\documentclass[11pt,letterpaper]{amsart}
\usepackage{amsmath,amssymb,amsfonts,amsthm,amscd}
\usepackage{enumerate}
\usepackage{mathtools}
\usepackage[dvipsnames]{xcolor}
\usepackage[margin=1.25in]{geometry}
\usepackage{mathrsfs}
\usepackage{euscript}
\usepackage{stix2}

\usepackage[colorlinks,linktocpage,linkcolor=cyan,citecolor=magenta,urlcolor=blue]{hyperref}
\usepackage{tikz}
\usetikzlibrary{arrows.meta}

\allowdisplaybreaks
\theoremstyle{plain}
\newtheorem{thm}{Theorem}[section]
\newtheorem{lemma}[thm]{Lemma}
\newtheorem{prop}[thm]{Proposition}

\newtheorem{cor}[thm]{Corollary}
\theoremstyle{definition}
\newtheorem{dfn}[thm]{Definition}

\newtheorem*{thm*}{Theorem}

\newtheorem*{dfn*}{Definition}

\numberwithin{equation}{section}

\newcommand{\bR}{\mathbb{R}}
\newcommand{\bZ}{\mathbb{Z}}
\newcommand{\bN}{\mathbb{N}}

\newcommand{\SL}{\mathsf{SL}}
\newcommand{\SO}{\mathsf{SO}}

\newcommand{\sG}{\mathsf{G}}
\newcommand{\sH}{\mathsf{H}}
\newcommand{\sK}{\mathsf{K}}
\newcommand{\Hom}{\mathrm{Hom}}

\newcommand{\diag}{\mathrm{diag}}

\newcommand{\cF}{\mathcal F}
\newcommand{\cC}{\mathcal C}
\newcommand{\cFt}{\mathcal F^{(2)}}
\newcommand{\cL}{\mathcal L}
\newcommand{\cD}{\mathcal D}
\newcommand{\cO}{\mathcal O}

\newcommand{\cG}{\mathcal G}
\newcommand{\cM}{\mathcal M}

\newcommand{\dph}{d_\varphi}
\newcommand{\ii}{\mathrm{i}} 
\newcommand{\bd}{\partial_\infty\Gamma}
\newcommand{\bdd}{\partial^{(2)}_\infty\Gamma}
\newcommand{\Lam}{\Lambda_\Gamma}
\DeclareMathOperator{\supp}{supp}
\DeclareMathOperator{\diam}{diam}

\DeclareMathOperator{\op}{\mathrm{op}}
\DeclareMathAlphabet{\mathesstixfrak}{U}{esstixfrak}{m}{n}
\newcommand{\fk}{\mathesstixfrak}

\title[Twisted Patterson--Sullivan densities for Anosov subgroups]{Twisted Patterson--Sullivan densities on nilpotent covers for Anosov subgroups}

\author{Krishnendu Gongopadhyay}

\address{Indian Institute of Science Education and Research (IISER) Mohali, Knowledge City,  Sector 81, S.A.S. Nagar 140306, Punjab, India}
\email{krishnendu@iisermohali.ac.in}

\author{Neelanjan Mondal}

\address{Indian Institute of Science Education and Research (IISER) Mohali, Knowledge City,  Sector 81, S.A.S. Nagar 140306, Punjab, India}
\email{mondalneelanjan@gmail.com}

\date{}

\thanks{}

\begin{document}
	
\begin{abstract}
	Let $\Gamma<\SL(d,\bR)$ be a Zariski-dense Borel--Anosov subgroup and let $\varphi$ be positive on its limit cone. For every $\chi \in \Hom(\Gamma, \bR)$, we construct twisted $(\varphi,\chi)$-conformal densities and prove their uniqueness, atomlessness, and ergodicity. For every normal subgroup $\Gamma_0\lhd\Gamma$ with nilpotent quotient, we classify the ergodic $\varphi$-conformal densities of $\Gamma_0$ in terms of characters of $\Gamma/\Gamma_0$.
\end{abstract}

	\maketitle
	

	\section{Introduction}
	
	Patterson--Sullivan theory relates the orbital growth of discrete
	groups of hyperbolic isometries to the geometry of their limit sets
	and the dynamics of the geodesic flow. For a non-elementary discrete
	subgroup $\Gamma<\operatorname{Isom}(\mathbb H^d)$, the critical
	exponent  is the abscissa of convergence of its
	Poincar\'e series and records the asymptotic growth of
	$\Gamma$-orbits in $\mathbb H^d$. Using this series, Patterson
	\cite{Patterson} and Sullivan \cite{Sullivan} constructed finite
	measures supported on
	$\Lambda_\Gamma\subseteq\partial_\infty\mathbb H^d$. 
	These measures relate the orbital growth and boundary geometry.
	
		\subsection{The rank-one settings}\label{ss:rankone}
	
	Paulin, Pollicott and Schapira~\cite{PPS} developed a
twisted Patterson--Sullivan theory for pinched negatively
curved Riemannian orbifolds, incorporating H\"older
potentials and real characters. We recall the relevant
results for  $\mathbb H^d$. We
	recall here the rank-one setting relevant to our results. Let
	\(\Gamma<\operatorname{Isom}(\mathbb H^d)\) be a non-elementary discrete subgroup,
	let
	\[
	\widetilde F:T^1\mathbb H^d\longrightarrow \mathbb R
	\]
	be a \(\Gamma\)-invariant H\"older continuous potential, inducing a
	potential \(F\) on \(\Gamma\backslash\mathbb H^d\), and let
	\(\chi\in\Hom(\Gamma,\mathbb R)\) be a real , additive character. 
	
		For $x,y\in\mathbb H^d$, we write
	\[
	\int_x^y\widetilde F
	=
	\int_0^{d(x,y)}
	\widetilde F(\widetilde\phi_t v_{x,y})\,dt,
	\]
	where $v_{x,y}\in T_x^1\mathbb H^d$ is the unit tangent vector
	pointing from $x$ to $y$.
	Paulin, Pollicott and Schapira associate
	with these data the twisted Poincar\'e series
	\[
	Q_{\Gamma,F,\chi,x,y}(s)
	=
	\sum_{\gamma\in\Gamma}
	e^{
	\chi(\gamma)+\int_x^{\gamma y}(\widetilde F-s)}
	\]
	and the twisted critical exponent
	\[
	\delta_{\Gamma,F,\chi}
	=
	\limsup_{n\to\infty}\frac1n
	\log
	\sum_{\substack{\gamma\in\Gamma\\
			n-1<d(x,\gamma y)\le n}}
	e^{
	\chi(\gamma)+\int_x^{\gamma y}\widetilde F}.
	\]
	The critical exponent $\delta_{\Gamma,F,\chi}$ is independent of
	$x,y$, and $Q_{\Gamma,F,\chi,x,y}(s)$ converges for
	$s>\delta_{\Gamma,F,\chi}$ and diverges for
	$s<\delta_{\Gamma,F,\chi}$ (see \cite[Proposition~11.8]{PPS}). When $\Gamma<\operatorname{Isom}(\mathbb H^d)$ is convex-cocompact,
	Paulin, Pollicott and Schapira~\cite[Theorem~1.10]{PPS} obtain the
	following results, which provide the rank-one model for our main
	results. For a normal subgroup $\Gamma_0\lhd\Gamma$ with induced
	potential $F_0$:
	\begin{enumerate}
		\item[(1)] if $\delta_{\Gamma,F,\chi}<+\infty$ and $Q_{\Gamma,F,\chi,x,y}
		\bigl(\delta_{\Gamma,F,\chi}\bigr)=\infty,$  then there is a unique (up to a positive scalar) twisted Patterson
		density of dimension $\delta_{\Gamma,F,\chi}$;
		\item[(2)] if  $\Gamma/\Gamma_0$ is nilpotent, then the
		set of ergodic Patterson densities of $(\Gamma_0,F_0)$ is the set of multiples of the
		twisted Patterson densities of $(\Gamma,F,\chi)$ for the characters $\chi$ of $\Gamma$
		vanishing on $\Gamma_0$.
	\end{enumerate}

	The purpose of this paper is to establish a higher-rank counterpart
	of this picture for Zariski-dense Borel--Anosov subgroups of
	$\SL(d,\mathbb R)$.  This is not a formal extension of the rank-one
	theory: the arguments of Paulin--Pollicott--Schapira rely essentially
	on the negatively curved geometry, scalar cocycles, and shadow
	structures available in rank one, and these tools do not carry over
	verbatim to higher rank.  A substantial part of our work is therefore
	devoted to developing the geometric and measure-theoretic machinery
	needed to replace them in the Anosov setting.  
	With this framework in place, we establish higher-rank analogues of the
	character-twisted and nilpotent-cover results of
	\cite{PPS}, namely the existence and uniqueness of twisted
	Patterson densities and the classification of ergodic conformal
	densities on nilpotent covers.
	In higher rank, Patterson--Sullivan theory was developed in general
	symmetric spaces by Albuquerque and Quint
	\cite{Albuquerque,Quint02a,Quint02b}, with further ergodic
	developments due to Link \cite{Link06}.  For Anosov subgroups,
	its geometric, dynamical, and counting aspects were subsequently
	developed by Sambarino \cite{Sam,Sam24}, Dey--Kapovich \cite{DK},
	Burger--Landesberg--Lee--Oh \cite{BLLO},
	Edwards--Lee--Oh \cite{ELO}, and Lee--Oh \cite{LO,LO24}.

No higher-rank Borel--Anosov analogue of the nilpotent-cover
classification of Paulin--Pollicott--Schapira was previously known.  In particular, no general framework has been available
that simultaneously treats twisted conformal densities, and the classification of ergodic densities on
nilpotent covers.  The present paper develops such a framework for
Zariski-dense Borel--Anosov subgroups.
	
	We first introduce the objects and notation appearing in our
	main results.

		\subsection{The higher-rank setting}\label{ss:setting}
	

	
	Let $\sG=\SL(d,\bR)$, with $d\geq 2$, and let $\sK=\SO(d)$. 
	The Cartan involution $g\longmapsto(g^{-1})^{\mathsf T}$ induces the 
	Cartan decomposition
	\[
	\fk{sl}_d(\bR)=\fk k\oplus\fk p,
	\qquad
	\fk k=\fk{so}(d),
	\qquad
	\fk p=\operatorname{Sym}_0(d,\bR),
	\]
	where $\operatorname{Sym}_0(d,\bR)$ denotes the space of symmetric traceless matrices.
	We fix the Cartan subspace
	$\fk a\subset \fk p$ consisting of diagonal matrices with trace zero
	and the closed positive Weyl chamber
	\[
	\fk a^+
	=
	\left\{
	\diag(a_1,\ldots,a_d)\in\fk a:
	a_1\geq\cdots\geq a_d
	\right\}.
	\]
	The associated simple roots are 
	\[
	\Delta=\{\alpha_1, \cdots, \alpha_{d-1}\}
	\]
	where $\alpha_i(\diag(a_1, \cdots,a_d))=a_i-a_{i+1}$. 
	Every $g\in\sG$ admits a Cartan decomposition
	\[
	g=k_1\exp(\kappa(g))k_2,
	\qquad k_1,k_2\in\sK,
	\]
	where $\kappa(g)\in\fk a^+$ is uniquely determined and is called the
	Cartan projection of $g$. More explicitly,
	\[
	\kappa(g)
	=
	\diag\bigl(\log\sigma_1(g),\ldots,\log\sigma_d(g)\bigr),
	\]
	where $\sigma_i(g)$ is the $i$-th eigenvalue of
	$\sqrt{g^{\mathsf T}g}$, ordered so that
	\[
	\sigma_1(g)\geq\cdots\geq\sigma_d(g)>0.
	\]
	Let $\mathbb X=\sG/\sK$ be the associated Riemannian symmetric space
	and let $o=e\sK$. We equip $\mathsf X$ with the $\sG$-invariant
	Riemannian metric whose inner product at $o$, under the identification
	$T_o\mathsf X\simeq\fk p$, is
	$\langle X,Y\rangle_o=\operatorname{tr}(XY)$.
	Thus, writing $\lVert X\rVert^2=\operatorname{tr}(X^2)$ for $X\in\fk p$, we have
	\[
	d_{\mathbb X}(\sK,g\sK)
	=
	\lVert\kappa(g)\rVert
	=
	\left(\sum_{i=1}^{d}\bigl(\log\sigma_i(g)\bigr)^2\right)^{1/2}.
	\]
	Equivalently, $\mathsf X$ may be identified with the space of
	positive-definite symmetric matrices of determinant one via
	$g\sK\mapsto gg^{\mathsf T}$.
	The Furstenberg boundary is the full flag variety $\cF(\bR^d)=\sG/\mathsf P$, where $\mathsf P$ is the Borel
	subgroup of upper triangular matrices.
	
	Anosov representations were introduced by Labourie~\cite{Lab}
	and subsequently generalized by Guichard and Wienhard~\cite{GW}. We use
	the equivalent characterization in terms of the linear growth of
	singular-value gaps; see \cite{KLPanosovcharacterizations, GGKW, BPS19}. 
	A finitely generated subgroup $\Gamma<\sG$ is \emph{Borel--Anosov} if  there are constants $c,C>0$ such that
	\begin{equation}\label{eq:anosovgap}
		\alpha_i(\kappa(\gamma))\ \geq\ c\,\lvert\gamma\rvert-C
		\qquad(1\leq i\leq d-1,\ \gamma\in\Gamma),
	\end{equation}
	it is $\mathsf{P}_k$-\emph{Anosov} if \eqref{eq:anosovgap} holds for the single root $\alpha_k$. A Borel--Anosov subgroup  of $\sG$   is word hyperbolic \cite[Theorem 1.4]{KLPmorse} (see also \cite[Section 3]{BPS19}). A central feature of Borel Anosov subgroups is that they admit a
	$\Gamma$-equivariant embedding
	\[
	\theta\colon\partial_{\infty}\Gamma\longrightarrow\mathcal F(\mathbb R^{d})
	\]
	from the Gromov boundary $\partial_{\infty}\Gamma$ of $\Gamma$ to the
	full flag manifold $\mathcal F(\mathbb R^{d})$, which is moreover
	transverse (see \cite{BPS19, GGKW, KLPanosovcharacterizations}).
Its image
\[
\Lambda_\Gamma:=\theta(\partial_\infty\Gamma)
\]
is the \emph{full-flag limit set} of $\Gamma$.
	
	Following Benoist~\cite{Ben97}, the \emph{limit cone} of $\Gamma$ is
	the asymptotic cone of $\kappa(\Gamma)$,
	\[
	\cL_\Gamma
	:=
	\left\{
	\lim_{n\to\infty} t_n\kappa(\gamma_n):
	t_n\to0^+,\ \gamma_n\in\Gamma
	\right\}.
	\]
	Write 
	$\cL_\Gamma^{>0}
	:=
	\left\{
	\varphi\in\fk a^\ast:
	\varphi>0\ \text{on }\cL_\Gamma\smallsetminus\{0\}
	\right\}.$
		 Throughout the paper, $\Gamma<\sG$ is a
		Zariski-dense Borel--Anosov subgroup.
		We fix  a finite symmetric
		generating set $S$ of $\Gamma$.

Fix $\varphi\in\cL_\Gamma^{>0}$ and set
\[
d_\varphi(\gamma_1,\gamma_2)
:=
\varphi\!\left(\kappa(\gamma_1^{-1}\gamma_2)\right)
\qquad(\gamma_1,\gamma_2\in\Gamma).
\]
By Lemma~\ref{lem:dphi-qi}, $d_\varphi$ is coarsely equivalent to
the word metric.  Let $\ii:\fk a\to\fk a$ be the opposition involution and set
\[
\varphi^\ii:=\varphi\circ\ii,
\qquad
\overline\varphi:=\frac12(\varphi+\varphi^\ii).
\]
We call $\varphi$ \emph{symmetric} if $\varphi^\ii=\varphi$.

For a character $\chi\in\Hom(\Gamma,\mathbb R)$, we define the
twisted Poincar\'e series
\[
Q_{\Gamma,\varphi,\chi}(s)
:=
\sum_{\gamma\in\Gamma}
e^{\chi(\gamma)-s\,\dph(e,\gamma)}
\]
and its critical exponent by
\[
\delta_{\varphi,\chi}(\Gamma)
:=
\limsup_{m\to\infty}
\frac{1}{m}
\log
\sum_{\substack{\gamma\in\Gamma\\
		m-1<\dph(e,\gamma)\leq m}}
e^{\chi(\gamma)}.
\]
By Lemma~\ref{lem:abscissa},
$\delta_{\varphi,\chi}(\Gamma)$ is the abscissa of convergence of
$Q_{\Gamma,\varphi,\chi}$.  We write
$\delta_\varphi(\Gamma):=\delta_{\varphi,0}(\Gamma)$
and say that $(\Gamma,\varphi,\chi)$ is of \emph{divergence type} if
$Q_{\Gamma,\varphi,\chi}
\bigl(\delta_{\varphi,\chi}(\Gamma)\bigr)=+\infty.$
For a subgroup $\Gamma_0\leq\Gamma$, the corresponding series and
critical exponents are defined by restricting the sums to $\Gamma_0$.

Let $\beta_\xi(x,y)\in\fk a$ denote the $\fk a$-valued Busemann
cocycle as in Definition~\ref{def:busemann}. Let
$\Gamma_0\lhd\Gamma$ be a non-elementary normal subgroup and let
$\chi\in\Hom(\Gamma_0,\bR)$ be $\Gamma$-conjugation invariant.
For $\sigma>0$, a twisted $(\varphi,\chi)$-conformal density of
dimension $\sigma$ for $\Gamma_0$ is a family
$\mu=(\mu_x)_{x\in\mathbb X}$ of finite non-zero Borel measures on
$\cF$, supported on $\Lambda_\Gamma$, such that
\[
\frac{d\mu_x}{d\mu_y}(\xi)
=
e^{-\sigma\varphi(\beta_\xi(x,y))}
\qquad
(x,y\in\mathbb X,\ \xi\in\cF)
\]
and
\[
(\gamma_0)_\ast\mu_x
=
e^{-\chi(\gamma_0)}\mu_{\gamma_0 x}
\qquad
(\gamma_0\in\Gamma_0,\ x\in\mathbb X).
\]
We denote the collection of such densities by
$\cM_{\varphi,\chi}(\sigma,\Gamma_0)$.
	These are the higher-rank counterparts of the twisted Patterson
	densities of Paulin--Pollicott--Schapira.
	
	\begin{thm}
		\label{thm:intro-patterson}
		Let $\Gamma<\sG$ be a Zariski-dense Borel--Anosov subgroup, and fix
		$(\varphi,\chi)\in\cL^{>0}\times\Hom(\Gamma,\bR)$. Let $\Gamma_0\lhd\Gamma$ be a non-elementary normal subgroup and let
		$\chi\in\Hom(\Gamma_0,\bR)$ be invariant under conjugation by $\Gamma$.
		If $\delta_{\varphi,\chi}(\Gamma_0)<\infty$, then
	there exists $\mu\in
		\cM_{\varphi,\chi}
		\bigl(\delta_{\varphi,\chi}(\Gamma_0),\Gamma_0\bigr)$
		such that
		$\supp\mu_x=\Lambda_\Gamma$ for every $x\in\mathbb X$.
	\end{thm}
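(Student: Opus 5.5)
We carry out the Patterson construction for the orbit $\Gamma_0 o$, with the twist $e^{\chi(\gamma)}$ included in the weights. Write $\delta:=\delta_{\varphi,\chi}(\Gamma_0)$. By Lemma~\ref{lem:abscissa} applied to $\Gamma_0$, $\delta$ is the abscissa of convergence of $Q_{\Gamma_0,\varphi,\chi}$.

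\textbf{Step 1: forcing divergence.} If $Q_{\Gamma_0,\varphi,\chi}(\delta)<\infty$, we first choose a Patterson slowly varying function $h\colon[0,\infty)\to[1,\infty)$. It should satisfy $h(t+u)\le e^{\epsilon u}h(t)$ for all $u\ge 0$ and $t\ge t_\epsilon$, and the modified series $\sum_{\gamma\in\Gamma_0}h(d_\varphi(e,\gamma))e^{\chi(\gamma)-s d_\varphi(e,\gamma)}$ should still have abscissa $\delta$ but diverge at $s=\delta$. This is Patterson's lemma verbatim, since it only uses the counting function $m\mapsto\sum_{m-1<d_\varphi(e,\gamma)\le m}e^{\chi(\gamma)}$. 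Otherwise we take $h\equiv1$.

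\textbf{Step 2: the approximating measures.} For $s>\delta$ and $x\in\mathbb X$, set
\[
\mu_{x,s}=\frac{1}{\tilde Q(s)}\sum_{\gamma\in\Gamma_0}h\bigl(\varphi(\kappa(\gamma))\bigr)\,e^{\chi(\gamma)-s\,\varphi(\kappa_x(\gamma))}\,\mathcal D_{\gamma o},
\]
where $\tilde Q(s)$ is the modified series at $x=o$. Here $\kappa_x(\gamma)$ denotes the $\fk a$-valued Cartan distance from $x$ to $\gamma o$, and $\mathcal D_{\gamma o}$ is the Dirac mass at $\gamma o$, viewed in a compactification $\mathbb X\cup\cF$ adapted to Anosov orbits. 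Concretely, $\gamma_n o\to\xi$ means that the flags $U(\gamma_n)$ obtained from the Cartan decomposition converge to $\xi$.

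We take a weak limit along $s_n\downarrow\delta$. Divergence of the modified series forces the mass to escape to infinity, so the limit $\mu_o$ lives on the accumulation set of $\Gamma_0 o$. Because $\Gamma_0$ is non-elementary and normal in $\Gamma$, its limit set is closed, $\Gamma$-invariant and nonempty in $\Lambda_\Gamma$. By minimality of the $\Gamma$-action on $\Lambda_\Gamma$ (using Zariski density), this set is all of $\Lambda_\Gamma$. For Borel--Anosov subgroups the Cartan flags of $\gamma_n$ converge to $\theta$ of the boundary limit of $\gamma_n$, so $\supp\mu_o\subseteq\Lambda_\Gamma$.

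\textbf{Step 3: conformality and the twist.} For $\gamma_0\in\Gamma_0$, reindexing $\gamma\mapsto\gamma_0\gamma$ gives $(\gamma_0)_*\mu_{x,s}=e^{-\chi(\gamma_0)}\mu_{\gamma_0x,s}$. The additivity $\chi(\gamma_0\gamma)=\chi(\gamma_0)+\chi(\gamma)$ accounts for the factor, and the slowly varying $h$ introduces only an error factor $e^{O(\epsilon)}$ that disappears in the limit. The Radon--Nikodym relation comes from the key estimate: for $\gamma o\to\xi$ in the regular (conical) direction,
\[
\kappa_x(\gamma)-\kappa_y(\gamma)\to\beta_\xi(x,y).
\]
For Anosov sequences this holds uniformly on neighborhoods of the limit flags, since the singular-value gaps \eqref{eq:anosovgap} diverge linearly. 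Given this convergence, the density $e^{-s\varphi(\kappa_x(\gamma))+s\varphi(\kappa_y(\gamma))}$ extends continuously to $e^{-\delta\varphi(\beta_\xi(x,y))}$ on $\Lambda_\Gamma$, and passing to the limit yields the conformality relation. We then define $\mu_x$ by $d\mu_x=e^{-\delta\varphi(\beta_\xi(x,o))}d\mu_o$, which is consistent with the limit.

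\textbf{Step 4: full support, and the main obstacle.} The support of $\mu_o$ is nonempty, closed and $\Gamma_0$-invariant, because of the quasi-invariance in Step 3. We also need it to be $\Gamma$-invariant, and this is the delicate point: $\mu$ is only a $\Gamma_0$-density.

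We would use the $\Gamma$-invariance of $\chi$ under conjugation. For $g\in\Gamma$, the family $x\mapsto g^{-1}_*\mu_{gx}$ is again in $\cM_{\varphi,\chi}(\delta,\Gamma_0)$, because $\chi(g\gamma_0g^{-1})=\chi(\gamma_0)$. Its support is $g^{-1}\supp\mu_o$, so it suffices to show that the limit set of $\Gamma_0$ is a minimal closed $\Gamma_0$-invariant set. This follows from north--south dynamics of loxodromic elements of $\Gamma_0$ on $\Lambda_\Gamma$, given that the limit set of $\Gamma_0$ equals $\Lambda_\Gamma$ by Step 2 (normal subgroups of hyperbolic groups have full limit set).

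Beyond this support question, the main technical step is the uniform Busemann convergence in Step 3. We would prove it through the Iwasawa/Cartan comparison $\beta_\xi(o,\gamma o)$ versus $\kappa(\gamma)$: the difference is bounded by a function of the angle between $\xi$ and the repelling flag of $\gamma^{-1}$. Transversality of the limit map keeps this angle bounded away from zero for the relevant $\gamma$.
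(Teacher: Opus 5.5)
Your outline has the same structure as the paper's proof (Theorem~\ref{thm:existence} with $\sH=\Gamma_0$). Both use Patterson's slowly varying function, $\chi$-twisted weighted Dirac sums over $\Gamma_0$, a weak-$\ast$ limit as $s\downarrow\delta$, reindexing $\gamma\mapsto\gamma_0\gamma$ for the twist, a Busemann limit driven by the diverging gaps, and minimality for full support. The paper puts the Dirac masses on the Gromov compactification $\overline\Gamma$ rather than on orbit points, and pushes the limit forward by $\theta$. It then simply defines $\mu_x:=e^{-\delta\varphi(\beta_\bullet(x,o))}\mu_o$, so conformality is automatic and only equivariance needs a limiting argument. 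Your space $\Gamma_0o\sqcup\cF$, topologised through $U(\gamma)$, can be made rigorous. However, you must check that each $\gamma_0$ acts continuously on it, i.e.\ $U(\gamma_0\gamma_n)\to\gamma_0\xi$ whenever $U(\gamma_n)\to\xi$. This follows from \eqref{eq:cartan-property} and the equivariance of $\theta$; on $\overline\Gamma$ it is automatic.

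The genuine problem is how you propose to prove what you call the main technical step. Comparing $\beta_\xi(o,\gamma o)$ with $\kappa(\gamma)$, with an error controlled by an angle, is a shadow-lemma-type estimate (compare Proposition~\ref{Prop:busctrl}). Such an estimate gives a bounded error, not the $o(1)$ convergence $\kappa(x,\gamma_no)-\kappa(y,\gamma_no)\to\beta_\xi(x,y)$ you need for the weight to extend continuously to the boundary. The angle you name also degenerates in exactly the relevant regime. By \eqref{eq:S-vs-U}, the repelling flag of $\gamma^{-1}$ is built from the subspaces $U_{d-i}(\gamma)$. But $\gamma_no\to\xi$ means $U(\gamma_n)\to\xi$, so $\xi$ is not uniformly transverse to it; transversality of $\theta$ only separates distinct points of $\Lam$. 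Continuity on your compactification needs convergence along every sequence with $U(\gamma_n)\to\xi$, not just conical ones, so no angle bound is available.

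The input the paper uses needs no transversality. By Lemma~\ref{lem:sigma-props}\textup{(4)}, for fixed $g\in\sG$ and all large $n$,
\[
\bigl|\omega_i(\kappa(g\gamma_n))-\omega_i(\kappa(\gamma_n))-\omega_i(\sigma(g,U(\gamma_n)))\bigr|
\leq 2e^{2\sqrt d\,\lVert\kappa(g)\rVert-\alpha_i(\kappa(\gamma_n))}\longrightarrow0 .
\]
No angle appears because $\lVert\Lambda^i g\,\mathbb V_{U_i(\gamma_n)}\rVert\geq e^{-\omega_i(\ii\kappa(g))}$ is automatically bounded below. Continuity of $\sigma$ then gives $\kappa(g^{-1}\gamma_n)-\kappa(\gamma_n)\to\sigma(g^{-1},\xi)$. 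Hence the Busemann limit holds for arbitrary $x,y\in\mathbb X$ (cf.\ Proposition~\ref{prop:buslimit}).

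Two smaller points. First, in Step~4 you do not need $\Gamma$-invariance of the support. The translates $x\mapsto(g^{-1})_\ast\mu_{gx}$ would not give it anyway, since nothing is known about uniqueness of $\Gamma_0$-densities. It suffices that $\theta^{-1}(\supp\mu_o)$ is a nonempty closed $\Gamma_0$-invariant subset of $\bd$, and that $\bd$ is minimal for the infinite normal subgroup $\Gamma_0$; this is where you end up. Second, membership in $\cM_{\varphi,\chi}(\delta,\Gamma_0)$ presupposes $\delta>0$, which you never check. It follows as in the positivity part of Proposition~\ref{prop:delta-props}\textup{(2)}, using a free subgroup of $[\Gamma_0,\Gamma_0]\subseteq\ker\chi$, which is an infinite normal subgroup of $\Gamma$.
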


	To study these densities, we use word shadows in the Cayley graph of
	$\Gamma$. Let $d_w$ be the word metric associated with the fixed generating set.
	For $R \ge 0$ and $\gamma_1,\gamma_2\in\Gamma$, define the \emph{word shadow}
	\[
	\cO_R(\gamma_1,\gamma_2)
	:=
	\left\{
	x\in\partial_\infty\Gamma:
	\text{some geodesic ray $[\gamma_1,x)$ meets
		$B_w(\gamma_2,R)$}
	\right\}.
	\]
	Via the limit map $\theta$, we identify
	$\cO_R(\gamma_1,\gamma_2)$ with its image in
	$\Lambda_\Gamma$. A sequence $(\gamma_i)$ in $\Gamma$ \emph{converges radially} to
	$x\in\bd$ if $|\gamma_i|\to\infty$ and $x\in\cO_R(e,\gamma_i)$
	for every $i$.
	We denote the set of such points by $\Lambda_\Gamma^{\mathrm{con}}$.
	
	Let a countable group $\Gamma$ act measurably on a standard Borel
	space $X$, and let $\mu$ be a finite Borel measure on $X$. The measure
	$\mu$ is \emph{quasi-invariant} if $\gamma_\ast\mu$ and $\mu$ are
	mutually absolutely continuous for every $\gamma\in\Gamma$. It is
	\emph{atomless} if $\mu(\{x\})=0$ for every $x\in X$. The action
	$\Gamma\curvearrowright(X,\mu)$ is \emph{ergodic} if every
	$\Gamma$-invariant Borel set $A\subseteq X$ satisfies
	$\mu(A)=0$ or $\mu(X\smallsetminus A)=0$.

	\begin{thm}
		\label{thm:A}
		Let $\Gamma<\sG$ be a Zariski-dense Borel--Anosov subgroup, and fix
		$(\varphi,\chi)\in\cL^{>0}\times\Hom(\Gamma,\bR)$.  Then
		$0<\delta_{\varphi,\chi}(\Gamma)<\infty$,
		the triple $(\Gamma,\varphi,\chi)$ is of divergence
		type, and
		\begin{enumerate}
			\item There exists
			$\mu^{\varphi,\chi}\in
			\cM_{\varphi,\chi}
			\bigl(\delta_{\varphi,\chi}(\Gamma),\Gamma\bigr)$ with 
			$\supp\mu_x^{\varphi,\chi}=\Lam$ for $x\in\mathbb X.$

			\item If
			$\nu\in\cM_{\varphi,\chi}(\sigma,\Gamma)$ for some
			$\sigma>0$, then
			$\sigma=\delta_{\varphi,\chi}(\Gamma)$ and
			$\nu=c\,\mu^{\varphi,\chi}$ for some $c>0$.
			
			\item For every $x\in\mathbb X$,
			$\mu_x^{\varphi,\chi}$ is atomless and the
			$\Gamma$-action on $(\cF,\mu_x^{\varphi,\chi})$
			is ergodic. Moreover,  
			$\Lambda_\Gamma^{\mathrm{con}}=\Lam,$ and
			$\mu_x^{\varphi,\chi}
			(\cF\smallsetminus\Lambda_\Gamma^{\mathrm{con}})=0.$
		\end{enumerate}
	\end{thm}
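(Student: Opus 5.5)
We establish the bounds on the critical exponent first, then existence, then a shadow lemma, from which the remaining items follow. Uniqueness is the step we expect to need the most care.

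\textbf{Finiteness and positivity of $\delta_{\varphi,\chi}(\Gamma)$.} By Lemma~\ref{lem:dphi-qi}, $d_\varphi$ is coarsely equivalent to the word metric, so $|\gamma|\le a\,d_\varphi(e,\gamma)+b$. Since $\chi$ is a homomorphism, $|\chi(\gamma)|\le \|\chi\|_S|\gamma|$ with $\|\chi\|_S=\max_{s\in S}|\chi(s)|$. Exponential growth of $\Gamma$ then gives $\delta_{\varphi,\chi}(\Gamma)<\infty$. For positivity we use $\chi(\gamma)+\chi(\gamma^{-1})=0$. For each $m$, let $A_m$ be the annulus $\{m-1<d_\varphi(e,\gamma)\le m\}$. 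Then
\[
\sum_{\gamma\in A_m}e^{\chi(\gamma)}\ \ge\ \#A_m^{\prime},
\]
where $A_m^{\prime}=\{\gamma\in A_m:\chi(\gamma)\ge0\}$. We pair $\gamma$ with $\gamma^{-1}$ and note that $d_\varphi(e,\gamma^{-1})=\varphi^\ii(\kappa(\gamma))$ is comparable to $d_\varphi(e,\gamma)$. After passing to the symmetrization $\overline\varphi$ (or to thickened annuli), at least half of a set of exponentially growing cardinality contributes $\ge1$. Hence $\delta_{\varphi,\chi}\ge c\,\delta_\varphi>0$ for some $c>0$, using $\delta_\varphi(\Gamma)>0$ for non-elementary $\Gamma$.

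\textbf{Existence (item 1).} This is Theorem~\ref{thm:intro-patterson} with $\Gamma_0=\Gamma$; the hypothesis is satisfied by the previous step. Concretely, we run Patterson's construction: weak limits, as $s\downarrow\delta_{\varphi,\chi}$, of
\[
\frac{1}{Q(s)}\sum_{\gamma}e^{\chi(\gamma)-s\,d_\varphi(e,\gamma)}\,\delta_{\gamma\xi_0},
\]
with a slowly varying correction if the series converges at $\delta_{\varphi,\chi}$. Transport to the flag variety uses the limit map $\theta$, and the Busemann cocycle $\varphi(\beta_\xi)$ gives the conformality.

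\textbf{Shadow lemma and its consequences.} The central tool is a twisted shadow lemma:
\[
\nu_o\bigl(\cO_R(e,\gamma)\bigr)\asymp e^{\chi(\gamma)-\sigma d_\varphi(e,\gamma)}
\]
for any $\nu\in\cM_{\varphi,\chi}(\sigma,\Gamma)$. Its proof uses three inputs: the twisted equivariance, which produces the factor $e^{\chi(\gamma)}$; the estimate $\varphi(\beta_\xi(\gamma o,o))\approx d_\varphi(e,\gamma)$ up to $O(R)$ for $\xi$ in the shadow, a consequence of the Anosov Morse property; and the fact that $\nu$ is not concentrated near a single point.

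The shadow lemma drives the rest of the theorem, in this order.
\begin{enumerate}
\item \emph{Limit set and support.} Every point of $\partial_\infty\Gamma$ is conical, because $\Gamma$ is hyperbolic and geodesic rays give radial sequences; this yields $\Lambda_\Gamma^{\mathrm{con}}=\Lam$, and the support of $\mu^{\varphi,\chi}_x$ is all of $\Lam$ by minimality.
\item \emph{$\sigma\ge\delta_{\varphi,\chi}$.} Covering $\partial_\infty\Gamma$ by the boundedly overlapping shadows at spheres of radius $n$ forces $\sum_{|\gamma|=n}e^{\chi(\gamma)-\sigma d_\varphi(e,\gamma)}$ to be bounded above and below.
\item \emph{Divergence type.} The upper bound from that covering gives $\sigma\ge\delta_{\varphi,\chi}$, while the lower bound applied to $\mu^{\varphi,\chi}$ makes the annular sums comparable to $1$ at $s=\delta_{\varphi,\chi}$. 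Hence the series diverges at $\delta_{\varphi,\chi}$.
\item \emph{$\sigma\le\delta_{\varphi,\chi}$.} If $\sigma>\delta_{\varphi,\chi}$, the lower bound yields a convergent sum that nevertheless bounds $\nu_o(\Lam)>0$ along every sphere, a contradiction. So $\sigma=\delta_{\varphi,\chi}$.
\item \emph{Atomlessness.} Shadows shrinking to a point have measure $\lesssim e^{\chi(\gamma)-\sigma d_\varphi(e,\gamma)}$. This tends to $0$ along rays, since $\sigma\,d_\varphi$ dominates $\chi$ linearly by the first step; we may need $\sigma$ strictly larger than $\limsup\chi/d_\varphi$ along rays, which positivity of the pressure should supply.
\end{enumerate}

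\textbf{Ergodicity and uniqueness.} We use the standard Lebesgue-density argument on conical points, with shadows playing the role of balls and bounded distortion coming from the shadow lemma. For ergodicity, if $A$ is invariant with $0<\nu(A)$, then at a density point $\xi$ with $\gamma_i\to\xi$ radially, pulling back by $\gamma_i^{-1}$ shows that $\nu(A)$ is close to full measure on large sets. For uniqueness, given two densities $\nu,\nu'$ of dimension $\delta_{\varphi,\chi}$, the measure $\tfrac12(\nu+\nu')$ is again ergodic. The Radon--Nikodym derivative $d\nu/d(\nu+\nu')$ is $\Gamma$-invariant, because the twist factors $e^{-\chi}$ cancel; it is therefore constant, giving $\nu=c\,\mu^{\varphi,\chi}$.

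The main obstacle is the twisted shadow lemma in the Anosov setting. We need uniform control of $\varphi(\beta_\xi(o,\gamma o))-d_\varphi(e,\gamma)$ over word shadows and the non-concentration estimate $\sup_\gamma\nu(\gamma^{-1}(\text{small nbhd}))<$ total. We would try to obtain both from the transversality of $\theta$, through a uniform lower bound on the distance to $\Lam$ minus a neighbourhood of $\xi^{\op}$, together with the KLP Morse lemma.
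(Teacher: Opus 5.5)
Most of your outline is a legitimate alternative to the paper's route. The two-sided bound on the sphere sums $\sum_{|\gamma|=n}e^{\chi(\gamma)-\sigma\dph(e,\gamma)}$ replaces the paper's annular-multiplicity and Borel--Cantelli argument for $\sigma=\delta_{\varphi,\chi}(\Gamma)$ and for divergence type. The $\gamma\leftrightarrow\gamma^{-1}$ pairing gives positivity. Density points plus the invariant Radon--Nikodym derivative $d\nu_o/d(\nu_o+\nu'_o)$ can replace the twisted Bowen--Margulis--Sullivan measure (with Cantrell--Tanaka) and Theorem~\ref{thm:fatou}. The non-concentration you flag as the main obstacle is actually easy for densities of $\Gamma$ itself, since every normalised translate $\nu^{[\gamma]}_o$ equals $\nu_o/\lVert\nu_o\rVert$, which is not a Dirac mass.

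The genuine gap is \emph{atomlessness}, which is part of item (3). Your justification fails. The first step only gives $|\chi(\gamma)|\le c_\chi A(\dph(e,\gamma)+B)$, and $c_\chi A$ can exceed $\delta:=\delta_{\varphi,\chi}(\Gamma)$. The most the shadow lemma yields is $\chi(\gamma)\le\delta\dph(e,\gamma)+\log C_R$, hence only the non-strict $\chi(g)\le\delta\varphi(\lambda(g))$ of Corollary~\ref{prop:jordan-constraint}. Now take the attracting point of an infinite-order $g$. Since $\kappa(g^n)=n\lambda(g)+O(1)$, the shadows $\cO_R(e,g^n)$ have mass $\asymp e^{n(\chi(g)-\delta\varphi(\lambda(g)))}$. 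They tend to $0$ exactly when $\chi(g)<\delta\varphi(\lambda(g))$, and equality would put an atom at $\theta(g^+)$. Nothing in your proposal gives this strict inequality, and ``positivity of the pressure'' is not an argument for it.

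To close the gap, split into two cases.
\begin{itemize}
\item At a point $\xi$ with finite stabiliser, let $(\gamma_i)$ be a ray to $\xi$. Twisted equivariance, Proposition~\ref{Prop:busctrl} and $e^{\chi(\gamma)-\delta\dph(e,\gamma)}\le C_R$ give $\mu_o(\{\gamma_i^{-1}\xi\})\ge C_R^{-1}e^{-\delta A_R}\mu_o(\{\xi\})$. These are infinitely many distinct points, contradicting finiteness of $\mu_o$.
\item At $\theta(g^\pm)$ you need a separate argument. One option is a Dal'bo--Otal--Peign\'e-type ping-pong: if $\chi(g)=\delta\varphi(\lambda(g))$, then $\sum_{n\ge N_0}e^{\chi(g^n)-s\dph(e,g^n)}\to\infty$ as $s\downarrow\delta$. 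Concatenating powers of $g$ with a transverse element then forces $Q_{\Gamma,\varphi,\chi}(s)=\infty$ for some $s>\delta$, a contradiction. The other option is the paper's argument. It transfers an atom of $\mu_o$ to an atom of $\mu^\ii_o$ via the shadow lemma for $\mu^\ii$ along $\gamma_i^{-1}$. It then contradicts ergodicity of $\widetilde m$ on $\Lambda^{(2)}_\Gamma$ by exhibiting a positive-mass pair outside the countable orbit that would carry $\widetilde m$.
\end{itemize}

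Make sure your ergodicity step does not itself use atomlessness; otherwise the gap propagates to ergodicity and to your uniqueness argument. After pulling back and letting $R\to\infty$, you only learn that a $\Gamma$-invariant $A^c$ of positive measure is, up to null sets, a single atom $\{\eta\}$. You then need the extra observation that invariance and quasi-invariance force $\Gamma\eta=\eta$, contradicting non-elementarity.
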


	\begin{thm}\label{thm:B}
		Let $\Gamma<\sG$ be a Zariski-dense Borel--Anosov subgroup, and fix
		$(\varphi,\chi)\in\cL^{>0}\times\Hom(\Gamma,\bR)$.  Let $\Gamma_0\lhd\Gamma$ be a normal
		subgroup with $\Gamma_0\neq\{e\}$ and $\Gamma/\Gamma_0$ nilpotent. Then for every
		$\sigma>0$, the set of $\varphi$-conformal densities of $\Gamma_0$ of dimension $\sigma$
		which are ergodic with respect to $\Gamma_0$ equals the set of twisted
		$(\varphi,\chi)$-conformal densities of $\Gamma$ of dimension $\sigma$ for the
		characters $\chi\in\Hom(\Gamma,\bR)$ vanishing on $\Gamma_0$. Moreover the
		assignment $\mu\longmapsto\chi$ is a bijection between the set of ergodic
		$\varphi$-conformal densities of $\Gamma_0$ up to scalar multiples and the set of
		characters
		\[
		\big\{\chi\in\Hom(\Gamma,\bR):\ \chi|_{\Gamma_0}=0\big\}\cong\Hom(\Gamma/\Gamma_0,\bR).
		\]
		In particular, the set of dimensions of ergodic $\varphi$-conformal densities of
		$\Gamma_0$ is exactly $\{\delta_{\varphi,\chi}(\Gamma):\chi|_{\Gamma_0}=0\}$.
	\end{thm}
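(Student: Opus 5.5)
We follow the strategy of Paulin--Pollicott--Schapira, with the rank-one shadow lemma and ergodicity inputs replaced by Theorem~\ref{thm:A}. The proof has two directions.

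\emph{Easy inclusion.} Let $\chi\in\Hom(\Gamma,\bR)$ vanish on $\Gamma_0$, and let $\mu\in\cM_{\varphi,\chi}(\sigma,\Gamma)$. Restricting the twisted equivariance to $\Gamma_0$ shows that $\mu$ is a $\varphi$-conformal density of $\Gamma_0$ of dimension $\sigma$. By Theorem~\ref{thm:A}(2), $\sigma=\delta_{\varphi,\chi}(\Gamma)$ and $\mu$ is a multiple of $\mu^{\varphi,\chi}$. It remains to show that $\mu$ is $\Gamma_0$-ergodic, not just $\Gamma$-ergodic.

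\emph{Reverse inclusion.} Let $\mu$ be an ergodic $\varphi$-conformal density of $\Gamma_0$ of dimension $\sigma$. Since $\Gamma_0\lhd\Gamma$, for $\gamma\in\Gamma$ the family $\mu^\gamma_x:=\gamma^{-1}_\ast\mu_{\gamma x}$ is again a $\Gamma_0$-ergodic $\varphi$-conformal density of dimension $\sigma$. This uses the cocycle identity $\beta_{\gamma\xi}(\gamma x,\gamma y)=\beta_\xi(x,y)$.

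The key claim is that $\mu^\gamma=e^{\chi(\gamma)}\mu$ for some scalar $e^{\chi(\gamma)}>0$; the map $\gamma\mapsto\chi(\gamma)$ is then automatically a homomorphism vanishing on $\Gamma_0$. Given the claim, the equality $\gamma_\ast\mu_x=e^{-\chi(\gamma)}\mu_{\gamma x}$ places $\mu$ in $\cM_{\varphi,\chi}(\sigma,\Gamma)$.

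To prove the claim, we argue by induction along the lower central series of $N=\Gamma/\Gamma_0$, as in \cite{PPS}. The main dichotomy is that two ergodic densities are either proportional or mutually singular. For an element $\gamma$ whose class is central in $N$, we consider $\mu+\mu^\gamma+\mu^{\gamma^2}+\cdots$, or rather the Radon--Nikodym derivative $f=d\mu^\gamma/d\mu$ when it exists.

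We exclude mutual singularity by showing that $\mu$ and $\mu^\gamma$ charge a common set. Both measures are supported on $\Lambda_\Gamma^{\mathrm{con}}$, and we use a shadow lemma for $\mu$ along $\Gamma$-orbits. This shadow lemma follows from the fact that $\Gamma_0$ is normal and has the same limit set $\Lambda_\Gamma$: the conical points for $\Gamma$ are approached by $\Gamma_0$-translates up to a bounded $\Gamma$-correction, via Lemma~\ref{lem:dphi-qi} and the Anosov gap estimate \eqref{eq:anosovgap}.

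Once the two measures are shown to be absolutely continuous, $f$ is a $\Gamma_0$-invariant function, because $\gamma$ normalizes $\Gamma_0$ and both measures are $\Gamma_0$-conformal. By ergodicity $f$ is constant, which gives the claim for central classes. We then pass to the quotient by the center and iterate through the finitely many steps of the nilpotent series.

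\emph{Ergodicity in the easy inclusion.} We decompose the $\Gamma_0$-invariant $\sigma$-algebra, using that $\Gamma/\Gamma_0$ is amenable and that $\mu^{\varphi,\chi}$ is unique. Each $\Gamma_0$-ergodic component is, by the reverse inclusion just proved, a twisted density for some character $\chi'$ vanishing on $\Gamma_0$. Comparing its behaviour under $\Gamma$ with that of $\mu^{\varphi,\chi}$ forces $\chi'=\chi$ on the support. Uniqueness in Theorem~\ref{thm:A}(2) then forces a single component.

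\emph{Bijection.} The assignment $\mu\mapsto\chi$ is well defined up to scalars. It is injective by Theorem~\ref{thm:A}(2), and surjective by Theorem~\ref{thm:A}(1) together with the easy inclusion. The statement about dimensions then follows.

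\emph{Main obstacle.} We expect the hardest step to be excluding mutual singularity between $\mu$ and $\mu^\gamma$, i.e.\ the higher-rank shadow lemma for a normal subgroup of infinite index. Our first attempt will derive it from the $\fk a$-valued Busemann cocycle bounds on word shadows $\cO_R(e,\gamma)$, proved earlier for Theorem~\ref{thm:A}.
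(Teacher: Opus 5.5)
Your architecture is the paper's: ergodicity forces $d\mu^\gamma_o/d\mu_o$ to be constant, and one climbs a central series of $\Gamma/\Gamma_0$. However, the two steps that carry the weight are not supplied.

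First, non-singularity. Showing that $\mu_o$ and $\mu^\gamma_o$ ``charge a common set'' does not exclude mutual singularity. Moreover, the geometric input you invoke is false when $[\Gamma:\Gamma_0]=\infty$. If $g$ has infinite order in $\Gamma/\Gamma_0$, then $d_w(g^n,\Gamma_0)$ equals the word length of the image of $g^n$ in the quotient, which tends to infinity. So $\Gamma$-radial sequences are not $\Gamma_0$-sequences up to a bounded correction.

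What works is the shadow lemma for $\Gamma_0$-densities at every point $\gamma'o$, $\gamma'\in\Gamma$, with the unknown mass $\lVert\mu_{\gamma'o}\rVert$ appearing as a factor (Theorem~\ref{thm:shadowprinciple}). Its lower bound comes from the compactness argument of Lemma~\ref{lem:fullness}, which uses that $\Gamma$-translates of $\Gamma_0$-densities are again $\Gamma_0$-densities. Combine it with $\lVert\mu^\gamma_{\gamma'o}\rVert=\lVert\mu_{\gamma\gamma'o}\rVert=\lVert\mu_{\gamma'\gamma o}\rVert$, which holds because $[\gamma,\gamma']\in\Gamma_0$ for $\gamma$ central modulo $\Gamma_0$. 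The right-hand side is within a factor $e^{\pm\sigma\lVert\varphi\rVert\lVert\kappa(\gamma)\rVert}$ of $\lVert\mu_{\gamma'o}\rVert$. A Vitali covering argument (Proposition~\ref{prop:masscomparison}) then gives $C_\gamma^{-1}\mu_o\leq\mu^\gamma_o\leq C_\gamma\mu_o$, as in Proposition~\ref{prop:quasiinv}.

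Second, and this is a missing idea rather than a missing detail: ``pass to the quotient by the centre and iterate'' does not go through as stated. After the first step, $\mu$ is twisted-equivariant on the preimage $\Gamma_1$ of the central subgroup, with a character $\chi_1\in\Hom(\Gamma_1,\bR)$ defined only on $\Gamma_1$. At the next level, take $\gamma$ with $[\gamma,\Gamma]\subseteq\Gamma_1$. The mass identity becomes $\lVert\mu^\gamma_{\gamma'o}\rVert=e^{\chi_1([\gamma,\gamma'])}\lVert\mu_{\gamma'\gamma o}\rVert$. Since $\chi_1$ is $\Gamma$-conjugation invariant, $\gamma'\mapsto\chi_1([\gamma,\gamma'])$ is a homomorphism $\Gamma\to\bR$, which is unbounded unless it vanishes. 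So the mass comparison that drove the first step, and with it your route to absolute continuity, is unavailable.

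You must show that each intermediate character is the restriction of a character of $\Gamma$, and hence vanishes on commutators. The paper does this in Lemma~\ref{lem:kill}, and this is exactly where amenability of the nilpotent quotient enters. It fixes a right-invariant mean $\mathfrak m$ on $\ell^\infty(\Gamma_0\backslash\Gamma)$ and observes that $\gamma\mapsto\mathfrak m\bigl([\gamma']\mapsto\log\lVert\mu_{\gamma'\gamma o}\rVert-\log\lVert\mu_{\gamma'o}\rVert\bigr)$ is a character of $\Gamma$ agreeing with $\chi_{k-1}$ on $\Gamma^{(k-1)}$. Along the lower central series, $\Gamma^{(k-1)}\subseteq\Gamma_0[\Gamma,\Gamma]$ for $k\geq2$. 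Hence every intermediate character is zero, and only the final step produces $\chi$.

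Your ergodic-decomposition argument for $\Gamma_0$-ergodicity in the easy direction inherits both gaps, since it relies on the reverse inclusion. The paper instead obtains it directly (Proposition~\ref{prop:ascent}) from the same quasi-invariance along the tower. That is unproblematic there because $\chi$ is a character of all of $\Gamma$.
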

	
	We now recall  Quint's growth indicator \cite[\S3.1.2]{Quint02a} (see also \cite[p.~1758]{Sam}).
For $v\in\cL_\Gamma\smallsetminus\{0\}$,  write
\[
\psi_\Gamma(v)
=
\lVert v\rVert
\inf_{\text{open cones }\cC\ni v}
\limsup_{T\to\infty}
\frac{1}{T}
\log\left|\left\{
\gamma\in\Gamma:
\kappa(\gamma)\in\cC,\
\lVert\kappa(\gamma)\rVert\leq T
\right\}\right|,
\]
where the cones are open in $\fk a$.
	Let  $\Theta_\Gamma$  be the
	\emph{growth form} of $\Gamma$ (see  Subsection~\ref{ss:growthform} for further details). 
	
	\begin{cor}\label{cor:D}
		Let  $\Gamma<\sG$ is a
		Zariski-dense Borel--Anosov subgroup. 
		Then $	\Theta_\Gamma\in\cL_\Gamma^{>0}, \Theta_\Gamma\circ\ii=\Theta_\Gamma,$ and $\delta_{\Theta_\Gamma}(\Gamma)=1.$
		Let $\{e\}\neq\Gamma_0\lhd\Gamma$ with
		$\Gamma/\Gamma_0$ nilpotent. Then:
		\begin{enumerate}
			\item There is a unique, up to a positive scalar,
			$\Theta_\Gamma$-conformal density $\mu^\Theta$
			of dimension $1$ for $\Gamma$. Its measures are
			atomless, the $\Gamma$-action is ergodic and
			conservative, and they give full measure to
			$\Lambda^{\mathrm{con}}_{\Gamma}$. 
			
			\item The ergodic $\Theta_\Gamma$-conformal
			densities of $\Gamma_0$ of dimension $1$ are
			precisely the twisted
			$(\Theta_\Gamma,\chi)$-conformal densities of
			$\Gamma$ for characters
			$\chi\in\Hom(\Gamma/\Gamma_0,\bR)$ satisfying
			$\delta_{\Theta_\Gamma,\chi}(\Gamma)=1$, and every
			occurring character satisfies
			\[
			|\chi(\gamma)|
			\leq\Theta_\Gamma(\lambda(\gamma))
			\qquad(\gamma\in\Gamma).
			\]
			
			\item  $\mu^\Theta$
			is, up to a positive scalar, the unique
			ergodic $\Theta_\Gamma$-conformal density of
			$\Gamma_0$ of dimension $1$. In particular,
			the $\Gamma_0$-action on
			$(\cF,\mu^\Theta_o)$ is ergodic.
		\end{enumerate}
	\end{cor}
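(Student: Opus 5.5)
We derive Corollary~\ref{cor:D} from Theorems~\ref{thm:A} and~\ref{thm:B}, applied with $\varphi=\Theta_\Gamma$, after first establishing the three properties of the growth form.

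\textbf{Properties of $\Theta_\Gamma$.} We take these from the Anosov literature recalled in Subsection~\ref{ss:growthform}. For a Zariski-dense Anosov subgroup, $\psi_\Gamma$ is concave, upper semicontinuous, and strictly positive on the interior of $\cL_\Gamma$ (Quint, Sambarino). By strict concavity and vertical tangency at the boundary, there is a unique linear form $\Theta_\Gamma\ge\psi_\Gamma$ that is tangent to $\psi_\Gamma$ at one interior direction. Hence $\Theta_\Gamma>0$ on $\cL_\Gamma\smallsetminus\{0\}$, and $\delta_{\Theta_\Gamma}(\Gamma)=1$ is the standard identity $\delta_\varphi=\max_{\cL_\Gamma}\psi_\Gamma/\varphi$.

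For symmetry, note that $\kappa(\gamma^{-1})=\ii\kappa(\gamma)$. Therefore $\cL_\Gamma$ and $\psi_\Gamma$ are $\ii$-invariant, so $\Theta_\Gamma\circ\ii$ is also a tangent form with $\Theta_\Gamma\circ\ii\ge\psi_\Gamma$. Uniqueness of the tangent form then gives $\Theta_\Gamma\circ\ii=\Theta_\Gamma$.

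\textbf{Part (1).} Apply Theorem~\ref{thm:A} with $\chi=0$. This gives existence, uniqueness, atomlessness, ergodicity, and full measure of $\Lambda_\Gamma^{\mathrm{con}}$. The dimension is $\delta_{\Theta_\Gamma}(\Gamma)=1$.

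For conservativity, we argue from full measure on conical points. By the shadow lemma, every conical point lies in infinitely many translates $\gamma\cO_R(e,\gamma')$ of a fixed set of positive measure. A Hopf-type argument, or divergence type together with the standard Borel--Cantelli converse, then shows that there is no wandering set of positive measure.

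\textbf{Parts (2) and (3).} By Theorem~\ref{thm:B} with $\sigma=1$, the ergodic $\Theta_\Gamma$-conformal densities of $\Gamma_0$ of dimension $1$ are exactly the twisted densities $\mu^{\Theta_\Gamma,\chi}$ with $\chi|_{\Gamma_0}=0$. By Theorem~\ref{thm:A}(2), such a density has dimension $\delta_{\Theta_\Gamma,\chi}(\Gamma)$, which must therefore equal $1$.

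\textbf{The bound $|\chi(\gamma)|\le\Theta_\Gamma(\lambda(\gamma))$.} Fix $\gamma$ and consider the cyclic powers $\gamma^n$, for which $\kappa(\gamma^n)=n\lambda(\gamma)+O(1)$. We use the twisted density to control $\chi$ along these powers. The conformality relation at the attracting fixed flag $\gamma^+$ reads
\[
(\gamma^{-1})_*\mu_o = e^{\chi(\gamma)}\mu_{\gamma^{-1}o}.
\]
Comparing the masses of shrinking neighbourhoods of $\gamma^+$ under iteration, using $\beta_{\gamma^+}(o,\gamma^{n}o)\approx n\lambda(\gamma)$, gives a scaling factor of roughly $e^{n(\chi(\gamma)-\Theta_\Gamma(\lambda(\gamma)))}$. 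Since these measures are finite, the scaling cannot blow up, which should force $\chi(\gamma)\le\Theta_\Gamma(\lambda(\gamma))$. Applying the same argument to $\gamma^{-1}$, and using $\lambda(\gamma^{-1})=\ii\lambda(\gamma)$ together with the symmetry $\Theta_\Gamma\circ\ii=\Theta_\Gamma$, yields the bound on $-\chi(\gamma)$.

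An alternative route is to use the variational characterisation: $\delta_{\Theta_\Gamma,\chi}\ge\sup$ over periodic data of $(\chi+h)/\Theta_\Gamma$. A character exceeding the bound on some $\gamma$ would then make this supremum exceed $1$. We expect this bound to be the main obstacle, since it requires an unstated pressure or periodic-orbit estimate.

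\textbf{Part (3).} Here we need that $\chi=0$ is the only character with $\chi|_{\Gamma_0}=0$ and $\delta_{\Theta_\Gamma,\chi}(\Gamma)=1$. The function $\chi\mapsto\delta_{\Theta_\Gamma,\chi}$ is convex. It is even because symmetry of $\Theta_\Gamma$ gives $d_\Theta(e,\gamma^{-1})=d_\Theta(e,\gamma)$, so that $\delta_{\chi}=\delta_{-\chi}$. Hence its minimum is $\delta_{\Theta_\Gamma,0}=1$.

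We would show strict convexity: Zariski density gives a non-degenerate variance of $\chi$ along the Patterson--Sullivan measure, so $\delta_{\Theta_\Gamma,\chi}>1$ for every $\chi\ne0$. We would first attempt this via Hölder's inequality on the Poincaré series, with equality forcing $\chi$ to be constant on spheres, and hence $0$. This is the second delicate step. Once it holds, ergodicity of $\Gamma_0$ on $(\cF,\mu^\Theta_o)$ follows directly.
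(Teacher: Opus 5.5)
\textbf{The gap is in Part~(3).} Your reductions for (1), for the description of the ergodic densities in (2), and for the final $\Gamma_0$-ergodicity (which is just Theorem~\ref{thm:B} with $\chi=0$) match the paper. But you never prove that $\delta_{\Theta_\Gamma,\chi}(\Gamma)=1$ forces $\chi=0$, and neither of your suggested routes works as stated.

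\begin{itemize}
\item Cauchy--Schwarz gives $Q_{\Gamma,\Theta_\Gamma,0}(s)\le Q_{\Gamma,\Theta_\Gamma,\chi}(s)^{1/2}Q_{\Gamma,\Theta_\Gamma,-\chi}(s)^{1/2}$, and hence only the non-strict inequality $\delta_{\Theta_\Gamma,\chi}(\Gamma)\ge1$. For $\chi\neq0$ the inequality between the series is strict for each $s$, but strictness does not pass to their abscissae of convergence.
\item The variance route needs a symbolic coding, analyticity of pressure and a Liv\v{s}ic argument, none of which is available here. Moreover, the degenerate case $\chi(\gamma)=c\,\Theta_\Gamma(\lambda(\gamma))$ is excluded by the symmetry of $\Theta_\Gamma$ against the oddness of $\chi$, not by Zariski density.
\end{itemize}

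The missing idea is a comparison of conformal densities, as in Proposition~\ref{prop:reversibility}. By symmetry, $\delta_{\Theta_\Gamma,-\chi}(\Gamma)=1$ as well, so take normalised $\mu^{\pm}\in\cM_{\Theta_\Gamma,\pm\chi}(1,\Gamma)$ and $\mu^\Theta\in\cM_{\Theta_\Gamma}(1,\Gamma)$. Then $\nu:=\tfrac12(\mu^++\mu^-)$ is an untwisted $\Theta_\Gamma$-conformal density of dimension $1$ for the non-elementary normal subgroup $\ker\chi\supseteq[\Gamma,\Gamma]$. It satisfies $\lVert\nu_{\gamma o}\rVert=\cosh\chi(\gamma)\ge1=\lVert\mu^\Theta_{\gamma o}\rVert$. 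The domination theorem (Theorem~\ref{thm:fatou}, resting on the uniform shadow lemma and Vitali covering via Proposition~\ref{prop:masscomparison}) then gives $\nu_o\ge\mu^\Theta_o$. Equal total masses force $\nu=\mu^\Theta$, hence $\cosh\chi\equiv1$ and $\chi=0$.

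\textbf{Three smaller points.}

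First, your description of $\Theta_\Gamma$ as the unique linear form $\ge\psi_\Gamma$ tangent at one interior direction is not a characterisation. Every interior direction carries a tangent form, and $\Theta_\Gamma\circ\ii$ is tangent at $\ii u$, so uniqueness of tangent forms cannot yield symmetry. Use the definition of $\Theta_\Gamma$ as the minimal-norm element of $\{\ell\ge\psi_\Gamma\}$. Since $\psi_\Gamma\circ\ii=\psi_\Gamma$ and $\ii$ is an isometry, $\Theta_\Gamma\circ\ii$ is another minimal-norm element, so it equals $\Theta_\Gamma$.

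Second, the Jordan bound is not an obstacle and needs no pressure estimate; your fixed-flag heuristic is exactly the shadow lemma. Corollary~\ref{cor:shadow-consequences}(1) applied to $\gamma^n$ gives $n\chi(\gamma)-\Theta_\Gamma(\kappa(\gamma^n))\le\log C_R$. Divide by $n$, let $n\to\infty$ using $\kappa(\gamma^n)/n\to\lambda(\gamma)$, and then apply the result to $\gamma^{-1}$, as in Corollary~\ref{prop:jordan-constraint}.

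Third, for conservativity, knowing that almost every orbit visits a fixed set of positive measure infinitely often does not exclude a dissipative part for a non-measure-preserving action. For example, $x\mapsto2x$ on $(0,\infty)$ with Lebesgue measure is totally dissipative, yet every orbit visits $(0,1)$ infinitely often. The paper instead uses that an ergodic, atomless, essentially free non-singular action is conservative (Theorem~\ref{thm:HTS}(4)). This is already part of the proof of Theorem~\ref{thm:A}.
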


\subsection{Organisation}
Section~\ref{sec:prelim} contains the necessary preliminaries.
Section~\ref{sec:densities} develops the theory of twisted conformal
densities, while Section~\ref{sec:differentiation} establishes
differentiation along word shadows. Section~\ref{sec:HTS} constructs
the twisted Bowen--Margulis--Sullivan measure and proves the twisted
Hopf--Tsuji--Sullivan theorem. The following section proves
Theorem~\ref{thm:A}, and Section~\ref{sec:nilpotent} proves
Theorem~\ref{thm:B} and Corollary~\ref{cor:D}.
	
\section{Preliminaries}\label{sec:prelim}

\subsection{Cartan projections and singular-value estimates}\label{ss:cartan}


	Let $\sG=\SL(d,\bR)$, $\sK=\SO(d)$, and let $\fk a$, $\fk a^+$ be as in
	Subsection~\ref{ss:setting}. The restricted roots are $e_i-e_j$ ($i\neq j$), the simple roots
	are $\alpha_i(\diag(a_1, \cdots, a_d))=a_i-a_{i+1}$ for $1\leq i\leq d-1$, and the \emph{fundamental weights}
	are
	\[
	\omega_i(\diag(a_1, \cdots, a_d))\ :=\ a_1+\dots+a_i\qquad(1\leq i\leq d-1).
	\]
The weights 	$\{\omega_1,\dots,\omega_{d-1}\}$ is a basis of $\fk a^\ast$.  The Weyl group 
	$W=\mathfrak{S}_d$ permutes the diagonal entries; its longest element $w_0$ reverses the
	order of the entries, and the \emph{opposition involution} is given by
	\[
	\ii=-w_0:\ \fk a\to\fk a,\qquad
	\ii\big(\diag(a_1,\dots,a_d)\big)=\diag(-a_d,\dots,-a_1).
	\]
	It preserves $\fk a^+$ and  the Euclidean norm
	$\|\diag(a_1, \cdots,a_d)\|^2=\sum_{j=1}^d a_j^2$.
 A direct
	computation gives
	\begin{equation}\label{eq:weights-ii}
		\omega_i\circ\ii=\omega_{d-i}\qquad(1\leq i\leq d-1).
	\end{equation}
The Cartan projection is $\sK$-bi-invariant, and inversion acts on it
by opposition:
\begin{equation}\label{eq:kappa-inverse}
	\kappa(k_1gk_2)=\kappa(g),
	\qquad
	\kappa(g^{-1})=\ii\bigl(\kappa(g)\bigr)
	\quad
	(g\in\sG,\;k_1,k_2\in\sK).
\end{equation}
By $\sG$-invariance of the metric on $\mathsf X$, it follows that
\begin{equation}
d_{\mathsf X}(g\sK,h\sK)
=
\bigl\|\kappa(g^{-1}h)\bigr\|
\qquad (g,h\in\sG).
\end{equation}
	The Cartan projection induces a \(\sG\)-invariant
	\(\fk a^+\)-valued distance on \(\mathsf X=\sG/\sK\), defined by
	\[
	\kappa (g\sK,h\sK):=\kappa(g^{-1}h).
	\]
	This is well defined by the \(\sK\)-bi-invariance of \(\kappa\), and it satisfies
	\[
	\kappa(h\sK,g\sK)=\ii\,\kappa(g\sK,h\sK),
	\qquad
	\kappa(o,g\sK)=\kappa(g).
	\]
	For $g,h \in \sG$, define  the
	\emph{vector-valued distance} on $\mathsf{X}$ by
	\[
	\kappa(g\sK,h\sK):=\kappa(g^{-1}h)\in\fk a^+. 
	\]
	This is well defined and $\sG$-invariant by \eqref{eq:kappa-inverse}, and satisfies
	$\kappa(h\sK,g\sK)=\ii\,\kappa(g\sK,h\sK)$.
	
We equip each $\Lambda^k\bR^d$ with the Euclidean structure induced
from $\bR^d$, and write $\lVert\cdot\rVert$ for the associated norm.
For $1\leq i\leq d-1$, let
$\Lambda^i:\sG\rightarrow
\mathrm{SL}\left({\binom{d}{i}},\bR\right)$ be the $i$th
exterior-power representation with highest weight $\omega_i$, and
write $\|\cdot\|_{\mathrm{op}}$ for the standard operator norm. By
\cite[Lem.~5.33(b)(i)]{BQbook} and
\eqref{eq:kappa-inverse},
\begin{equation}\label{eq:wedge-norm}
	\|\Lambda^i g\|_{\mathrm{op}}
	=\sigma_1(g)\cdots\sigma_i(g)
	=e^{\omega_i(\kappa(g))},
	\qquad
	\|(\Lambda^i g)^{-1}\|_{\mathrm{op}}
	=e^{\omega_i(\ii\kappa(g))}.
\end{equation}
For $g \in \sG$, the Jordan projection is the element $\lambda(g)\in \fk a^+$, given by
\[
\lambda(g)=\diag(\lambda_1(g), \cdots, \lambda_d(g))
\]
where $\lambda_1(g)\ge \cdots\ge \lambda_d(g)$ are the logarithm of the  modulus of the eigenvalues of $g$. We note that for $g \in \sG$ the following identity holds:
\[
\lim_{n\to\infty}\frac{1}{n}\kappa(g^n)=\lambda(g).
\]
Moreover, similarly to the Cartan projection, we have the following relation: 
\begin{equation}
	\lambda(g^{-1}) = \ii\lambda(g) 
	\quad
	(g\in\sG).
\end{equation}

For \(0\leq k\leq d\), let \(\mathbb{Gr}_k(\bR^d)\) denote the
Grassmannian of \(k\)-dimensional subspaces of \(\bR^d\). For \(E\in\mathbb{Gr}_k(\bR^d)\), let
\(\mathbb V_E\in\Lambda^k\bR^d\) denote the exterior product of an
orthonormal basis of \(E\); this vector is uniquely determined by
\(E\) up to sign.
If \(E\in\mathbb{Gr}_i(\bR^d)\) and
\(F\in\mathbb{Gr}_{d-i}(\bR^d)\), then
\begin{equation}\label{eq:wedge-pairing}
	\bigl\lVert\mathbb V_E\wedge\mathbb V_F\bigr\rVert
	=
	\bigl\lvert
	\langle\mathbb V_E,\mathbb V_{F^\perp}\rangle
	\bigr\rvert,
\end{equation}
and the common value in \eqref{eq:wedge-pairing}  is non-zero if and only if
\(E\oplus F=\bR^d\); see
\cite[1.6.1 and 1.7.8, pp.~24 and~34]{Federer}.


	\begin{lemma}\label{lem:sv}
		Let $g,h\in \sG$ and $1\leq i\leq d-1$. Then the following hold:
		\begin{enumerate}
			\item  $\omega_i(\kappa(gh))\leq\omega_i(\kappa(g))+\omega_i(\kappa(h))$.
			\item 
			$\lVert\kappa(gh)-\kappa(h)\rVert\leq\sqrt d\,\lVert\kappa(g)\rVert$ and
			$\lVert\kappa(gh)-\kappa(g)\rVert\leq\sqrt d\,\lVert\kappa(h)\rVert$.
		\end{enumerate}
	\end{lemma}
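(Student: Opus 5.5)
For part (1) I will use the identity \eqref{eq:wedge-norm}, $e^{\omega_i(\kappa(g))}=\|\Lambda^i g\|_{\mathrm{op}}$. Since $\Lambda^i$ is a homomorphism, $\Lambda^i(gh)=\Lambda^i g\,\Lambda^i h$. Submultiplicativity of the operator norm then gives $\|\Lambda^i(gh)\|_{\mathrm{op}}\le\|\Lambda^i g\|_{\mathrm{op}}\|\Lambda^i h\|_{\mathrm{op}}$. Taking logarithms yields (1) directly.

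For part (2), I would first show that each singular value of $gh$ is controlled multiplicatively by those of $h$. The tool is the min–max characterization of singular values,
\[
\sigma_j(A)=\max_{\dim E=j}\ \min_{v\in E,\ \|v\|=1}\|Av\|.
\]
Applying it to $A=gh$, and using $\sigma_d(g)\|w\|\le\|gw\|\le\sigma_1(g)\|w\|$, gives
\[
\sigma_d(g)\,\sigma_j(h)\ \le\ \sigma_j(gh)\ \le\ \sigma_1(g)\,\sigma_j(h)
\qquad (1\le j\le d).
\]
Taking logarithms, each diagonal entry satisfies
\[
\bigl|\log\sigma_j(gh)-\log\sigma_j(h)\bigr|\le\max\bigl(\log\sigma_1(g),\,-\log\sigma_d(g)\bigr)=\|\kappa(g)\|_\infty .
\]
Here $\|\cdot\|_\infty$ is the sup norm of the diagonal entries of $\kappa(g)$.

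Next I pass from entrywise bounds to the Euclidean norm. Summing the squares of the $d$ entries gives
\[
\|\kappa(gh)-\kappa(h)\|\le\sqrt d\,\|\kappa(g)\|_\infty\le\sqrt d\,\|\kappa(g)\| .
\]
This is the first inequality. Both $\kappa(gh)$ and $\kappa(h)$ list their entries in decreasing order, so the entrywise comparison is exactly the one the min–max bound supplies.

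For the second inequality I use the symmetric argument with $g$ and $h$ exchanged: $\sigma_d(h)\sigma_j(g)\le\sigma_j(gh)\le\sigma_1(h)\sigma_j(g)$, which holds because $\|ghv\|\le\sigma_1(g)\|hv\|$ and by the dual min–max argument. Alternatively, I can apply the first inequality to $(gh)^{-1}=h^{-1}g^{-1}$ and use $\kappa(x^{-1})=\ii\kappa(x)$ together with the fact that $\ii$ is an isometry.

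The only mildly delicate step is justifying the multiplicative bound $\sigma_j(gh)\le\sigma_1(g)\sigma_j(h)$. Via min–max this is standard, and it could also be cited from \cite{BQbook}. The rest is bookkeeping.
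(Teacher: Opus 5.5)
Your proof is correct and follows essentially the same route as the paper: part (1) is the identical submultiplicativity argument, and part (2) uses the same multiplicative singular-value bounds, entrywise logarithms, a sum of squares, and the inversion trick with $\ii$. The only differences are cosmetic: you derive the singular-value inequality via Courant--Fischer min--max where the paper cites it from Canary's notes, and you prove the $\kappa(h)$-comparison directly and deduce the $\kappa(g)$-comparison by inversion, whereas the paper does the reverse.
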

	
	\begin{proof}
			By \eqref{eq:wedge-norm} and the submultiplicativity of the
		operator norm,
		\[
		e^{\omega_i(\kappa(gh))}
		= \lVert \Lambda^i(gh)\rVert_{\op}
		\leq \lVert\Lambda^i g\rVert_{\op}
		\lVert\Lambda^i h\rVert_{\op}
		= e^{\omega_i(\kappa(g))+\omega_i(\kappa(h))},
		\]
		which proves~(1).
		
	For~(2), by \cite[Lemma~31.1]{CanaryAnosovNotes}, we have
	\(\sigma_j(g)\sigma_d(h)\leq\sigma_j(gh)
	\leq\sigma_j(g)\sigma_1(h)\) for every \(1\leq j\leq d\).
		Since \(\prod_{r=1}^d\sigma_r(h)=1\), we have
		\(\sigma_1(h)\geq1\geq\sigma_d(h)\), and hence
		\[
		\bigl|\log\sigma_j(gh)-\log\sigma_j(g)\bigr|
		\leq
		\max\{\log\sigma_1(h),-\log\sigma_d(h)\}
		\leq\lVert\kappa(h)\rVert .
		\]
	Summing the squares over
	\(j=1,\ldots,d\) yields
		\[
		\lVert\kappa(gh)-\kappa(g)\rVert
		\leq\sqrt d\,\lVert\kappa(h)\rVert .
		\]
			Finally, using \(\kappa(g^{-1})=\ii\kappa(g)\) and the fact that
		\(\ii\) is an isometry, we obtain
		\[
		\begin{aligned}
			\lVert\kappa(gh)-\kappa(h)\rVert
			&=\lVert\kappa(h^{-1}g^{-1})-\kappa(h^{-1})\rVert\\
			&\leq \sqrt d\,\lVert\kappa(g^{-1})\rVert
			=\sqrt d\,\lVert\kappa(g)\rVert.
		\end{aligned}
		\]
	\end{proof}
	
For \(g\in\sG\), write \(g=k_1\exp(\kappa(g))k_2\), with
\(k_1,k_2\in\sK\), and let \(V_i:=\langle e_1,\ldots,e_i\rangle\).
If \(\alpha_i(\kappa(g))>0\), equivalently
\(\sigma_i(g)>\sigma_{i+1}(g)\), then
\(U_i(g):=k_1V_i\) and \(S_i(g):=k_2^{-1}V_i\) are well defined. Fix
$\widetilde w_0:= \operatorname{antidiag}(1, -1, \cdots,(-1)^{d-1})
\in\mathsf{N}_{\sK}(\mathsf{A})$,
such that 
$\widetilde w_0\,\mathsf{Z}_{\sK}(\mathsf{A})=w_0$.
Since \(\widetilde w_0V_i=V_{d-i}^{\perp}\), the Cartan decomposition
\(g^{-1}=(k_2^{-1}\widetilde w_0)\exp(\ii\kappa(g))
(\widetilde w_0^{-1}k_1^{-1})\) gives
\begin{equation}\label{eq:S-vs-U}
	S_i(g)
	=(k_2^{-1}\widetilde w_0V_{d-i})^\perp
	=U_{d-i}(g^{-1})^\perp.
\end{equation}
If all the gaps $\alpha_i(\kappa(g))$, $1\leq i\leq d-1$, are positive,  then
\(U(g):=(U_1(g),\ldots,U_{d-1}(g))\in\cF\) is a complete flag.


Recall that the $\mathsf P_k$- and $\mathsf P_{d-k}$-Anosov conditions are
equivalent; see \cite[Rem.~3.1 and Props.~4.5, 4.9]{BPS19}. Thus, by
\cite[Cor.~32.4]{CanaryAnosovNotes}, if
$x\in\partial_\infty\Gamma$ and $\gamma_n\to x$ in
$\overline{\Gamma}:=\Gamma\cup\partial_\infty\Gamma$, then
$U(\gamma_n)$ is eventually defined and
\begin{equation}\label{eq:cartan-property}
	U(\gamma_n)\longrightarrow\theta(x)
	\qquad\text{in }\cF.
\end{equation}

\begin{lemma}\label{lem:product}
	Let $g,h\in\sG$, let $1\leq i\leq d-1$, and suppose that
	$\alpha_i(\kappa(h))>0$. Choose the signs of
	$\mathbb V_{S_i(h)}$ and $\mathbb V_{U_i(h)}$ compatibly so that
	$\Lambda^i h\,\mathbb V_{S_i(h)}
	=
	e^{\omega_i(\kappa(h))}\mathbb V_{U_i(h)}$.
	Then as operators on $\Lambda^i\bR^d$, we have
	$\Lambda^i h
	=
	e^{\omega_i(\kappa(h))}
	\big\langle\mathbb V_{S_i(h)},\cdot\big\rangle
	\mathbb V_{U_i(h)}
	+
	E_h$ with 
	$\lVert E_h\rVert_{\mathrm{op}}
	\leq
	e^{\omega_i(\kappa(h))-\alpha_i(\kappa(h))}$.
	Consequently, we have the following product estimate.
	\begin{equation}\label{eq:prod-two-sided}
	\begin{aligned}
		e^{\omega_i(\kappa(h))}
		\left(
		\bigl\lVert\Lambda^i g\mathbb V_{U_i(h)}\bigr\rVert
		-e^{-\alpha_i(\kappa(h))}
		\lVert\Lambda^i g\rVert_{\mathrm{op}}
		\right)
		&\leq\lVert\Lambda^i(gh)\rVert_{\mathrm{op}}\\
		&\leq
		e^{\omega_i(\kappa(h))}
		\left(
		\bigl\lVert\Lambda^i g\mathbb V_{U_i(h)}\bigr\rVert
		+e^{-\alpha_i(\kappa(h))}
		\lVert\Lambda^i g\rVert_{\mathrm{op}}
		\right).
	\end{aligned}
	\end{equation}
\end{lemma}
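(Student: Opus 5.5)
We work with the Cartan decomposition $h=k_1\exp(\kappa(h))k_2$ and diagonalise $\Lambda^i h$ in the orthonormal basis of $\Lambda^i\bR^d$ coming from the standard basis. Write $e_I=e_{j_1}\wedge\cdots\wedge e_{j_i}$ for $I=\{j_1<\cdots<j_i\}$; these vectors form an orthonormal basis. The operator $\Lambda^i\exp(\kappa(h))$ is diagonal in this basis, with eigenvalue $\prod_{j\in I}\sigma_j(h)$ on $e_I$.

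The largest eigenvalue is $e^{\omega_i(\kappa(h))}$, attained at $I_0=\{1,\dots,i\}$, where $e_{I_0}=\mathbb V_{V_i}$. Every other $I$ is obtained from $I_0$ by replacing indices with larger ones. Hence its eigenvalue is at most
\[
\sigma_1(h)\cdots\sigma_{i-1}(h)\sigma_{i+1}(h)=e^{\omega_i(\kappa(h))-\alpha_i(\kappa(h))},
\]
using the ordering $\sigma_1\ge\cdots\ge\sigma_d$. So $\Lambda^i\exp(\kappa(h))=e^{\omega_i(\kappa(h))}\langle e_{I_0},\cdot\rangle e_{I_0}+D$, where $D$ is diagonal and supported on $e_{I_0}^\perp$, with $\lVert D\rVert_{\op}\le e^{\omega_i(\kappa(h))-\alpha_i(\kappa(h))}$.

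Next conjugate by the orthogonal maps $\Lambda^i k_1$ and $\Lambda^i k_2$. We have $\Lambda^i k_1\,e_{I_0}=\pm\mathbb V_{k_1V_i}=\pm\mathbb V_{U_i(h)}$. Also, $\langle e_{I_0},\Lambda^i k_2\,\cdot\rangle=\langle\Lambda^i k_2^{-1}e_{I_0},\cdot\rangle=\pm\langle\mathbb V_{S_i(h)},\cdot\rangle$. The two signs combine to give exactly the compatible normalisation in the statement. To see this, apply the rank-one part to $\mathbb V_{S_i(h)}$: since $D$ kills $e_{I_0}$, the error term vanishes on $\mathbb V_{S_i(h)}$, so the rank-one part must reproduce $\Lambda^i h\,\mathbb V_{S_i(h)}=e^{\omega_i(\kappa(h))}\mathbb V_{U_i(h)}$. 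We then set $E_h=\Lambda^i k_1\,D\,\Lambda^i k_2$. Its operator norm equals $\lVert D\rVert_{\op}$ because orthogonal maps preserve operator norms, and this gives the claimed bound.

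For the product estimate, write
\[
\Lambda^i(gh)=e^{\omega_i(\kappa(h))}\langle\mathbb V_{S_i(h)},\cdot\rangle\,\Lambda^i g\,\mathbb V_{U_i(h)}+\Lambda^i g\,E_h .
\]
The first term is a rank-one operator $\langle u,\cdot\rangle w$ with $\lVert u\rVert=1$, so its operator norm is exactly $e^{\omega_i(\kappa(h))}\lVert\Lambda^i g\,\mathbb V_{U_i(h)}\rVert$. The second term satisfies
\[
\lVert\Lambda^i g\,E_h\rVert_{\op}\le\lVert\Lambda^i g\rVert_{\op}\,e^{\omega_i(\kappa(h))-\alpha_i(\kappa(h))}.
\]
The triangle inequality and the reverse triangle inequality for $\lVert\cdot\rVert_{\op}$ then give both sides of \eqref{eq:prod-two-sided}.

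The only delicate point is the sign bookkeeping and confirming that the second-largest eigenvalue on $\Lambda^i$ is $e^{\omega_i-\alpha_i}$; neither is a genuine obstacle.
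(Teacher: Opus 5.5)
Your proposal is correct and follows essentially the paper's route: you write $\Lambda^i h$ as its top singular rank-one piece plus a remainder whose norm is at most the second singular value $e^{\omega_i(\kappa(h))-\alpha_i(\kappa(h))}$, then apply the triangle and reverse triangle inequalities to $\Lambda^i(gh)$. The only difference is that you construct the singular value decomposition explicitly from $h=k_1\exp(\kappa(h))k_2$ (including the sign check), where the paper instead cites the singular-value formulas for exterior powers and the standard extension of a top singular pair to full singular bases.
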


	\begin{proof}
		Set $N:=\binom{d}{i}$ and $T:=\Lambda^i h$, and write
		$\sigma_1(T)\geq\cdots\geq\sigma_N(T)$ for the singular values of $T$.
		By the singular-value formulas for exterior powers
		\cite[Appendix~A.5, (A.19)--(A.22)]{BPS19},
		\[
		\begin{aligned}
			\sigma_1(T)
			=\sigma_1(h)\cdots\sigma_i(h)
			=e^{\omega_i(\kappa(h))},\quad
			\sigma_2(T)
			=\sigma_1(h)\cdots\sigma_{i-1}(h)\sigma_{i+1}(h)
			=e^{\omega_i(\kappa(h))-\alpha_i(\kappa(h))}.
		\end{aligned}
		\]
		Since $\alpha_i(\kappa(h))
		>0$ and 
		$\sigma_1(T)>\sigma_2(T)$, 
		the compatible choice of signs gives
		\[
	T\mathbb V_{S_i(h)}
	=
	\sigma_1(h)\cdots\sigma_i(h)\mathbb V_{U_i(h)}
	=
	\sigma_1(T)\mathbb V_{U_i(h)}.
	\]
		By the singular-value decomposition
		\cite[Corollary~2.6.7, p.~154]{HJ13}, this pair extends to
		orthonormal singular bases $\{v_r\}_{r=1}^N$ and
		$\{u_r\}_{r=1}^N$ satisfying $v_1=\mathbb V_{S_i(h)}$,
		$u_1=\mathbb V_{U_i(h)}$, and $Tv_r=\sigma_r(T)u_r$.
		Consequently,
		\[
			T
		=
		\sum_{r=1}^N
		\sigma_r(T)\langle v_r,\cdot\rangle u_r
		=
		e^{\omega_i(\kappa(h))}
		\big\langle\mathbb V_{S_i(h)},\cdot\big\rangle
		\mathbb V_{U_i(h)}
		+E_h,\qquad 	E_h:=\sum_{r=2}^N\sigma_r(T)\langle v_r,\cdot\rangle u_r
		\]
		where $	\lVert E_h\rVert_{\mathrm{op}}
		\leq\sigma_2(T)
		=e^{\omega_i(\kappa(h))-\alpha_i(\kappa(h))}$.
		Applying $\Lambda^i g$ yields
		\[
		\Lambda^i(gh)
		=e^{\omega_i(\kappa(h))}
		\big\langle\mathbb V_{S_i(h)},\cdot\big\rangle
		\Lambda^i g\mathbb V_{U_i(h)}
		+\Lambda^i gE_h.
		\]
	Moreover,
	\[
	\begin{split}
		\lVert\Lambda^i gE_h\rVert_{\mathrm{op}}
		\leq
		\lVert\Lambda^i g\rVert_{\mathrm{op}}
		\lVert E_h\rVert_{\mathrm{op}}
		\leq
		e^{\omega_i(\kappa(h))-\alpha_i(\kappa(h))}
		\lVert\Lambda^i g\rVert_{\mathrm{op}}.
	\end{split}
	\]
		The triangle and reverse triangle inequalities therefore give
		\[
		\begin{aligned}
			e^{\omega_i(\kappa(h))}
			\left(
			\bigl\lVert\Lambda^i g\mathbb V_{U_i(h)}\bigr\rVert
			-e^{-\alpha_i(\kappa(h))}
			\lVert\Lambda^i g\rVert_{\mathrm{op}}
			\right)
			&\leq\lVert\Lambda^i(gh)\rVert_{\mathrm{op}}\\
			&\leq
			e^{\omega_i(\kappa(h))}
			\left(
			\bigl\lVert\Lambda^i g\mathbb V_{U_i(h)}\bigr\rVert
			+e^{-\alpha_i(\kappa(h))}
			\lVert\Lambda^i g\rVert_{\mathrm{op}}
			\right).
		\end{aligned}
		\]
	\end{proof}


		\subsection{Iwasawa cocycle and the Busemann function}\label{ss:cocycle}

Let $\cF$ denote the compact $\sG$-homogeneous space of complete flags
$\eta=(\eta_1\subset\cdots\subset\eta_{d-1})$ in $\bR^d$. The open
$\sG$-orbit $\cFt\subset\cF\times\cF$ consists of the transverse pairs,
characterized by $\xi_i\oplus\eta_{d-i}=\bR^d$ for 
$1\leq i\leq d-1.$

\begin{dfn}\label{def:iwasawa}
	For $g\in \sG$ and $\eta\in\cF$, define 
	\[
	\sigma:\sG \times\cF\longrightarrow\fk a,
	\qquad
	\omega_i\bigl(\sigma(g,\eta)\bigr)
	:=
	\log\left\lVert\Lambda^i g\,\mathbb V_{\eta_i}\right\rVert,
	\qquad (1\leq i\leq d-1).
	\]
\end{dfn}
	
	
	For thr representation $\rho=\Lambda^i$, the highest weight is $\omega_i$ and the corresponding
	invariant line is $\Lambda^i\eta_i=\bR\mathbb V_{\eta_i}$. Hence
	\cite[Lem.~8.17]{BQbook} identifies the map $\sigma$ defined above with the
	Iwasawa cocycle of \cite[\S8.2]{BQbook}; see also
	\cite[\S6.1 and Lem.~6.4(i)]{Quint02b}. In particular, $\sigma$ is a
	continuous cocycle by \cite[Lem.~8.2]{BQbook}. We record this, together with
	the estimates needed below. We record these facts, together with the
	estimates needed below.

\begin{lemma}\label{lem:sigma-props}
	For all $g,h\in\sG$ and $\eta\in\cF$, the following properties hold.
	\begin{enumerate}
		\item $\sigma(gh,\eta)=\sigma(g,h\eta)+\sigma(h,\eta)$. Moreover,
		$\sigma(k,\eta)=0$ for every $k\in\sK$.
		
		\item The map $\sigma$ is continuous.
		
		\item $\lVert\sigma(g,\eta)\rVert\leq
		\lVert\kappa(g)\rVert$.
		
		\item If all gaps of $\kappa(h)$ are positive, then, for every $i$,
		\[
		\Bigl\lvert
		\omega_i\bigl(\kappa(gh)\bigr)
		-\omega_i\bigl(\kappa(h)\bigr)
		-\omega_i\bigl(\sigma(g,U(h))\bigr)
		\Bigr\rvert
		\leq
		2e^{\,2\sqrt d\,\lVert\kappa(g)\rVert-\alpha_i(\kappa(h))}
		\]
		provided
		$e^{\,2\sqrt d\,\lVert\kappa(g)\rVert-\alpha_i(\kappa(h))}
		\leq\tfrac12$.
	\end{enumerate}
\end{lemma}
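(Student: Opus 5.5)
Items (1) and (2) will be taken essentially from the identification of $\sigma$ with the Iwasawa cocycle. Still, I would check (1) directly from Definition~\ref{def:iwasawa}. Since $h$ maps $\eta_i$ onto $(h\eta)_i$, we have $\Lambda^i h\,\mathbb V_{\eta_i}=\pm\lVert\Lambda^i h\,\mathbb V_{\eta_i}\rVert\,\mathbb V_{(h\eta)_i}$. Applying $\Lambda^i g$ and taking logarithms of norms then gives
\[
\omega_i(\sigma(gh,\eta))=\omega_i(\sigma(g,h\eta))+\omega_i(\sigma(h,\eta))
\]
for every $i$. Because $\{\omega_i\}$ is a basis of $\fk a^\ast$, this is the cocycle identity. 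For $k\in\sK$, $\Lambda^i k$ is orthogonal, so $\sigma(k,\eta)=0$. Continuity in (2) follows since $(g,\eta)\mapsto\log\lVert\Lambda^i g\,\mathbb V_{\eta_i}\rVert$ is continuous: the sign ambiguity of $\mathbb V_{\eta_i}$ disappears under the norm, and $\eta_i\mapsto\pm\mathbb V_{\eta_i}$ is locally continuous.

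For (3), I will use (1) and the Cartan decomposition $g=k_1 a k_2$ with $a=\exp(\kappa(g))$ to reduce to $\sigma(g,\eta)=\sigma(a,k_2\eta)$. Writing $\eta'=k_2\eta$, we get
\[
\omega_i(\sigma(a,\eta'))=\log\lVert\Lambda^i a\,\mathbb V_{\eta'_i}\rVert .
\]
Here $\Lambda^i a$ is diagonal in the basis $e_{I}$, with entries $e^{\sum_{j\in I}a_j}$, and $\mathbb V_{\eta'_i}$ is a unit vector. Hence $\omega_i(\sigma)$ is a log of a convex combination (in squares) of these exponentials. I would then use the Iwasawa/Kostant convexity description: $\sigma(a,\eta')$ lies in the convex hull of $W\cdot\kappa(g)$. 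This is the standard fact \cite[\S8]{BQbook} that $\sigma(g,\eta)$ is the $\fk a$-component of the Iwasawa decomposition of $gk$, combined with Kostant's convexity theorem. The norm is $W$-invariant and convex, so $\lVert\sigma(g,\eta)\rVert\leq\lVert\kappa(g)\rVert$. Proving Kostant convexity directly would be the main obstacle, so I would cite it rather than reprove it.

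For (4), I will apply Lemma~\ref{lem:product} with $\lVert\Lambda^i(gh)\rVert_{\op}=e^{\omega_i(\kappa(gh))}$ and $\lVert\Lambda^i g\,\mathbb V_{U_i(h)}\rVert=e^{\omega_i(\sigma(g,U(h)))}$. Dividing \eqref{eq:prod-two-sided} by $e^{\omega_i(\kappa(h))}e^{\omega_i(\sigma(g,U(h)))}$ gives
\[
\bigl|e^{\Delta}-1\bigr|\leq e^{-\alpha_i(\kappa(h))}\,\frac{\lVert\Lambda^i g\rVert_{\op}}{\lVert\Lambda^i g\,\mathbb V_{U_i(h)}\rVert}=:\epsilon,
\]
where $\Delta$ is the quantity to be bounded. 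The ratio is at most
\[
\lVert\Lambda^i g\rVert_{\op}\,\lVert(\Lambda^i g)^{-1}\rVert_{\op}=e^{\omega_i(\kappa(g))+\omega_i(\ii\kappa(g))}.
\]
I then need $\omega_i(v)\leq\sqrt d\,\lVert v\rVert$. This holds since $|a_1+\dots+a_i|\leq\sqrt i\,\lVert v\rVert$, and $\ii$ is an isometry. Hence $\epsilon\leq e^{2\sqrt d\lVert\kappa(g)\rVert-\alpha_i(\kappa(h))}\leq\tfrac12$. Finally, $|e^\Delta-1|\leq\epsilon\leq\tfrac12$ implies $|\Delta|\leq\max\{\log(1+\epsilon),-\log(1-\epsilon)\}\leq2\epsilon$, using $-\log(1-\epsilon)\leq 2\epsilon$ for $\epsilon\leq\tfrac12$. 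The lower bound is positive because $\epsilon<1$.
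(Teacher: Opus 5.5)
Your proposal is correct and follows the paper's proof. Part (4) is exactly the paper's argument: divide \eqref{eq:prod-two-sided} by $e^{\omega_i(\kappa(h))}\lVert\Lambda^i g\,\mathbb V_{U_i(h)}\rVert$, bound the ratio by $\lVert\Lambda^i g\rVert_{\mathrm{op}}\lVert(\Lambda^i g)^{-1}\rVert_{\mathrm{op}}$, and use $\omega_i\le\sqrt d\,\lVert\cdot\rVert$ together with $|\log(1\pm t)|\le 2t$. The other differences are cosmetic: you check (1)--(2) directly from the exterior-power definition rather than citing the Benoist--Quint identification with the Iwasawa cocycle, and for (3) you use Kostant's convexity theorem ($\sigma(g,\eta)\in\operatorname{conv}(W\cdot\kappa(g))$) where the paper cites Benoist--Quint, Cor.~8.20(a); both are valid.
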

	
\begin{proof}
	Assertions~(1) and~(2) follow from the preceding discussion.
	For $k\in\sK$, the $\sK$-invariance gives
	\[
	\omega_i(\sigma(k,\eta))
	=\log\lVert\Lambda^i k\mathbb V_{\eta_i}\rVert=0
	\]
	for every $1\le i\le d-1$, and hence $\sigma(k,\eta)=0$. \textup{(3)} follows from
	\cite[Cor.~8.20(a)]{BQbook}. 
	For~(4), fix $i$ and set
	\[
	A_i:=\bigl\lVert\Lambda^i g\mathbb V_{U_i(h)}\bigr\rVert,
	\qquad
	t_i:=e^{-\alpha_i(\kappa(h))}
	\frac{\lVert\Lambda^i g\rVert_{\mathrm{op}}}{A_i}.
	\]
	Dividing~\eqref{eq:prod-two-sided} by
	$e^{\omega_i(\kappa(h))}A_i$ gives
	\[
	1-t_i\leq
	\frac{\lVert\Lambda^i(gh)\rVert_{\mathrm{op}}}
	{e^{\omega_i(\kappa(h))}A_i}
	\leq1+t_i.
	\]
	By Cauchy--Schwarz inequality,
\[
 \omega_i(\kappa(g)) =\Bigl|\sum_{j=1}^i \log\sigma_j(g)\Bigr| \leq\sqrt{i}\Bigl(\sum_{j=1}^i (\log\sigma_j(g))^2\Bigr)^{1/2} \leq\sqrt d\,\lVert\kappa(g)\rVert. 
 \]
	Hence, by~\eqref{eq:wedge-norm},
	\[
	t_i
	\leq
	e^{-\alpha_i(\kappa(h))
		+\omega_i(\kappa(g))
		+\omega_i(\ii\kappa(g))}
	\leq
	e^{2\sqrt d\,\lVert\kappa(g)\rVert-\alpha_i(\kappa(h))}
	=:\varepsilon_i.
	\]
	Under the stated assumption, $t_i\leq\varepsilon_i\leq\frac12$.
	Since $|\log(1\pm t)|\leq2t$ for $0\leq t\leq\frac12$, taking
	logarithms and using~\eqref{eq:wedge-norm} and Definition~\ref{def:iwasawa} yields
	\[
	\begin{aligned}
		\bigl|\omega_i(\kappa(gh))-\omega_i(\kappa(h))
		-\omega_i(\sigma(g,U(h)))\bigr|
		&\leq 2t_i\\
		&\leq
		2e^{2\sqrt d\,\lVert\kappa(g)\rVert-\alpha_i(\kappa(h))},
	\end{aligned}
	\]
	as required.
	
	\end{proof}

	\begin{dfn}\label{def:busemann}
	For $\eta\in\cF$, define
	\[
	\beta_\eta\colon \mathbb X\times \mathbb X\longrightarrow\fk a,
	\qquad
	\beta_\eta(g\sK,h\sK)
	:=\sigma(g^{-1},\eta)-\sigma(h^{-1},\eta).
	\]
\end{dfn}

By Lemma~\ref{lem:sigma-props}\textup{(1)}, the right-hand side remains
unchanged if $g$ and $h$ are replaced by $gk_1$ and $hk_2$,
respectively, for any $k_1,k_2\in\sK$. Hence, $\beta_\eta$ is well
defined.

\begin{lemma}\label{lem:beta-props}
	For all $g,g_1,g_2,g_3\in\sG$ and $\eta\in\cF$, the following hold.
	\begin{enumerate}
		\item  $\beta_\eta(g_1\sK,g_3\sK)
		=\beta_\eta(g_1\sK,g_2\sK)+\beta_\eta(g_2\sK,g_3\sK),$ and  $	\beta_{g\eta}(gg_1\sK,gg_2\sK)=\beta_\eta(g_1\sK,g_2\sK).$
		\item  $\beta_\eta(g\sK,h\sK)
		=\sigma(g^{-1}h,h^{-1}\eta)$
		\item $\lVert\beta_\eta(g\sK,h\sK)\rVert
		\leq\lVert \kappa(g\sK,h\sK)\rVert$.
		\item  The map $\eta\longmapsto \beta_\eta(g\sK, h\sK)$ is continuous on $\cF$.
	\end{enumerate}
\end{lemma}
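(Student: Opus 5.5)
All four assertions follow directly from Definition~\ref{def:busemann} and the cocycle properties collected in Lemma~\ref{lem:sigma-props}. We prove them in the order (1), (2), (3), (4). Throughout we write $x=g_1\sK$, etc., and use that $\beta_\eta$ is well defined, so we may compute with any representatives.

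For (1), the additivity is a telescoping identity:
\[
\beta_\eta(g_1\sK,g_2\sK)+\beta_\eta(g_2\sK,g_3\sK)
=\sigma(g_1^{-1},\eta)-\sigma(g_2^{-1},\eta)+\sigma(g_2^{-1},\eta)-\sigma(g_3^{-1},\eta),
\]
which equals $\beta_\eta(g_1\sK,g_3\sK)$. For the equivariance, the cocycle identity of Lemma~\ref{lem:sigma-props}(1) gives
\[
\sigma\bigl(g_j^{-1}g^{-1},g\eta\bigr)=\sigma(g_j^{-1},\eta)+\sigma(g^{-1},g\eta).
\]
Substituting this for $j=1,2$ into the definition of $\beta_{g\eta}(gg_1\sK,gg_2\sK)$, the common term $\sigma(g^{-1},g\eta)$ cancels, and what remains is $\beta_\eta(g_1\sK,g_2\sK)$.

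For (2), we apply the cocycle identity with the factorisation $g^{-1}=(g^{-1}h)\,h^{-1}$:
\[
\sigma(g^{-1},\eta)=\sigma\bigl(g^{-1}h,h^{-1}\eta\bigr)+\sigma(h^{-1},\eta).
\]
Subtracting $\sigma(h^{-1},\eta)$ from both sides yields $\beta_\eta(g\sK,h\sK)=\sigma(g^{-1}h,h^{-1}\eta)$, as claimed.

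For (3), combine (2) with Lemma~\ref{lem:sigma-props}(3):
\[
\lVert\beta_\eta(g\sK,h\sK)\rVert=\lVert\sigma(g^{-1}h,h^{-1}\eta)\rVert\leq\lVert\kappa(g^{-1}h)\rVert=\lVert\kappa(g\sK,h\sK)\rVert .
\]

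For (4), fix $g,h$. By Definition~\ref{def:busemann}, $\eta\mapsto\beta_\eta(g\sK,h\sK)$ is the difference of the maps $\eta\mapsto\sigma(g^{-1},\eta)$ and $\eta\mapsto\sigma(h^{-1},\eta)$. Each of these is the restriction of the continuous map $\sigma$ (Lemma~\ref{lem:sigma-props}(2)) to a slice $\{\text{fixed group element}\}\times\cF$, hence continuous, and so is their difference.

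There is no real obstacle. The only points requiring care are applying the cocycle identity with the correct order of factors in (1) and (2), and remembering that independence of representatives was already established after Definition~\ref{def:busemann}.
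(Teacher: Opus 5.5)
Your proof is correct and follows the same route as the paper. The paper simply notes that all four assertions are immediate from Definition~\ref{def:busemann} and Lemma~\ref{lem:sigma-props}(1)--(3); you have written out the cocycle computations explicitly, and the factor orders in (1) and (2) are right.
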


The assertions follow immediately from
Definition~\ref{def:busemann} and
Lemma~\ref{lem:sigma-props}\textup{(1)}--\textup{(3)}.

\subsection{The Gromov product}

\begin{dfn}\label{def:gromov}
	Define
	\[
	\cG:\cFt\longrightarrow\fk a,\qquad
	\omega_i\bigl(\cG(\xi,\eta)\bigr)
	:=-\log\bigl\lVert
	\mathbb V_{\xi_i}\wedge\mathbb V_{\eta_{d-i}}
	\bigr\rVert,
	\quad (1\leq i\leq d-1).
	\]
\end{dfn}


This is the vector-valued Gromov product of
\cite[\S8.10]{BPS19}; see also \cite[\S4]{Sam}. The map $\cG$ is
well defined and continuous on $\cF^{(2)}$, and
$\omega_i(\cG(\xi,\eta))\geq0$ for every $1\leq i\leq d-1$.

\begin{lemma}\label{lem:gromov-equiv}
	For every $g\in\sG$ and $(\xi,\eta)\in\cFt$,
	\[
	\cG(g\xi,g\eta)
	=
	\cG(\xi,\eta)+\sigma(g,\xi)+\ii\sigma(g,\eta).
	\]
\end{lemma}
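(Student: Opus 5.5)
The identity is a statement about linear functionals. Since $\{\omega_1,\dots,\omega_{d-1}\}$ is a basis of $\fk a^\ast$, it suffices to verify, for each $1\le i\le d-1$,
\[
\omega_i\bigl(\cG(g\xi,g\eta)\bigr)
=
\omega_i\bigl(\cG(\xi,\eta)\bigr)+\omega_i\bigl(\sigma(g,\xi)\bigr)+\omega_i\bigl(\ii\sigma(g,\eta)\bigr).
\]
By \eqref{eq:weights-ii}, $\omega_i\circ\ii=\omega_{d-i}$, so the last term equals $\omega_{d-i}(\sigma(g,\eta))=\log\lVert\Lambda^{d-i}g\,\mathbb V_{\eta_{d-i}}\rVert$ by Definition~\ref{def:iwasawa}. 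Likewise $\omega_i(\sigma(g,\xi))=\log\lVert\Lambda^i g\,\mathbb V_{\xi_i}\rVert$. After negating, the claim therefore becomes
\[
\log\bigl\lVert\mathbb V_{g\xi_i}\wedge\mathbb V_{g\eta_{d-i}}\bigr\rVert
=
\log\bigl\lVert\mathbb V_{\xi_i}\wedge\mathbb V_{\eta_{d-i}}\bigr\rVert
-\log\lVert\Lambda^i g\,\mathbb V_{\xi_i}\rVert
-\log\lVert\Lambda^{d-i} g\,\mathbb V_{\eta_{d-i}}\rVert .
\]

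To prove this, I would use two observations. First, $\Lambda^i g\,\mathbb V_{\xi_i}$ is a nonzero decomposable $i$-vector spanning the line $\Lambda^i(g\xi_i)$. Hence $\Lambda^i g\,\mathbb V_{\xi_i}=\pm\lVert\Lambda^i g\,\mathbb V_{\xi_i}\rVert\,\mathbb V_{g\xi_i}$, because the unit decomposable vector representing a subspace is unique up to sign. The same holds for $\eta_{d-i}$ in degree $d-i$. Second, the top exterior power $\Lambda^d g$ acts on the line $\Lambda^d\bR^d$ by $\det g=1$. Functoriality of the wedge product then gives
\[
(\Lambda^i g\,\mathbb V_{\xi_i})\wedge(\Lambda^{d-i}g\,\mathbb V_{\eta_{d-i}})
=\Lambda^d g\,(\mathbb V_{\xi_i}\wedge\mathbb V_{\eta_{d-i}})
=\mathbb V_{\xi_i}\wedge\mathbb V_{\eta_{d-i}} .
\]
Substituting the first observation into the left-hand side and taking norms yields
\[
\lVert\Lambda^i g\,\mathbb V_{\xi_i}\rVert\,\lVert\Lambda^{d-i}g\,\mathbb V_{\eta_{d-i}}\rVert\,\lVert\mathbb V_{g\xi_i}\wedge\mathbb V_{g\eta_{d-i}}\rVert=\lVert\mathbb V_{\xi_i}\wedge\mathbb V_{\eta_{d-i}}\rVert .
\]
Taking logarithms gives exactly the desired identity.

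All quantities are finite. Transversality $\xi_i\oplus\eta_{d-i}=\bR^d$ makes the wedge nonzero, by the remark after \eqref{eq:wedge-pairing}. The pair $(g\xi,g\eta)$ is again transverse, so $\cG(g\xi,g\eta)$ is defined.

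The only step requiring care is the bookkeeping in the first paragraph: identifying $\omega_i(\ii\sigma(g,\eta))$ with the degree-$(d-i)$ Iwasawa term via \eqref{eq:weights-ii}. Once that is settled, the rest is the determinant-one identity on $\Lambda^d\bR^d$.
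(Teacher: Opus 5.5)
Your proof is correct. It differs from the paper's, which contains no computation: it invokes \cite[Lem.~4.12]{Sam} and asserts that the identity holds ``in the above conventions.'' That leaves the reader to check two things when translating Sambarino's normalisations into the wedge-product definitions used here: that the opposition involution lands on $\sigma(g,\eta)$ rather than on $\sigma(g,\xi)$, and that both cocycle terms enter with a plus sign.

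Your argument makes this check directly from Definitions~\ref{def:iwasawa} and~\ref{def:gromov}. First, \eqref{eq:weights-ii} identifies $\omega_i(\ii\sigma(g,\eta))$ with the degree-$(d-i)$ Iwasawa term $\log\lVert\Lambda^{d-i}g\,\mathbb V_{\eta_{d-i}}\rVert$. Second, $\Lambda^i g\,\mathbb V_{\xi_i}=\pm\lVert\Lambda^i g\,\mathbb V_{\xi_i}\rVert\,\mathbb V_{g\xi_i}$, and similarly in degree $d-i$. Third, $(\Lambda^i g\,\mathbb V_{\xi_i})\wedge(\Lambda^{d-i}g\,\mathbb V_{\eta_{d-i}})=\det(g)\,\mathbb V_{\xi_i}\wedge\mathbb V_{\eta_{d-i}}$. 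Together these give the required norm identity exactly, and your handling of the signs and of finiteness via transversality is sound.

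The citation buys brevity and consistency with the existing literature. Your version is self-contained and confirms that the formula as stated matches the paper's own definitions. It also shows that the only input beyond the definitions is $\det g=1$.
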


\begin{proof}
	This is \cite[Lem.~4.12]{Sam}, in the above conventions.
\end{proof}


	\subsection{The Finsler quasi-distance and coarse additivity}
	\label{sec:shadows}
	
	Write
	\begin{equation}\label{Eq:d}
	\dph(\gamma_1,\gamma_2)
	:=
	\varphi\bigl(\kappa(\gamma_1^{-1}\gamma_2)\bigr)
	=
	\varphi\bigl(\kappa(\gamma_1\sK,\gamma_2\sK)\bigr),
	\qquad \gamma_1,\gamma_2\in\Gamma.
	\end{equation}
	Thus, $\dph(e,\gamma)=\varphi(\kappa(\gamma))$ and $\dph$ is left
	invariant, that is, $\dph(g\gamma_1,g\gamma_2)=\dph(\gamma_1,\gamma_2)$. Moreover, writing $\varphi^\ii:=\varphi\circ\ii$, \eqref{eq:kappa-inverse} gives
	\begin{equation}\label{eq:dphi-flip}
	d_{\varphi^\ii}(\gamma_1,\gamma_2)
	=d_\varphi(\gamma_2,\gamma_1).
	\end{equation}
	We also set
	\begin{equation}\label{eq:L}
	L(R):=
	\max\bigl\{\lVert\kappa(\gamma)\rVert:
	\gamma\in\Gamma,\ |\gamma|\leq R\bigr\}
	\leq C_\Gamma R+C_\Gamma .
	\end{equation}
	
\begin{lemma}\label{lem:dphi-qi}
	The following assertions hold.
	\begin{enumerate}
		\item There exist $A\geq1$ and $B\geq0$, depending only on
		$(\Gamma,S,\varphi)$, such that, for all $\gamma_1,\gamma_2\in\Gamma$,
		\[
		A^{-1}d_w(\gamma_1,\gamma_2)-B
		\leq \dph(\gamma_1,\gamma_2)
		\leq A\,d_w(\gamma_1,\gamma_2)+B.
		\]
		
		\item For all $\gamma_1,\gamma_2,\gamma_3\in\Gamma$,
		\[
		\begin{aligned}
			\bigl|\dph(\gamma_1,\gamma_3)-\dph(\gamma_2,\gamma_3)\bigr|
			&\leq
			\sqrt d\,\lVert\varphi\rVert
			\lVert\kappa(\gamma_1^{-1}\gamma_2)\rVert,\\
			\bigl|\dph(\gamma_3,\gamma_1)-\dph(\gamma_3,\gamma_2)\bigr|
			&\leq
			\sqrt d\,\lVert\varphi\rVert
			\lVert\kappa(\gamma_1^{-1}\gamma_2)\rVert.
		\end{aligned}
		\]
		In particular, both left-hand sides are at most
		$\sqrt d\,\lVert\varphi\rVert
		L(d_w(\gamma_1,\gamma_2))$.
	\end{enumerate}
\end{lemma}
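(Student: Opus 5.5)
For assertion~(2) I would argue directly from Lemma~\ref{lem:sv}(2) and the left invariance of $\dph$. Write $\dph(\gamma_1,\gamma_3)=\varphi(\kappa(\gamma_1^{-1}\gamma_3))$ and $\dph(\gamma_2,\gamma_3)=\varphi(\kappa(\gamma_2^{-1}\gamma_3))$. Put $g:=\gamma_1^{-1}\gamma_2$ and $h:=\gamma_2^{-1}\gamma_3$, so that $\gamma_1^{-1}\gamma_3=gh$. Lemma~\ref{lem:sv}(2) gives $\lVert\kappa(gh)-\kappa(h)\rVert\leq\sqrt d\,\lVert\kappa(g)\rVert$. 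Applying the linear functional $\varphi$, with $\lVert\varphi\rVert$ its operator norm with respect to the Euclidean norm on $\fk a$, yields the first inequality.

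For the second inequality, take $g:=\gamma_3^{-1}\gamma_1$ and $h:=\gamma_1^{-1}\gamma_2$, so that $gh=\gamma_3^{-1}\gamma_2$. The second bound of Lemma~\ref{lem:sv}(2), $\lVert\kappa(gh)-\kappa(g)\rVert\leq\sqrt d\,\lVert\kappa(h)\rVert$, then gives the claim directly. The final remark follows from $\kappa(\gamma_1^{-1}\gamma_2)$ with $|\gamma_1^{-1}\gamma_2|=d_w(\gamma_1,\gamma_2)$ and the definition \eqref{eq:L} of $L$.

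For assertion~(1), the upper bound is also immediate. By the triangle inequality in the word metric combined with (2), or more simply by subadditivity, each generator $s\in S$ contributes at most $\sqrt d\,\lVert\varphi\rVert\max_{s\in S}\lVert\kappa(s)\rVert$. Telescoping along a word geodesic from $\gamma_1$ to $\gamma_2$ using (2) then gives $\dph(\gamma_1,\gamma_2)\leq A\,d_w(\gamma_1,\gamma_2)$.

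The lower bound is the real content, and here positivity of $\varphi$ on the limit cone must be used. Since $\cL_\Gamma\cap\{\lVert v\rVert=1\}$ is compact and $\varphi>0$ on it, there is $c_0>0$ and an open cone $\cC\supset\cL_\Gamma\smallsetminus\{0\}$ with $\varphi(v)\geq c_0\lVert v\rVert$ on $\cC$. By the definition of the asymptotic cone, only finitely many $\gamma\in\Gamma$ have $\kappa(\gamma)\notin\cC$: otherwise a sequence with $\lVert\kappa(\gamma_n)\rVert\to\infty$, which holds by discreteness and properness, would have normalized limit point outside $\cC$ yet in $\cL_\Gamma$. Hence $\varphi(\kappa(\gamma))\geq c_0\lVert\kappa(\gamma)\rVert-B'$ for all $\gamma$. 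Next, the Anosov gap \eqref{eq:anosovgap} gives $\lVert\kappa(\gamma)\rVert\geq c'\alpha_1(\kappa(\gamma))\geq c'(c|\gamma|-C)$, since $\alpha_1(v)\leq\sqrt2\lVert v\rVert$. Applying this to $\gamma=\gamma_1^{-1}\gamma_2$ gives the lower bound.

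The main obstacle is the finiteness claim relating $\varphi>0$ on $\cL_\Gamma$ to a uniform linear lower bound. This needs $\kappa$ to be proper on $\Gamma$, which follows from discreteness, equivalently from the Anosov gap itself, and a careful compactness argument on the unit sphere.
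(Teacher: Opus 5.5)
Your proposal is correct.

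\textbf{Assertion (2).} This is essentially the paper's argument: both apply Lemma~\ref{lem:sv}(2) to a suitable factorization. Your factorizations are slightly more direct:
\[
\gamma_1^{-1}\gamma_3=(\gamma_1^{-1}\gamma_2)(\gamma_2^{-1}\gamma_3),
\qquad
\gamma_3^{-1}\gamma_2=(\gamma_3^{-1}\gamma_1)(\gamma_1^{-1}\gamma_2).
\]
With these you apply the two halves of Lemma~\ref{lem:sv}(2) immediately. The paper instead first reduces to $\gamma_3=e$ by left invariance. For the second inequality it then uses $\kappa(g^{-1})=\ii\kappa(g)$ and the $\ii$-invariance of the norm.

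\textbf{Assertion (1).} Here the routes genuinely differ. The paper gives no argument and simply cites \cite[Lem.~4.5]{DOT}; you supply a self-contained proof.
\begin{itemize}
\item Your upper bound telescopes (2) along a word geodesic, starting from $\dph(\gamma_1,\gamma_1)=0$. It uses nothing about $\varphi$.
\item Your lower bound isolates exactly where the hypothesis $\varphi\in\cL_\Gamma^{>0}$ enters. Compactness of $\cL_\Gamma\cap\{\lVert v\rVert=1\}$ (which holds because the asymptotic cone is closed, by a diagonal argument) gives an open cone around $\cL_\Gamma\smallsetminus\{0\}$ on which $\varphi\geq c_0\lVert\cdot\rVert$. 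Properness of $\kappa$ on $\Gamma$ shows that $\kappa(\gamma)$ leaves this cone for only finitely many $\gamma$. A single gap from \eqref{eq:anosovgap}, together with $\alpha_1\leq\sqrt2\lVert\cdot\rVert$, then converts $\lVert\kappa(\gamma)\rVert$ into word length.
\end{itemize}
What this buys over the citation: it makes the lemma independent of \cite{DOT}. It also shows that the lower bound only needs the orbit map to be a quasi-isometric embedding, not the full Borel--Anosov condition.

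One small clarification about your aside ``more simply by subadditivity.'' This is valid if you mean subadditivity of $\lVert\kappa(\cdot)\rVert$ (the triangle inequality in $\mathbb X$) combined with $\varphi\leq\lVert\varphi\rVert\,\lVert\cdot\rVert$. It is not valid for $\varphi\circ\kappa$ itself, which is not subadditive for a general $\varphi\in\fk a^\ast$. Your telescoping argument is unaffected by this.
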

	
\begin{proof}
	Assertion \textup{(1)} is precisely \cite[Lem.~4.5]{DOT}.
	For \textup{(2)}, left invariance reduces to the case $\gamma_3=e$.
	Since
	$\gamma_1^{-1}=(\gamma_1^{-1}\gamma_2)\gamma_2^{-1}$,
	Lemma~\ref{lem:sv}\textup{(2)} gives
	\[
	\begin{aligned}
		\bigl|\dph(\gamma_1,e)-\dph(\gamma_2,e)\bigr|
		&=
		\bigl|\varphi\bigl(
		\kappa(\gamma_1^{-1})-\kappa(\gamma_2^{-1})
		\bigr)\bigr|\\
		&\leq
		\lVert\varphi\rVert
		\bigl\lVert
		\kappa(\gamma_1^{-1})-\kappa(\gamma_2^{-1})
		\bigr\rVert\\
		&\leq
		\sqrt d\,\lVert\varphi\rVert
		\bigl\lVert\kappa(\gamma_1^{-1}\gamma_2)\bigr\rVert.
	\end{aligned}
	\]
The second inequality follows similarly by applying
Lemma~\ref{lem:sv}\textup{(2)} to
\(\gamma_1=\gamma_2(\gamma_2^{-1}\gamma_1)\), and using
\eqref{eq:kappa-inverse} together with the \(\iota\)-invariance of the norm.
The final bound in both cases follows from the definition of \(L\) in
\eqref{eq:L}.
\end{proof}

	\begin{prop}[{\cite[Cor.~5.15]{LO};
			see also \cite[Prop.~4.6]{DOT}}]
		\label{prop:coarse-add}
		There exists $D_0=D_0(\Gamma,S)\geq0$ such that, whenever
		$\gamma_2$ lies on a word geodesic from $\gamma_1$ to $\gamma_3$,
		\[
		\bigl\lVert
		\kappa(\gamma_1^{-1}\gamma_3)
		-\kappa(\gamma_1^{-1}\gamma_2)
		-\kappa(\gamma_2^{-1}\gamma_3)
		\bigr\rVert
		\leq D_0.
		\]
		In particular, with $D_\varphi:=\lVert\varphi\rVert D_0$,
		\[
		\bigl|
		\dph(\gamma_1,\gamma_3)
		-\dph(\gamma_1,\gamma_2)
		-\dph(\gamma_2,\gamma_3)
		\bigr|
		\leq D_\varphi.
		\]
	\end{prop}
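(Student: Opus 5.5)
By left invariance of $\kappa(\gamma_1^{-1}\,\cdot\,)$ we may assume $\gamma_1=e$. Set $g:=\gamma_2$ and $h:=\gamma_2^{-1}\gamma_3$, so that $|g|+|h|=|gh|$ because $\gamma_2$ lies on a word geodesic from $e$ to $\gamma_3$. It suffices to bound $\lvert\omega_i(\kappa(gh))-\omega_i(\kappa(g))-\omega_i(\kappa(h))\rvert$ uniformly for each $1\le i\le d-1$, since the $\omega_i$ form a basis of $\fk a^\ast$. The upper bound $\omega_i(\kappa(gh))\le\omega_i(\kappa(g))+\omega_i(\kappa(h))$ is Lemma~\ref{lem:sv}(1). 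The statement about $\dph$ then follows immediately, because $|\varphi(v)|\le\lVert\varphi\rVert\lVert v\rVert$.

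For the lower bound, first dispose of short pieces. If $\min(|g|,|h|)\le R_0$ for a constant $R_0$ to be fixed, then Lemma~\ref{lem:sv}(2) and \eqref{eq:L} give the bound $\sqrt d\,L(R_0)$ plus the trivial bound on the short term. So assume $|g|,|h|\ge R_0$; by \eqref{eq:anosovgap}, all gaps of $\kappa(g)$ and $\kappa(h)$ are then at least $cR_0-C$, which is large.

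Apply the lower half of \eqref{eq:prod-two-sided}. This gives
\[
\lVert\Lambda^i(gh)\rVert_{\op}\ \ge\ e^{\omega_i(\kappa(h))}\bigl(\lVert\Lambda^i g\,\mathbb V_{U_i(h)}\rVert-e^{-\alpha_i(\kappa(h))}\lVert\Lambda^i g\rVert_{\op}\bigr).
\]
Next apply the decomposition of Lemma~\ref{lem:product} to $g$. This gives
\[
\lVert\Lambda^i g\,\mathbb V_{U_i(h)}\rVert\ \ge\ e^{\omega_i(\kappa(g))}\bigl(\lvert\langle\mathbb V_{S_i(g)},\mathbb V_{U_i(h)}\rangle\rvert-e^{-\alpha_i(\kappa(g))}\bigr).
\]
Combining the two, $\omega_i(\kappa(gh))\ge\omega_i(\kappa(g))+\omega_i(\kappa(h))-C'$ as soon as the pairing $\lvert\langle\mathbb V_{S_i(g)},\mathbb V_{U_i(h)}\rangle\rvert$ is bounded below by some $\epsilon_0>0$ and $R_0$ is chosen so that $2e^{-(cR_0-C)}<\epsilon_0/2$. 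By \eqref{eq:S-vs-U} and \eqref{eq:wedge-pairing}, this pairing equals $\lVert\mathbb V_{U_i(h)}\wedge\mathbb V_{U_{d-i}(g^{-1})}\rVert$. It therefore measures the transversality of $U_i(h)$ and $U_{d-i}(g^{-1})$.

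\textbf{The main obstacle is this uniform transversality.} We must show there is an $\epsilon_0>0$ such that, whenever $|g|,|h|\ge R_0$ and $e$ lies on a word geodesic from $g^{-1}$ to $h$, the flags $U(h)$ and $U(g^{-1})$ are $\epsilon_0$-transverse. The geodesic condition is equivalent to $(g^{-1}\,|\,h)_e\le\delta'$ for a hyperbolicity constant $\delta'$.

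I would argue by contradiction. Take sequences $g_n,h_n$ with $|g_n|,|h_n|\to\infty$, Gromov products $(g_n^{-1}\,|\,h_n)_e$ bounded, and pairing tending to $0$. Pass to subsequences with $g_n^{-1}\to x$ and $h_n\to y$ in $\overline\Gamma$. The bounded Gromov product forces $x\neq y$. By \eqref{eq:cartan-property}, $U(h_n)\to\theta(y)$ and $U(g_n^{-1})\to\theta(x)$. Transversality of $\theta$ makes $\lVert\mathbb V_{\theta(y)_i}\wedge\mathbb V_{\theta(x)_{d-i}}\rVert>0$, and continuity then contradicts the pairing tending to $0$.

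Sequences in which $|g_n|$ or $|h_n|$ stays bounded are already covered by the short-piece case. This yields $\epsilon_0$ and hence $R_0$, and we take $D_0$ to be the maximum of the constants obtained in the two cases.
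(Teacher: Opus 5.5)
Your argument is correct, but it takes a genuinely different route from the paper. The paper gives no proof of its own: it observes that the geodesic hypothesis means $|\gamma_1^{-1}\gamma_3|=|\gamma_1^{-1}\gamma_2|+|\gamma_2^{-1}\gamma_3|$ and then invokes \cite[Cor.~5.15]{LO}, which comes out of the geometric theory of Anosov subgroups acting on $\mathbb X$.

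You instead give a self-contained linear-algebraic proof using only tools from Section~\ref{sec:prelim}:
\begin{enumerate}
\item Lemma~\ref{lem:product}, applied once to $h$ and once to $g$, reduces everything to a lower bound on $\lvert\langle\mathbb V_{S_i(g)},\mathbb V_{U_i(h)}\rangle\rvert$.
\item \eqref{eq:S-vs-U} and \eqref{eq:wedge-pairing} turn this into uniform transversality of $U_i(h)$ and $U_{d-i}(g^{-1})$.
\item That transversality follows by compactness of $\overline\Gamma$, from \eqref{eq:cartan-property} and the transversality of $\theta$.
\end{enumerate}
Incidentally, this is the natural use of \eqref{eq:S-vs-U} and \eqref{eq:wedge-pairing}, which the paper sets up but never uses.

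The details check out. The geodesic hypothesis gives $(g^{-1}\mid h)_e=0$ exactly, and \cite[Rem.~III.H.3.17(5)]{BH} excludes $x=y$ in the limit. The short-piece case is handled correctly by Lemma~\ref{lem:sv}(2). The combined error $e^{-\alpha_i(\kappa(g))}+e^{-\alpha_i(\kappa(h))}$ is absorbed once the compactness threshold is enlarged to your $R_0$.

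Your route avoids the cited machinery entirely. It also proves slightly more: it only needs $(g^{-1}\mid h)_e$ bounded, i.e.\ $\gamma_2$ uniformly close to a word geodesic. The price is that $D_0$ comes from an argument by contradiction and is not explicit.

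One wording slip: the geodesic condition implies a bound on the Gromov product but is not equivalent to one. Only that implication is used, so nothing breaks.
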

	
	\begin{proof}
		The geodesic assumption gives
		$|\gamma_1^{-1}\gamma_3|
		=|\gamma_1^{-1}\gamma_2|+|\gamma_2^{-1}\gamma_3|$; hence the first
		assertion follows from \cite[Cor.~5.15]{LO}, and the second follows
		by applying $\varphi$.
	\end{proof}

We next realize the cocycles introduced in Subsection~\ref{ss:cocycle}
as asymptotic differences of Cartan projections.

\begin{prop}\label{prop:buslimit}
	Let $x\in\bd$ and let $(\gamma_n)$ be a sequence in $\Gamma$ converging
	to $x$ in $\overline{\Gamma}$. Then, for every $\gamma\in\Gamma$,
	\[
	\kappa(\gamma^{-1}\gamma_n)-\kappa(\gamma_n)
	\longrightarrow
	\sigma\bigl(\gamma^{-1},\theta(x)\bigr)
	\qquad(n\to\infty).
	\]
	Consequently, for every $\gamma_1,\gamma_2\in\Gamma$,
	\[
	\kappa(\gamma_1\sK,\gamma_n\sK)
	-\kappa(\gamma_2\sK,\gamma_n\sK)
	\longrightarrow
	\beta_{\theta(x)}(\gamma_1\sK,\gamma_2\sK)
	\qquad(n\to\infty).
	\]
\end{prop}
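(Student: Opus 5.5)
We work fundamental weight by fundamental weight. Since $\{\omega_1,\dots,\omega_{d-1}\}$ is a basis of $\fk a^\ast$, it suffices to prove that for each $1\le i\le d-1$,
\[
\omega_i\bigl(\kappa(\gamma^{-1}\gamma_n)\bigr)-\omega_i\bigl(\kappa(\gamma_n)\bigr)
\longrightarrow
\omega_i\bigl(\sigma(\gamma^{-1},\theta(x))\bigr).
\]
We would apply Lemma~\ref{lem:sigma-props}\textup{(4)} with $g=\gamma^{-1}$ fixed and $h=\gamma_n$.

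First, the gaps of $\kappa(\gamma_n)$ must be positive and tend to infinity. Since $\gamma_n\to x\in\bd$, we have $|\gamma_n|\to\infty$, so the Borel--Anosov estimate \eqref{eq:anosovgap} gives $\alpha_i(\kappa(\gamma_n))\ge c|\gamma_n|-C\to\infty$ for every $i$. Hence $U(\gamma_n)$ is defined for large $n$.

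Second, because $\lVert\kappa(\gamma^{-1})\rVert$ is fixed, the hypothesis $e^{2\sqrt d\lVert\kappa(\gamma^{-1})\rVert-\alpha_i(\kappa(\gamma_n))}\le\frac12$ holds for all large $n$. Lemma~\ref{lem:sigma-props}\textup{(4)} then yields
\[
\Bigl|\omega_i(\kappa(\gamma^{-1}\gamma_n))-\omega_i(\kappa(\gamma_n))-\omega_i\bigl(\sigma(\gamma^{-1},U(\gamma_n))\bigr)\Bigr|
\le 2e^{2\sqrt d\lVert\kappa(\gamma^{-1})\rVert-\alpha_i(\kappa(\gamma_n))}\longrightarrow0.
\]

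Third, by the Cartan property \eqref{eq:cartan-property}, $U(\gamma_n)\to\theta(x)$ in $\cF$. Continuity of $\sigma$ (Lemma~\ref{lem:sigma-props}\textup{(2)}) gives $\sigma(\gamma^{-1},U(\gamma_n))\to\sigma(\gamma^{-1},\theta(x))$. Combining this with the previous step proves the first assertion. The main point is simply that the hypotheses of part (4) hold eventually; this is the only place where the Anosov gap growth enters.

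For the consequence, write $\kappa(\gamma_j\sK,\gamma_n\sK)=\kappa(\gamma_j^{-1}\gamma_n)$. Subtracting $\kappa(\gamma_n)$ from each term and applying the first part with $\gamma=\gamma_1$ and $\gamma=\gamma_2$, the difference converges to
\[
\sigma(\gamma_1^{-1},\theta(x))-\sigma(\gamma_2^{-1},\theta(x)).
\]
By Definition~\ref{def:busemann}, this is exactly $\beta_{\theta(x)}(\gamma_1\sK,\gamma_2\sK)$.
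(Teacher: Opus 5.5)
Your proposal is correct and follows essentially the same route as the paper: you apply Lemma~\ref{lem:sigma-props}(4) with $g=\gamma^{-1}$, $h=\gamma_n$, use the Anosov gap growth to make its error term vanish, and pass to the limit via \eqref{eq:cartan-property} and the continuity of $\sigma$. You then deduce the Busemann statement by applying the first part to $\gamma_1$ and $\gamma_2$ and using Definition~\ref{def:busemann}, exactly as the paper does.
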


\begin{proof}
	Since $\gamma_n\to x$ in $\overline{\Gamma}$, we have
	$|\gamma_n|\to\infty$; hence \eqref{eq:anosovgap} gives
	\[
	\min_{1\leq i\leq d-1}
	\alpha_i\bigl(\kappa(\gamma_n)\bigr)
	\longrightarrow\infty.
	\]
	Fix $\gamma\in\Gamma$. By
	Lemma~\ref{lem:sigma-props}\textup{(4)}, for every $i$ and all
	sufficiently large $n$,
	\[
	\left|
	\omega_i\!\left(
	\kappa(\gamma^{-1}\gamma_n)-\kappa(\gamma_n)
	-\sigma(\gamma^{-1},U(\gamma_n))
	\right)
	\right|
	\leq
	2e^{\,2\sqrt d\,\lVert\kappa(\gamma^{-1})\rVert
		-\alpha_i(\kappa(\gamma_n))}
	\longrightarrow0.
	\]
	By \eqref{eq:cartan-property} and
	Lemma~\ref{lem:sigma-props}\textup{(2)}, we obtain 
	\[
	\sigma(\gamma^{-1},U(\gamma_n))
	\longrightarrow
	\sigma(\gamma^{-1},\theta(x))
	\qquad(n\to\infty).
	\]
	Together with the preceding estimates, this proves the first assertion.
Consequently, applying the first assertion with $\gamma=\gamma_j$, $j=1,2$, we obtain, by
Definition~\ref{def:busemann},
\[
\begin{aligned}
	\kappa(\gamma_1\sK,\gamma_n\sK)-\kappa(\gamma_2\sK,\gamma_n\sK)
	&\longrightarrow
	\sigma(\gamma_1^{-1},\theta(x))
	-\sigma(\gamma_2^{-1},\theta(x)) \qquad(n\to\infty)\\
	&=\beta_{\theta(x)}(\gamma_1\sK,\gamma_2\sK).
\end{aligned}
\]
\end{proof}

\begin{prop}\label{Prop:busctrl}
	For every $R\geq 0$, set
	\[
	A_R:=D_\varphi+2\sqrt{d}\,\lVert\varphi\rVert\,L(R).
	\]
	Then, for all $\gamma_1,\gamma_2\in\Gamma$ and every
	$x\in\cO_R(\gamma_1,\gamma_2)$,
	\[
	\left|
	\varphi\bigl(\beta_{\theta(x)}
	(\gamma_1\sK,\gamma_2\sK)\bigr)
	-\dph(\gamma_1,\gamma_2)
	\right|
	\leq A_R.
	\]
\end{prop}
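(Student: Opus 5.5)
Since $x\in\cO_R(\gamma_1,\gamma_2)$, we may fix a word geodesic ray $(\eta_n)_{n\ge0}$ with $\eta_0=\gamma_1$, $\eta_n\to x$ in $\overline\Gamma$, which passes through some $\gamma_2'\in B_w(\gamma_2,R)$, say $\gamma_2'=\eta_m$. We express the Busemann value as a limit along this ray and then use coarse additivity along the geodesic.

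\emph{Step 1 (limit formula).} By Proposition~\ref{prop:buslimit}, applied to the sequence $\eta_n\to x$,
\[
\varphi\bigl(\beta_{\theta(x)}(\gamma_1\sK,\gamma_2\sK)\bigr)
=\lim_{n\to\infty}\bigl(\dph(\gamma_1,\eta_n)-\dph(\gamma_2,\eta_n)\bigr).
\]

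\emph{Step 2 (coarse additivity).} For $n\ge m$, the point $\eta_m=\gamma_2'$ lies on the word geodesic segment from $\gamma_1$ to $\eta_n$. Proposition~\ref{prop:coarse-add} therefore gives
\[
\bigl|\dph(\gamma_1,\eta_n)-\dph(\gamma_1,\gamma_2')-\dph(\gamma_2',\eta_n)\bigr|\le D_\varphi .
\]

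\emph{Step 3 (replace $\gamma_2'$ by $\gamma_2$).} By Lemma~\ref{lem:dphi-qi}(2), each of the errors
\[
|\dph(\gamma_1,\gamma_2')-\dph(\gamma_1,\gamma_2)|
\quad\text{and}\quad
|\dph(\gamma_2',\eta_n)-\dph(\gamma_2,\eta_n)|
\]
is at most $\sqrt d\,\lVert\varphi\rVert L(R)$, since $d_w(\gamma_2,\gamma_2')\le R$. Combining this with Step 2, for all $n\ge m$,
\[
\bigl|\dph(\gamma_1,\eta_n)-\dph(\gamma_2,\eta_n)-\dph(\gamma_1,\gamma_2)\bigr|\le D_\varphi+2\sqrt d\,\lVert\varphi\rVert L(R)=A_R .
\]
Letting $n\to\infty$ and using Step 1 yields the claim.

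\emph{Main point to check.} The main obstacle is only bookkeeping in Step 3: we must use the correct argument slot of Lemma~\ref{lem:dphi-qi}(2). The first slot is needed for $\dph(\gamma_2',\eta_n)$ versus $\dph(\gamma_2,\eta_n)$, and the second slot for $\dph(\gamma_1,\gamma_2')$ versus $\dph(\gamma_1,\gamma_2)$. Both bounds are controlled by $\lVert\kappa(\gamma_2^{-1}\gamma_2')\rVert\le L(R)$.

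We also need that a geodesic ray from $\gamma_1$ toward $x$ actually converges to $x$ in $\overline\Gamma$, which holds by the definition of the Gromov boundary. Finally, Proposition~\ref{prop:buslimit} applies to any sequence converging to $x$, so no further uniformity is required.
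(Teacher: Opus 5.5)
Your proposal is correct and is essentially the paper's proof. Both arguments fix a geodesic ray from $\gamma_1$ to $x$ through a point of $B_w(\gamma_2,R)$ and write $\varphi\bigl(\beta_{\theta(x)}(\gamma_1\sK,\gamma_2\sK)\bigr)$ as the limit of $\dph(\gamma_1,\eta_n)-\dph(\gamma_2,\eta_n)$ via Proposition~\ref{prop:buslimit}. They then combine Proposition~\ref{prop:coarse-add} with two applications of Lemma~\ref{lem:dphi-qi}(2), one for each argument slot as you correctly track, to get the bound $A_R$ before letting $n\to\infty$.
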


\begin{proof}
	Choose a geodesic ray $c$ from $\gamma_1$ to $p$ meeting
	$B_w(\gamma_2,R)$, and let $\gamma:=c(t_0)$ be such that
	$d_w(u,\gamma_2)\leq R$. Set $\gamma_n:=c(n)$ for $n\geq t_0$, so
	that $\gamma_n\to x$. 
	Applying $\varphi$ to the second assertion of Proposition~\ref{prop:buslimit} and using \eqref{Eq:d}, we obtain
	\[
	\dph(\gamma_1,\gamma_n)-\dph(\gamma_2,\gamma_n)
	\longrightarrow
	\varphi\bigl(\beta_{\theta(x)}
	(\gamma_1\sK,\gamma_2\sK)\bigr)
	\qquad(n\to\infty).
	\]
	Fix $n\geq t_0$. Since $\gamma$ lies on the geodesic segment from
	$\gamma_1$ to $\gamma_n$, Proposition~\ref{prop:coarse-add} gives
	\[
	\left|
	\dph(\gamma_1,\gamma_n)-\dph(\gamma_1,\gamma)-\dph(\gamma,\gamma_n)
	\right|
	\leq D_\varphi.
	\]
	Moreover, Lemma~\ref{lem:dphi-qi}\textup{(2)} and
	$d_w(\gamma,\gamma_2)\leq R$ yield
	\[
	\left|
	\dph(\gamma_2,\gamma_n)-\dph(\gamma,\gamma_n)
	\right|
	\leq \sqrt{d}\,\lVert\varphi\rVert\,L(R)
	\]
	and
	\[
	\left|
	\dph(\gamma_1,\gamma)-\dph(\gamma_1,\gamma_2)
	\right|
	\leq \sqrt{d}\,\lVert\varphi\rVert\,L(R).
	\]
	Combining these estimates, we obtain
	\[
	\left|
	\dph(\gamma_1,\gamma_n)-\dph(\gamma_2,\gamma_n)
	-\dph(\gamma_1,\gamma_2)
	\right|
	\leq A_R.
	\]
	By Proposition~\ref{prop:buslimit}, letting $n\to\infty$ in the preceding estimate yields the assertion.
\end{proof}

\subsection{Word shadows}


By enlarging the hyperbolicity constant if necessary, we fix
$\delta_0\geq 1$ such that $\operatorname{Cay}(\Gamma,S)$ is
$\delta_0$-hyperbolic in the sense of
\cite[Definition~III.H.1.20]{BH} and every geodesic triangle in
$X$ is $\delta_0$-slim; such a choice is possible by
\cite[Propositions~III.H.1.17 and III.H.1.22]{BH}.

For $\gamma\in\Gamma$ and $R\geq0$, define the word-metric ball
$B_w(\gamma,R)
:=
\left\{
\gamma'\in\Gamma:
d_w(\gamma',\gamma)\leq R
\right\}.$

\begin{dfn}\label{def:word-shadows}
	For $\gamma_1,\gamma_2\in\Gamma$ and $R\geq0$, define the shadow
	\[
	\cO_R(\gamma_1,\gamma_2)
	:=
	\left\{
	\xi\in\bd:
	\text{some geodesic ray $[\gamma_1,\xi)$ meets
		$B_w(\gamma_2,R)$}
	\right\}.
	\]
\end{dfn}


We now record the basic properties of word shadows that will be used below.

\begin{lemma}\label{lem:shadow-basic}
	Let $\gamma_1,\gamma_2\in\Gamma$, $R\geq0$.
	\begin{enumerate}
		\item $\cO_R(\gamma_1,\gamma_2)$ is a closed subset of $\bd$, non-decreasing in $R$, and
		$\gamma\,\cO_R(\gamma_1,\gamma_2)=\cO_R(\gamma\gamma_1,\gamma\gamma_2)$ for every $\gamma\in\Gamma$.
		\item  If $c$ is a geodesic ray from $\gamma_1$ to $x\in\bd$,
		then $x\in\cO_R(\gamma_1,c(t))$ for all $t\geq0$, $R\geq0$.
		\item If $d_w(\gamma_1,\gamma_2)\leq R$ then $\cO_R(\gamma_1,\gamma_2)=\bd$.
	\end{enumerate}
\end{lemma}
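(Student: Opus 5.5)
The three assertions are elementary consequences of the definitions in the $\delta_0$-hyperbolic Cayley graph. We treat them in order, working in $\operatorname{Cay}(\Gamma,S)$ with geodesic rays parametrised by arclength. Throughout, $\Gamma$ acts by isometries on the Cayley graph and extends to homeomorphisms of $\overline{\Gamma}$.

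For (1), monotonicity in $R$ is immediate, since $B_w(\gamma_2,R)\subseteq B_w(\gamma_2,R')$ for $R\le R'$. For equivariance, left multiplication by $\gamma$ sends a geodesic ray $[\gamma_1,\xi)$ meeting $B_w(\gamma_2,R)$ to a geodesic ray $[\gamma\gamma_1,\gamma\xi)$ meeting $B_w(\gamma\gamma_2,R)$. Applying the same argument to $\gamma^{-1}$ gives the reverse inclusion.

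Closedness is the only step needing an argument, and we would prove it by an Arzel\`a--Ascoli diagonal argument. Take $\xi_n\in\cO_R(\gamma_1,\gamma_2)$ with $\xi_n\to\xi$ in $\bd$, and choose geodesic rays $c_n$ from $\gamma_1$ to $\xi_n$ and times $t_n$ with $d_w(c_n(t_n),\gamma_2)\le R$. Then $t_n\le d_w(\gamma_1,\gamma_2)+R$ is bounded, so after passing to a subsequence $t_n\to t$. Since the Cayley graph is locally finite, a further diagonal subsequence makes the rays $c_n$ agree with a fixed geodesic ray $c$ on each $[0,k]$ for $n$ large. Because each $c_n(t_n)$ is a vertex of $c_n$ within $R$ of $\gamma_2$, and $c_n=c$ on $[0,t+1]$ eventually, the ray $c$ meets $B_w(\gamma_2,R)$ (up to the usual rounding of $t$ to an integer time). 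It then remains to see that $c(\infty)=\xi$. Since $c_n$ and $c$ share initial segments of length tending to infinity, the Gromov products $(c_n(\infty)\,|\,c(\infty))_{\gamma_1}$ tend to $\infty$ up to a $\delta_0$-error, so $\xi_n\to c(\infty)$. Hausdorffness of $\bd$ then forces $c(\infty)=\xi$, hence $\xi\in\cO_R(\gamma_1,\gamma_2)$.

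For (2), the ray $c$ itself is a geodesic ray from $\gamma_1$ to $x$, and it passes through $c(t)\in B_w(c(t),R)$ for every $R\ge 0$. Here $t$ is taken so that $c(t)$ is a vertex, as implicit in the notation. For (3), let $x\in\bd$; since $\Gamma$ is hyperbolic and the Cayley graph is proper, some geodesic ray $c$ from $\gamma_1$ to $x$ exists. Then $c(0)=\gamma_1\in B_w(\gamma_2,R)$, so $x\in\cO_R(\gamma_1,\gamma_2)$.

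The main obstacle is the convergence $c(\infty)=\xi$ in (1). We would handle it with the standard estimate that two geodesic rays from the same basepoint that coincide on $[0,k]$ have Gromov product at least $k-O(\delta_0)$, as in \cite[III.H.3]{BH}.
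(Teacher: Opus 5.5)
Your proof is correct. The paper states this lemma without proof and treats all three items as standard, so there is no argument in the paper to compare with; your proposal supplies the omitted verification. Closedness is the only item with real content, and extracting a limiting ray by a diagonal argument in the locally finite Cayley graph is the right way to handle it.

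Two small tightenings:
\begin{enumerate}
\item Since $c_n(0)=\gamma_1$ is a vertex, the times $t_n$ are integers bounded by $d_w(\gamma_1,\gamma_2)+R$. Once $c_n$ agrees with $c$ on $[0,d_w(\gamma_1,\gamma_2)+R]$, you get $c(t_n)=c_n(t_n)\in B_w(\gamma_2,R)$ directly, and no rounding is needed.
\item With the sup--liminf definition \eqref{Eq:Ext}, rays from $\gamma_1$ that agree on $[0,k]$ satisfy $(c_n(\infty)\mid c(\infty))_{\gamma_1}\geq k$ exactly. Moving the basepoint to $e$ costs at most $|\gamma_1|$, so \eqref{eq:visual} gives $\varrho(\xi_n,c(\infty))\to0$ with no $\delta_0$-error.
\end{enumerate}
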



	\begin{lemma}
		\label{lem:multiplicity}
		For every $R\geq 0$ there exists $M_R\geq 1$ such that, for all
		$m\in\bR$ and $x\in\bd$,
		\[
		\left|\left\{\gamma\in\Gamma:
		x\in\cO_R(e,\gamma),\
		m\leq\dph(e,\gamma)<m+1
		\right\}\right|
		\leq M_R.
		\]
	\end{lemma}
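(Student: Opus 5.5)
The plan is to show that every $\gamma$ in the set lies within uniformly bounded word distance of a single point of a fixed geodesic ray from $e$ to $x$; the cardinality bound then follows from the growth of word balls. Fix a geodesic ray $c$ from $e$ to $x$. Take $\gamma$ in the set. By definition some geodesic ray $c'$ from $e$ to $x$ meets $B_w(\gamma,R)$, say at $c'(t)$ with $d_w(c'(t),\gamma)\le R$. Since $c(0)=c'(0)=e$ and both rays converge to the same boundary point, standard $\delta_0$-hyperbolicity (asymptotic rays issuing from a common point; cf.~\cite[III.H.3.3]{BH}) gives $d_w(c(t),c'(t))\le 8\delta_0$ for all $t$. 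Hence $d_w(\gamma,c(t))\le R+8\delta_0$, so every element of the set lies within $R':=R+8\delta_0$ of the image of $c$.

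Next I would localize the parameter $t$ using $\dph$. Let $\gamma$ and $\gamma'$ be two elements of the set, with $d_w(\gamma,c(t))\le R'$ and $d_w(\gamma',c(t'))\le R'$, and assume $t\le t'$. Lemma~\ref{lem:dphi-qi}(2) gives $|\dph(e,\gamma)-\dph(e,c(t))|\le \sqrt d\,\lVert\varphi\rVert L(R')$, and the same bound for $\gamma',c(t')$. Since $c(t)$ lies on the geodesic from $e$ to $c(t')$, Proposition~\ref{prop:coarse-add} gives $\dph(e,c(t'))\ge \dph(e,c(t))+\dph(c(t),c(t'))-D_\varphi$. By Lemma~\ref{lem:dphi-qi}(1), $\dph(c(t),c(t'))\ge A^{-1}(t'-t)-B$. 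Both $\dph(e,\gamma)$ and $\dph(e,\gamma')$ lie in $[m,m+1)$, so combining these estimates yields
\[
t'-t\le A\bigl(1+2\sqrt d\,\lVert\varphi\rVert L(R')+D_\varphi+B\bigr)=:T_R ,
\]
a bound independent of $m$ and $x$.

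Consequently, fixing one element $\gamma_\ast$ of the set with parameter $t_\ast$, every other element lies in $B_w(c(t_\ast),R'+T_R+1)$. The cardinality of a word ball of radius $r$ in $\Gamma$ is at most $|S|^{r+1}$, independent of its center. So we may take $M_R:=\max\{1,(|S|+1)^{R'+T_R+2}\}$.

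The only step that is not routine is the uniform fellow-travelling of two geodesic rays from $e$ to the same point $x$ in a Cayley graph, where rays are sequences of group elements. I expect this to be the main obstacle; it is standard for a $\delta_0$-hyperbolic geodesic space. If only a coarse version of that statement is available, I would replace $8\delta_0$ by whatever constant it provides; the rest of the argument is unaffected.
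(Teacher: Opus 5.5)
Your proof is correct and follows essentially the same route as the paper's. You use fellow-travelling of asymptotic rays from $e$ (via \cite[III.H.3.3]{BH}), then Lemma~\ref{lem:dphi-qi}(2) to transfer the annulus condition to points on a ray, then Proposition~\ref{prop:coarse-add} with Lemma~\ref{lem:dphi-qi}(1) to bound word distances along the ray, and finally a uniform word-ball count; the only difference is that you project every element onto one fixed ray and bound the parameter gap $t'-t$, whereas the paper compares pairs $\gamma,\gamma'$ via two rays and bounds $d_w(\gamma,\gamma')$ directly.
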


	\begin{proof}
		Suppose $x\in\cO_R(e,\gamma)\cap\cO_R(e,\gamma')$ with
		$\dph(e,\gamma),\dph(e,\gamma')\in[m,m+1)$. Choose geodesic rays
		$c,c'$ from $e$ to $x$ such that $c$ meets $B_w(\gamma,R)$ at
		$\gamma_1=c(t_1)$ and $c'$ meets $B_w(\gamma',R)$ at
		$\widetilde{\gamma}_2=c'(t_2)$. Since $c$ and $c'$ have the same initial point and the same endpoint at infinity, by \cite[Lem.~III.H.3.3(1)]{BH}, we have
		$d_w\bigl(c(t_2),c'(t_2)\bigr)\leq 2\delta_0$.
		Setting $\gamma_2:=c(t_2)$, we obtain
		\[
		d_w(\gamma_2,\gamma')
		\leq d_w(\gamma_2,\widetilde{\gamma}_2)
		+d_w(\widetilde{\gamma}_2,\gamma')
		\leq 2\delta_0+R.
		\]
		Thus, $c$ meets $B_w(\gamma',2\delta_0+R)$ at $\gamma_2$. By
		Lemma~\ref{lem:dphi-qi}\textup{(2)}, with
		$q(R):=\sqrt d\,\lVert\varphi\rVert\,L(2\delta_0+R)$,
		we have
		\[
		\dph(e,\gamma_1), \dph(e, \gamma_2)\in
		\bigl[m-q(R),m+1+q(R)\bigr].
		\]
		Assume $t_1\leq t_2$, the other case being identical. Then $\gamma_1$
		lies on the geodesic $c([0,t_2])$ from $e$ to $\gamma_2$, so
		Proposition~\ref{prop:coarse-add} gives
		\[
		\dph(\gamma_1,\gamma_2)
		\leq \dph(e,\gamma_2)-\dph(e,\gamma_1)+D_\varphi
		\leq 1+2q(R)+D_\varphi.
		\]
		Hence, by Lemma~\ref{lem:dphi-qi}\textup{(1)},
		\[
		d_w(\gamma_1,\gamma_2)
		\leq A\bigl(1+2q(R)+D_\varphi+B\bigr)
		=:C(R).
		\]
		Consequently, we obtain
		\[
		d_w(\gamma,\gamma')
		\leq R+C(R)+R+2\delta_0.
		\]
		Thus, all such $\gamma'$ lie in the ball of radius
		$2R+2\delta_0+C(R)$ about $\gamma$, and we may take
		\[
		M_R:=\left|B_w\bigl(e,2R+2\delta_0+C(R)\bigr)\right|.
		\]
	\end{proof}

\subsection{Geometric estimates for shadows}

For $\gamma_0,\gamma_1,\gamma_2\in\Gamma$, define
\[
(\gamma_1\mid\gamma_2)_{\gamma_0}
:=
\frac{1}{2}\Bigl(
d_w(\gamma_0,\gamma_1)
+d_w(\gamma_0,\gamma_2)
-d_w(\gamma_1,\gamma_2)
\Bigr);
\]
see \cite[Def.~III.H.1.19]{BH}. Set
$\overline{\Gamma}:=\Gamma\cup\bd$. Following
\cite[Def.~III.H.3.15]{BH}, for $x,y\in\overline{\Gamma}$ and
$\gamma_0\in\Gamma$, define
\begin{equation}\label{Eq:Ext}
(x\mid y)_{\gamma_0}
:=
\sup
\liminf_{n,m\to\infty}
(\gamma_n\mid\gamma_m')_{\gamma_0},
\end{equation}
where the supremum is taken over all sequences $(\gamma_n)$ and
$(\gamma_m')$ in $\Gamma$ converging respectively to $x$ and $y$ in
$\overline{\Gamma}$. If either $x$ or $y$ lies in $\Gamma$, the
corresponding sequence is understood to be constant; see
\cite[Rem.~III.H.3.17(3)]{BH}.

By \cite[Def.~III.H.3.20 and Prop.~III.H.3.21]{BH}, we fix a visual
metric $\varrho$ on $\bd$,  at $e$, such that, for some
$\varepsilon_0>0$ and $C_0\geq1$,
\begin{equation}\label{eq:visual}
	C_0^{-1}e^{-\varepsilon_0(x\mid y)_e}
	\leq \varrho(x,y)
	\leq C_0e^{-\varepsilon_0(x\mid y)_e},
	\qquad x,y\in\bd.
\end{equation}

\begin{lemma}\label{lem:concentration}
	There exists $c_2=c_2(\delta_0)\geq0$ such that, for all
	$\gamma_1,\gamma_2\in\Gamma$, $R\geq0$, and
	$x,y\in\bd\smallsetminus\cO_R(\gamma_1,\gamma_2)$, one has
	$(x\mid y)_{\gamma_2}\geq R-c_2$. Consequently,
	\[
	\sup_{\gamma\in\Gamma}
	\diam_\varrho\bigl(\bd\smallsetminus\cO_R(\gamma,e)\bigr)
	\leq
	C_0e^{-\varepsilon_0(R-c_2)}
	\longrightarrow 0
	\qquad (R\to\infty),
	\]
	with the convention $\diam_\varrho\emptyset=0$.
\end{lemma}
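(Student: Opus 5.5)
The plan is to argue directly with geodesic rays in the $\delta_0$-hyperbolic Cayley graph, using thin triangles and the Gromov product $(\cdot\mid\cdot)_{\gamma_2}$.

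Fix $\gamma_1,\gamma_2$ and $R$, and let $x,y\notin\cO_R(\gamma_1,\gamma_2)$. Choose geodesic rays $c_x=[\gamma_1,x)$ and $c_y=[\gamma_1,y)$. By the definition of the shadow, neither ray meets $B_w(\gamma_2,R)$; every ray from $\gamma_1$ to $x$ or to $y$ avoids this ball. I would pick points far out on the rays, $a_n=c_x(n)$ and $b_m=c_y(m)$, and aim for the lower bound
\[
(a_n\mid b_m)_{\gamma_2}\geq R-c'
\]
for all large $n,m$. Taking $\liminf$ and using the definition \eqref{Eq:Ext} as a supremum then gives $(x\mid y)_{\gamma_2}\geq R-c'$. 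The constant $c'$ may need to be enlarged by a few multiples of $\delta_0$ to absorb the comparison with the sequential Gromov product \cite[Rem.~III.H.3.17]{BH}.

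The key estimate is the standard fact that in a $\delta_0$-hyperbolic geodesic space
\[
(a\mid b)_p\geq d_w\bigl(p,[a,b]\bigr)-4\delta_0 .
\]
More precisely, $d(p,[a,b])\le (a\mid b)_p+4\delta_0$ \cite[III.H.1.17, 1.22]{BH}, which is the needed direction. So it suffices to bound $d_w(\gamma_2,[a_n,b_m])$ from below by $R-O(\delta_0)$. Consider the geodesic triangle with vertices $\gamma_1$, $a_n$, $b_m$, whose sides $[\gamma_1,a_n]\subset c_x$ and $[\gamma_1,b_m]\subset c_y$ avoid $B_w(\gamma_2,R)$.

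I would then argue by contradiction. Suppose a point $z\in[a_n,b_m]$ satisfies $d_w(z,\gamma_2)<R-2\delta_0$. By slimness, $z$ lies within $\delta_0$ of some point $w$ on one of the other two sides, and then $d_w(w,\gamma_2)<R-\delta_0<R$. This contradicts the fact that both of those sides avoid the ball. Hence $d_w(\gamma_2,[a_n,b_m])\geq R-2\delta_0$, which gives $(a_n\mid b_m)_{\gamma_2}\geq R-6\delta_0$. After the passage to boundary points we can take $c_2:=6\delta_0+2\delta_0$, or similar.

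The step I expect to be the main obstacle is exactly this passage to boundary points. The naive bound must be checked against the $\liminf$ definition, and when $x=y$ the product is $+\infty$, which is harmless. Since \eqref{Eq:Ext} is a supremum, exhibiting the specific sequences $a_n\to x$ and $b_m\to y$ suffices, so there is in fact no real loss.

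For the consequence, take $\gamma_1=\gamma$ and $\gamma_2=e$. For $x,y\notin\cO_R(\gamma,e)$ we have $(x\mid y)_e\geq R-c_2$, so \eqref{eq:visual} gives
\[
\varrho(x,y)\leq C_0e^{-\varepsilon_0(R-c_2)} .
\]
This bound is uniform in $\gamma$, which gives the diameter bound, and it tends to $0$ as $R\to\infty$. The empty case is covered by the convention $\diam_\varrho\emptyset=0$.
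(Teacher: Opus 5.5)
Your proof is correct and yields $c_2=6\delta_0$, the same constant as the paper. The bound $(a_n\mid b_m)_{\gamma_2}\geq R-6\delta_0$ holds for all $n,m$. Since \eqref{Eq:Ext} is a supremum, it passes directly to $(x\mid y)_{\gamma_2}$, so the extra $2\delta_0$ you set aside is not needed. Your route differs from the paper's mainly in where the two boundary points $x$ and $y$ are brought together.

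\textbf{The paper's route.} It never forms the triangle $\gamma_1,a_n,b_m$. It applies the distance--Gromov-product estimate $d_w(p,[a,b])\leq(a\mid b)_p+4\delta_0$ separately on each ray, obtaining $(x\mid\gamma_1)_{\gamma_2}\geq R-4\delta_0$ and $(y\mid\gamma_1)_{\gamma_2}\geq R-4\delta_0$. It then combines these with the boundary form of the four-point inequality, \cite[Rem.~III.H.3.17(4)]{BH}, losing a further $2\delta_0$.

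\textbf{Your route.} You combine at the finite level, using $\delta_0$-slimness of the triangle with vertices $\gamma_1,a_n,b_m$. You then apply the same distance estimate once, to the third side $[a_n,b_m]$.

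\textbf{What each buys.} Your argument is slightly more self-contained. All the hyperbolic geometry happens at vertices of $\Gamma$, and the only boundary input is the definition of the extended Gromov product, so you never need the quasi-ultrametric inequality on $\partial_\infty\Gamma$. The paper's argument reuses the one-ray estimate $(x\mid\gamma_1)_{\gamma_2}\geq R-O(\delta_0)$, which also drives Lemmas~\ref{lem:shrinking} and~\ref{lem:engulfing}, so it fits the surrounding toolkit.

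\textbf{A small technical point, common to both proofs.} $B_w(\gamma_2,R)$ consists of group elements. A ray avoiding it is therefore only guaranteed to stay at distance $>R-\tfrac12$ from $\gamma_2$ at non-vertex points. Your margin of $\delta_0\geq1$ absorbs this: if $d_w(w,\gamma_2)<R-\delta_0$, the nearest vertex of that side lies in the ball, which gives the contradiction.
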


\begin{proof}
If $x,y\notin\cO_R(\gamma_1,\gamma_2)$, choose
$\gamma_n\in[\gamma_1,x)$ such that $\gamma_n\to x$. Then
$d_w(\gamma_2,[\gamma_1,\gamma_n])>R$. By the proof that \textup{(1)} implies \textup{(3)} in
\cite[Prop.~III.H.1.17]{BH} and the observation following
\cite[Def.~III.H.1.19]{BH}, one has
 $(\gamma_n\mid\gamma_1)_{\gamma_2}\geq R-4\delta_0$. Hence, 
 $(x\mid\gamma_1)_{\gamma_2}\geq R-4\delta_0$ by \eqref{Eq:Ext}. The same argument
 with $y$ in place of $x$, we obtain
 $(y\mid\gamma_1)_{\gamma_2}
 \geq R-4\delta_0$. Applying \cite[Rem.~III.H.3.17(4)]{BH}, we obtain the first assertion. The second assertion
 follows from \eqref{eq:visual} by taking $\gamma_1=\gamma$ and
 $\gamma_2=e$.
\end{proof}

	\begin{lemma}\label{lem:shrinking}
		For every $\gamma\in\Gamma$ and $R\geq 0$, if
		$x,y\in\cO_R(e,\gamma)$, then
		$(x\mid y)_e\geq\lvert\gamma\rvert-R-3\delta_0$; consequently,
		$\diam_\varrho\cO_R(e,\gamma)\leq
		C_0e^{-\varepsilon_0(\lvert\gamma\rvert-R-3\delta_0)}$.
	\end{lemma}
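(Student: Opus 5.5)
Fix $\gamma\in\Gamma$, $R\geq0$ and $x,y\in\cO_R(e,\gamma)$. The plan is to reduce the Gromov product $(x\mid y)_e$ to Gromov products of points on geodesic rays, and then bound those using the fact that both rays pass near $\gamma$. The diameter estimate then follows directly from \eqref{eq:visual}.

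By Definition~\ref{def:word-shadows}, we choose geodesic rays $c=[e,x)$ and $c'=[e,y)$ together with times $s,t$ such that $d_w(c(s),\gamma)\leq R$ and $d_w(c'(t),\gamma)\leq R$. By the triangle inequality, $s=|c(s)|\geq|\gamma|-R$, and likewise $t\geq|\gamma|-R$.

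For $n\geq s$ and $m\geq t$, we estimate $(c(n)\mid c'(m))_e$ as follows. Since $c$ and $c'$ are geodesics from $e$,
\[
d_w(c(n),c'(m))\leq (n-s)+d_w(c(s),c'(t))+(m-t)\leq n+m-s-t+2R.
\]
Using $d_w(e,c(n))=n$ and $d_w(e,c'(m))=m$, this gives
\[
(c(n)\mid c'(m))_e\geq \tfrac12\bigl(s+t-2R\bigr)\geq |\gamma|-2R.
\]
This lower bound is too weak, since we want $|\gamma|-R-3\delta_0$. To fix this, we compare the rays at the common time $T:=|\gamma|-R\leq\min(s,t)$. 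We have $d_w(c(T),c(s))=s-T$, and
\[
s-T=|c(s)|-|\gamma|+R\leq 2R.
\]
That is still too crude. The better approach is thin triangles. Consider the geodesic triangle with vertices $e$, $c(s)$, $c'(t)$, whose third side has length at most $2R$. The point $c(T)$ lies on the side from $e$ to $c(s)$. Its distance from $c(s)$ is $s-T$, and its distance from $e$ is $T$. Insize/tripod considerations then give
\[
d_w(c(T),c'(T))\leq 4\delta_0
\]
provided $T\leq (c(s)\mid c'(t))_e$. We do have
\[
(c(s)\mid c'(t))_e\geq \tfrac12(s+t-2R)\geq |\gamma|-2R,
\]
which again loses $R$.

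So I would instead use the direct Gromov-product inequality. For a geodesic ray, $(c(n)\mid\gamma)_e\geq s-d_w(c(s),\gamma)\geq |\gamma|-R$, because
\[
(c(n)\mid\gamma)_e=\tfrac12\bigl(n+|\gamma|-d_w(c(n),\gamma)\bigr)
\]
and $d_w(c(n),\gamma)\leq n-s+d_w(c(s),\gamma)$. Therefore
\[
(c(n)\mid\gamma)_e\geq \tfrac12\bigl(s+|\gamma|-R\bigr)\geq|\gamma|-R.
\]
The same bound holds for $c'(m)$. The $\delta_0$-hyperbolicity inequality
\[
(a\mid b)_e\geq\min\{(a\mid\gamma)_e,(\gamma\mid b)_e\}-\delta_0
\]
then yields $(c(n)\mid c'(m))_e\geq|\gamma|-R-\delta_0$.

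Passing to the boundary via \eqref{Eq:Ext}, and using \cite[Rem.~III.H.3.17]{BH} to absorb at most $2\delta_0$ more in the comparison between sequential and boundary products, we obtain $(x\mid y)_e\geq|\gamma|-R-3\delta_0$. The diameter bound is then immediate from the upper inequality in \eqref{eq:visual}.

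The main obstacle is the bookkeeping of hyperbolicity constants, making sure the total loss is at most $3\delta_0$ under the paper's convention for $\delta_0$-hyperbolicity. The crude triangle-inequality bounds tried first each lose an extra $R$, which is why the argument routes through the intermediate point $\gamma$ and the four-point inequality.
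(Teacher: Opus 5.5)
Your final argument is correct and is essentially the paper's proof: you bound $(c(n)\mid\gamma)_e\geq\tfrac12(s+|\gamma|-R)\geq|\gamma|-R$ along each ray and then apply the four-point $\delta_0$-hyperbolicity inequality through $\gamma$. The one difference is the order of steps. You apply the inequality to the interior points $c(n),c'(m)$, losing $\delta_0$, and then pass to the boundary; there no further loss occurs, because \eqref{Eq:Ext} is a supremum over sequences, so the extra $2\delta_0$ you set aside is unnecessary. The paper instead passes $(x\mid\gamma)_e$ to the boundary first and uses \cite[Rem.~III.H.3.17(4)]{BH}, losing $2\delta_0$.

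Separately, the intermediate claim $s-d_w(c(s),\gamma)\geq|\gamma|-R$ in your first display is false in general (e.g.\ $s=|\gamma|-R$ with $d_w(c(s),\gamma)=R>0$), but it is harmless, since the computation that follows gives the needed bound directly.
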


	
	\begin{proof}
		Choose a geodesic ray $[e,x)$ meeting $B_w(\gamma,R)$ at $u$ and a sequence $\gamma_n\in[e,x)$ converging to $x$ such that $u\in[e,\gamma_n]$ for every $n$. Since $d_w(u,\gamma)\leq R$, we have $|u|\geq|\gamma|-R$, while $d_w(\gamma_n,\gamma)\leq d_w(\gamma_n,u)+R=|\gamma_n|-|u|+R$. Consequently, $(\gamma_n\mid\gamma)_e\geq\frac12(|u|+|\gamma|-R)\geq|\gamma|-R$, and hence $(x\mid\gamma)_e\geq|\gamma|-R$ by \eqref{Eq:Ext}. Similarly, $(y\mid\gamma)_e\geq|\gamma|-R$. Thus, \cite[Rem.~III.H.3.17(4)]{BH} yields $(x\mid y)_e\geq\min\{(x\mid\gamma)_e,(\gamma\mid y)_e\}-2\delta_0\geq|\gamma|-R-2\delta_0\geq|\gamma|-R-3\delta_0$. The diameter estimate follows from \eqref{eq:visual}.
	\end{proof}

	\begin{lemma}\label{lem:engulfing}
		There exists $R_1=R_1(\delta_0)\geq1$ such that, for all $R\geq R_1$ and $\gamma_1,\gamma_2\in\Gamma$, if $\cO_R(e,\gamma_1)\cap\cO_R(e,\gamma_2)\neq\emptyset$ and $\lvert\gamma_2\rvert\geq\lvert\gamma_1\rvert$, then $\cO_R(e,\gamma_2)\subseteq\cO_{6R}(e,\gamma_1)$.
	\end{lemma}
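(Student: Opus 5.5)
We argue directly in the Cayley graph, using only $\delta_0$-hyperbolicity. The tools are the fellow-travelling of geodesic rays with a common origin and endpoint \cite[Lem.~III.H.3.3(1)]{BH} and the thinness of geodesic triangles.

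Fix $R\geq R_1$, where $R_1$ is a constant of the form $c\,\delta_0$ to be determined. Pick $z\in\cO_R(e,\gamma_1)\cap\cO_R(e,\gamma_2)$, together with geodesic rays $c_1,c_2$ from $e$ to $z$ and times $t_1,t_2$ such that $d_w(c_j(t_j),\gamma_j)\leq R$. Then $|t_j-|\gamma_j||\leq R$. Since $c_1$ and $c_2$ share both endpoints, $d_w(c_1(t),c_2(t))\leq 2\delta_0$ for all $t$. Set $u:=c_1(t_1)$. Then $d_w(u,\gamma_1)\leq R$, and $|u|=t_1\leq |\gamma_1|+R\leq|\gamma_2|+R\leq t_2+2R$.

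\textbf{Step 1 (locating $u$ near the ray $c_2$).} If $t_1\leq t_2$, then $c_2(t_1)$ lies on $c_2([0,t_2])$ and is within $2\delta_0$ of $u$. If $t_2<t_1\leq t_2+2R$, we use $c_2(t_2)$ instead, which is within $d_w(c_2(t_2),c_1(t_2))+(t_1-t_2)\leq 2\delta_0+2R$ of $u$. In either case there is a point $v\in c_2([0,t_2])$ with $d_w(u,v)\leq 2R+2\delta_0$.

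\textbf{Step 2 (shadows of $\gamma_2$ pass near $v$).} Let $y\in\cO_R(e,\gamma_2)$, and let $c$ be a ray from $e$ to $y$ with $d_w(c(s),\gamma_2)\leq R$. Consider the geodesic triangle with vertices $e$, $c_2(t_2)$, $c(s)$, using the sides $c_2([0,t_2])$, $c([0,s])$, and a segment $\sigma$ joining $c_2(t_2)$ to $c(s)$. The side $\sigma$ has length at most $2R$, since both endpoints lie in $B_w(\gamma_2,R)$. By $\delta_0$-slimness, $v$ lies within $\delta_0$ of either $c([0,s])$ or $\sigma$. In the second case, $v$ is within $\delta_0+2R$ of $c(s)$.

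Combining the two steps, the ray $c$ passes within
\[
(2R+2\delta_0)+(2R+\delta_0)+R=5R+3\delta_0
\]
of $\gamma_1$: move from $\gamma_1$ to $u$ (distance $R$), from $u$ to $v$ (at most $2R+2\delta_0$), and from $v$ to the ray $c$ (at most $2R+\delta_0$). Choosing $R_1\geq 3\delta_0$ gives $5R+3\delta_0\leq 6R$, hence $y\in\cO_{6R}(e,\gamma_1)$. Since $y\in\cO_R(e,\gamma_2)$ was arbitrary, this proves $\cO_R(e,\gamma_2)\subseteq\cO_{6R}(e,\gamma_1)$.

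\textbf{Main obstacle.} The delicate point is the bookkeeping in Step 1. In the case $t_1>t_2$, the point $u$ may lie slightly beyond $c_2(t_2)$; this is exactly where the hypothesis $|\gamma_2|\geq|\gamma_1|$ is needed, since it bounds $t_1-t_2$ by $2R$. We should also verify that the constants really close up: the estimate $5R+3\delta_0\leq 6R$ is what forces the requirement $R_1\geq 3\delta_0$. A secondary subtlety is that word geodesic rays are used only at integer times, but this costs only additive constants, which are absorbed by enlarging $R_1$.
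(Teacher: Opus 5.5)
Your proof is correct, but it takes a different route from the paper's.

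The paper works with Gromov products extended to $\bd$. The shadow conditions give $(x\mid\gamma_1)_e\geq|\gamma_1|-R$ and $(x\mid\gamma_2)_e,(y\mid\gamma_2)_e\geq|\gamma_2|-R$. Two applications of the four-point inequality \cite[Rem.~III.H.3.17(4)]{BH}, with $|\gamma_2|\geq|\gamma_1|$ entering through the minimum, give $(y\mid\gamma_1)_e\geq|\gamma_1|-R-4\delta_0$. The paper then passes to vertices on a ray $[e,y)$ via Rem.~III.H.3.17(5) and converts back to distances via Prop.~III.H.1.17. This yields $d_w(\gamma_1,[e,y))\leq R+10\delta_0$, so $R_1=2\delta_0$ suffices.

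You stay with rays and triangles instead. You use fellow-travelling of $c_1,c_2$ and the case split $t_1\leq t_2$ versus $t_1>t_2$; here $|\gamma_2|\geq|\gamma_1|$ bounds the overshoot by $2R$. This places a vertex $v$ of $c_2([0,t_2])$ within $2R+2\delta_0$ of $u$. A single slim triangle then transfers $v$ to an arbitrary ray in the shadow of $\gamma_2$, giving $5R+3\delta_0$ and $R_1=3\delta_0$.

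Your argument is more elementary and explicit: it avoids the sup--liminf definition of the boundary Gromov product and the associated limit passages. The paper's argument gives the sharper additive inclusion $\cO_R(e,\gamma_2)\subseteq\cO_{R+10\delta_0}(e,\gamma_1)$. Your bound grows like $5R$, which is exactly why the factor $6$ is needed.

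The integer-time issue you flag costs nothing. In the case where $v$ is close to $\sigma$, the witness $c(s)\in B_w(\gamma_2,R)$ is already a vertex. In the other case the bound is $3R+3\delta_0$, and adding $1/2$ to reach a vertex of $c$ keeps it below $5R+3\delta_0$. So $R_1=3\delta_0$ works without enlargement.
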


\begin{proof}
	Fix $x\in\cO_R(e,\gamma_1)\cap\cO_R(e,\gamma_2)$ and
	$y\in\cO_R(e,\gamma_2)$. By the proof of
	Lemma~\ref{lem:shrinking}, we obtain
	$(x\mid\gamma_1)_e\geq\lvert\gamma_1\rvert-R$ and
	$(x\mid\gamma_2)_e,(y\mid\gamma_2)_e\geq
	\lvert\gamma_2\rvert-R$. Applying
	\cite[Rem.~III.H.3.17(4)]{BH} twice and using
	$\lvert\gamma_2\rvert\geq\lvert\gamma_1\rvert$, we obtain
	$(\gamma_1\mid\gamma_2)_e\geq
	\lvert\gamma_1\rvert-R-2\delta_0$ and hence
	$(y\mid\gamma_1)_e\geq
	\lvert\gamma_1\rvert-R-4\delta_0$. Let $[\smash{e,y})$ be a geodesic ray and choose
	$\gamma_n\in[e,y)$ with $\gamma_n$ converging to $y$. By
	\cite[Rem.~III.H.3.17(5)]{BH},
	$\liminf_{n\to\infty}(\gamma_n\mid\gamma_1)_e\geq
	\lvert\gamma_1\rvert-R-6\delta_0$. Moreover, the proof of
	\cite[Prop.~III.H.1.17, (1)$\Rightarrow$(3)]{BH}, together
	with the observation following
	\cite[Def.~III.H.1.19]{BH}, gives
	$d_w(\gamma_1,[e,\gamma_n])\leq
	(e\mid\gamma_n)_{\gamma_1}+4\delta_0
	=\lvert\gamma_1\rvert-(\gamma_n\mid\gamma_1)_e+4\delta_0$.
	Letting $n\to\infty$, we obtain 
	 $d_w(\gamma_1,[e,y))\leq R+10\delta_0$. Therefore, for
	$R\geq R_1:=2\delta_0$, one has
	$d_w(\gamma_1,[e,y))\leq6R$, and hence
	$y\in\cO_{6R}(e,\gamma_1)$. Since $y$ was arbitrary, the
	assertion follows.
\end{proof}

	The next result is a Vitali covering lemma for word shadows; compare
	\cite[Lem.~11.5]{PPS}.
	
	\begin{lemma}\label{lem:vitali}
		Let $R\geq R_1$ and $I\subseteq\Gamma$. Then there exists
		$J\subseteq I$ such that the shadows
		$\{\cO_R(e,\gamma)\}_{\gamma\in J}$ are pairwise disjoint and
		\[
		\bigcup_{\gamma\in I}\cO_R(e,\gamma)
		\subseteq
		\bigcup_{\gamma\in J}\cO_{6R}(e,\gamma).
		\]
	\end{lemma}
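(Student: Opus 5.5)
The plan is the standard greedy (Vitali-type) selection by increasing word length, with Lemma~\ref{lem:engulfing} doing the geometric work.

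\emph{Step 1: selection.} Discard any $\gamma\in I$ with $\cO_R(e,\gamma)=\emptyset$, since these contribute nothing to either union. Enumerate the remaining elements of $I$ in order of non-decreasing word length $\lvert\gamma\rvert$; this is possible because $\Gamma$ is countable and each sphere is finite. Call the enumeration $\gamma^{(1)},\gamma^{(2)},\dots$. Build $J$ inductively: put $\gamma^{(k)}$ into $J$ if and only if $\cO_R(e,\gamma^{(k)})$ is disjoint from $\cO_R(e,\gamma)$ for every $\gamma$ already chosen in $J$. By construction the shadows $\{\cO_R(e,\gamma)\}_{\gamma\in J}$ are pairwise disjoint, and $J\subseteq I$.

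\emph{Step 2: covering.} Let $\gamma'\in I$ be an element that was not selected. Then there is an already chosen $\gamma\in J$, which appears earlier in the enumeration and hence satisfies $\lvert\gamma\rvert\le\lvert\gamma'\rvert$, such that $\cO_R(e,\gamma)\cap\cO_R(e,\gamma')\neq\emptyset$. Since $R\geq R_1$, Lemma~\ref{lem:engulfing} applies with $\gamma_1=\gamma$ and $\gamma_2=\gamma'$ and gives
\[
\cO_R(e,\gamma')\subseteq\cO_{6R}(e,\gamma).
\]
For $\gamma\in J$ itself we have $\cO_R(e,\gamma)\subseteq\cO_{6R}(e,\gamma)$ by monotonicity in $R$ (Lemma~\ref{lem:shadow-basic}(1)). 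Together these give
\[
\bigcup_{\gamma\in I}\cO_R(e,\gamma)\subseteq\bigcup_{\gamma\in J}\cO_{6R}(e,\gamma).
\]

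\emph{Main point.} The only step needing care is that the witness $\gamma$ precedes $\gamma'$ in the enumeration. This is what guarantees $\lvert\gamma\rvert\le\lvert\gamma'\rvert$, which is exactly the hypothesis Lemma~\ref{lem:engulfing} requires. The non-decreasing length ordering makes this automatic, and since each step of the construction is finite, no appeal to Zorn's lemma is needed.
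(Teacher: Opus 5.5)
Your proposal is correct and matches the paper's proof: you enumerate $I$ by non-decreasing word length, select greedily, and apply Lemma~\ref{lem:engulfing} to each rejected $\gamma'$ to get $\cO_R(e,\gamma')\subseteq\cO_{6R}(e,\gamma)$. Discarding empty shadows first is harmless but unnecessary, since the greedy rule would simply put such an element into $J$ without affecting disjointness or the covering.
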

	
	\begin{proof}
		If $I=\varnothing$, take $J=\varnothing$. Otherwise, enumerate
		$I=\{\gamma_1,\gamma_2,\ldots\}$ so that
		\[
		|\gamma_1|\leq|\gamma_2|\leq\cdots,
		\]
		using a finite enumeration when $I$ is finite. This is possible
		since word balls are finite. Construct $J$ inductively: include
		$\gamma_n$ if and only if $\cO_R(e,\gamma_n)$ is disjoint from
		$\cO_R(e,\gamma_m)$ for every $m<n$ with $\gamma_m\in J$.
		The selected shadows are pairwise disjoint by construction.
		If $\gamma_n\notin J$, there exists $m<n$ with $\gamma_m\in J$
		such that $\cO_R(e,\gamma_n)\cap\cO_R(e,\gamma_m)\neq\varnothing.$
		Since $|\gamma_m|\leq|\gamma_n|$, Lemma~\ref{lem:engulfing} gives
		$\cO_R(e,\gamma_n)\subseteq\cO_{6R}(e,\gamma_m).$
		If $\gamma_n\in J$, then
		$\cO_R(e,\gamma_n)\subseteq\cO_{6R}(e,\gamma_n)$.
		Taking unions yields
		\[
		\bigcup_{\gamma\in I}\cO_R(e,\gamma)
		\subseteq
		\bigcup_{\gamma\in J}\cO_{6R}(e,\gamma),
		\]
		as required.
	\end{proof}
	
	\section{Twisted conformal densities}\label{sec:densities}
	
	Throughout this section, let $\Gamma<\sG$ be a Borel--Anosov subgroup. Let $\sH\lhd\Gamma$ be a
	non-elementary normal subgroup and let $\chi\in\Hom(\sH,\bR)$.
	We assume that $\chi$ is invariant under the conjugation action of
	$\Gamma$ on $\sH$, that is,
	\begin{equation}\label{eq:conj-inv}
		\chi(\gamma h\gamma^{-1})=\chi(h)
		\qquad
		(\gamma\in\Gamma,\ h\in \sH).
	\end{equation}
	This condition is automatic whenever $\chi$ is the restriction to $\sH$
	of a character of $\Gamma$; in particular, it holds when $\chi=0$.

	\begin{dfn}\label{def:twisted}
		Let $\delta>0$. A \emph{twisted $(\varphi,\chi)$-conformal density of dimension
			$\delta$ for $\sH$} is a family $\nu=(\nu_x)_{x\in \mathbb X}$ of finite non-zero Borel measures
		on $\cF$, all supported on $\Lam$, such that for all $x,y\in \mathbb X$, $h\in \sH$:
		\begin{align}
			\nu_x&=e^{-\delta\,\varphi(\beta_\bullet(x,y))}\,\nu_y
			&&\text{(conformality)},\label{eq:conf}\\
			h_\ast\nu_x&=e^{-\chi(h)}\,\nu_{hx}
			&&\text{(twisted equivariance)}.\label{eq:equiv}
		\end{align}
		We write $\cM_{\varphi,\chi}(\delta,\sH)$ for the set of such families. Denote 
		$\cM_\varphi(\delta,\sH):=\cM_{\varphi,0}(\delta,\sH)$ and call its elements \emph{$\varphi$-conformal densities of dimension
			$\delta$ for $\sH$}. 
	\end{dfn}

	
	For a finite Borel measure $\mu$ on $\cF$, we write
	$\lVert\mu\rVert:=\mu(\cF)$ for its total mass. Since each $\nu_x$ is supported on $\Lam$, we have
	$\lVert\nu_x\rVert=\nu_x(\Lam)$.
	
	Via the limit map $\theta$, we identify $\partial_\infty\Gamma$ with
	$\Lambda_\Gamma$, and hence each shadow $\cO_R(\gamma_1,\gamma_2)$
	with its image in $\Lambda_\Gamma$. Thus, for every Borel measure
	$\mu$ on $\cF$ supported on $\Lambda_\Gamma$, we write
	$\mu(\cO_R(\gamma_1,\gamma_2))
:=\mu(\theta(\cO_R(\gamma_1,\gamma_2)))$.

	\begin{lemma}\label{lem:density-basic}
		Let $\nu\in\cM_{\varphi,\chi}(\delta,\sH)$.
		\begin{enumerate}
			\item $\lVert\nu_{hx}\rVert=e^{\chi(h)}\lVert\nu_x\rVert$ for $h\in\sH$, $x\in\mathbb X$.
			\item For every Borel $A\subseteq\cF$ and $x,y\in\mathbb X$,
			$\displaystyle \nu_x(A)=\int_A e^{-\delta\varphi(\beta_\xi(x,y))}\,d\nu_y(\xi)$.
			\item For $h\in\sH$ and $x\in\mathbb X$,
			$\displaystyle \frac{d(h_\ast\nu_x)}{d\nu_x}(\xi)
			= e^{-\chi(h)}\,e^{-\delta\varphi(\beta_\xi(hx,\,x))}$, a positive continuous
			function of $\xi\in\cF$.
			\item $\supp\nu_x=\Lam$ for every $x\in\mathbb X$.
			\item $\lvert\log\lVert\nu_x\rVert-\log\lVert\nu_y\rVert\rvert
			\leq\delta\,\lVert\varphi\rVert\,\lVert\kappa(x,y)\rVert$ for all $x,y\in\mathbb X$.
		\end{enumerate}
	\end{lemma}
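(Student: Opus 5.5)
Items (1)–(3) and (5) follow directly from the two defining relations \eqref{eq:conf} and \eqref{eq:equiv}, together with the basic properties of the Busemann cocycle in Lemma~\ref{lem:beta-props}. Item (4) is the only one that needs a dynamical input, namely minimality of $\Gamma$ (or of $\sH$) on $\Lam$.

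For (1), evaluate \eqref{eq:equiv} on $\cF$. Since $h_\ast\nu_x(\cF)=\nu_x(\cF)$, we get $\lVert\nu_x\rVert=e^{-\chi(h)}\lVert\nu_{hx}\rVert$. Item (2) is just \eqref{eq:conf} written as an integral. For (3), combine \eqref{eq:equiv} with \eqref{eq:conf} applied to the pair $(hx,x)$:
\[
h_\ast\nu_x=e^{-\chi(h)}\nu_{hx}=e^{-\chi(h)}e^{-\delta\varphi(\beta_\bullet(hx,x))}\nu_x .
\]
Continuity and positivity of the density come from Lemma~\ref{lem:beta-props}(4). For (5), Lemma~\ref{lem:beta-props}(3) gives $|\delta\varphi(\beta_\xi(x,y))|\le\delta\lVert\varphi\rVert\lVert\kappa(x,y)\rVert$ uniformly in $\xi$. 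Integrating over $\cF$ in (2) bounds $\lVert\nu_x\rVert/\lVert\nu_y\rVert$ between $e^{\mp\delta\lVert\varphi\rVert\lVert\kappa(x,y)\rVert}$, and taking logarithms gives (5).

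For (4), the inclusion $\supp\nu_x\subseteq\Lam$ holds because $\nu_x$ is supported on the closed set $\Lam$. By (2) all the $\nu_x$ are mutually equivalent with continuous positive densities, so $\supp\nu_x$ does not depend on $x$; call it $\Sigma$. Then (3) shows that $h_\ast\nu_x$ and $\nu_x$ are equivalent, so $\Sigma$ is a nonempty closed $\sH$-invariant subset of $\Lam$ (nonempty since $\nu_x\neq0$). It therefore suffices to show that $\sH$ acts minimally on $\Lam$.

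The minimality step is the only real obstacle. Via the equivariant homeomorphism $\theta$, $\Lam$ is identified with $\bd$. Since $\sH$ is a non-elementary normal subgroup of the hyperbolic group $\Gamma$, its limit set in $\bd$ is a nonempty closed $\Gamma$-invariant set (normality gives $\gamma\Lambda(\sH)=\Lambda(\gamma\sH\gamma^{-1})=\Lambda(\sH)$). Minimality of $\Gamma$ on $\bd$ then forces $\Lambda(\sH)=\bd$. Finally, a non-elementary subgroup acts minimally on its own limit set: every nonempty closed invariant subset of $\bd$ contains the limit set, because it contains the attracting fixed points of loxodromic elements of $\sH$, and these are dense in the limit set. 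Hence $\Sigma=\Lam$. If one prefers not to pass through $\partial_\infty\sH$, the same conclusion follows by applying north–south dynamics of loxodromic $h\in\sH$ directly to a point of $\Sigma$ that is not the repelling point of $h$.
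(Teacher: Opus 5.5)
Your proposal is correct and follows the paper's proof essentially line by line. Items (1)--(3) and (5) come straight from \eqref{eq:conf}, \eqref{eq:equiv} and Lemma~\ref{lem:beta-props}, and (4) is reduced to showing that every nonempty closed $\sH$-invariant subset of $\bd\cong\Lam$ is all of $\bd$. The only difference is that the paper cites Kapovich--Benakli for this minimality statement for the normal non-elementary subgroup $\sH$, whereas you sketch it via $\Lambda(\sH)=\bd$ and north--south dynamics. In that sketch, the point of $\Sigma$ different from $h^-$ exists because $\Sigma$ cannot be a single $\sH$-fixed point, by non-elementarity.
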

	
	\begin{proof}
		
(1) By \eqref{eq:equiv}, we have $\lVert\nu_x\rVert=\lVert h_\ast\nu_x\rVert
=e^{-\chi(h)}\lVert\nu_{hx}\rVert$.
		
		(2) This is precisely the integral form of \eqref{eq:conf}.
		
	(3) Combining \eqref{eq:equiv} and \eqref{eq:conf}, we obtain
	$h_\ast\nu_x=e^{-\chi(h)}\nu_{hx}
	=e^{-\chi(h)}e^{-\delta\varphi(\beta_\bullet(hx,x))}\nu_x$,
	which immediately yields the asserted Radon--Nikodym derivative. 
	
(4) By \eqref{eq:conf}, $\supp\nu_x$ is independent of $x$. It is a
non-empty closed subset of $\Lam$ and is $\sH$-invariant by
\eqref{eq:equiv}. Since $\theta\colon\bd\to\Lam$ is an
$\sH$-equivariant homeomorphism, $\theta^{-1}(\supp\nu_x)$ is a
non-empty closed $\sH$-invariant subset of $\bd$. By
\cite[Prop.~12.1(1) and Thm.~12.2(5)]{KB}, we obtain $\bd\subseteq\theta^{-1}(\supp\nu_x)$.
Therefore, $\supp\nu_x=\Lam$.
		
	(5) By (2) and Lemma~\ref{lem:beta-props}(3), we have
	\[
	e^{-\delta\lVert\varphi\rVert\lVert\kappa(x,y)\rVert}
	\leq
	\frac{\lVert\nu_x\rVert}{\lVert\nu_y\rVert}
	\leq
	e^{\delta\lVert\varphi\rVert\lVert\kappa(x,y)\rVert}.
	\]
	Taking logarithms proves (5).
	\end{proof}

\begin{dfn}
		For $\gamma\in\Gamma$ and $\nu\in\cM_{\varphi,\chi}(\delta,\sH)$, define
	\[
	\nu^{[\gamma]}_x
	:=\frac{1}{\lVert\nu_{\gamma o}\rVert}\,(\gamma^{-1})_\ast\nu_{\gamma x}
	\qquad (x\in\mathbb X).
	\]
\end{dfn}

\begin{lemma}\label{lem:translates}
	For $\gamma\in\Gamma$ and $\nu\in\cM_{\varphi,\chi}(\delta,\sH)$, $\nu^{[\gamma]}\in\cM_{\varphi,\chi}(\delta,\sH)$ and
	$\lVert\nu^{[\gamma]}_o\rVert=1$. Moreover, for every $R\geq0$ and
	$\gamma_1,\gamma_2\in\Gamma$,
	\begin{equation}\label{eq:fullness-translation}
		\frac{\nu_{\gamma_2o}\bigl(\cO_R(\gamma_1,\gamma_2)\bigr)}
		{\lVert\nu_{\gamma_2o}\rVert}
		=
		\nu^{[\gamma_2]}_o
		\bigl(\cO_R(\gamma_2^{-1}\gamma_1,e)\bigr).
	\end{equation}
\end{lemma}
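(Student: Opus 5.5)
We check the two defining identities \eqref{eq:conf} and \eqref{eq:equiv} directly for $\nu^{[\gamma]}$, and then obtain \eqref{eq:fullness-translation} by unwinding the definition together with Lemma~\ref{lem:shadow-basic}(1).

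\emph{Preliminaries.} Each $\nu^{[\gamma]}_x$ is a finite non-zero Borel measure, since $\lVert\nu_{\gamma o}\rVert>0$. It is supported on $\gamma^{-1}\Lam=\Lam$, by $\Gamma$-invariance of $\Lam=\theta(\bd)$. The normalization is immediate:
\[
\lVert\nu^{[\gamma]}_o\rVert=\lVert(\gamma^{-1})_\ast\nu_{\gamma o}\rVert/\lVert\nu_{\gamma o}\rVert=1.
\]

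\emph{Conformality.} Fix $x,y\in\mathbb X$ and a Borel set $A$. By Lemma~\ref{lem:density-basic}(2) applied at the points $\gamma x,\gamma y$,
\[
\nu_{\gamma x}(\gamma A)=\int_{\gamma A}e^{-\delta\varphi(\beta_\eta(\gamma x,\gamma y))}\,d\nu_{\gamma y}(\eta).
\]
Substitute $\eta=\gamma\xi$ and use the invariance $\beta_{\gamma\xi}(\gamma x,\gamma y)=\beta_\xi(x,y)$ from Lemma~\ref{lem:beta-props}(1). This gives
\[
(\gamma^{-1})_\ast\nu_{\gamma x}=e^{-\delta\varphi(\beta_\bullet(x,y))}(\gamma^{-1})_\ast\nu_{\gamma y}.
\]
Dividing by the constant $\lVert\nu_{\gamma o}\rVert$ yields \eqref{eq:conf} for $\nu^{[\gamma]}$.

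\emph{Twisted equivariance.} This is the only step that uses normality of $\sH$ and hypothesis \eqref{eq:conj-inv}. For $h\in\sH$ put $h':=\gamma h\gamma^{-1}\in\sH$, so that $\gamma h=h'\gamma$. Then
\[
h_\ast(\gamma^{-1})_\ast\nu_{\gamma x}=(\gamma^{-1})_\ast h'_\ast\nu_{\gamma x}.
\]
By \eqref{eq:equiv} applied to $h'$,
\[
(\gamma^{-1})_\ast h'_\ast\nu_{\gamma x}=e^{-\chi(h')}(\gamma^{-1})_\ast\nu_{h'\gamma x}=e^{-\chi(h)}(\gamma^{-1})_\ast\nu_{\gamma(hx)},
\]
where the last equality uses $\chi(h')=\chi(h)$ from \eqref{eq:conj-inv} and $h'\gamma x=\gamma hx$. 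Dividing by $\lVert\nu_{\gamma o}\rVert$ gives $h_\ast\nu^{[\gamma]}_x=e^{-\chi(h)}\nu^{[\gamma]}_{hx}$. Hence $\nu^{[\gamma]}\in\cM_{\varphi,\chi}(\delta,\sH)$.

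\emph{Identity \eqref{eq:fullness-translation}.} Apply the definition with $\gamma=\gamma_2$ and $x=o$:
\[
\nu^{[\gamma_2]}_o(B)=\nu_{\gamma_2 o}(\gamma_2B)/\lVert\nu_{\gamma_2 o}\rVert .
\]
Take $B=\theta(\cO_R(\gamma_2^{-1}\gamma_1,e))$. By $\Gamma$-equivariance of $\theta$ and Lemma~\ref{lem:shadow-basic}(1),
\[
\gamma_2B=\theta\bigl(\gamma_2\cO_R(\gamma_2^{-1}\gamma_1,e)\bigr)=\theta(\cO_R(\gamma_1,\gamma_2)),
\]
which gives \eqref{eq:fullness-translation}.

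All steps are routine. The only point needing care is the equivariance step, where normality lets us write $\gamma h=h'\gamma$ with $h'\in\sH$, and conjugation invariance of $\chi$ lets us replace $\chi(h')$ by $\chi(h)$.
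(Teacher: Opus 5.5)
Your proposal is correct and follows essentially the same route as the paper. Both check conformality via the invariance $\beta_{\gamma\xi}(\gamma x,\gamma y)=\beta_\xi(x,y)$ (you phrase it through the integral form rather than Radon--Nikodym derivatives, which is only cosmetic). Both obtain twisted equivariance by conjugating $h$ into $\gamma h\gamma^{-1}\in\sH$ and invoking \eqref{eq:conj-inv}, and both derive \eqref{eq:fullness-translation} from Lemma~\ref{lem:shadow-basic}(1).
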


\begin{proof}
	By definition and the $\Gamma$-invariance of $\Lam$, each
	$\nu^{[\gamma]}_x$ is a finite non-zero Borel measure supported on $\Lam$. 
	To verify conformality, let $x,y\in\mathbb X$ and $\xi\in\cF$. By the
	definition of $\nu^{[\gamma]}$, the conformality of $\nu$, we have
	\[
	\begin{aligned}
		\frac{d\nu^{[\gamma]}_x}{d\nu^{[\gamma]}_y}(\xi)
		=
		\frac{
			d\bigl(\lVert\nu_{\gamma o}\rVert^{-1}
			(\gamma^{-1})_\ast\nu_{\gamma x}\bigr)
		}{
			d\bigl(\lVert\nu_{\gamma o}\rVert^{-1}
			(\gamma^{-1})_\ast\nu_{\gamma y}\bigr)
		}(\xi)
		=
		\frac{d\bigl((\gamma^{-1})_\ast\nu_{\gamma x}\bigr)}
		{d\bigl((\gamma^{-1})_\ast\nu_{\gamma y}\bigr)}(\xi)
		=
		\frac{d\nu_{\gamma x}}{d\nu_{\gamma y}}(\gamma\xi)
		=
		e^{-\delta\varphi(\beta_{\gamma\xi}(\gamma x,\gamma y))},
	\end{aligned}
	\]
which is equal to $e^{-\delta\varphi(\beta_\xi(x,y))}$ by Lemma~\ref{lem:beta-props}(1).
For $h\in\sH$, normality of $\sH$ and \eqref{eq:equiv} give
	\[
	\begin{aligned}
		h_\ast\nu^{[g]}_x
		=
		\frac{1}{\lVert\nu_{go}\rVert}
		(g^{-1})_\ast(ghg^{-1})_\ast\nu_{gx}
		=
		\frac{e^{-\chi(ghg^{-1})}}{\lVert\nu_{go}\rVert}
		(g^{-1})_\ast\nu_{ghx}
		=
		e^{-\chi(h)}\nu^{[g]}_{hx},
	\end{aligned}
	\]
	where the last equality follows from \eqref{eq:conj-inv}. Hence
	$\nu^{[g]}\in\cM_{\varphi,\chi}(\delta,\sH)$. Since push-forward
	preserves total mass,
	\[
	\lVert\nu^{[\gamma]}_o\rVert
	=
	\frac{\lVert(\gamma^{-1})_\ast\nu_{\gamma o}\rVert}
	{\lVert\nu_{\gamma o}\rVert}
	=1.
	\]
	Finally, by Lemma~\ref{lem:shadow-basic}(1),
	\[
	\begin{aligned}
		\nu^{[\gamma_2]}_o
		\bigl(\cO_R(\gamma_2^{-1}\gamma_1,e)\bigr)
		&=
		\frac{1}{\lVert\nu_{\gamma_2o}\rVert}
		\bigl((\gamma_2^{-1})_\ast\nu_{\gamma_2o}\bigr)
		\bigl(\cO_R(\gamma_2^{-1}\gamma_1,e)\bigr)\\
		&=
		\frac{\nu_{\gamma_2o}
			\bigl(\gamma_2\cO_R(\gamma_2^{-1}\gamma_1,e)\bigr)}
		{\lVert\nu_{\gamma_2o}\rVert}\\
		&=
		\frac{\nu_{\gamma_2o}
			\bigl(\cO_R(\gamma_1,\gamma_2)\bigr)}
		{\lVert\nu_{\gamma_2o}\rVert}.
	\end{aligned}
	\]
	This proves \eqref{eq:fullness-translation}.
\end{proof}

\begin{dfn}\label{def:normalised-class}
	We denote by $\cD_{\varphi,\chi}(\delta,\sH)$ the collection of
	normalised base-point measures arising from twisted
	$(\varphi,\chi)$-conformal densities:
	\[
	\cD_{\varphi,\chi}(\delta,\sH)
	:=
	\left\{
	\nu_o:\ 
	\nu\in\cM_{\varphi,\chi}(\delta,\sH),\
	\lVert\nu_o\rVert=1
	\right\}.
	\]
	For simplicity, we write
	$\cD:=\cD_{\varphi,\chi}(\delta,\sH)$.
\end{dfn}

Let $\mathcal P(\cF)$ denote the space of Borel probability measures
on $\cF$, endowed with the weak-$\ast$ topology.

\begin{lemma}\label{lem:compact-class}
	The set $\cD$ is closed in $\mathcal P(\cF)$, and hence compact.
\end{lemma}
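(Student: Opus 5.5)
The approach is to reconstruct a whole density from its base-point measure and to check that the defining identities survive weak-$\ast$ limits. Compactness then follows at once, since $\mathcal P(\cF)$ is compact ($\cF$ is a compact metrizable space) and a closed subset of a compact space is compact. So the work is in closedness.

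Take a sequence $\nu^{(n)}_o\in\cD$, with $\nu^{(n)}\in\cM_{\varphi,\chi}(\delta,\sH)$ and $\lVert\nu^{(n)}_o\rVert=1$, converging weak-$\ast$ to some $\mu\in\mathcal P(\cF)$. Weak-$\ast$ convergence in $\mathcal P(\cF)$ is metrizable, so sequences suffice. I define a candidate family by
\[
\mu_x:=e^{-\delta\varphi(\beta_\bullet(x,o))}\,\mu\qquad(x\in\mathbb X),
\]
and verify the following in order.

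First, each $\mu_x$ is a finite non-zero Borel measure. The density is continuous in $\xi$ by Lemma~\ref{lem:beta-props}(4), and bounded above and below by $e^{\pm\delta\lVert\varphi\rVert\lVert\kappa(x,o)\rVert}$ by Lemma~\ref{lem:beta-props}(3).

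Second, the support lies in $\Lam$. Each $\nu^{(n)}_o$ gives zero mass to the open set $\cF\smallsetminus\Lam$, which is open because $\Lam=\theta(\bd)$ is compact. By the portmanteau theorem, $\mu(\cF\smallsetminus\Lam)\le\liminf_n\nu^{(n)}_o(\cF\smallsetminus\Lam)=0$.

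Third, conformality \eqref{eq:conf}. This follows from the cocycle identity in Lemma~\ref{lem:beta-props}(1): $\beta_\xi(x,o)-\beta_\xi(y,o)=\beta_\xi(x,y)$.

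Fourth, twisted equivariance \eqref{eq:equiv}. By conformality this reduces to showing, for $h\in\sH$, that
\[
h_\ast\mu=e^{-\chi(h)}e^{-\delta\varphi(\beta_\bullet(ho,o))}\mu.
\]
For each $n$, Lemma~\ref{lem:density-basic}(3) gives the identity $h_\ast\nu^{(n)}_o=e^{-\chi(h)}e^{-\delta\varphi(\beta_\bullet(ho,o))}\nu^{(n)}_o$, with a density $f_h$ that is continuous on $\cF$ and independent of $n$. Now test against any $g\in C(\cF)$. Since $g\circ h$ and $g f_h$ are both continuous, weak-$\ast$ convergence lets us pass to the limit in
\[
\int g\circ h\,d\nu^{(n)}_o=\int g f_h\,d\nu^{(n)}_o,
\]
which yields the identity for $\mu$.

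Finally, $\lVert\mu_o\rVert=\mu(\cF)=1$ because $\beta_\xi(o,o)=0$. Hence $\mu=\mu_o\in\cD$.

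The only delicate points are the continuity of the Radon--Nikodym factors, which is needed to pass to weak-$\ast$ limits and is supplied by Lemma~\ref{lem:beta-props}(4) and Lemma~\ref{lem:density-basic}(3), and the closedness of $\Lam$ in the support step. No nontrivial obstacle is expected; the main point is that all the defining relations are expressed through continuous densities, which is exactly why the class is stable under weak-$\ast$ limits.
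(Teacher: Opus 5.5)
Your proposal is correct and follows essentially the same route as the paper: define $\mu_x:=e^{-\delta\varphi(\beta_\bullet(x,o))}\mu$, get conformality from the cocycle identity, and pass the base-point relation $h_\ast\nu^{(n)}_o=e^{-\chi(h)}e^{-\delta\varphi(\beta_\bullet(ho,o))}\nu^{(n)}_o$ to the weak-$\ast$ limit by testing against continuous functions. One small imprecision: reducing \eqref{eq:equiv} for general $x$ to the case $x=o$ needs not only conformality but also the equivariance $\beta_{h\xi}(hx,ho)=\beta_\xi(x,o)$ from Lemma~\ref{lem:beta-props}(1), which is exactly how the paper handles that step.
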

	
	\begin{proof}
		Let
		$\nu_n=(\nu_{n,x})_{x\in\mathbb X}
		\in\cM_{\varphi,\chi}(\delta,\sH)$
		satisfy $\lVert\nu_{n,o}\rVert=1$, and suppose that
		$\nu_{n,o}\to\sigma$ weak-$\ast$ in $\mathcal P(\cF)$.
		Hence $\sigma$ is a probability
		measure with $\supp\sigma\subseteq\Lam$.
	 For $x\in\mathbb X$, define $\sigma_x
		:=
		e^{-\delta\varphi(\beta_\bullet(x,o))}\sigma.$
	It follows directly from Lemma~\ref{lem:beta-props}(4) that
	each $\sigma_x$ is a finite non-zero Borel measure and that $\supp\sigma_x=\supp\sigma\subseteq\Lam$.
		By the cocycle property in Lemma~\ref{lem:beta-props}(1), for all
		$x,y\in\mathbb X$,
		\[
		\sigma_x
		=
		e^{-\delta\varphi(\beta_\bullet(x,y))}
		e^{-\delta\varphi(\beta_\bullet(y,o))}\sigma
		=
		e^{-\delta\varphi(\beta_\bullet(x,y))}\sigma_y.
		\]
	Thus $(\sigma_x)_{x\in\mathbb X}$
		satisfies \eqref{eq:conf}.
		 Since
		$\nu_n\in\cM_{\varphi,\chi}(\delta,\sH)$, combining
		\eqref{eq:conf} and \eqref{eq:equiv}, gives, for every
		$h\in\sH$ and $f\in \mathcal C(\cF)$,
		\[
		\int_{\cF}f(h\xi)\,d\nu_{n,o}(\xi)
		=
		\int_{\cF}
		f(\xi)e^{-\chi(h)}
		e^{-\delta\varphi(\beta_\xi(ho,o))}
		\,d\nu_{n,o}(\xi).
		\]
		Since both integrands are continuous,  passing to the weak-$\ast$
		limit, we obtain
		\[
		\begin{aligned}
			\int_{\cF}f(\xi)\,d(h_\ast\sigma)(\xi)
			&=
			\int_{\cF}f(h\xi)\,d\sigma(\xi)\\
			&=
			\int_{\cF}
			f(\xi)e^{-\chi(h)}
			e^{-\delta\varphi(\beta_\xi(ho,o))}
			\,d\sigma(\xi)\\
			&=
			e^{-\chi(h)}
			\int_{\cF}f(\xi)\,d\sigma_{ho}(\xi).
		\end{aligned}
		\]
		Since this holds for every $f\in \mathcal C(\cF)$, it implies
			\[
		h_\ast\sigma=e^{-\chi(h)}\sigma_{ho}
		\qquad (h\in\sH).
		\]
		Using the equivariance of $\beta$ from
		Lemma~\ref{lem:beta-props}(1) and the conformality established above,
		 we obtain
		\[
		\begin{aligned}
			h_\ast\sigma_x
			&=
			e^{-\delta\varphi(\beta_\bullet(hx,ho))}\,h_\ast\sigma\\
			&=
			e^{-\chi(h)}
			e^{-\delta\varphi(\beta_\bullet(hx,ho))}\sigma_{ho}\\
			&=
			e^{-\chi(h)}\sigma_{hx},
		\end{aligned}
		\]
		for every $h\in\sH$ and $x\in\mathbb X$.
		Hence $(\sigma_x)_{x\in\mathbb X}\in
		\cM_{\varphi,\chi}(\delta,\sH)$. Since
		$\beta_\bullet(o,o)=0$, we have $\sigma_o=\sigma$, and hence
		$\lVert\sigma_o\rVert=1$. Therefore $\sigma\in\cD$, proving that
		$\cD$ is weak-$\ast$ closed in $\mathcal P(\cF)$. Since $\cF$ is
		compact, $\mathcal P(\cF)$ is weak-$\ast$ compact, and consequently
		so is $\cD$.
	\end{proof}
	
	\subsection{Uniform fullness and the shadow principle}

	\begin{lemma}\label{lem:fullness}
		There exist
		$R_2=R_2(\varphi,\chi,\delta,\sH)\geq R_1$ and
		$m_0\in(0,1]$ such that, for every
		$\nu\in\cM_{\varphi,\chi}(\delta,\sH)$, every $R\geq R_2$,
		and all $\gamma_1,\gamma_2\in\Gamma$,
		\[
		\nu_{\gamma_2o}\bigl(\cO_R(\gamma_1,\gamma_2)\bigr)
		\geq
		m_0\lVert\nu_{\gamma_2o}\rVert.
		\]
	\end{lemma}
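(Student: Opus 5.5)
By \eqref{eq:fullness-translation} in Lemma~\ref{lem:translates}, it suffices to prove the following uniform statement: there exist $R_2\geq R_1$ and $m_0\in(0,1]$ such that
\[
\sigma\bigl(\cO_R(\gamma,e)\bigr)\geq m_0
\qquad(\sigma\in\cD,\ \gamma\in\Gamma,\ R\geq R_2).
\]
Indeed, for $\nu\in\cM_{\varphi,\chi}(\delta,\sH)$ the normalised translate $\nu^{[\gamma_2]}$ lies in $\cM_{\varphi,\chi}(\delta,\sH)$ with $\lVert\nu^{[\gamma_2]}_o\rVert=1$. Hence $\nu^{[\gamma_2]}_o\in\cD$, and we apply the uniform statement with $\gamma=\gamma_2^{-1}\gamma_1$. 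Monotonicity of shadows in $R$ (Lemma~\ref{lem:shadow-basic}(1)) lets us work with a single $R=R_2$.

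The plan is a compactness argument on $\cD$ (Lemma~\ref{lem:compact-class}), with atomlessness replaced by a weaker non-concentration property. The first step is to show that no $\sigma\in\cD$ is a Dirac mass. By Lemma~\ref{lem:density-basic}(4), every $\sigma\in\cD$ has $\supp\sigma=\Lam$. Since $\sH$ is non-elementary, $\Lam$ is infinite, so $\sigma$ is not a single atom. We then upgrade this to a uniform bound:
\[
a:=\sup_{\sigma\in\cD}\ \sup_{\xi\in\Lam}\sigma(\{\xi\})<1.
\]
Suppose instead that $\sigma_n(\{\xi_n\})\to1$. Pass to subsequences with $\sigma_n\to\sigma\in\cD$ weak-$\ast$ and $\xi_n\to\xi$. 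For every closed ball $\overline B$ around $\xi$, the Portmanteau theorem gives $\sigma(\overline B)\geq\limsup_n\sigma_n(\overline B)=1$. Therefore $\sigma=\delta_\xi$, which contradicts the first step.

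The second step uses shrinking complements. By Lemma~\ref{lem:concentration}, the complement $\bd\smallsetminus\cO_R(\gamma,e)$ has $\varrho$-diameter at most $\epsilon_R:=C_0e^{-\varepsilon_0(R-c_2)}$, uniformly in $\gamma$. So it is contained in a ball $B_\varrho(\xi,\epsilon_R)$ with $\xi$ in that complement, or the complement is empty. We claim that
\[
\sup_{\sigma\in\cD}\ \sup_{\xi\in\bd}\sigma\bigl(B_\varrho(\xi,\epsilon)\bigr)\to a
\qquad(\epsilon\to0).
\]
Otherwise there are $\epsilon_n\to0$, $\sigma_n\in\cD$, $\xi_n\in\bd$ with $\sigma_n(B(\xi_n,\epsilon_n))\geq a+\eta$ for some fixed $\eta>0$. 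Taking limits $\sigma_n\to\sigma$ and $\xi_n\to\xi$, the balls $B(\xi_n,\epsilon_n)$ eventually lie inside any closed ball $\overline B(\xi,r)$. Portmanteau gives $\sigma(\overline B(\xi,r))\geq a+\eta$ for every $r>0$, hence $\sigma(\{\xi\})\geq a+\eta$. This contradicts the definition of $a$. Throughout, balls are transported to $\cF$ via the homeomorphism $\theta$, so weak-$\ast$ convergence on $\cF$ applies to closed subsets of $\Lam$.

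To conclude, choose $R_2\geq R_1$ so that the supremum of ball masses at radius $\epsilon_{R_2}$ is at most $(1+a)/2$. Then
\[
\sigma\bigl(\cO_R(\gamma,e)\bigr)\geq1-\frac{1+a}{2}=\frac{1-a}{2}=:m_0
\]
for all $R\geq R_2$, all $\gamma\in\Gamma$ and all $\sigma\in\cD$, using that $\sigma$ is a probability measure supported on $\Lam$.

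The main obstacle is the uniform non-atomic bound $a<1$. It relies on the closedness of $\cD$ and on full support, and measurability of shadows is needed to make sense of the masses; both are available from Lemmas~\ref{lem:compact-class} and~\ref{lem:shadow-basic}(1). The constants depend only on $(\varphi,\chi,\delta,\sH)$ through $\cD$, as required.
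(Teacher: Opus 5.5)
Your proof is correct and follows essentially the paper's route: you reduce via \eqref{eq:fullness-translation} to normalised measures in $\cD$, then use compactness of $\cD$ (Lemma~\ref{lem:compact-class}) together with the uniformly shrinking complements of Lemma~\ref{lem:concentration} to show that any failure would produce a Dirac weak-$\ast$ limit in $\cD$, which is impossible. You organise this quantitatively through the maximal atom mass $a<1$, and you exclude Dirac masses via full support (Lemma~\ref{lem:density-basic}(4)) rather than via quasi-invariance and the absence of an $\sH$-fixed point; both work. One small fix: a set of $\varrho$-diameter at most $\epsilon_R$ lies in the closed ball $\overline B_\varrho(\xi,\epsilon_R)$, not necessarily in the open one, so run your second step with closed balls (or with radius $2\epsilon_R$), which changes nothing because that step is monotone in the radius.
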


	\begin{proof}
		By \eqref{eq:fullness-translation} and
		Lemma~\ref{lem:translates}, it is enough to find
		$R_2\geq R_1$ and $m_0>0$ such that $\sigma\bigl(\cO_R(\gamma,e)\bigr)\geq m_0$
		for every $\sigma\in\cD$, $\gamma\in\Gamma$, and $R\geq R_2$.
		Suppose, to the contrary, that there exist sequences
		$\sigma_n\in\cD$, $\gamma_n\in\Gamma$, and $R_n\to\infty$ such that $\sigma_n\bigl(\cO_{R_n}(\gamma_n,e)\bigr)\rightarrow0.$
		If $\lvert\gamma_n\rvert\leq R_n$ for infinitely many $n$,
		then Lemma~\ref{lem:shadow-basic}(3) gives
		$\cO_{R_n}(\gamma_n,e)=\bd$, contradicting
		$\sigma_n(\cO_{R_n}(\gamma_n,e))\to0$. Therefore, $\lvert\gamma_n\rvert>R_n$ for all sufficiently large $n$.
		Set $K_n:=\bd\smallsetminus\cO_{R_n}(\gamma_n,e)$. 
		Since
		$\theta(K_n)=\Lambda_\Gamma\smallsetminus
		\theta(\cO_{R_n}(\gamma_n,e))$
		and $\sigma_n(\Lambda_\Gamma)=1$, we have
	\[
	\sigma_n\bigl(\theta(K_n)\bigr)
	=
1
	-\sigma_n\bigl(\cO_{R_n}(\gamma_n,e)\bigr)
	\longrightarrow 1
	\qquad (n\to\infty).
	\]
	Thus we have
		$\sigma_n(\theta(K_n))>\frac12$ for all sufficiently large $n$.
		If $K_n=\varnothing$, then $\theta(K_n)=\varnothing$, and hence
		$\sigma_n(\theta(K_n))=0$, a contradiction. Therefore,
		$K_n\neq\varnothing$ for all sufficiently large $n$, and we may
		choose $q_n\in K_n$.
		By Lemma~\ref{lem:concentration},
		\[
		K_n\subseteq\overline B_\rho(q_n,\varepsilon_n),
		\qquad
		\varepsilon_n
		:=
		C_0e^{-\varepsilon_0(R_n-c_2)}
		\longrightarrow0.
		\]
		After passing to a subsequence, compactness of $\bd$ and
		Lemma~\ref{lem:compact-class} give $q_n\longrightarrow q\in\bd$ and $\sigma_n\longrightarrow\sigma\in\cD$  weak-$\ast$.
		Let $r>0$ and let $f\in \mathcal C(\cF)$ satisfy $0\leq f\leq1$ and
		$f=0$ on $\theta(\overline B_\rho(q,r))$. For all sufficiently
		large $n$, we have 
		$K_n
		\subseteq
		\overline B_\rho(q_n,\varepsilon_n)
		\subseteq
		\overline B_\rho(q,r).$
		Hence $f$ vanishes on $\theta(K_n)$, and therefore
		\[
		0\leq
		\int_{\cF} f\,d\sigma_n
		=
		\int_{\cO_{R_n}(\gamma_n,e)} f\,d\sigma_n
		\leq
		\sigma_n\bigl(\cO_{R_n}(\gamma_n,e)\bigr)
		\longrightarrow 0.
		\]
		The weak-$\ast$ convergence gives
		$\int_{\cF}f\,d\sigma=0$. Since $r$ and $f$ are arbitrary and
		$\supp\sigma\subseteq\Lam$, it follows that
		\[
		\supp\sigma\subseteq
		\bigcap_{r>0}\theta\bigl(\overline B_\rho(q,r)\bigr)
		=
		\{\theta(q)\}.
		\]
		Thus $\sigma=\mathscr D_{\theta(q)}$.
	Since $\sigma\in\cD$, Lemma~\ref{lem:density-basic}(3) gives
	$h_*\sigma\ll\sigma$ and $\sigma\ll h_*\sigma$ for every $h\in\sH$.
	Since $\sigma=\delta_{\theta(q)}$, we have
	$h_*\sigma=\delta_{h\theta(q)}$. Thus, the mutual absolute continuity
	of $\delta_{h\theta(q)}$ and $\delta_{\theta(q)}$ forces
	$h\theta(q)=\theta(q)$. The equivariance and injectivity of $\theta$
	then give $hq=q$ for every $h\in\sH$, contradicting
\cite[Prop.~12.1(1), Thm.~12.2(1)]{KB}. This proves the lemma.
	\end{proof}

\begin{thm}\label{thm:shadowprinciple}
	Let $(\sH,\chi)$ be as above and let $\delta>0$. For every
	$R\geq R_2$, there exists
	$C_R=C_R(\varphi,\chi,\delta,\sH)\geq1$ such that, for every
	$\nu\in\cM_{\varphi,\chi}(\delta,\sH)$ and
	$\gamma_1,\gamma_2\in\Gamma$,
	\[
	C_R^{-1}\lVert\nu_{\gamma_2o}\rVert
	e^{-\delta\dph(\gamma_1,\gamma_2)}
	\leq
	\nu_{\gamma_1o}\bigl(\cO_R(\gamma_1,\gamma_2)\bigr)
	\leq
	C_R\lVert\nu_{\gamma_2o}\rVert
	e^{-\delta\dph(\gamma_1,\gamma_2)}.
	\]
	In particular, if $\sH=\Gamma$, then for every $\gamma\in\Gamma$,
	\begin{equation}\label{eq:shadow-character}
		C_R^{-1}e^{\chi(\gamma)-\delta\dph(e,\gamma)}
		\leq
		\frac{\nu_o\bigl(\cO_R(e,\gamma)\bigr)}
		{\lVert\nu_o\rVert}
		\leq
		C_Re^{\chi(\gamma)-\delta\dph(e,\gamma)}.
	\end{equation}
\end{thm}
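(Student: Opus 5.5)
The plan is the standard shadow-lemma argument: combine conformality, the Busemann control of Proposition~\ref{Prop:busctrl}, and the uniform fullness of Lemma~\ref{lem:fullness}. Fix $R\geq R_2$, $\nu\in\cM_{\varphi,\chi}(\delta,\sH)$ and $\gamma_1,\gamma_2\in\Gamma$, and write $\cO:=\cO_R(\gamma_1,\gamma_2)$, identified with $\theta(\cO)\subseteq\Lam$.

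The first step is to change the base point from $\gamma_1o$ to $\gamma_2o$. By Lemma~\ref{lem:density-basic}(2),
\[
\nu_{\gamma_1o}(\cO)=\int_{\cO}e^{-\delta\varphi(\beta_\xi(\gamma_1o,\gamma_2o))}\,d\nu_{\gamma_2o}(\xi).
\]
For every $\xi=\theta(x)$ with $x\in\cO$, Proposition~\ref{Prop:busctrl} gives
\[
\bigl|\varphi(\beta_{\xi}(\gamma_1o,\gamma_2o))-\dph(\gamma_1,\gamma_2)\bigr|\leq A_R.
\]
Hence the integrand lies between $e^{-\delta A_R}e^{-\delta\dph(\gamma_1,\gamma_2)}$ and $e^{\delta A_R}e^{-\delta\dph(\gamma_1,\gamma_2)}$, and therefore
\[
e^{-\delta A_R}e^{-\delta\dph(\gamma_1,\gamma_2)}\,\nu_{\gamma_2o}(\cO)\leq\nu_{\gamma_1o}(\cO)\leq e^{\delta A_R}e^{-\delta\dph(\gamma_1,\gamma_2)}\,\nu_{\gamma_2o}(\cO).
\]

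The second step bounds $\nu_{\gamma_2o}(\cO)$. From above we use the trivial bound $\nu_{\gamma_2o}(\cO)\leq\lVert\nu_{\gamma_2o}\rVert$. From below we use Lemma~\ref{lem:fullness}, which gives $\nu_{\gamma_2o}(\cO)\geq m_0\lVert\nu_{\gamma_2o}\rVert$ since $R\geq R_2$. Taking $C_R:=m_0^{-1}e^{\delta A_R}$ yields the two-sided estimate. This constant depends only on $R,\varphi,\delta$ and, through $m_0$, on $(\chi,\sH)$; it is uniform in $\nu$ and in $\gamma_1,\gamma_2$.

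The third step is the case $\sH=\Gamma$. Take $\gamma_1=e$ and $\gamma_2=\gamma$. By Lemma~\ref{lem:density-basic}(1) applied with $h=\gamma$ and $x=o$, we have $\lVert\nu_{\gamma o}\rVert=e^{\chi(\gamma)}\lVert\nu_o\rVert$. Substituting this and dividing by $\lVert\nu_o\rVert$ gives \eqref{eq:shadow-character}.

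None of these steps is delicate, since the real input (uniform fullness, via the compactness of $\cD$ and non-elementarity) is already in Lemma~\ref{lem:fullness}. The only point needing care is the direction and sign conventions in the Busemann cocycle. We need $d\nu_{\gamma_1o}/d\nu_{\gamma_2o}=e^{-\delta\varphi(\beta_\xi(\gamma_1o,\gamma_2o))}$, and Proposition~\ref{Prop:busctrl} must be applied with exactly this ordered pair and with points of the shadow seen from $\gamma_1$. Both match the hypotheses as stated.
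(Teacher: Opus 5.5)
Your proposal is correct and follows the paper's proof essentially step for step. You change base point from $\gamma_1o$ to $\gamma_2o$ via Lemma~\ref{lem:density-basic}(2) and bound the integrand with Proposition~\ref{Prop:busctrl}. You then bound $\nu_{\gamma_2o}(\cO_R(\gamma_1,\gamma_2))$ below by Lemma~\ref{lem:fullness} and above trivially, obtaining the same constant $C_R=m_0^{-1}e^{\delta A_R}$, and finally derive the case $\sH=\Gamma$ from $\lVert\nu_{\gamma o}\rVert=e^{\chi(\gamma)}\lVert\nu_o\rVert$.
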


\begin{proof}
	By Lemma~\ref{lem:density-basic}(2),
	\[
	\nu_{\gamma_1o}\bigl(\cO_R(\gamma_1,\gamma_2)\bigr)
	=
	\int_{\cO_R(\gamma_1,\gamma_2)}
	e^{-\delta\varphi(
		\beta_\xi(\gamma_1o,\gamma_2o))}
	\,d\nu_{\gamma_2o}(\xi).
	\]
	For every $\xi\in\cO_R(\gamma_1,\gamma_2)$,
	Proposition~\ref{Prop:busctrl} gives
	\[
	\begin{aligned}
		e^{-\delta A_R}e^{-\delta\dph(\gamma_1,\gamma_2)}
		\nu_{\gamma_2o}\bigl(\cO_R(\gamma_1,\gamma_2)\bigr)
		&\leq
		\nu_{\gamma_1o}\bigl(\cO_R(\gamma_1,\gamma_2)\bigr)\\
		&\leq
		e^{\delta A_R}e^{-\delta\dph(\gamma_1,\gamma_2)}
		\nu_{\gamma_2o}\bigl(\cO_R(\gamma_1,\gamma_2)\bigr).
	\end{aligned}
	\]
	By Lemma~\ref{lem:fullness},
	\[
	\nu_{\gamma_2o}\bigl(\cO_R(\gamma_1,\gamma_2)\bigr)
	\geq m_0\lVert\nu_{\gamma_2o}\rVert,
	\]
	while
	\[
	\nu_{\gamma_2o}\bigl(\cO_R(\gamma_1,\gamma_2)\bigr)
	\leq\lVert\nu_{\gamma_2o}\rVert.
	\]
	It follows that
	\[
	\begin{aligned}
		m_0e^{-\delta A_R}\lVert\nu_{\gamma_2o}\rVert
		e^{-\delta\dph(\gamma_1,\gamma_2)}
		&\leq
		\nu_{\gamma_1o}\bigl(\cO_R(\gamma_1,\gamma_2)\bigr)\\
		&\leq
		e^{\delta A_R}\lVert\nu_{\gamma_2o}\rVert
		e^{-\delta\dph(\gamma_1,\gamma_2)}\le m_0^{-1}e^{\delta A_R}\lVert\nu_{\gamma_2o}\rVert
		e^{-\delta\dph(\gamma_1,\gamma_2)}.
	\end{aligned}
	\]
	Thus the first assertion follows with $	C_R:=m_0^{-1}e^{\delta A_R}.$
	If $\sH=\Gamma$, applying the first assertion with
	$\gamma_1=e$, $\gamma_2=\gamma$ and using
	$\lVert\nu_{\gamma o}\rVert
	=e^{\chi(\gamma)}\lVert\nu_o\rVert$ from
	Lemma~\ref{lem:density-basic}(1) gives
	\eqref{eq:shadow-character}.
\end{proof}

	\begin{cor}\label{cor:shadow-consequences}
		Under the assumptions of
		\textup{Theorem~\ref{thm:shadowprinciple}}, fix
		$\nu\in\cM_{\varphi,\chi}(\delta,\sH)$ and $R\geq R_2$.
		\begin{enumerate}
			\item 
			If $\sH=\Gamma$, then
			\[
			e^{\chi(\gamma)-\delta\dph(e,\gamma)}
			\leq C_R
			\qquad (\gamma\in\Gamma).
			\]
			
			\item
			For every $m\in\bR$,
			\[
			\sum_{\substack{\gamma\in\Gamma\\
					m\leq\dph(e,\gamma)<m+1}}
			\lVert\nu_{\gamma o}\rVert
			e^{-\delta\dph(e,\gamma)}
			\leq
			C_RM_R\lVert\nu_o\rVert,
			\]
			where $M_R$ is given by
			Lemma~\ref{lem:multiplicity}. In particular, if
			$\sH=\Gamma$, then
			\[
			\sum_{\substack{\gamma\in\Gamma\\
					m\leq\dph(e,\gamma)<m+1}}
			e^{\chi(\gamma)-\delta\dph(e,\gamma)}
			\leq C_RM_R.
			\]
			
			\item 
			For every $\gamma\in\Gamma$,
			\[
			C_R^{-1}e^{-\delta d_{\varphi^{\ii}}(e,\gamma)}
			\leq
			\frac{\lVert\nu_{\gamma o}\rVert}
			{\lVert\nu_o\rVert}
			\leq
			C_Re^{\delta\dph(e,\gamma)}.
			\]
			
			\item 
			For every $\gamma_1,\gamma_2\in\Gamma$,
			\[
			\nu_{\gamma_1o}
			\bigl(\cO_{6R}(\gamma_1,\gamma_2)\bigr)
			\leq
			C_{6R}C_R\,
			\nu_{\gamma_1o}
			\bigl(\cO_R(\gamma_1,\gamma_2)\bigr).
			\]
			
			\item 
			\[
			\sum_{\gamma\in\Gamma}
			\lVert\nu_{\gamma o}\rVert
			e^{-\delta\dph(e,\gamma)}
			=+\infty.
			\]
		\end{enumerate}
	\end{cor}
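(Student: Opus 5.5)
The plan is to read off items (1)--(4) directly from the two-sided estimate of Theorem~\ref{thm:shadowprinciple}, always using the trivial bound $\nu_x(A)\le\lVert\nu_x\rVert$. Item (5) needs one extra covering argument along geodesic rays.

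For (1), I take $\sH=\Gamma$ and use the lower bound in \eqref{eq:shadow-character}. Since $\nu_o(\cO_R(e,\gamma))\le\lVert\nu_o\rVert$, this gives $C_R^{-1}e^{\chi(\gamma)-\delta\dph(e,\gamma)}\le1$.

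For (3), I use two instances of Theorem~\ref{thm:shadowprinciple}:
\begin{itemize}
\item The upper bound on $\lVert\nu_{\gamma o}\rVert/\lVert\nu_o\rVert$ comes from $\gamma_1=e$, $\gamma_2=\gamma$. The lower estimate gives $C_R^{-1}\lVert\nu_{\gamma o}\rVert e^{-\delta\dph(e,\gamma)}\le\nu_o(\cO_R(e,\gamma))\le\lVert\nu_o\rVert$.
\item The lower bound comes from $\gamma_1=\gamma$, $\gamma_2=e$. This gives $C_R^{-1}\lVert\nu_o\rVert e^{-\delta\dph(\gamma,e)}\le\nu_{\gamma o}(\cO_R(\gamma,e))\le\lVert\nu_{\gamma o}\rVert$. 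By \eqref{eq:dphi-flip}, $\dph(\gamma,e)=d_{\varphi^\ii}(e,\gamma)$.
\end{itemize}

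For (4), note that $6R\ge R\ge R_2$, so both constants $C_{6R}$ and $C_R$ are available. Then
\[
\nu_{\gamma_1o}(\cO_{6R}(\gamma_1,\gamma_2))\le C_{6R}\lVert\nu_{\gamma_2o}\rVert e^{-\delta\dph(\gamma_1,\gamma_2)}\le C_{6R}C_R\,\nu_{\gamma_1o}(\cO_R(\gamma_1,\gamma_2)).
\]

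For (2), fix $m$ and let $\Gamma_m$ be the set of $\gamma$ with $m\le\dph(e,\gamma)<m+1$. By Lemma~\ref{lem:multiplicity}, $\sum_{\gamma\in\Gamma_m}\mathbf 1_{\cO_R(e,\gamma)}\le M_R$ pointwise on $\Lam$. Integrating against $\nu_o$ gives $\sum_{\gamma\in\Gamma_m}\nu_o(\cO_R(e,\gamma))\le M_R\lVert\nu_o\rVert$. Inserting the lower bound $\nu_o(\cO_R(e,\gamma))\ge C_R^{-1}\lVert\nu_{\gamma o}\rVert e^{-\delta\dph(e,\gamma)}$ yields the claim. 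The special case $\sH=\Gamma$ then follows from Lemma~\ref{lem:density-basic}(1).

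For (5), the key step, I reverse the covering argument of (2).
\begin{itemize}
\item Set $W:=\sqrt d\,\lVert\varphi\rVert L(1)+1$. Consecutive vertices $c(t),c(t+1)$ on any word-geodesic ray from $e$ satisfy $\lvert\dph(e,c(t+1))-\dph(e,c(t))\rvert\le W-1$ by Lemma~\ref{lem:dphi-qi}(2).
\item Also $\dph(e,c(t))\to\infty$ by Lemma~\ref{lem:dphi-qi}(1), and $\dph(e,c(0))=0$.
\item Hence for every $m\ge0$ and every $x\in\bd$ there is a vertex $\gamma=c(t)$ on a ray $[e,x)$ with $\dph(e,\gamma)\in[m,m+W)$. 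By Lemma~\ref{lem:shadow-basic}(2), $x\in\cO_R(e,\gamma)$.
\item So the shadows $\cO_R(e,\gamma)$ with $\dph(e,\gamma)\in[m,m+W)$ cover $\Lam$. Therefore $\sum\nu_o(\cO_R(e,\gamma))\ge\lVert\nu_o\rVert$ over that window.
\item The upper bound of Theorem~\ref{thm:shadowprinciple} (with $\gamma_1=e$) then gives $\sum_{m\le\dph(e,\gamma)<m+W}\lVert\nu_{\gamma o}\rVert e^{-\delta\dph(e,\gamma)}\ge C_R^{-1}\lVert\nu_o\rVert>0$.
\item Summing over the disjoint windows $m=kW$, $k\in\bN$, the full series diverges.
\end{itemize}

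The only delicate point is choosing the window width $W$ so that every ray actually passes through each window. That is handled by the Lipschitz-type control of Lemma~\ref{lem:dphi-qi}(2).
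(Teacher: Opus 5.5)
Your items (1)--(4) follow the paper's proof: the same specializations of Theorem~\ref{thm:shadowprinciple}, combined with $\nu_x(A)\le\lVert\nu_x\rVert$. For (2) you use the multiplicity bound of Lemma~\ref{lem:multiplicity}, which you make explicit by integrating $\sum_{\gamma}\mathbf 1_{\cO_R(e,\gamma)}\le M_R$ against $\nu_o$.

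For (5) you take a different route, which is also valid. The paper argues by Borel--Cantelli. Every point of $\Lam$ lies in the shadows $\cO_R(e,c(n))$ of infinitely many distinct elements, so $\Lam\subseteq\limsup_k\cO_R(e,\gamma_k)$. Convergence of $\sum_\gamma\nu_o(\cO_R(e,\gamma))$ would then force $\nu_o(\Lam)=0$, and the upper shadow bound transfers the divergence to $\sum_\gamma\lVert\nu_{\gamma o}\rVert e^{-\delta\dph(e,\gamma)}$.

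You show instead that, for each window $[m,m+W)$, the shadows of the elements with $\dph(e,\gamma)$ in that window already cover $\Lam$. This uses a first-passage argument along a ray whose steps are at most $W-1$, by Lemma~\ref{lem:dphi-qi}(2). Your checks that $\dph(e,c(0))=0$ and $\dph(e,c(t))\to\infty$ are what make this work.

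The paper's version is shorter and needs no control on step sizes. Yours is quantitative: each window contributes at least $C_R^{-1}\lVert\nu_o\rVert$. Together with (2), this shows the partial sums over $\{\dph(e,\gamma)<T\}$ are comparable to $T$. For $\sH=\Gamma$ it gives $\sum_{m\le\dph(e,\gamma)<m+W}e^{\chi(\gamma)}\asymp e^{\delta m}$, a purely exponential growth estimate for the twisted annular sums, which is strictly more than divergence.
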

	
	\begin{proof}
		For \textup{(1)}, the lower bound in
		\eqref{eq:shadow-character} and the inequality
		$\nu_o(\cO_R(e,\gamma))\leq\lVert\nu_o\rVert$ give $e^{\chi(\gamma)-\delta\dph(e,\gamma)}
		\leq C_R$.
		
		For \textup{(2)}, Theorem~\ref{thm:shadowprinciple}, applied with
		$\gamma_1=e,\gamma=\gamma$  gives $\lVert\nu_{\gamma o}\rVert
		e^{-\delta\dph(e,\gamma)}
		\leq
		C_R\nu_o\bigl(\cO_R(e,\gamma)\bigr).$
		Summing over all $\gamma\in\Gamma$ satisfying
		$m\leq\dph(e,\gamma)<m+1$ and applying
		Lemma~\ref{lem:multiplicity}, we obtain
		\[
		\sum_{\substack{\gamma\in\Gamma\\
				m\leq\dph(e,\gamma)<m+1}}
		\lVert\nu_{\gamma o}\rVert
		e^{-\delta\dph(e,\gamma)}
		\leq
		C_RM_R\lVert\nu_o\rVert.
		\]
		If $\sH=\Gamma$, then
		$\lVert\nu_{\gamma o}\rVert
		=e^{\chi(\gamma)}\lVert\nu_o\rVert$ by
		Lemma~\ref{lem:density-basic}(1), which gives the particular case.
		
		For the lower bound in \textup{(3)}, apply
		Theorem~\ref{thm:shadowprinciple} with
		$\gamma_1=\gamma,\gamma_2=e$. Since
		$\nu_{\gamma o}(\cO_R(\gamma,e))
		\leq\lVert\nu_{\gamma o}\rVert$, we obtain
		\[
		\frac{\lVert\nu_{\gamma o}\rVert}
		{\lVert\nu_o\rVert}
		\geq
		C_R^{-1}e^{-\delta\dph(\gamma,e)}
		=
		C_R^{-1}e^{-\delta d_{\varphi^{\ii}}(e,\gamma)},
		\]
		where the equality follows from \eqref{eq:dphi-flip}.
		Applying the theorem with
		$\gamma_1=e,\gamma_2=\gamma$ and using
		$\nu_o(\cO_R(e,\gamma))\leq\lVert\nu_o\rVert$ similarly gives
		\[
		\frac{\lVert\nu_{\gamma o}\rVert}
		{\lVert\nu_o\rVert}
		\leq
		C_Re^{\delta\dph(e,\gamma)}.
		\]
		
		For \textup{(4)}, applying
		Applying Theorem~\ref{thm:shadowprinciple} at the radii $6R$ and $R$,
		respectively, gives
		\[
		\begin{aligned}
			\nu_{\gamma_1o}\bigl(\cO_{6R}(\gamma_1,\gamma_2)\bigr)
			\leq
			C_{6R}\lVert\nu_{\gamma_2o}\rVert
			e^{-\delta\dph(\gamma_1,\gamma_2)}
			\leq
			C_{6R}C_R
			\nu_{\gamma_1o}\bigl(\cO_R(\gamma_1,\gamma_2)\bigr).
		\end{aligned}
		\]
		
	For (5), let $(\gamma_k)_{k\geq1}$ be an enumeration of $\Gamma$.
	For every $x\in\partial_\infty\Gamma$, choose a geodesic ray $c$ from
	$e$ to $x$. By Lemma~\ref{lem:shadow-basic}(2), we have 
	$x\in\cO_R(e,c(n))$ for every $n\geq0$. 
	Since the elements $c(n)$ are pairwise distinct, our identification
	of $\partial_\infty\Gamma$ with $\Lambda_\Gamma$ gives
	$\Lambda_\Gamma
	\subseteq
	\limsup_{k\to\infty}\cO_R(e,\gamma_k).$ 
	If
$\sum_{\gamma\in\Gamma}
\nu_o\bigl(\cO_R(e,\gamma)\bigr)<\infty,$
	then  the Borel--Cantelli gives $\nu_o\left(
	\limsup_{k\to\infty}\cO_R(e,\gamma_k)
	\right)=0,$
	contradicting
	$\nu_o(\Lambda_\Gamma)=\lVert\nu_o\rVert>0$. Therefore, we have 
	\[
	\sum_{\gamma\in\Gamma}
	\nu_o\bigl(\cO_R(e,\gamma)\bigr)=+\infty.
	\]
	Finally, Theorem~\ref{thm:shadowprinciple} gives
	$\nu_o\bigl(\cO_R(e,\gamma)\bigr)
	\leq
	C_R\lVert\nu_{\gamma o}\rVert
	e^{-\delta\dph(e,\gamma)},$
	and hence $	\sum_{\gamma\in\Gamma}
	\lVert\nu_{\gamma o}\rVert
	e^{-\delta\dph(e,\gamma)}
	=+\infty.$
	\end{proof}

	\subsection{Critical exponents}\label{ss:critical}
\begin{dfn}
	\label{def:twisted-critical-exponent}
	For $\chi\in\Hom(\Gamma,\bR)$, define
	\[
	Q_{\Gamma,\varphi,\chi}(s)
	:=
	\sum_{\gamma\in\Gamma}
	e^{\chi(\gamma)-s\,\dph(e,\gamma)}
\qquad
	\delta_{\varphi,\chi}(\Gamma)
	:=
	\limsup_{m\to\infty}
	\frac{1}{m}
	\log
	\sum_{\substack{\gamma\in\Gamma\\
			m-1<\dph(e,\gamma)\leq m}}
	e^{\chi(\gamma)}.
	\]
	We write
	$\delta_\varphi(\Gamma):=\delta_{\varphi,0}(\Gamma)$. For a
	subgroup $\sH\leq\Gamma$ and
	$\chi\in\Hom(\sH,\bR)$, the quantities
	$Q_{\sH,\varphi,\chi}(s)$,
	$\delta_{\varphi,\chi}(\sH)$, and
	$\delta_\varphi(\sH)$ are defined analogously by summing over
	$h\in\sH$.
\end{dfn}

By Lemma~\ref{lem:dphi-qi}(1), there exist $a>0$ and $b\geq0$ such that
$\dph(e,\gamma)\geq A^{-1}\lvert\gamma\rvert-B$ for $\gamma\in\Gamma$.
Consequently, we have 
$$\{\gamma\in\Gamma:m-1<\dph(e,\gamma)\leq m\}
\subseteq B_w\bigl(e,A(m+B)\bigr).$$
Since $\Gamma$ is finitely generated, every finite-radius ball in the word metric is finite, and hence so is each of the above annuli.

	
\begin{lemma}\label{lem:abscissa}
	Let $\chi\in\Hom(\Gamma,\bR)$. Then
	$Q_{\Gamma,\varphi,\chi}(s)<\infty$ whenever
	$s>\delta_{\varphi,\chi}(\Gamma)$, and
	$Q_{\Gamma,\varphi,\chi}(s)=+\infty$ whenever
	$s<\delta_{\varphi,\chi}(\Gamma)$. The same assertions hold for
	every subgroup $\sH\leq\Gamma$ and every
	$\chi\in\Hom(\sH,\bR)$. Moreover, for fixed
	$\gamma_1,\gamma_2\in\Gamma$, the value of
	$\delta_{\varphi,\chi}(\Gamma)$ is unchanged if
	$\dph(e,\gamma)$ is replaced by
	$\dph(\gamma_1,\gamma\gamma_2)$.
\end{lemma}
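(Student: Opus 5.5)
We prove Lemma~\ref{lem:abscissa} by the standard Dirichlet-series comparison, organised by the annuli used in the definition of the critical exponent. For $m\in\bN$ write
\[
a_m:=\sum_{\substack{\gamma\in\Gamma\\ m-1<\dph(e,\gamma)\leq m}} e^{\chi(\gamma)},
\]
a finite sum by the remark preceding the lemma. Since $\dph(e,\gamma)\geq -B$ and the annuli partition $\Gamma$ (up to finitely many terms with $\dph(e,\gamma)\le 0$, which contribute a finite amount), we have
\[
\sum_m a_m e^{-s m}\;\leq\; Q_{\Gamma,\varphi,\chi}(s)\;\leq\; \text{(finite)}+\sum_m a_m e^{-s(m-1)}
\]
for $s\ge0$, with the inequalities reversed in the exponent if $s<0$. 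Thus $Q$ converges if and only if $\sum_m a_m e^{-sm}$ does. Since $\delta:=\limsup_m \frac1m\log a_m$, the root test gives convergence for $s>\delta$ and divergence for $s<\delta$: in the latter case $a_m e^{-sm}\geq e^{(\delta-s-\epsilon)m}$ infinitely often. If $a_m=0$ eventually, then $\delta=-\infty$ and $Q$ converges everywhere; this case is consistent with the statement.

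The argument uses only that the annuli are finite and that $\dph$ is bounded below on the summation set. It therefore applies verbatim to any subgroup $\sH\le\Gamma$ with $\chi\in\Hom(\sH,\bR)$, since the $\sH$-annuli are subsets of the $\Gamma$-annuli.

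For the base-point independence, we use Lemma~\ref{lem:dphi-qi}(2) twice:
\[
\bigl|\dph(\gamma_1,\gamma\gamma_2)-\dph(e,\gamma)\bigr|\leq \sqrt d\,\lVert\varphi\rVert\bigl(L(|\gamma_1|)+L(|\gamma_2|)\bigr)=:K.
\]
The first application moves $\gamma_1$ to $e$ (a change of $\kappa(\gamma_1)$). The second compares $\dph(e,\gamma\gamma_2)$ with $\dph(e,\gamma)$ via $\kappa(\gamma^{-1}\gamma\gamma_2)=\kappa(\gamma_2)$. Hence
\[
e^{-|s|K}Q(s)\le \sum_\gamma e^{\chi(\gamma)-s\dph(\gamma_1,\gamma\gamma_2)}\le e^{|s|K}Q(s),
\]
so both series have the same abscissa of convergence. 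Defining the modified exponent as the $\limsup$ over the modified annuli and applying the first part of the argument to the modified quantity, which is valid because its annuli are also finite and it is bounded below by $-B-K$, shows that it equals this common abscissa, namely $\delta_{\varphi,\chi}(\Gamma)$.

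Alternatively, each modified annulus is contained in the union of the $\lceil K\rceil+1$ original annuli nearby, and conversely. This gives $\limsup\frac1m\log$ equality directly, since shifting the index by a bounded amount does not change the $\limsup$.

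The main technical point is the case $s<0$ in the comparison, together with the finitely many $\gamma$ with $\dph(e,\gamma)\le 0$. Both are handled by the uniform lower bound $\dph\ge -B$ from Lemma~\ref{lem:dphi-qi}(1), which keeps those exceptional terms in a finite set.
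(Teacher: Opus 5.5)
Your proposal is correct and follows essentially the same route as the paper. Both arguments decompose into the annuli $m-1<\dph(e,\gamma)\le m$ and compare $Q_{\Gamma,\varphi,\chi}(s)$ with $\sum_m a_m e^{-sm}$ up to factors $e^{\pm|s|}$ (the paper's $\varepsilon$-argument is exactly your root test). Both handle the finitely many remaining terms by finiteness of word balls, and both obtain the base-point statement from the bounded difference in Lemma~\ref{lem:dphi-qi}(2).

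One trivial slip: in your alternative argument, a modified annulus meets about $2\lceil K\rceil+1$ original annuli rather than $\lceil K\rceil+1$. This is harmless, since only the boundedness of this number matters.
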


\begin{proof}
	Write
	\[
	S_m
	:=
	\sum_{\substack{\gamma\in\Gamma\\
			m-1<\dph(e,\gamma)\leq m}}
	e^{\chi(\gamma)}
	\qquad\text{and}\qquad
	\delta
	:=
	\limsup_{m\to\infty}\frac{1}{m}\log S_m.
	\]
	If $m-1<\dph(e,\gamma)\leq m$, then
	$	e^{-s\dph(e,\gamma)}
	\in
	\left[
	e^{-|s|}e^{-sm},
	e^{|s|}e^{-sm}
	\right].$
	Suppose first that $s>\delta$. Choose
	$\varepsilon\in(0,s-\delta)$. By the definition of the limsup,
	there exists $M\in\bN$ such that
	$S_m\leq e^{(\delta+\varepsilon)m}$ for every $m\geq M.$
	The preceding estimate gives
	\[
	\begin{aligned}
		\sum_{m\geq M}
		\sum_{\substack{\gamma\in\Gamma\\
				m-1<\dph(e,\gamma)\leq m}}
		e^{\chi(\gamma)-s\dph(e,\gamma)}
		\leq
		e^{|s|}
		\sum_{m\geq M}S_me^{-sm}
		\leq
		e^{|s|}
		\sum_{m\geq M}
		e^{-(s-\delta-\varepsilon)m}
		<\infty.
	\end{aligned}
	\]
	Moreover, Lemma~\ref{lem:dphi-qi}(1) gives
	\[
	\{\gamma\in\Gamma:\dph(e,\gamma)\leq M-1\}
	\subseteq
	B_w\bigl(e,A(M+B-1)\bigr).
	\]
	Since $\Gamma$ is finitely generated, the ball on the right is
	finite. Therefore,
	\[
	\sum_{\substack{\gamma\in\Gamma\\
			\dph(e,\gamma)\leq M-1}}
	e^{\chi(\gamma)-s\dph(e,\gamma)}
	<\infty.
	\]
	Combining the last two estimates yields
	$Q_{\Gamma,\varphi,\chi}(s)<\infty$. Now suppose that $s<\delta$. Choose
	$\varepsilon>0$ such that $\varepsilon \in (0, \delta-s)$. There exists a
	sequence $m_k\to\infty$ for which $S_{m_k}\geq e^{(\delta-\varepsilon)m_k}.$ Hence, we obtain
	\[
	\begin{aligned}
		\sum_{\substack{\gamma\in\Gamma\\
				m_k-1<\dph(e,\gamma)\leq m_k}}
		e^{\chi(\gamma)-s\dph(e,\gamma)}
		\geq
		e^{-|s|}S_{m_k}e^{-sm_k}
		\geq
		e^{-|s|}
		e^{(\delta-\varepsilon-s)m_k}
		\longrightarrow+\infty.
	\end{aligned}
	\]
	Since all terms of $Q_{\Gamma,\varphi,\chi}(s)$ are non-negative,
	\[
	Q_{\Gamma,\varphi,\chi}(s)
	\geq
	\sum_{\substack{\gamma\in\Gamma\\
			m_k-1<\dph(e,\gamma)\leq m_k}}
	e^{\chi(\gamma)-s\dph(e,\gamma)}
	\]
	for every $k$. Letting $k\to\infty$ gives
	$Q_{\Gamma,\varphi,\chi}(s)=+\infty$.
	The same argument applies verbatim to every subgroup
	$\sH\leq\Gamma$, with $\gamma$ replaced by $h\in\sH$. 
	
	The last assertion follows from
	Lemma~\ref{lem:dphi-qi}(2), since the two distances differ by a bounded amount.
\end{proof}

\begin{prop}\label{prop:delta-props}
	Let $\chi\in\Hom(\Gamma,\bR)$ and
	$\varphi\in\cL^{>0}_\Gamma$.
	\begin{enumerate}
		\item We have $\varphi^{\ii}\in\cL^{>0}_\Gamma$ and
		$Q_{\Gamma,\varphi^{\ii},-\chi}(s)
		=Q_{\Gamma,\varphi,\chi}(s)$ for every $s\in\bR$. Consequently,
		$\delta_{\varphi^{\ii},-\chi}(\Gamma)
		=\delta_{\varphi,\chi}(\Gamma)$, and
		$(\Gamma,\varphi^{\ii},-\chi)$ is of divergence type if and only
		if $(\Gamma,\varphi,\chi)$ is.
		
		\item We have $0<\delta_{\varphi,\chi}(\Gamma)<+\infty$ and
		\[
		\left|
		\delta_{\varphi,\chi}(\Gamma)-\delta_\varphi(\Gamma)
		\right|
		\leq c_\chi A,
		\]
		where $c_\chi:=\max_{s\in S}\lvert\chi(s)\rvert$ and $A$ is the
		constant in Lemma~\ref{lem:dphi-qi}(1).
		
		\item If $\sH\leq\Gamma$, then
		$\delta_{\varphi,\chi|_{\sH}}(\sH)
		\leq\delta_{\varphi,\chi}(\Gamma)$.
		
		\item If $\varphi^{\ii}=\varphi$, then
		$\delta_{\varphi,-\chi}(\Gamma)
		=\delta_{\varphi,\chi}(\Gamma)
		\geq\delta_\varphi(\Gamma)$.
	\end{enumerate}
\end{prop}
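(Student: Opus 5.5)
(1) The cone $\cL_\Gamma$ is invariant under $\ii$: since $\kappa(\gamma^{-1})=\ii\kappa(\gamma)$ and $\Gamma$ is a group, $\ii\cL_\Gamma=\cL_\Gamma$. Hence $\varphi^\ii=\varphi\circ\ii$ is positive on $\cL_\Gamma\smallsetminus\{0\}$, so $\varphi^\ii\in\cL^{>0}_\Gamma$. For the series, I would reindex by $\gamma\mapsto\gamma^{-1}$. By \eqref{eq:dphi-flip}, $d_{\varphi^\ii}(e,\gamma^{-1})=\dph(\gamma^{-1},e)=\dph(e,\gamma)$ (left invariance), and $-\chi(\gamma^{-1})=\chi(\gamma)$. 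Thus $Q_{\Gamma,\varphi^\ii,-\chi}(s)=Q_{\Gamma,\varphi,\chi}(s)$ term by term. The same bijection preserves the annuli in the definition of $\delta$, which gives equality of the critical exponents and of divergence type.

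(2) The key observation is that $\chi$ is Lipschitz for the word metric: $|\chi(\gamma)|\le c_\chi|\gamma|$. By Lemma~\ref{lem:dphi-qi}(1), $|\gamma|\le A(\dph(e,\gamma)+B)$. On the annulus $m-1<\dph(e,\gamma)\le m$ this gives $e^{\pm\chi(\gamma)}\le e^{c_\chi A(m+B)}$. Comparing $S_m^\chi$ with $S_m^0$ therefore yields $e^{-c_\chi A(m+B)}S_m^0\le S_m^\chi\le e^{c_\chi A(m+B)}S_m^0$. Taking $\limsup\frac1m\log$ gives $|\delta_{\varphi,\chi}-\delta_\varphi|\le c_\chi A$.

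Finiteness of $\delta_\varphi(\Gamma)$ comes from exponential growth of word balls together with the containment of annuli in $B_w(e,A(m+B))$. Hence $\delta_{\varphi,\chi}<\infty$.

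Positivity is the main obstacle, because the crude bound only gives $\delta_{\varphi,\chi}\ge\delta_\varphi-c_\chi A$, which could be negative. I would instead argue with a free subsemigroup. Since $\Gamma$ is non-elementary hyperbolic, pick $a,b$ generating a free semigroup such that, by ping-pong for hyperbolic groups, products $w$ of length $n$ in $\{a,b\}$ are quasi-geodesic. Coarse additivity (Proposition~\ref{prop:coarse-add}, extended along quasigeodesics) then gives $\dph(e,w)\le Kn$.

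To control $\chi$, I would use the elements $a$, $b$, $a^{-1}$, $b^{-1}$ and choose the sign pattern so that $\chi\ge0$ on the words: replace $a$ by $a^{-1}$ if $\chi(a)<0$, and similarly for $b$. Then there are $2^n$ distinct elements with $\chi\ge0$ and $\dph\le Kn+C$. This forces $Q_{\Gamma,\varphi,\chi}(s)\ge\sum_n 2^ne^{-s(Kn+C)}=\infty$ for $s<\log2/K$, so $\delta_{\varphi,\chi}\ge\log 2/K>0$. The check needed here is that $a^{\pm1},b^{\pm1}$ still form a free quasi-geodesic semigroup; I would arrange this by taking high powers of loxodromics with distinct fixed points.

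(3) This is immediate: the sum over $\sH$ is a subsum of the nonnegative terms over $\Gamma$, so $Q_{\sH}\le Q_\Gamma$, and Lemma~\ref{lem:abscissa} then gives the inequality of abscissae.

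(4) When $\varphi^\ii=\varphi$, part (1) gives $\delta_{\varphi,-\chi}=\delta_{\varphi,\chi}$. For the inequality, I would use convexity. On each annulus, $e^{\chi(\gamma)}+e^{-\chi(\gamma)}\ge2$, so $S^\chi_m+S^{-\chi}_m\ge2S^0_m$. Hence $\max(\delta_{\varphi,\chi},\delta_{\varphi,-\chi})\ge\delta_\varphi$, since the limsup of a sum is governed by the larger term. Combined with the equality, this gives $\delta_{\varphi,\chi}\ge\delta_\varphi$.
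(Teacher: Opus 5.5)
Your proposal is correct, but it proves positivity of $\delta_{\varphi,\chi}(\Gamma)$ by a different route from the paper's.

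\textbf{Parts that match the paper.} Parts (1), (3), (4), the finiteness of $\delta_\varphi(\Gamma)$, and the bound $|\delta_{\varphi,\chi}(\Gamma)-\delta_\varphi(\Gamma)|\le c_\chi A$ follow the paper. The only difference is that you compare the annulus sums $S_m$ directly, whereas the paper compares Poincar\'e series and then applies Lemma~\ref{lem:abscissa}; this is immaterial.

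\textbf{Positivity: the two routes.} The paper passes to $\mathsf N=[\Gamma,\Gamma]$. It shows $\mathsf N$ is infinite, hence non-elementary, using that an infinite normal subgroup has full limit set \cite[Thm.~12.2(5)]{KB}. It then takes a free subgroup $\langle a,b\rangle\le\mathsf N$, so that $\chi\equiv0$ on every positive word. You instead take any rank-two free subgroup $\langle a,b\rangle\le\Gamma$ and replace $a,b$ by $a^{\pm1},b^{\pm1}$ so that $\chi\ge0$ on positive words. This only needs a free subgroup of $\Gamma$ itself and avoids the normal-subgroup/limit-set input. Because $|a^{-1}|=|a|$, your lower bound $\log 2/(AL)$ is, like the paper's, uniform in $\chi$. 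Here $a,b$ need not lie in $[\Gamma,\Gamma]$, so $L$ may be smaller.

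\textbf{Two simplifications.}
\begin{enumerate}
\item You do not need quasi-geodesicity or any extension of Proposition~\ref{prop:coarse-add}. Only an upper bound on $\dph(e,w)$ is used, and $\dph(e,w)\le A|w|+B\le ALn+B$ by Lemma~\ref{lem:dphi-qi}(1).
\item The freeness check you flag is automatic once $a,b$ freely generate a free group of rank two. Then $a^{\pm1},b^{\pm1}$ still generate it freely, so the positive words of length $n$ give $2^n$ distinct elements, and these are distinct across different $n$. You should start from a free \emph{subgroup} (e.g.\ from \cite[Thm.~12.2(1)]{KB}) rather than merely a free semigroup. Freeness of the semigroup generated by $\{a,b\}$ need not survive replacing $a$ by $a^{-1}$.
\end{enumerate}
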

	
	\begin{proof}
		(1) By \eqref{eq:kappa-inverse},
		$\ii(\cL_\Gamma)=\cL_\Gamma$, and hence
		$\varphi^{\ii}\in\cL^{>0}_\Gamma$. Moreover,
		\[
		d_{\varphi^{\ii}}(e,\gamma^{-1})
		=
		\varphi^{\ii}\bigl(\kappa(\gamma^{-1})\bigr)
		=
		\varphi\bigl(\ii^2\kappa(\gamma)\bigr)
		=
		\dph(e,\gamma).
		\]
		Since 
		$\chi(\gamma^{-1})=-\chi(\gamma)$, reindexing the series gives
		\[
		\begin{aligned}
			Q_{\Gamma,\varphi^{\ii},-\chi}(s)
			&=
			\sum_{\gamma\in\Gamma}
			e^{-\chi(\gamma^{-1})
				-sd_{\varphi^{\ii}}(e,\gamma^{-1})}\\
			&=
			\sum_{\gamma\in\Gamma}
			e^{\chi(\gamma)-s\dph(e,\gamma)}
			=
			Q_{\Gamma,\varphi,\chi}(s).
		\end{aligned}
		\]
		The remaining assertions follow immediately.
		
		(2) Lemma~\ref{lem:dphi-qi}(1) gives $\dph(e,\gamma)\geq A^{-1}|\gamma|-B.$
		Therefore, for $s>A\log(|S|)$,
		\[
		\begin{aligned}
			Q_{\Gamma,\varphi,0}(s)
			&\leq
			e^{sB}
			\sum_{n=0}^{\infty}
			\left|\{\gamma\in\Gamma:|\gamma|=n\}\right|
			e^{-sn/A}\\
			&\leq
			e^{sB}
			\sum_{n=0}^{\infty}
			|S|^n e^{-sn/A}
			<\infty.
		\end{aligned}
		\]
		Hence $Q_{\Gamma,\varphi,0}(s)<\infty$, and
		Lemma~\ref{lem:abscissa} gives
		$\delta_\varphi(\Gamma)\leq s<+\infty$.
		Since $\chi$ is a homomorphism, we have 
		$	|\chi(\gamma)|
		\leq c_\chi|\gamma|
		\leq c_\chi A\bigl(\dph(e,\gamma)+B\bigr).$
		It follows that
		\[
		\begin{aligned}
			e^{-c_\chi AB}
			Q_{\Gamma,\varphi,0}(s+c_\chi A)
			\leq
			Q_{\Gamma,\varphi,\chi}(s)
			\leq
			e^{c_\chi AB}
			Q_{\Gamma,\varphi,0}(s-c_\chi A).
		\end{aligned}
		\]
		Lemma~\ref{lem:abscissa} now gives
		\[
		\delta_\varphi(\Gamma)-c_\chi A
		\leq
		\delta_{\varphi,\chi}(\Gamma)
		\leq
		\delta_\varphi(\Gamma)+c_\chi A.
		\]
		In particular,
		$\delta_{\varphi,\chi}(\Gamma)<+\infty$.
		It remains to establish positivity. Let $\mathsf N:=[\Gamma,\Gamma]$.
		Since $\Gamma$ is non-elementary, \cite[Thm.~12.2(1)]{KB} gives a
		free subgroup $\mathsf F_2\leq\Gamma$ of rank two.  Since
		$[\mathsf F_2,\mathsf F_2]\leq\mathsf N$ is infinite,
		$\mathsf N$ is infinite. As $\mathsf N\lhd\Gamma$,
		\cite[Thm.~12.2(5)]{KB} gives
		$\Lambda(\mathsf N)=\Lambda(\Gamma)=\bd$. In particular,
		$\mathsf N$ is non-elementary, so applying
		\cite[Thm.~12.2(1)]{KB} to $\mathsf N\leq\Gamma$ yields a
		free subgroup $\mathsf F:=\langle a,b\rangle\leq\mathsf N$ of rank two.
		Set $L:=\max\{|a|,|b|\}$. For each $n\geq1$, let
		\[
		W_n
		:=
		\left\{
		s_1\cdots s_n:
		s_j\in\{a,b\}
		\right\}.
		\]
		The sets $W_n$ are pairwise disjoint and
		$\lvert W_n\rvert=2^n$. Since $W_n\subseteq\mathsf N\subseteq\ker\chi$, we have
		$\chi(w)=0$ for every $w\in W_n$. Moreover,
		Lemma~\ref{lem:dphi-qi}(1), together with $|w|\leq nL$, gives
		\[
		\dph(e,w)
		\leq A|w|+B
		\leq ALn+B.
		\]
		Consequently, for
		$0<s<	\frac{\log 2}{AL}$,
		\[
		\begin{aligned}
			Q_{\Gamma,\varphi,\chi}(s)
			&\geq
			\sum_{n=1}^{\infty}
			\sum_{w\in W_n}
			e^{-s\dph(e,w)}\\
			&\geq
			e^{-sB}
			\sum_{n=1}^{\infty}
			\left(2e^{-sAL}\right)^n
			=
			+\infty.
		\end{aligned}
		\]
		Therefore, Lemma~\ref{lem:abscissa} yields
		$\delta_{\varphi,\chi}(\Gamma)
		\geq
		\frac{\log 2}{AL}
		>0.$
		
	(3) Since
	$Q_{\sH,\varphi,\chi|_{\sH}}(s)\leq
	Q_{\Gamma,\varphi,\chi}(s)$ for every $s\in\bR$, the assertion
	follows from Lemma~\ref{lem:abscissa}.
		
		(4) If $\varphi^{\ii}=\varphi$, then (1) gives $\delta_{\varphi,-\chi}(\Gamma)
		=
		\delta_{\varphi,\chi}(\Gamma).$
		On the other hand,
		\[
		\begin{aligned}
			Q_{\Gamma,\varphi,\chi}(s)
			+
			Q_{\Gamma,\varphi,-\chi}(s)
			=
			\sum_{\gamma\in\Gamma}
			\left(e^{\chi(\gamma)}+e^{-\chi(\gamma)}\right)
			e^{-s\dph(e,\gamma)}
			\geq
			2Q_{\Gamma,\varphi,0}(s).
		\end{aligned}
		\]
	By Lemma~\ref{lem:abscissa}, this yields
	$\max\left\{
	\delta_{\varphi,\chi}(\Gamma),
	\delta_{\varphi,-\chi}(\Gamma)
	\right\}
	\geq\delta_\varphi(\Gamma).$
	Together with the preceding equality, this proves (4).
	\end{proof}

	\begin{prop}\label{prop:rigidity}
		Let $\chi\in\Hom(\Gamma,\bR)$ and $\delta>0$. If
		$\cM_{\varphi,\chi}(\delta,\Gamma)\neq\emptyset$, then
		$\delta=\delta_{\varphi,\chi}(\Gamma)$ and
		$Q_{\Gamma,\varphi,\chi}
		\bigl(\delta_{\varphi,\chi}(\Gamma)\bigr)=+\infty$.
		In particular, $(\Gamma,\varphi,\chi)$ is of divergence type.
	\end{prop}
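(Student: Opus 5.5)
Given $\nu\in\cM_{\varphi,\chi}(\delta,\Gamma)$, I would read off everything from Corollary~\ref{cor:shadow-consequences}, with $\sH=\Gamma$ and a fixed $R\geq R_2$. The plan is to prove the two inequalities $\delta\le\delta_{\varphi,\chi}(\Gamma)$ and $\delta\ge\delta_{\varphi,\chi}(\Gamma)$ separately, and then obtain divergence as a by-product.

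\emph{Upper bound $\delta\le\delta_{\varphi,\chi}(\Gamma)$.} By Lemma~\ref{lem:density-basic}(1), $\lVert\nu_{\gamma o}\rVert=e^{\chi(\gamma)}\lVert\nu_o\rVert$. Hence Corollary~\ref{cor:shadow-consequences}(5) reads
\[
\lVert\nu_o\rVert\,Q_{\Gamma,\varphi,\chi}(\delta)=\sum_{\gamma\in\Gamma}\lVert\nu_{\gamma o}\rVert e^{-\delta\dph(e,\gamma)}=+\infty .
\]
So $Q_{\Gamma,\varphi,\chi}(\delta)=+\infty$. Lemma~\ref{lem:abscissa} says the series converges for $s>\delta_{\varphi,\chi}(\Gamma)$, and therefore $\delta\le\delta_{\varphi,\chi}(\Gamma)$.

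\emph{Lower bound $\delta\ge\delta_{\varphi,\chi}(\Gamma)$.} The particular case of Corollary~\ref{cor:shadow-consequences}(2) gives, for every $m$,
\[
\sum_{m\le\dph(e,\gamma)<m+1}e^{\chi(\gamma)}\le C_RM_R\,e^{\delta(m+1)} .
\]
The annuli $m-1<\dph(e,\gamma)\le m$ in the definition of $\delta_{\varphi,\chi}(\Gamma)$ are shifted by one unit relative to these. They are covered by at most two of the half-open annuli $[m',m'+1)$, so the annular sums $S_m$ satisfy $S_m\le 2C_RM_Re^{\delta(m+1)}$. Taking $\limsup\frac1m\log$ gives $\delta_{\varphi,\chi}(\Gamma)\le\delta$.

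\emph{Conclusion.} The two bounds give $\delta=\delta_{\varphi,\chi}(\Gamma)$. Then $Q_{\Gamma,\varphi,\chi}(\delta_{\varphi,\chi}(\Gamma))=Q_{\Gamma,\varphi,\chi}(\delta)=+\infty$, which is exactly the divergence-type statement.

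There is no serious obstacle here, since the shadow lemma has already done the work. The one point to check carefully is the annulus-boundary bookkeeping between the open/closed conventions in the two definitions, so that the constant stays independent of $m$.
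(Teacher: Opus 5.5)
Your proof is correct and follows the paper's argument: both inequalities come from Corollary~\ref{cor:shadow-consequences}(2) and (5) combined with Lemma~\ref{lem:density-basic}(1), and divergence is read off from (5). The only cosmetic difference is that you bound the annular sums in the definition of $\delta_{\varphi,\chi}(\Gamma)$ directly, whereas the paper sums the annuli to show $Q_{\Gamma,\varphi,\chi}(s)<\infty$ for $s>\delta$ and then invokes Lemma~\ref{lem:abscissa}.
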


	\begin{proof}
		Fix $R=R_2$ and
		$\nu\in\cM_{\varphi,\chi}(\delta,\Gamma)$. By
		Corollary~\ref{cor:shadow-consequences}(2) and
		Lemma~\ref{lem:density-basic}(1), for every $m\in\bZ$,
		\[
		\sum_{\substack{\gamma\in\Gamma\\
				m\leq\dph(e,\gamma)<m+1}}
		e^{\chi(\gamma)-\delta\dph(e,\gamma)}
		\leq C_RM_R.
		\]
		By Lemma~\ref{lem:dphi-qi}(1), choose $m_-\in\bZ$ such that
		$\dph(e,\gamma)\geq m_-$ for every $\gamma\in\Gamma$.
		For $s>\delta$, summing over the annuli gives
		\[
		\begin{aligned}
			Q_{\Gamma,\varphi,\chi}(s)
			&=
			\sum_{m\geq m_-}
			\sum_{\substack{\gamma\in\Gamma\\
					m\leq\dph(e,\gamma)<m+1}}
			e^{\chi(\gamma)-\delta\dph(e,\gamma)}
			e^{-(s-\delta)\dph(e,\gamma)}\\
			&\leq
			C_RM_R\sum_{m\geq m_-}e^{-(s-\delta)m}
			<\infty.
		\end{aligned}
		\]
	Hence 
	Lemma~\ref{lem:abscissa} implies that
	$\delta_{\varphi,\chi}(\Gamma)\leq\delta$. On the other hand,
		Corollary~\ref{cor:shadow-consequences}(5) and
		Lemma~\ref{lem:density-basic}(1) give
		\[
		Q_{\Gamma,\varphi,\chi}(\delta)
		=
		\frac{1}{\lVert\nu_o\rVert}
		\sum_{\gamma\in\Gamma}
		\lVert\nu_{\gamma o}\rVert
		e^{-\delta\dph(e,\gamma)}
		=+\infty.
 		\]
		Lemma~\ref{lem:abscissa} therefore gives
		$\delta_{\varphi,\chi}(\Gamma)\geq\delta$. Together with the opposite
		inequality, this shows that $\delta_{\varphi,\chi}(\Gamma)=\delta$.
	\end{proof}

	\begin{cor}\label{prop:poincare}
		Let $\sH\lhd\Gamma$ be a non-elementary normal subgroup and let
		$\chi\in\Hom(\sH,\bR)$ satisfy \eqref{eq:conj-inv}. If
		$\delta>0$ and $\nu\in\cM_{\varphi,\chi}(\delta,\sH)$, then
		\[
		\sum_{\gamma\in\Gamma}
		\lVert\nu_{\gamma o}\rVert\,
		e^{-s\,\dph(e,\gamma)}
		<\infty
		\qquad\text{for every }s>\delta .
		\]
	\end{cor}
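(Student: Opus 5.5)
The plan is to use Corollary~\ref{cor:shadow-consequences}(2), which holds for arbitrary $\sH$ and is the key input here. It gives, for every $m\in\bR$ and fixed $R=R_2$,
\[
\sum_{\substack{\gamma\in\Gamma\\ m\leq\dph(e,\gamma)<m+1}}
\lVert\nu_{\gamma o}\rVert e^{-\delta\dph(e,\gamma)}
\leq C_RM_R\lVert\nu_o\rVert .
\]
This is a uniform bound on the $\delta$-weighted masses over each unit $\dph$-annulus in $\Gamma$. Note that the sum ranges over all of $\Gamma$, not just $\sH$. This is legitimate because Theorem~\ref{thm:shadowprinciple} and Lemma~\ref{lem:multiplicity} are stated for all $\gamma_1,\gamma_2\in\Gamma$.

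Next I would decompose $\Gamma$ into these annuli. By Lemma~\ref{lem:dphi-qi}(1), $\dph(e,\gamma)\geq -B$ for all $\gamma$, so we may choose an integer $m_-$ with $\dph(e,\gamma)\geq m_-$ throughout $\Gamma$. Then $\Gamma$ is the disjoint union of the annuli $\{m\leq\dph(e,\gamma)<m+1\}$ for $m\geq m_-$, $m\in\bZ$. Fix $s>\delta$. On the $m$-th annulus,
\[
e^{-s\dph(e,\gamma)}=e^{-\delta\dph(e,\gamma)}e^{-(s-\delta)\dph(e,\gamma)}\leq e^{-\delta\dph(e,\gamma)}e^{-(s-\delta)m},
\]
because $s-\delta>0$ and $\dph(e,\gamma)\geq m$ there.

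Summing over the annuli gives
\[
\sum_{\gamma\in\Gamma}\lVert\nu_{\gamma o}\rVert e^{-s\dph(e,\gamma)}
\leq C_RM_R\lVert\nu_o\rVert\sum_{m\geq m_-}e^{-(s-\delta)m}<\infty,
\]
which is a convergent geometric series. This mirrors the first half of the proof of Proposition~\ref{prop:rigidity}.

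There is no genuine obstacle. The only point to check is that Corollary~\ref{cor:shadow-consequences}(2) applies to a normal subgroup $\sH$ rather than to $\Gamma$ itself. That corollary in turn rests on Lemma~\ref{lem:fullness}, which is stated for all $\nu\in\cM_{\varphi,\chi}(\delta,\sH)$ and uses only that $\sH$ is non-elementary and $\chi$ satisfies \eqref{eq:conj-inv}, both of which are in the hypotheses.
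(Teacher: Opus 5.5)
Your proposal is correct and matches the paper's proof: the paper derives this from Corollary~\ref{cor:shadow-consequences}(2) and sums over unit $\dph$-annuli, exactly as in the first half of the proof of Proposition~\ref{prop:rigidity}, with the same geometric tail $\sum_{m\geq m_-}e^{-(s-\delta)m}$. Your check that the annulus bound holds for a general non-elementary normal subgroup $\sH$, with the sum taken over all of $\Gamma$, is the right point to verify.
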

	
	\begin{proof}
		The assertion follows from Corollary~\ref{cor:shadow-consequences}(2), as in the first
		part of the proof of Proposition~\ref{prop:rigidity}.
	\end{proof}
	
	\subsection{Existence: the twisted Patterson construction}

\begin{lemma}
	\label{lem:patterson-h}
	Let $\chi\in\Hom(\Gamma,\bR)$ and set
	$\delta:=\delta_{\varphi,\chi}(\Gamma)$. There exists a
	non-decreasing function $f\colon\bR\longrightarrow(0,\infty)$
	such that, for every $\varepsilon>0$, there is
	$T_\varepsilon\in\bR$ with
	$f(t+r)\leq e^{\varepsilon r}f(t)$
for all $t\geq T_\varepsilon,\ r\geq0)$.
	Moreover, the modified series
	\[
	Q^f_{\Gamma,\varphi,\chi}(s)
	:=
	\sum_{\gamma\in\Gamma}
	e^{\chi(\gamma)-s\dph(e,\gamma)}
	f\bigl(\dph(e,\gamma)\bigr)
	\]
	converges for $s>\delta$ and diverges for $s\leq\delta$.
\end{lemma}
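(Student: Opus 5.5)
This is Patterson's classical lemma, and I would reduce it to a statement about a Dirichlet-type series over annuli and then follow Patterson's original construction. Set
\[
a_m:=\sum_{\substack{\gamma\in\Gamma\\ m-1<\dph(e,\gamma)\leq m}}e^{\chi(\gamma)}\qquad(m\in\bZ).
\]
Each $a_m$ is finite, because the annuli are finite by Lemma~\ref{lem:dphi-qi}(1). Moreover $a_m=0$ for $m$ below some $m_-$, and $\delta=\limsup_m\frac1m\log a_m$ is finite and positive by Proposition~\ref{prop:delta-props}(2).

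\emph{Reduction to annuli.} Any $f$ of the required kind is non-decreasing and slowly varying. On each annulus $\dph(e,\gamma)\in(m-1,m]$, so both $e^{-s\dph(e,\gamma)}$ and $f(\dph(e,\gamma))$ are comparable to $e^{-sm}$ and $f(m)$. For $f(\dph)$ versus $f(m)$ this uses monotonicity on one side and $f(t+1)\leq e^{\varepsilon}f(t)$ on the other. Hence $Q^f_{\Gamma,\varphi,\chi}(s)$ converges if and only if $\sum_m a_m e^{-sm}f(m)$ converges. The finitely many small $m$ are harmless.

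\emph{Easy case.} If $\sum_m a_m e^{-\delta m}=\infty$, that is, if $(\Gamma,\varphi,\chi)$ is of divergence type (Lemma~\ref{lem:abscissa} and the comparison above), take $f\equiv1$. Divergence for $s<\delta$ is Lemma~\ref{lem:abscissa}, and divergence at $s=\delta$ holds by assumption.

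\emph{Convergent case.} I would follow Patterson and build $f$ piecewise. Choose $\varepsilon_k\downarrow0$ and an increasing sequence $t_k\to\infty$. On $[t_k,t_{k+1}]$ set $f(t)=f(t_k)e^{\varepsilon_k(t-t_k)}$, and set $f$ constant below $t_1$. Then $f$ is continuous and non-decreasing. For $t\geq t_k$ and $r\geq0$ one has $f(t+r)\leq e^{\varepsilon_k r}f(t)$, since the slope of $\log f$ is at most $\varepsilon_k$ beyond $t_k$; this gives $T_\varepsilon=t_k$ for $\varepsilon_k\leq\varepsilon$.

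For convergence when $s>\delta$: pick $k$ with $\varepsilon_k<s-\delta$. Then $f(m)\leq Ce^{\varepsilon_k m}$, and $\sum_m a_m e^{-(s-\varepsilon_k)m}<\infty$ by Lemma~\ref{lem:abscissa}, since $s-\varepsilon_k>\delta$.

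The divergence at $s=\delta$ is the main obstacle, and it is where the $t_k$ must be chosen inductively. Since $\limsup\frac1m\log a_m=\delta$, for each $k$ there are infinitely many $m$ with $a_m\geq e^{(\delta-\varepsilon_k/2)m}$. Having fixed $t_k$ and $f(t_k)$, I would choose $t_{k+1}>t_k+1$ so large that some integer $m\in[t_k,t_{k+1}]$ satisfies
\[
a_m e^{-\delta m}f(m)\geq a_m e^{-\delta m}f(t_k)e^{\varepsilon_k(m-t_k)}\geq f(t_k)e^{-\varepsilon_k t_k}e^{\varepsilon_k m/2}\geq1.
\]
This is possible because the middle quantity tends to infinity along such $m$. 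Infinitely many terms of the annulus series are then at least $1$, so it diverges at $s=\delta$. For $s<\delta$, divergence follows since $f\geq f(t_1)>0$ and, by Lemma~\ref{lem:abscissa}, the unweighted series already diverges.

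The delicate point is to keep three requirements consistent: the slopes $\varepsilon_k$ must decrease (for slow variation), the intervals must be long enough to contain a good $m$ (for divergence), and $f$ must not grow faster than every $e^{\eta t}$ with $\eta>0$ (for convergence). The piecewise-exponential construction with slopes $\varepsilon_k\to0$ satisfies all three.
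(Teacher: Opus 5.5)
Your proof is correct and takes essentially the paper's approach. The paper proves the lemma only by citing Patterson's lemma, as used by Paulin--Pollicott--Schapira. Your piecewise-exponential $f$ is that construction written out over the annuli $a_m$: slopes $\varepsilon_k\downarrow 0$, with breakpoints $t_k$ chosen inductively so that each block contributes a term $\geq 1$ at $s=\delta$. The only cosmetic difference is that you find one good annulus per block from the $\limsup$, whereas Patterson uses divergence of the tail of the series at exponent $\delta-\varepsilon_k$.
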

	
\begin{proof}
	Apply Patterson's construction \cite[Lem.~3.1]{Patterson},
	as in \cite[proof of Prop.~11.10(i)]{PPS}.
\end{proof}

	\begin{thm}\label{thm:existence}
	Let $\sH\leq\Gamma$ be a subgroup and
	$\chi\in\Hom(\sH,\bR)$. Suppose that
	$\delta:=\delta_{\varphi,\chi}(\sH)$ is finite. Then there
	exists a family $\mu=(\mu_x)_{x\in\mathbb X}$ of finite
	non-zero Borel measures on $\cF$, supported on $\Lam$, that
	satisfies \eqref{eq:conf} with dimension $\delta$ and
	\eqref{eq:equiv} for every element of $\sH$.
	If $\sH$ is an infinite normal subgroup of $\Gamma$, then
	$\supp\mu_x=\Lam$ for every $x\in\mathbb X$. If, in addition,
	$\chi$ satisfies \eqref{eq:conj-inv}, then
	$\mu\in\cM_{\varphi,\chi}(\delta,\sH)$. In particular,
	$\cM_{\varphi,\chi}
	(\delta_{\varphi,\chi}(\Gamma),\Gamma)\neq\emptyset$
	for every $\chi\in\Hom(\Gamma,\bR)$.
	\end{thm}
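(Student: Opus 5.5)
The plan is the classical Patterson construction, carried out with the vector-valued Busemann cocycle on the compactification $\Gamma\cup\Lam$. Fix $x\in\mathbb X$. For $s>\delta$, using the function $f$ of Lemma~\ref{lem:patterson-h} (applied to $\sH$ and $\chi$; the proof there transfers verbatim to subgroups), define the atomic measures
\[
\mu_{x,s}:=\frac{1}{Q^f_{\sH,\varphi,\chi}(s)}\sum_{h\in\sH}e^{\chi(h)-s\,\varphi(\kappa(x,ho))}f\bigl(\varphi(\kappa(x,ho))\bigr)\,\mathscr D_{ho},
\]
viewed on $\overline{\Gamma}$, or on the compact space $\cF\sqcup\Gamma$ topologised through $\gamma\mapsto U(\gamma)$ and \eqref{eq:cartan-property}. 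These are finite for $s>\delta$: $\varphi(\kappa(x,ho))$ differs from $\dph(e,h)$ by a bounded amount (Lemma~\ref{lem:sv}(2)), and the slow growth of $f$ absorbs this. The normalisation gives $\lVert\mu_{o,s}\rVert=1$.

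Choose $s_n\downarrow\delta$ with $\mu_{o,s_n}\to\mu_o$ weak-$\ast$. Divergence of $Q^f$ at $\delta$ means every finite set of group elements gets mass tending to $0$, so $\mu_o$ lives on the boundary. Since orbit points accumulate only on $\Lam$ by \eqref{eq:cartan-property}, $\mu_o$ is supported on $\Lam$. Define $\mu_x:=e^{-\delta\varphi(\beta_\bullet(x,o))}\mu_o$, so that conformality \eqref{eq:conf} is automatic by the cocycle identity of Lemma~\ref{lem:beta-props}(1). The real content is to check that $\mu_x$ is also the limit of $\mu_{x,s_n}$, and that it satisfies \eqref{eq:equiv}.

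The main obstacle is this identification of limits. The density $d\mu_{x,s}/d\mu_{o,s}$ at $ho$ equals
\[
e^{-s[\varphi(\kappa(x,ho))-\varphi(\kappa(o,ho))]}\cdot\frac{f(\varphi(\kappa(x,ho)))}{f(\varphi(\kappa(o,ho)))}.
\]
I need this to extend continuously in the limit. For the exponential factor, Proposition~\ref{prop:buslimit} shows that $\kappa(x,\gamma_n o)-\kappa(o,\gamma_n o)\to\beta_{\theta(p)}(x,o)$ as $\gamma_n\to p$. This is proved there for $x\in\Gamma o$; for general $x=go$ the same proof works via Lemma~\ref{lem:sigma-props}(4). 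I would establish this convergence uniformly near each boundary point, so that the function extends continuously to $\Gamma\cup\Lam$. For the $f$-ratio, the two arguments differ by at most $\lVert\varphi\rVert\sqrt d\,\lVert\kappa(x)\rVert$ while both tend to infinity, so the property $f(t+r)\le e^{\varepsilon r}f(t)$ forces the ratio to $1$. Weak-$\ast$ continuity then gives $\mu_{x,s_n}\to\mu_x$.

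For equivariance, reindexing $h\mapsto h_0h$ gives $(h_0)_\ast\mu_{x,s}=e^{-\chi(h_0)}\mu_{h_0x,s}$ exactly, provided $\chi$ is additive on $\sH$. This identity passes to the limit, which proves \eqref{eq:equiv} for elements of $\sH$.

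For full support when $\sH$ is infinite and normal, the argument of Lemma~\ref{lem:density-basic}(4) applies. The support is a nonempty closed $\sH$-invariant subset of $\bd$, and an infinite normal subgroup of the non-elementary hyperbolic group $\Gamma$ has full limit set by \cite[Thm.~12.2(5)]{KB}, so the support is all of $\Lam$. Under \eqref{eq:conj-inv} all axioms of Definition~\ref{def:twisted} hold, so $\mu\in\cM_{\varphi,\chi}(\delta,\sH)$. Finally, Proposition~\ref{prop:delta-props}(2) gives $0<\delta_{\varphi,\chi}(\Gamma)<\infty$, which yields the case $\sH=\Gamma$.
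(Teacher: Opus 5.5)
Your proposal is correct and is essentially the paper's argument. The paper also runs Patterson's construction on $\overline\Gamma$, defines $\mu_x$ from $\mu_o$ by conformality, and proves equivariance by showing that the ratio $R_s$ (which in your notation is $d\mu_{\gamma o,s}/d\mu_{o,s}$) converges off finite sets to a continuous function $V_\gamma$ on $\overline\Gamma$, using Lemma~\ref{lem:sigma-props}(4), \eqref{eq:cartan-property} and the slow growth of $f$. The only difference is that the paper does this only for $x=\gamma o$ with $\gamma\in\sH$ and then extends to all $x$ by conformality, whereas you do it for every $x$ and combine it with the exact finite-$s$ identity $(h_0)_\ast\mu_{x,s}=e^{-\chi(h_0)}\mu_{h_0x,s}$; this is the same computation.

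The one check you should add is $\delta>0$, which Definition~\ref{def:twisted} requires for membership in $\cM_{\varphi,\chi}(\delta,\sH)$. For an infinite normal $\sH\neq\Gamma$ it follows by running the free-subgroup argument of Proposition~\ref{prop:delta-props}(2) inside $[\sH,\sH]\subseteq\ker\chi$, which is again an infinite normal subgroup of $\Gamma$.
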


	\begin{proof}
		Let $f$ and $Q^f_{\sH,\varphi,\chi}$ be as in
		Lemma~\ref{lem:patterson-h}. For $s>\delta$, define a probability
		measure on $\overline\Gamma$ by
		\[
		\widetilde\mu_s
		:=
		\frac{1}{Q^f_{\sH,\varphi,\chi}(s)}
		\sum_{h\in\sH}
		e^{\chi(h)-s\dph(e,h)}
		f\bigl(\dph(e,h)\bigr)\mathscr D_h.
		\]
		Since $\overline\Gamma$ is compact, we may choose
		$s_k\downarrow\delta$ such that
		$\widetilde\mu_{s_k}\to\widetilde\mu$ weak-$\ast$ for some
		$\widetilde\mu\in\mathcal P(\overline\Gamma)$.
		Lemma~\ref{lem:patterson-h} and Fatou's lemma give
		$Q^f_{\sH,\varphi,\chi}(s_k)\to+\infty$.
		Consequently,
		$\widetilde\mu_{s_k}(\{\gamma\})\to0$ for every
		$\gamma\in\Gamma$. Each point of $\Gamma$ is isolated in
		$\overline\Gamma$, so weak-$\ast$ convergence gives
		$\widetilde\mu(\{\gamma\})=0$. As $\Gamma$ is countable,
		$\widetilde\mu(\Gamma)=0$. Thus $\widetilde\mu$ is a probability
		measure on $\bd$.
		Set
		\[
		\mu_o:=\theta_\ast\widetilde\mu,
		\qquad
		\mu_x:=e^{-\delta\varphi(\beta_\bullet(x,o))}\mu_o
		\quad(x\in\mathbb X).
		\]
		By Lemma~\ref{lem:beta-props}(4), these are finite non-zero
		Borel measures supported on $\Lam$. The cocycle identity in
		Lemma~\ref{lem:beta-props}(1) gives
		$\mu_x=e^{-\delta\varphi(\beta_\bullet(x,y))}\mu_y$.
		Hence $\mu$ satisfies \eqref{eq:conf}.
		
		It remains to prove twisted equivariance. Fix $\gamma\in\sH$.
		Since $\gamma_\ast\mathscr D_h=\mathscr D_{\gamma h}$,
		reindexing by $\beta=\gamma h$ gives
		\[
		e^{\chi(\gamma)}\gamma_\ast\widetilde\mu_s
		=
	R_s\widetilde{\mu}_s
		\]
		where, for $\beta\in\sH$,
		\[
		R_s(\beta)
		:=
		e^{-s[\dph(\gamma,\beta)-\dph(e,\beta)]}
		\frac{f\bigl(\dph(\gamma,\beta)\bigr)}
		{f\bigl(\dph(e,\beta)\bigr)}.
		\]
		Set $L:=\sqrt d\,\lVert\varphi\rVert\,
		\lVert\kappa(\gamma)\rVert$.
		By Lemma~\ref{lem:dphi-qi}(2),
		\[
		\bigl|\dph(\gamma,\beta)-\dph(e,\beta)\bigr|\leq L.
		\]
		For $\varepsilon>0$, let $T_\varepsilon$ be as in
		Lemma~\ref{lem:patterson-h}. If
		$\dph(e,\beta)\geq T_\varepsilon+L$, then both
		$\dph(e,\beta)$ and $\dph(\gamma,\beta)$ are at least
		$T_\varepsilon$. Therefore, we have $f(\dph(\gamma,\beta))/f(\dph(e,\beta))
		\in[e^{-\varepsilon L},e^{\varepsilon L}]$.
		By the Anosov hypothesis, there exist $c>0$ and $C\geq0$ such
		that
		\[
		\min_{1\leq i\leq d-1}\alpha_i(\kappa(\beta))
		\geq c\,d_w(e,\beta)-C
		\qquad(\beta\in\sH).
		\]
		Therefore, the Anosov estimate  and  Lemma~\ref{lem:sigma-props}(4)  gives
		\[
		\dph(\gamma,\beta)-\dph(e,\beta)
		-\varphi\bigl(\sigma(\gamma^{-1},U(\beta))\bigr)
		\longrightarrow0
		\qquad(d_w(e,\beta)\to\infty).
		\]
		Define $V_\gamma\colon\overline\Gamma\to(0,\infty)$ by
		\[
		\begin{aligned}
			V_\gamma(\beta)
			&:=e^{-\delta\varphi(\sigma(\gamma^{-1},U(\beta)))},
			\qquad (\beta\in\Gamma),\\
			V_\gamma(x)
			&:=e^{-\delta\varphi(\beta_{\theta(p)}(\gamma o,o))},
			\qquad (x\in\bd).
		\end{aligned}
		\]
		Note that $V_\gamma \in \mathcal{C}(\overline{\Gamma})$.
			Combining the preceding estimates, for every $\varepsilon>0$
		there is a finite set $E_\varepsilon\subset\sH$, independent of
		$k$, such that, for $\beta\in\sH\smallsetminus E_\varepsilon$,
		\[
		R_{s_k}(\beta)\in
		V_\gamma(\beta)
		\left[
		e^{-((|\delta|+L)\varepsilon+(s_k-\delta)L)},
		e^{((|\delta|+L)\varepsilon+(s_k-\delta)L)}
		\right].
		\]
	Let $0\leq F\in \mathcal C(\overline\Gamma)$.
	Multiply the preceding inequalities by $F$ and integrate over
	$\sH\setminus E_\varepsilon$ with respect to
	$\widetilde\mu_{s_k}$.
	The contributions from $E_\varepsilon$ tend to zero, since their
	unnormalised weights remain bounded while
	$Q^f_{\sH,\varphi,\chi}(s_k)\to+\infty$.
	By weak-$\ast$ convergence and the continuity of $V_\gamma$,
	letting $k\to\infty$ and then $\varepsilon\downarrow0$ yields
	\[
	e^{\chi(\gamma)}\gamma_\ast\widetilde\mu
	=V_\gamma\,\widetilde\mu.
	\]
	Since $\theta$ is $\Gamma$-equivariant, pushing forward the previous identity gives
	$e^{\chi(\gamma)}\gamma_\ast\mu_o=\mu_{\gamma o}$. By conformality and the
	$\Gamma$-equivariance of $\beta$, we then get
	$\gamma_\ast\mu_x=e^{-\chi(\gamma)}\mu_{\gamma x}$ for every $x\in\mathbb X$.
	Thus \eqref{eq:equiv} holds.
	
	If $\sH$ is infinite and normal in $\Gamma$, then
	$\theta^{-1}(\supp\mu_o)$ is a non-empty closed
	$\sH$-invariant subset of $\bd$. Hence it equals $\bd$ by
	\cite[Prop.~12.1(1), Thm.~12.2(5)]{KB}.
	Thus $\supp\mu_x=\Lam$ for every $x\in\mathbb X$, and
	$\delta>0$ by Proposition~\ref{prop:delta-props}(2). If
	\eqref{eq:conj-inv} holds, then
	$\mu\in\cM_{\varphi,\chi}(\delta,\sH)$.
	
	For $\sH=\Gamma$, Proposition~\ref{prop:delta-props}(2)
	gives $\delta_{\varphi,\chi}(\Gamma)<\infty$, proving the final claim.
	\end{proof}
	
		\begin{cor}
		\label{prop:jordan-constraint}
		Let $\chi\in\Hom(\Gamma,\bR)$ and $\delta:=\delta_{\varphi,\chi}(\Gamma)$. Then
		for every $\gamma\in\Gamma$,
		\[
		-\,\delta\,\varphi\big(\ii\,\lambda(\gamma)\big)\ \leq\ \chi(\gamma)\ \leq\
		\delta\,\varphi\big(\lambda(\gamma)\big);
		\]
		in particular, if $\varphi$ is symmetric,
		$\lvert\chi(\gamma)\rvert\leq\delta\,\varphi(\lambda(\gamma))$.
	\end{cor}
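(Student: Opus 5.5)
The plan is to combine Corollary~\ref{cor:shadow-consequences}(1) with the growth of powers. By Theorem~\ref{thm:existence} the set $\cM_{\varphi,\chi}(\delta,\Gamma)$ is non-empty, so we may fix $\nu\in\cM_{\varphi,\chi}(\delta,\Gamma)$ and $R=R_2$. Corollary~\ref{cor:shadow-consequences}(1) then gives the uniform bound
\[
\chi(\gamma)-\delta\,\dph(e,\gamma)\leq \log C_R\qquad(\gamma\in\Gamma).
\]
The constant $C_R$ does not depend on $\gamma$, and this uniformity is what the argument uses.

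For the upper bound, fix $\gamma\in\Gamma$ and apply the inequality to $\gamma^n$ for $n\geq1$. Since $\chi$ is a homomorphism, this reads $n\chi(\gamma)\leq \delta\,\varphi(\kappa(\gamma^n))+\log C_R$. We divide by $n$ and let $n\to\infty$. The identity $\frac1n\kappa(\gamma^n)\to\lambda(\gamma)$ and the linearity of $\varphi$ then give $\chi(\gamma)\leq\delta\,\varphi(\lambda(\gamma))$.

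For the lower bound, apply the upper bound to $\gamma^{-1}$. This gives $-\chi(\gamma)=\chi(\gamma^{-1})\leq\delta\,\varphi(\lambda(\gamma^{-1}))=\delta\,\varphi(\ii\lambda(\gamma))$, using $\lambda(g^{-1})=\ii\lambda(g)$. Rearranging yields $-\delta\,\varphi(\ii\lambda(\gamma))\leq\chi(\gamma)$.

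In the symmetric case, $\varphi\circ\ii=\varphi$, so the two bounds combine to $|\chi(\gamma)|\leq\delta\,\varphi(\lambda(\gamma))$.

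The only point needing care is that $C_R$ is independent of $\gamma$, which holds by Theorem~\ref{thm:shadowprinciple}. Torsion elements need no separate treatment: for them $\lambda(\gamma)=0$, and $\chi(\gamma)=0$ because $\bR$ is torsion-free, so the inequalities hold trivially and are in any case covered by the general argument.
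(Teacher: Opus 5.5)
Your proposal is correct and matches the paper's proof essentially step for step. Like the paper, you take a density from Theorem~\ref{thm:existence}, apply the uniform bound of Corollary~\ref{cor:shadow-consequences}(1) to $\gamma^n$ with $\kappa(\gamma^n)/n\to\lambda(\gamma)$, and then pass to $\gamma^{-1}$ using $\lambda(\gamma^{-1})=\ii\lambda(\gamma)$ for the lower bound; you also use implicitly that $0<\delta<\infty$, from Proposition~\ref{prop:delta-props}(2), which is needed to invoke both the existence theorem and the shadow principle.
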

	
	\begin{proof}
		By Theorem~\ref{thm:existence} and
		Corollary~\ref{cor:shadow-consequences}(1), there exists
		$C\geq1$ such that
		$\chi(\gamma')\leq\delta\,\dph(e,\gamma')+\log C$
		for every $\gamma'\in\Gamma$.
		Hence, for every $\gamma\in\Gamma$ and $n\geq1$,
		\[
		\chi(\gamma)
		=
		\frac{\chi(\gamma^n)}{n}
		\leq
		\delta\,\varphi\!\left(\frac{\kappa(\gamma^n)}{n}\right)
		+\frac{\log C}{n}.
		\]
		Using $\kappa(\gamma^n)/n\to\lambda(\gamma)$ and letting
		$n\to\infty$ gives
		$\chi(\gamma)\leq\delta\,\varphi(\lambda(\gamma))$.
		Applying this bound to $\gamma^{-1}$, together with
		$\chi(\gamma^{-1})=-\chi(\gamma)$ and
		$\lambda(\gamma^{-1})=\ii\lambda(\gamma)$, yields
		$\chi(\gamma)\geq-\delta\,\varphi(\ii\lambda(\gamma))$.
		The final assertion follows when $\varphi\circ\ii=\varphi$.
	\end{proof}

	\section{Differentiation along shadows}\label{sec:differentiation}

Fix $R\geq\max\{R_1,R_2\}$ and a finite Borel measure $\mu$ on $Lam$  such that, for some $D\geq1$,
\begin{equation}\label{eq:doubling}
	0<\mu(\cO_R(e,\gamma)),\qquad
	\mu(\cO_{6R}(e,\gamma))\leq D\,\mu(\cO_R(e,\gamma))
	\qquad(\gamma\in\Gamma).
\end{equation}
These conditions hold for $\mu=\nu_o$, where
$\nu\in\cM_{\varphi,\chi}(\delta,H)$, with $D=C_{6R}C_R$, by
Lemma~\ref{lem:fullness}, conformality, and
Corollary~\ref{cor:shadow-consequences}(4).

A sequence $(\gamma_i)$ in $\Gamma$ \emph{converges radially} to
$x\in\bd$ if $|\gamma_i|\to\infty$ and $x\in\cO_R(e,\gamma_i)$
for every $i$. By Lemma~\ref{lem:shadow-basic}(2), every $x$ admits such a sequence along a geodesic ray
from $e$ to $x$.

\begin{lemma}\label{lem:maximal}
	Let $\nu$ be a finite Borel measure on $\bd$ and $\lambda>0$. Set
	\[
	I:=\{\gamma\in\Gamma:
	\nu(\cO_R(e,\gamma))>\lambda\,\mu(\cO_{6R}(e,\gamma))\}.
	\]
	Then
	\[
	\mu\left(\bigcup_{\gamma\in I}\cO_R(e,\gamma)\right)
	\leq\frac{\lVert\nu\rVert}{\lambda}.
	\]
\end{lemma}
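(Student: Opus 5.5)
The plan is to combine the Vitali covering lemma for word shadows with the defining inequality of $I$, in the style of a Hardy--Littlewood maximal estimate.

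First, since $R\geq R_1$, Lemma~\ref{lem:vitali} applied to the index set $I$ gives a subfamily $J\subseteq I$ such that the shadows $\{\cO_R(e,\gamma)\}_{\gamma\in J}$ are pairwise disjoint and
\[
\bigcup_{\gamma\in I}\cO_R(e,\gamma)\subseteq\bigcup_{\gamma\in J}\cO_{6R}(e,\gamma).
\]
Here $J$ is countable, as a subset of $\Gamma$.

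Second, using monotonicity and countable subadditivity of $\mu$, the membership $J\subseteq I$, and then the disjointness of the $R$-shadows for $\nu$, we obtain
\[
\mu\Bigl(\bigcup_{\gamma\in I}\cO_R(e,\gamma)\Bigr)
\leq\sum_{\gamma\in J}\mu\bigl(\cO_{6R}(e,\gamma)\bigr)
<\frac1\lambda\sum_{\gamma\in J}\nu\bigl(\cO_R(e,\gamma)\bigr)
=\frac1\lambda\,\nu\Bigl(\bigsqcup_{\gamma\in J}\cO_R(e,\gamma)\Bigr)
\leq\frac{\lVert\nu\rVert}{\lambda}.
\]
If $J$ is empty, then $I$ is empty and the bound is trivial.

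The only points needing care are measurability and the identification of the shadows. Each shadow $\cO_R(e,\gamma)$ is closed by Lemma~\ref{lem:shadow-basic}(1), so all the unions above are countable unions of Borel sets. Since $\mu$ lives on $\Lam$ while $\nu$ lives on $\bd$, we read $\mu(\cO)$ as $\mu(\theta(\cO))$, using that $\theta$ is a homeomorphism onto $\Lam$. This only requires keeping the notation consistent.

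No real obstacle is expected: all of the geometric content is already contained in Lemma~\ref{lem:vitali}. The doubling hypothesis \eqref{eq:doubling} is not needed for this lemma and will only be used later, in the differentiation theorem.
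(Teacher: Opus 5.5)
Your proposal is correct and follows the paper's proof essentially verbatim. Both apply Lemma~\ref{lem:vitali} to $I$, bound $\mu$ of the union by $\sum_{\gamma\in J}\mu(\cO_{6R}(e,\gamma))$, use the defining inequality of $I$, and conclude by disjointness of the $R$-shadows; as you note, the doubling hypothesis is not used.
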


\begin{proof}
	The set $E:=\bigcup_{\gamma\in I}\cO_R(e,\gamma)$ is Borel,
	since $\Gamma$ is countable and shadows are closed.
	By Lemma~\ref{lem:vitali}, there exists $J\subseteq I$ such that
	the shadows $\{\cO_R(e,\gamma)\}_{\gamma\in J}$ are pairwise disjoint
	and $E\subseteq\bigcup_{\gamma\in J}\cO_{6R}(e,\gamma)$. Hence
	\[
	\mu(E)
	\leq\sum_{\gamma\in J}\mu(\cO_{6R}(e,\gamma))
	\leq\frac{1}{\lambda}
	\sum_{\gamma\in J}\nu(\cO_R(e,\gamma))
	\leq\frac{\lVert\nu\rVert}{\lambda},
	\]
	where the last inequality follows from disjointness.
\end{proof}
	
	We write $L^1(\mu)$ for the space of measurable functions $f$ on $\bd$
	with $\lVert f\rVert_{L^1(\mu)}:=\int_{\bd}|f|\,d\mu<\infty$,
	identifying functions that agree $\mu$-almost everywhere.

\begin{thm}\label{thm:differentiation}
	Let $\mu$ be as above and $f\in L^1(\mu)$. Then, for $\mu$-almost
	every $x\in\bd$ and every sequence $(\gamma_i)$ converging radially
	to $x$,
	\[
	\lim_{i\to\infty}
	\frac{1}{\mu(\cO_R(e,\gamma_i))}
	\int_{\cO_R(e,\gamma_i)}f\,d\mu=f(x).
	\]
	In particular, for every Borel set $A\subseteq\bd$, the ratios
	$\mu(A\cap\cO_R(e,\gamma_i))/\mu(\cO_R(e,\gamma_i))$ converge to
	$\mathbb{1}_A(x)$ for $\mu$-almost every $x$ and every radial sequence
	converging to $x$.
\end{thm}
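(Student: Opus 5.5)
The plan is the classical Lebesgue differentiation argument: prove a weak-type maximal inequality for radial averages, approximate $f$ in $L^1(\mu)$ by continuous functions, and pass to the limit. Working on $\bd$ via $\theta$, define for $g\in L^1(\mu)$ the radial maximal function
\[
M g(x):=\sup\Bigl\{\frac{1}{\mu(\cO_R(e,\gamma))}\int_{\cO_R(e,\gamma)}|g|\,d\mu:\ x\in\cO_R(e,\gamma)\Bigr\}.
\]
The first step is the weak-type bound $\mu(\{Mg>\lambda\})\leq D\lVert g\rVert_{L^1(\mu)}/\lambda$. To get it, apply Lemma~\ref{lem:maximal} to the measure $\nu:=|g|\mu$ with threshold $\lambda/D$. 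If $\int_{\cO_R(e,\gamma)}|g|\,d\mu>\lambda\mu(\cO_R(e,\gamma))$, then \eqref{eq:doubling} gives $\nu(\cO_R(e,\gamma))>(\lambda/D)\mu(\cO_{6R}(e,\gamma))$. Hence $\{Mg>\lambda\}$ is contained in a union of shadows indexed by the set $I$ of Lemma~\ref{lem:maximal}, which is a Borel set of $\mu$-measure at most $D\lVert g\rVert/\lambda$.

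The second step is the continuous case. Let $g$ be continuous on $\bd$. For a radial sequence $(\gamma_i)\to x$, Lemma~\ref{lem:shrinking} gives $\diam_\varrho\cO_R(e,\gamma_i)\leq C_0e^{-\varepsilon_0(|\gamma_i|-R-3\delta_0)}\to0$, and $x$ lies in each of these shadows. Uniform continuity then forces the averages of $g$ over $\cO_R(e,\gamma_i)$ to converge to $g(x)$, for \emph{every} $x$ and every radial sequence. Here the positivity in \eqref{eq:doubling} ensures the averages are defined.

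The third step passes to general $f$. Continuous functions are dense in $L^1(\mu)$, since $\mu$ is a finite Borel measure on the compact metric space $\bd$. Define the oscillation
\[
\Omega f(x):=\sup_{(\gamma_i)}\limsup_{i}\Bigl|\frac{1}{\mu(\cO_R(e,\gamma_i))}\int_{\cO_R(e,\gamma_i)}f\,d\mu-f(x)\Bigr|,
\]
the supremum being over radial sequences converging to $x$. For continuous $g$ we have $\Omega f\leq M(f-g)+|f-g|$ pointwise. The weak-type bound and Chebyshev then give $\mu(\{\Omega f>2\lambda\})\leq (D+1)\lVert f-g\rVert_{L^1(\mu)}/\lambda$, which can be made arbitrarily small. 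So $\Omega f=0$ $\mu$-a.e., and the statement for Borel $A$ is the case $f=\mathbb 1_A$.

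The main obstacle is measurability: the supremum over all radial sequences must be handled so that the exceptional set is contained in a $\mu$-null Borel set. The weak-type estimate is phrased through the Borel union of shadows, and the bound $\Omega f\leq M(f-g)+|f-g|$ places the bad set inside such unions, so only outer measure is needed. This suffices, and the same device works whether or not $\Omega f$ is itself measurable.
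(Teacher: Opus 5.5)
Your proposal is correct and follows essentially the same route as the paper: the Vitali-type estimate of Lemma~\ref{lem:maximal} applied to $|f-g|\mu$ with the threshold rescaled by the doubling constant $D$, the continuous case via Lemma~\ref{lem:shrinking}, and density of $\mathcal C(\bd)$ in $L^1(\mu)$ combined with Markov's inequality. The differences are only cosmetic. You package the estimate through a maximal function over all shadows and use an outer-measure argument, whereas the paper fixes an everywhere finite Borel representative of $f$ (you should do this too). The paper's exceptional sets $E_t$ are Borel because they are countable suprema over $\gamma\in\Gamma$, so measurability is not really an obstacle.
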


\begin{proof}
	Choose an everywhere finite Borel representative of $f$ and set
	\[
	A_\gamma h
	:=
	\frac{1}{\mu(\cO_R(e,\gamma))}
	\int_{\cO_R(e,\gamma)}h\,d\mu
	\qquad(h\in L^1(\mu),\ \gamma\in\Gamma).
	\]
	For every $g\in\mathcal C(\bd)$, Lemma~\ref{lem:shrinking}
	and uniform continuity give
	\[
	\sup_{x\in\cO_R(e,\gamma)}|A_\gamma g-g(x)|
	\leq
	\sup_{x,q\in\cO_R(e,\gamma)}|g(q)-g(x)|
	\longrightarrow0
	\qquad(|\gamma|\to\infty).
	\]
	
	For $t>0$, define
	\[
	E_t
	:=
	\left\{
	x\in\bd:
	\lim_{N\to\infty}
	\sup_{\substack{\gamma\in\Gamma,\ |\gamma|\geq N\\
			x\in\cO_R(e,\gamma)}}
	|A_\gamma f-f(x)|>t
	\right\}.
	\]
	Each $E_t$ is Borel, since $f$ is Borel, $\Gamma$ is countable,
	and shadows are Borel.
	
	Fix $g\in\mathcal C(\bd)$ and put $h=f-g$. For
	$x\in\cO_R(e,\gamma)$,
	\[
	|A_\gamma f-f(x)|
	\leq A_\gamma|h|+|A_\gamma g-g(x)|+|h(x)|.
	\]
	Together with the preceding convergence, this implies
	\[
	E_t
	\subseteq
	\{x\in\bd:|h(x)|>t/2\}
	\cup
	\bigcup_{\substack{\gamma\in\Gamma\\ A_\gamma|h|>t/2}}
	\cO_R(e,\gamma).
	\]
	Whenever $A_\gamma|h|>t/2$, \eqref{eq:doubling} gives
	\[
	\int_{\cO_R(e,\gamma)}|h|\,d\mu
	>
	\frac{t}{2}\mu(\cO_R(e,\gamma))
	\geq
	\frac{t}{2D}\mu(\cO_{6R}(e,\gamma)).
	\]
	Thus Lemma~\ref{lem:maximal}, applied to
	$\nu=|h|\mu$ and $\lambda=t/(2D)$, together with
	Markov's inequality, yields
	\[
	\mu(E_t)
	\leq
	\frac{2}{t}\lVert h\rVert_{L^1(\mu)}
	+
	\frac{2D}{t}\lVert h\rVert_{L^1(\mu)}
	=
	\frac{2(D+1)}{t}\lVert f-g\rVert_{L^1(\mu)}.
	\]
	Since $\mathcal C(\bd)$ is dense in $L^1(\mu)$, it follows
	that $\mu(E_t)=0$ for every $t>0$.
	
	For every $x\notin\bigcup_{n\geq1}E_{1/n}$, the limit
	defining $E_t$ is zero. Hence $A_{\gamma_i}f\to f(x)$
	along every sequence $(\gamma_i)$ converging radially to $x$.
	The final assertion follows by taking $f=\mathbb{1}_A$.
\end{proof}
	
\begin{prop}\label{prop:masscomparison}
	Let $(\sH,\chi)$ be as in Section~\ref{sec:densities}.
	Fix $\delta>0$ and $R\geq R_2$, and let
	$\mu,\nu\in\cM_{\varphi,\chi}(\delta,\sH)$. Suppose that
	$\Theta\geq1$ satisfies
	\[
	\Theta^{-1}\lVert\mu_{\gamma o}\rVert
	\leq\lVert\nu_{\gamma o}\rVert
	\leq\Theta\lVert\mu_{\gamma o}\rVert
	\qquad(\gamma\in\Gamma).
	\]
	Then $\mu_o$ and $\nu_o$ are mutually absolutely continuous, with
	\[
	(\Theta C_RC_{6R})^{-1}
	\leq\frac{d\nu_o}{d\mu_o}
	\leq\Theta C_RC_{6R}
	\qquad\mu_o\text{-a.e.}
	\]
	The one-sided assumption
	$\lVert\nu_{\gamma o}\rVert\leq\Theta\lVert\mu_{\gamma o}\rVert$
	for every $\gamma\in\Gamma$ already implies
	$\nu_o\leq\Theta C_RC_{6R}\mu_o$.
\end{prop}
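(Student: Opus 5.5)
The plan is to compare $\nu_o$ and $\mu_o$ on word shadows $\cO_R(e,\gamma)$ via Theorem~\ref{thm:shadowprinciple}, and then pass from shadows to arbitrary Borel sets using the Vitali covering Lemma~\ref{lem:vitali}. The differentiation Theorem~\ref{thm:differentiation} is an alternative route for the final step, but the covering argument is more direct. By symmetry of the two-sided hypothesis in $(\mu,\nu)$, it suffices to prove the one-sided claim $\nu_o\leq\Theta C_RC_{6R}\mu_o$ from $\lVert\nu_{\gamma o}\rVert\leq\Theta\lVert\mu_{\gamma o}\rVert$. Applying the same claim with the roles of $\mu$ and $\nu$ exchanged gives the other bound. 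Together the two inequalities give mutual absolute continuity and the stated bounds on $d\nu_o/d\mu_o$.

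\emph{Step 1 (shadow comparison).} Fix $\gamma\in\Gamma$ and apply Theorem~\ref{thm:shadowprinciple} with $\gamma_1=e$ and $\gamma_2=\gamma$, at radius $6R$ for $\nu$ and at radius $R$ for $\mu$. This gives
\[
\nu_o(\cO_{6R}(e,\gamma))\leq C_{6R}\lVert\nu_{\gamma o}\rVert e^{-\delta\dph(e,\gamma)}
\leq \Theta C_{6R}\lVert\mu_{\gamma o}\rVert e^{-\delta\dph(e,\gamma)}
\leq \Theta C_{6R}C_R\,\mu_o(\cO_R(e,\gamma)).
\]
The constants $C_R$ and $C_{6R}$ are uniform over $\cM_{\varphi,\chi}(\delta,\sH)$, so the same constants serve for both $\mu$ and $\nu$.

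\emph{Step 2 (closed sets, then Borel sets).} Let $K=K(\Theta C_RC_{6R})$ denote the constant $\Theta C_RC_{6R}$. First let $U\subseteq\bd$ be open and let $\varepsilon>0$. I would take
\[
I:=\{\gamma\in\Gamma:\ \cO_R(e,\gamma)\subseteq U,\ |\gamma|\geq N\}.
\]
Every $x\in U$ lies in some such shadow. To see this, take a geodesic ray from $e$ to $x$ and points $c(n)$ on it; then $x\in\cO_R(e,c(n))$ by Lemma~\ref{lem:shadow-basic}(2), and the diameter of these shadows shrinks to zero by Lemma~\ref{lem:shrinking}, so they eventually lie in $U$. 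Hence $U=\bigcup_{\gamma\in I}\cO_R(e,\gamma)$. Lemma~\ref{lem:vitali} then supplies a subfamily $J\subseteq I$ with pairwise disjoint $R$-shadows whose $6R$-shadows cover $U$. Combining with Step 1,
\[
\nu_o(U)\leq\sum_{\gamma\in J}\nu_o(\cO_{6R}(e,\gamma))\leq K\sum_{\gamma\in J}\mu_o(\cO_R(e,\gamma))\leq K\,\mu_o\Bigl(\bigcup_{\gamma\in J}\cO_R(e,\gamma)\Bigr)\leq K\,\mu_o(U),
\]
where the third inequality uses disjointness and the last uses $\cO_R(e,\gamma)\subseteq U$ for $\gamma\in J\subseteq I$. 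Note that the $6R$-shadows need not lie in $U$, but only the $R$-shadows appear on the right, so this causes no problem. Finally, outer regularity of the finite Borel measures on the compact metric space $\bd$ (identified with $\Lam$ via $\theta$) extends $\nu_o(U)\leq K\mu_o(U)$ from open sets to all Borel sets. Both measures are supported on $\Lam$, so this gives the inequality on $\cF$.

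The main obstacle is this passage from shadows to general sets. The key point is to cover by shadows contained in $U$ and to apply Vitali at radius $R$ while comparing at radius $6R$, which is exactly the reason for using Theorem~\ref{thm:shadowprinciple} at both radii. If the covering argument needed adjusting, the fallback is Theorem~\ref{thm:differentiation}: apply it to $\mu=\mu_o$, which satisfies \eqref{eq:doubling}, decompose $\nu_o=f\mu_o+\nu_s$, and use Step 1 along radial sequences to bound both $f$ and the singular part $\nu_s$.
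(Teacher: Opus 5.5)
Your proposal is correct and is essentially the paper's proof. Like the paper, you compare shadows via Theorem~\ref{thm:shadowprinciple} at radii $6R$ and $R$, cover an open set by word shadows using Lemma~\ref{lem:vitali}, and finish with outer regularity. The only cosmetic differences are that you select shadows with $\cO_R(e,\gamma)\subseteq U$ rather than $\cO_{6R}(e,\gamma)\subseteq V$, and that you treat open sets before passing to Borel sets; neither affects the argument.
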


\begin{proof}
	It suffices to prove the one-sided assertion. Let $B\subseteq\bd$
	be Borel and $\varepsilon>0$. By outer regularity, choose an open
	$V\supseteq B$ with $\mu_o(V)\leq\mu_o(B)+\varepsilon$, and set
	\[
	I:=\{\gamma\in\Gamma:\cO_{6R}(e,\gamma)\subseteq V\}.
	\]
	For each $x\in V$, choose a geodesic ray $c$ from $e$ to $x$.
	By Lemma~\ref{lem:shadow-basic}(2),
	$x\in\cO_R(e,c(n))\subseteq\cO_{6R}(e,c(n))$.
	Lemma~\ref{lem:shrinking}, applied at radius $6R$, gives
	$\cO_{6R}(e,c(n))\subseteq V$ for all sufficiently large $n$.
	Thus $V=\bigcup_{\gamma\in I}\cO_R(e,\gamma)$.
	Since $R\geq R_2\geq R_1$, Lemma~\ref{lem:vitali} provides
	$J\subseteq I$ such that the shadows
	$\{\cO_R(e,\gamma)\}_{\gamma\in J}$ are pairwise disjoint and
	\[
	B\subseteq\bigcup_{\gamma\in J}\cO_{6R}(e,\gamma)\subseteq V.
	\]
	Applying Theorem~\ref{thm:shadowprinciple} at radii $6R$ and $R$
	and using the mass bound, we obtain
	\[
	\begin{aligned}
		\nu_o(B)
		&\leq\sum_{\gamma\in J}\nu_o(\cO_{6R}(e,\gamma))\\
		&\leq C_{6R}\sum_{\gamma\in J}
		\lVert\nu_{\gamma o}\rVert e^{-\delta\dph(e,\gamma)}\\
		&\leq\Theta C_RC_{6R}
		\sum_{\gamma\in J}\mu_o(\cO_R(e,\gamma))\\
		&\leq\Theta C_RC_{6R}\mu_o(V),
	\end{aligned}
	\]
	where the last inequality uses disjointness and
	$\cO_R(e,\gamma)\subseteq V$ for $\gamma\in J$.
	Letting $\varepsilon\downarrow0$ gives
	$\nu_o\leq\Theta C_RC_{6R}\mu_o$.
	Interchanging $\mu$ and $\nu$ proves the reverse inequality and
	the stated Radon--Nikodym bounds.
\end{proof}

\begin{thm}\label{thm:fatou}
	Let $(\sH,\chi)$ be as in Section~\ref{sec:densities}, let $\delta>0$,
	and let $\mu,\nu\in\cM_{\varphi,\chi}(\delta,\sH)$.
	\begin{enumerate}
		\item If
		$\lVert\mu_{\gamma o}\rVert\geq\lVert\nu_{\gamma o}\rVert$
		for every $\gamma\in\Gamma$, then $\mu_x\geq\nu_x$ as measures
		for every $x\in\mathbb X$.
		\item If
		$\lVert\mu_{\gamma o}\rVert=\lVert\nu_{\gamma o}\rVert$
		for every $\gamma\in\Gamma$, then $\mu=\nu$.
	\end{enumerate}
\end{thm}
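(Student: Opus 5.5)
The plan is to reduce everything to the base point and show that the Radon--Nikodym derivative $g:=d\nu_o/d\mu_o$ satisfies $g\le 1$ $\mu_o$-a.e. Then \eqref{eq:conf}, which has the same density factor for $\mu$ and $\nu$, gives $d\nu_x/d\mu_x=g$ for every $x\in\mathbb X$, and hence $\nu_x\le\mu_x$. Part (2) follows by applying (1) in both directions.

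\emph{Step 1: existence and invariance of $g$.} Proposition~\ref{prop:masscomparison}, applied with $\Theta=1$ under the one-sided hypothesis, gives $\nu_o\le C_RC_{6R}\mu_o$. So $g$ exists and is bounded. Since \eqref{eq:equiv} has the same factor $e^{-\chi(h)}$ for both families, $g$ is $\sH$-invariant $\mu_o$-a.e. For $\gamma\in\Gamma$, the derivative of $(\gamma^{-1})_\ast\nu_{\gamma o}$ with respect to $(\gamma^{-1})_\ast\mu_{\gamma o}$ is $g\circ\gamma$.

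\emph{Step 2: a density point.} Suppose, for contradiction, that $A:=\{g>1+\varepsilon\}$ has $\mu_o(A)>0$ for some $\varepsilon>0$. By Theorem~\ref{thm:differentiation} (valid since $\mu_o$ satisfies \eqref{eq:doubling}), pick $x\in A$ and a radial sequence $\gamma_i\to x$ such that
\[
\mu_o\bigl(A\cap\cO_R(e,\gamma_i)\bigr)/\mu_o\bigl(\cO_R(e,\gamma_i)\bigr)\to1 .
\]
Define the rescaled families
\[
\mu^{(i)}_y:=(\gamma_i^{-1})_\ast\mu_{\gamma_iy}/\lVert\mu_{\gamma_io}\rVert,\qquad \nu^{(i)}_y:=(\gamma_i^{-1})_\ast\nu_{\gamma_iy}/\lVert\mu_{\gamma_io}\rVert,
\]
using the same normalizer for both. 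As in Lemma~\ref{lem:translates}, both lie in $\cM_{\varphi,\chi}(\delta,\sH)$. They satisfy $\lVert\nu^{(i)}_{\beta o}\rVert\le\lVert\mu^{(i)}_{\beta o}\rVert$ for all $\beta\in\Gamma$, and $\lVert\mu^{(i)}_o\rVert=1$. By Lemma~\ref{lem:shadow-basic}(1) and the bounded distortion of the Busemann weight on shadows (Proposition~\ref{Prop:busctrl}), the set $\gamma_i^{-1}A$ occupies asymptotically full $\mu^{(i)}_o$-mass of $\cO_R(\gamma_i^{-1},e)$. Lemma~\ref{lem:fullness} shows this shadow carries mass at least $m_0$. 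Its complement shrinks to a point by Lemma~\ref{lem:concentration}.

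Pass to a subsequence so that the complement points converge to $q\in\bd$. By Lemma~\ref{lem:compact-class}, and the analogous closedness argument for $\nu^{(i)}$ (whose masses are bounded by $1$), we get weak-$\ast$ limits $\mu^\infty,\nu^\infty\in\cM_{\varphi,\chi}(\delta,\sH)$ with $\lVert\mu^\infty_o\rVert=1$. The mass domination $\lVert\nu^\infty_{\beta o}\rVert\le\lVert\mu^\infty_{\beta o}\rVert$ passes to the limit for every $\beta$, since the conformal densities are continuous in $\xi$. Testing against continuous functions supported away from $\theta(q)$ gives $\nu^\infty_o\ge(1+\varepsilon)\mu^\infty_o$ on $\Lam\smallsetminus\{\theta(q)\}$. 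This step needs care: the factor $1+\varepsilon$ holds on $\gamma_i^{-1}A$, and the exceptional part of the shadow has mass tending to zero.

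\emph{Step 3: contradiction and main obstacle.} By conformality, the inequality $\nu^\infty_{y}\ge(1+\varepsilon)\mu^\infty_{y}$ off $\theta(q)$ holds for every $y$. Hence
\[
\lVert\nu^\infty_{\beta o}\rVert\ge(1+\varepsilon)\bigl(\lVert\mu^\infty_{\beta o}\rVert-\mu^\infty_{\beta o}(\{\theta(q)\})\bigr).
\]
The main obstacle is a possible atom of $\mu^\infty$ at $\theta(q)$, since atomlessness is only proved later. I would choose $\beta$ along a geodesic ray from $e$ to some $q'\ne q$, so that $q\notin\cO_{R}(e,\beta)$ for large $\beta$. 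By Theorem~\ref{thm:shadowprinciple}, $\lVert\mu^\infty_{\beta o}\rVert\gtrsim e^{\delta\dph(e,\beta)}\mu^\infty_o(\cO_R(e,\beta))$. Meanwhile the atom's weight $e^{-\delta\varphi(\beta_{\theta(q)}(\beta o,o))}\mu^\infty_o(\{\theta(q)\})$ is controlled through the Busemann estimate, using Proposition~\ref{Prop:busctrl} applied from the viewpoint of a shadow from $\beta$ containing $q$. The goal is to show that the atom's share of $\lVert\mu^\infty_{\beta o}\rVert$ tends to $0$. Granting this, we get $\lVert\nu^\infty_{\beta o}\rVert>\lVert\mu^\infty_{\beta o}\rVert$ for large $\beta$, which contradicts the mass domination. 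Hence $g\le1$ a.e., which proves (1), and (2) follows.
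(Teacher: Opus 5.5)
Your blow-up strategy (density point, rescaling by $\gamma_i^{-1}$, weak-$\ast$ limits) is genuinely different from the paper's, but as written it does not close, for two reasons.

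First, in Step~2 the radius $R$ is fixed, because Theorem~\ref{thm:differentiation} is a fixed-radius statement. Lemma~\ref{lem:concentration} then only gives $\diam_\varrho\bigl(\bd\smallsetminus\cO_R(\gamma_i^{-1},e)\bigr)\leq C_0e^{-\varepsilon_0(R-c_2)}$. This bound is uniform in $i$ but does not tend to $0$. The complement contains every $\xi$ whose Gromov product with $\gamma_i^{-1}$ exceeds $R$ by a fixed constant, which is a set of definite size. So it does not shrink to a point. In the proof of Lemma~\ref{lem:fullness} the shrinking comes from $R_n\to\infty$. What your limit actually gives is therefore $\nu^\infty_o\geq(1+\varepsilon)\mu^\infty_o$ off a small closed set $K$, not off $\theta(q)$. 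A diagonal argument would repair this: take radii $R_k\to\infty$, apply the differentiation theorem at each $R_k$, and choose $i_k$ so that the density defect beats the distortion factor $e^{2\delta A_{R_k}}$ from Proposition~\ref{Prop:busctrl}. That argument is not in the proposal.

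Second, and more seriously, Step~3 is left as ``granting this'', and the mechanism you suggest does not supply it. Take $\beta$ far out on a ray to $q'\neq q$ and $R$ large, so that $q\in\cO_R(\beta,e)$. Proposition~\ref{Prop:busctrl} then makes the atom's weight at $\beta o$ comparable to $e^{-\delta d_{\varphi^\ii}(e,\beta)}$. Your lower bound $\lVert\mu^\infty_{\beta o}\rVert\gtrsim e^{\delta\dph(e,\beta)}\mu^\infty_o(\cO_R(e,\beta))$ would help only if $\mu^\infty_o(\cO_R(e,\beta))\gg e^{-\delta(\dph(e,\beta)+d_{\varphi^\ii}(e,\beta))}$. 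Theorem~\ref{thm:shadowprinciple} together with Corollary~\ref{cor:shadow-consequences}\textup{(3)} gives only $\gtrsim$. So for general $\sH$ and $\chi$ the atom's share is merely bounded, not small.

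The missing idea is to use $\sH$-equivariance instead of rays. You already noted this invariance for $g$ in Step~1. The limits $\mu^\infty$ and $\nu^\infty$ have the same Radon--Nikodym cocycle $e^{-\chi(h)}e^{-\delta\varphi(\beta_\bullet(ho,o))}$ for $h\in\sH$. Hence an inequality $\nu^\infty_o\geq(1+\varepsilon)\mu^\infty_o$ on a set $U$ transports to $hU$.
\begin{itemize}
\item In the point case, choose $h\in\sH$ with $h\theta(q)\neq\theta(q)$.
\item For the set $K$, choose finitely many $h_j\in\sH$ with $\bigcap_jh_jK=\emptyset$. These exist by minimality of the $\sH$-action once $K\neq\bd$.
\end{itemize}
Either way the inequality holds on all of $\Lam$. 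This gives $\lVert\nu^\infty_o\rVert\geq(1+\varepsilon)\lVert\mu^\infty_o\rVert$, contradicting $\lVert\nu^\infty_o\rVert\leq\lVert\mu^\infty_o\rVert=1$. No atomlessness is needed.

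The paper avoids all of this by iterating the one-sided bound you already use in Step~1. Put $C^\ast=C_RC_{6R}$ and $\tau:=\sup\{t:\mu_o\geq t\nu_o\}\geq(C^\ast)^{-1}$. If $\tau<1$, then $\mu':=(\mu-\tau\nu)/(1-\tau)$ again lies in $\cM_{\varphi,\chi}(\delta,\sH)$ and satisfies $\lVert\mu'_{\gamma o}\rVert\geq\lVert\nu_{\gamma o}\rVert$. Proposition~\ref{prop:masscomparison} applied to $(\mu',\nu)$ then yields $\mu_o\geq\bigl(\tau+(1-\tau)(C^\ast)^{-1}\bigr)\nu_o$, contradicting the maximality of $\tau$.
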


\begin{proof}
	Assume the hypothesis of \textup{(1)}. Fix $R\geq R_2$ and put
	$C^\ast:=C_RC_{6R}$.
	Proposition~\ref{prop:masscomparison} gives
	$\nu_o\leq C^\ast\mu_o$. Set
	\[
	\tau:=\sup\{t\geq0:\mu_o\geq t\nu_o\}.
	\]
	Then
	\[
	(C^\ast)^{-1}\leq\tau
	\leq\frac{\lVert\mu_o\rVert}{\lVert\nu_o\rVert}<\infty.
	\]
	Choose $t_n\uparrow\tau$ with $\mu_o\geq t_n\nu_o$.
	For every Borel set $B\subseteq\cF$,
	\[
	\mu_o(B)\geq\lim_{n\to\infty}t_n\nu_o(B)
	=\tau\nu_o(B).
	\]
	Thus $\mu_o\geq\tau\nu_o$.
	
	Suppose that $\tau<1$ and define
	\[
	\mu'_x:=\frac{\mu_x-\tau\nu_x}{1-\tau}
	\qquad(x\in\mathbb X).
	\]
	By \eqref{eq:conf}, for every Borel set $B\subseteq\cF$,
	\[
	\mu'_x(B)
	=\frac{1}{1-\tau}
	\int_B e^{-\delta\varphi(\beta_\xi(x,o))}
	\,d(\mu_o-\tau\nu_o)(\xi)\geq0.
	\]
	Moreover, for every $\gamma\in\Gamma$,
	\[
	\begin{aligned}
		\lVert\mu'_{\gamma o}\rVert
		&=\frac{\lVert\mu_{\gamma o}\rVert
			-\tau\lVert\nu_{\gamma o}\rVert}{1-\tau}\\
		&\geq
		\frac{(1-\tau)\lVert\nu_{\gamma o}\rVert}{1-\tau}
		=\lVert\nu_{\gamma o}\rVert>0.
	\end{aligned}
	\]
	In particular, $\mu'_o\neq0$, and the preceding integral formula
	gives $\mu'_x\neq0$ for every $x\in\mathbb X$.
	These measures are finite and supported on $\Lam$. For
	$x,y\in\mathbb X$ and $h\in\sH$, we also have
	\[
	\begin{aligned}
		\mu'_x
		&=\frac{e^{-\delta\varphi(\beta_\bullet(x,y))}}
		{1-\tau}(\mu_y-\tau\nu_y)
		=e^{-\delta\varphi(\beta_\bullet(x,y))}\mu'_y,\\
		h_\ast\mu'_x
		&=\frac{h_\ast\mu_x-\tau h_\ast\nu_x}{1-\tau}
		=e^{-\chi(h)}
		\frac{\mu_{hx}-\tau\nu_{hx}}{1-\tau}
		=e^{-\chi(h)}\mu'_{hx}.
	\end{aligned}
	\]
	Hence $\mu'\in\cM_{\varphi,\chi}(\delta,\sH)$.
	Applying Proposition~\ref{prop:masscomparison} to $(\mu',\nu)$
	therefore gives $\nu_o\leq C^\ast\mu'_o$. Consequently,
	\[
	\mu_o
	=\tau\nu_o+(1-\tau)\mu'_o
	\geq\left(\tau+\frac{1-\tau}{C^\ast}\right)\nu_o.
	\]
	Since $\tau+(1-\tau)/C^\ast>\tau$, this contradicts the
	definition of $\tau$. Thus $\tau\geq1$, so $\mu_o\geq\nu_o$.
	For every $x\in\mathbb X$ and Borel set $B\subseteq\cF$,
	\[
	\mu_x(B)-\nu_x(B)
	=\int_B e^{-\delta\varphi(\beta_\xi(x,o))}
	\,d(\mu_o-\nu_o)(\xi)\geq0.
	\]
	This proves \textup{(1)}. Applying \textup{(1)} in both
	directions proves \textup{(2)}.
\end{proof}

\begin{prop}\label{prop:reversibility}
	Assume $\varphi$ is symmetric and let $\chi\in\Hom(\Gamma,\bR)$ be such that
	$\delta_{\varphi,\chi}(\Gamma)=\delta_\varphi(\Gamma)$. Then $\chi=0$.
\end{prop}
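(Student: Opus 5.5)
The plan is to combine evenness and convexity of $t\mapsto\delta_{\varphi,t\chi}(\Gamma)$ with the uniqueness machinery of Section~\ref{sec:differentiation}, and to conclude from a coboundary argument. This is a plan rather than a checked proof, and the last step is the one I am least sure of.

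\emph{Step 1: flatness on $[-1,1]$.} By H\"older's inequality applied to $Q_{\Gamma,\varphi,t\chi}(s)$ (on the annuli in Definition~\ref{def:twisted-critical-exponent}), the function $P(t):=\delta_{\varphi,t\chi}(\Gamma)$ is convex on $\bR$. It is finite by Proposition~\ref{prop:delta-props}(2), and even by Proposition~\ref{prop:delta-props}(1) because $\varphi^\ii=\varphi$. Hence $P$ attains its minimum $P(0)=\delta_\varphi(\Gamma)$ at $t=0$. The hypothesis $P(1)=P(0)$ together with evenness and convexity then gives $P(t)=\delta:=\delta_\varphi(\Gamma)$ for all $|t|\le1$. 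By Theorem~\ref{thm:existence} and Proposition~\ref{prop:rigidity}, for each $|t|\le1$ there is a density $\mu^{t}\in\cM_{\varphi,t\chi}(\delta,\Gamma)$, all of the \emph{same} dimension $\delta$. Normalize so that $\lVert\mu^t_o\rVert=1$.

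\emph{Step 2: comparison via a geometric mean.} I would set $\lambda_o:=\bigl(\tfrac{d\mu^{1}_o}{d m}\tfrac{d\mu^{-1}_o}{dm}\bigr)^{1/2}m$, where $m=\mu^1_o+\mu^{-1}_o$, and $\lambda_x:=e^{-\delta\varphi(\beta_\bullet(x,o))}\lambda_o$. Both factors have the same conformal cocycle, and their twists $e^{\mp\chi}$ cancel under the square root. So, provided $\lambda_o\neq0$, one gets $\lambda\in\cM_{\varphi,0}(\delta,\Gamma)$. By Cauchy--Schwarz and the shadow principle (Theorem~\ref{thm:shadowprinciple}),
\[
\lambda_o\bigl(\cO_R(e,\gamma)\bigr)
\le\mu^1_o\bigl(\cO_R(e,\gamma)\bigr)^{1/2}\mu^{-1}_o\bigl(\cO_R(e,\gamma)\bigr)^{1/2}
\asymp e^{-\delta\dph(e,\gamma)}\asymp\mu^0_o\bigl(\cO_R(e,\gamma)\bigr).
\]
The same shadow bounds, used with Proposition~\ref{prop:masscomparison} and Theorem~\ref{thm:fatou}, give $\lambda=c\,\mu^0$ once $\lambda\ne0$. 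To get nonvanishing, I would cover $\Lambda_\Gamma$ by the shadows of the annulus $m\le\dph(e,\gamma)<m+1$, which every ray crosses and which have multiplicity at most $M_R$ by Lemma~\ref{lem:multiplicity}. Summing the shadow estimates over this cover shows that the weights $w_m(\gamma)\propto e^{-\delta\dph(e,\gamma)}$ on each annulus satisfy $\sum_\gamma w_m(\gamma)e^{t\chi(\gamma)}\asymp1$ uniformly in $m$ and $|t|\le1$. Thus $\chi$ has uniformly bounded exponential moments along annuli. Feeding this into Theorem~\ref{thm:differentiation}, which gives ratios $\mu^{t}_o(\cO_R(e,\gamma_i))/\mu^0_o(\cO_R(e,\gamma_i))\asymp e^{t\chi(\gamma_i)}$ along radial sequences, should show that $\mu^{1}_o$ and $\mu^0_o$ are not mutually singular.

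\emph{Step 3: the coboundary contradiction (main obstacle).} Once $\mu^1_o$ has a nonzero absolutely continuous part $f\,\mu^0_o$, I would compare the equivariance laws: conformality cancels and twisted equivariance gives $f(\gamma\xi)=e^{\chi(\gamma)}f(\xi)$ for $\mu^0_o$-a.e.\ $\xi$. So $\log f$ is a measurable transfer function for $\chi$. To contradict $\chi\neq0$, pick $\gamma_0$ with $\chi(\gamma_0)>0$ and a set $E=\{a<f<b\}$ of positive measure. Then $\gamma_0^nE\subseteq\{ae^{n\chi(\gamma_0)}<f<be^{n\chi(\gamma_0)}\}$, so these images are pairwise disjoint for large separations. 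The crux is to show that $\mu^0_o(\gamma_0^nE)$ does not decay, which would contradict finiteness of $\mu^0_o$. I would first try to obtain this from recurrence: Corollary~\ref{cor:shadow-consequences}(5) and the differentiation theorem should show that $\mu^0_o$-a.e.\ point lies in infinitely many shadows $\gamma\cO_R(\gamma^{-1},e)$ on which the Radon--Nikodym cocycle is bounded. This is a Poincar\'e-recurrence substitute for the conservativity proved only later, and it is the step I expect to be hardest. If it fails, the fallback is the thermodynamic route: strict convexity of $P$ unless $\chi(\gamma)$ is proportional to $\varphi(\lambda(\gamma))$, with the proportionality constant forced to be $0$ by evenness.
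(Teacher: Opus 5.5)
\textbf{The gap is in Step~3.} The mechanism you propose there cannot produce the contradiction.

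Suppose you have reached $\mu^1_o=F\mu^0_o$ with $F(\gamma\xi)=e^{\chi(\gamma)}F(\xi)$. For a.e.\ $\xi\in E=\{a<F<b\}$, every $\gamma$ with $\gamma\xi\in E$ satisfies $|\chi(\gamma)|<\log(b/a)$. So recurrence only produces infinitely many returns by elements with small $|\chi|$. This holds even with full conservativity of $\Gamma$ on $(\cF,\mu^0_o)$, i.e.\ Theorem~\ref{thm:HTS}(4). Such returns are perfectly consistent with $\chi\neq0$, since $\ker\chi\supseteq[\Gamma,\Gamma]$ is infinite.

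Abstractly, a nonzero character can be a measurable coboundary for an ergodic, conservative action with a finite quasi-invariant measure. For example, let $\bZ^2$ act on $\bR$ by $x\mapsto x+m+n\sqrt2$, with Gaussian measure, $\chi(m,n)=m+n\sqrt2$ and $F(x)=e^x$.

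Moreover, for a single $\gamma_0$ the sets $\gamma_0^nE$ lie in $\{F>ae^{n\chi(\gamma_0)}\}$. Their measure decays by Markov's inequality, since $F\in L^1(\mu^0_o)$. So ``showing non-decay'' is just the contradiction restated, with no mechanism behind it.

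What would actually close your argument is ergodicity of $\ker\chi$ on $(\cF,\mu^0_o)$: then $F$ is constant and $\chi=0$. In this paper that is only available through Proposition~\ref{prop:ascent}, which itself rests on Theorem~\ref{thm:fatou}. The thermodynamic fallback would need a coding of the $\varphi$-reparametrized flow, an identification of $\delta_{\varphi,t\chi}(\Gamma)$ as a zero of pressure, and a Liv\v{s}ic/variance criterion. None of these is developed here.

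Two smaller points on Step~2:
\begin{itemize}
\item Theorem~\ref{thm:differentiation} only differentiates $L^1$ densities. The singular-part differentiation you need must be proved separately, e.g.\ from Lemma~\ref{lem:maximal}.
\item The shadow constants $C_R$ depend on the character, so uniformity in $|t|\leq1$ is not free. Only $t=\pm1$ is actually needed, though.
\end{itemize}

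\textbf{The missing idea is to average arithmetically rather than geometrically.} Proposition~\ref{prop:delta-props}(4) already gives $\delta_{\varphi,-\chi}(\Gamma)=\delta_{\varphi,\chi}(\Gamma)=\delta_\varphi(\Gamma)$, so Step~1 is unnecessary. Take $\mu^\pm\in\cM_{\varphi,\pm\chi}(\delta,\Gamma)$ and $\mu^0\in\cM_\varphi(\delta,\Gamma)$, normalised at $o$.

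Then $\nu:=\tfrac12(\mu^++\mu^-)$ is an untwisted $\varphi$-conformal density for the non-elementary normal subgroup $\sH:=\ker\chi$. Its masses satisfy $\lVert\nu_{\gamma o}\rVert=\cosh\chi(\gamma)\geq1=\lVert\mu^0_{\gamma o}\rVert$ for every $\gamma\in\Gamma$. Theorem~\ref{thm:fatou}(1), applied over $\sH$, gives $\nu_o\geq\mu^0_o$. Equal total masses then force $\nu=\mu^0$, hence $\cosh\chi\equiv1$ and $\chi=0$.

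Your geometric mean keeps full $\Gamma$-equivariance. But Cauchy--Schwarz gives $\lVert\lambda_{\gamma o}\rVert\leq1$, which is the wrong direction for the comparison theorem. The arithmetic mean gives up equivariance outside $\ker\chi$ but keeps the lower mass bound, and that is exactly what Theorem~\ref{thm:fatou} uses.
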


\begin{proof}
	Set $\delta:=\delta_\varphi(\Gamma)$. By
	Proposition~\ref{prop:delta-props}\textup{(4)},
	$\delta_{\varphi,-\chi}(\Gamma)
	=\delta_{\varphi,\chi}(\Gamma)=\delta$.
	Theorem~\ref{thm:existence} provides
	\[
	\mu^\pm\in\cM_{\varphi,\pm\chi}(\delta,\Gamma),
	\qquad
	\mu^0\in\cM_\varphi(\delta,\Gamma),
	\qquad
	\lVert\mu^\pm_o\rVert=\lVert\mu^0_o\rVert=1.
	\]
	Put $\sH:=\ker\chi$. Since $\sH$ contains $[\Gamma,\Gamma]$,
	it is a non-elementary normal subgroup by the argument in
	Proposition~\ref{prop:delta-props}\textup{(2)}.
	Define $\nu_x:=\tfrac12(\mu^+_x+\mu^-_x)$.
	Conformality follows from \eqref{eq:conf}, and
	\eqref{eq:equiv} gives
	\[
	h_\ast\nu_x
	=\tfrac12\bigl(
	e^{-\chi(h)}\mu^+_{hx}
	+e^{\chi(h)}\mu^-_{hx}\bigr)
	=\nu_{hx}
	\qquad(h\in \sH,\ x\in\mathbb X).
	\]
	Thus $\nu,\mu^0\in\cM_\varphi(\delta,\sH)$.
	By Lemma~\ref{lem:density-basic}\textup{(1)},
	\[
	\lVert\nu_{\gamma o}\rVert
	=\frac{e^{\chi(\gamma)}+e^{-\chi(\gamma)}}{2}
	=\cosh\chi(\gamma)
	\geq1=\lVert\mu^0_{\gamma o}\rVert
	\qquad(\gamma\in\Gamma).
	\]
	Theorem~\ref{thm:fatou}\textup{(1)} therefore yields
	$\nu_o\geq\mu^0_o$. Since both measures have total mass $1$,
	$\nu_o=\mu^0_o$, and \eqref{eq:conf} gives
	\[
	\nu_x-\mu^0_x
	=e^{-\delta\varphi(\beta_\bullet(x,o))}
	(\nu_o-\mu^0_o)=0
	\qquad(x\in\mathbb X).
	\]
	Consequently,
	\[
	\cosh\chi(\gamma)
	=\lVert\nu_{\gamma o}\rVert
	=\lVert\mu^0_{\gamma o}\rVert
	=1
	\qquad(\gamma\in\Gamma),
	\]
	so $\chi=0$.
\end{proof}

	\section{The twisted Bowen--Margulis--Sullivan measure and ergodicity}
	\label{sec:HTS}

	Throughout this section, fix $\chi\in\Hom(\Gamma,\bR)$ and set
	$\delta:=\delta_{\varphi,\chi}(\Gamma)\in(0,\infty)$.
	By Proposition~\ref{prop:delta-props}(1),
	$\varphi^\ii\in\cL^{>0}_\Gamma$ and
	$\delta_{\varphi^\ii,-\chi}(\Gamma)=\delta$.
	Applying Theorem~\ref{thm:existence} to $(\varphi,\chi)$ and
	$(\varphi^\ii,-\chi)$, we choose and fix densities
	\[
	\mu\in\cM_{\varphi,\chi}(\delta,\Gamma),
	\qquad
	\mu^\ii\in\cM_{\varphi^\ii,-\chi}(\delta,\Gamma).
	\]
	
We define $\Lambda^{(2)}_\Gamma:=(\Lam\times\Lam)\cap\cFt.$ 
The limit map $\theta $ induces a homeomorphism  from $\bdd$ to
$\Lambda^{(2)}_\Gamma$.

\begin{dfn}\label{def:bms-measure}
	We define a Borel measure $\widetilde m$ on $\Lambda^{(2)}_\Gamma$ by
	\[
	d\widetilde m(\xi,\eta)
	:=e^{\delta\varphi^{\ii}(\cG(\xi,\eta))}
	\,d\mu^{\ii}_o(\xi)\,d\mu_o(\eta).
	\]
\end{dfn}

On $\Lambda^{(2)}_\Gamma$, a positive Borel measure is \emph{Radon} if and
only if it is locally finite, equivalently finite on compact sets.

\begin{lemma}\label{lem:bms-invariant}
	The measure $\widetilde m$ is a nonzero $\Gamma$-invariant Radon measure,
	and
	\[
	\widetilde m\ll
	(\mu^{\ii}_o\otimes\mu_o)|_{\Lambda^{(2)}_\Gamma}\ll
	\widetilde m.
	\]
\end{lemma}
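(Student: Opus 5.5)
The proof has three parts: Radon-ness and absolute continuity, nonvanishing, and $\Gamma$-invariance.

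\emph{Radon and absolute continuity.} The density $F(\xi,\eta):=e^{\delta\varphi^{\ii}(\cG(\xi,\eta))}$ is continuous and strictly positive on $\Lambda^{(2)}_\Gamma\subseteq\cFt$, since $\cG$ is continuous on $\cFt$. So $\widetilde m$ and $(\mu^{\ii}_o\otimes\mu_o)|_{\Lambda^{(2)}_\Gamma}$ are mutually absolutely continuous, with Radon--Nikodym derivatives $F$ and $F^{-1}$. A compact subset $K\subseteq\Lambda^{(2)}_\Gamma$ gives $\widetilde m(K)\le\max_K F\cdot\lVert\mu^{\ii}_o\rVert\,\lVert\mu_o\rVert<\infty$, so $\widetilde m$ is Radon.

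\emph{Nonvanishing.} It suffices to show $(\mu^{\ii}_o\otimes\mu_o)(\Lambda^{(2)}_\Gamma)>0$. By Lemma~\ref{lem:density-basic}(4), both measures have full support $\Lam$. Choose distinct points $x\neq y$ in $\bd$; then $(\theta(x),\theta(y))\in\Lambda^{(2)}_\Gamma$ by transversality of $\theta$. Since $\Lambda^{(2)}_\Gamma$ is relatively open in $\Lam\times\Lam$ (as $\cFt$ is open), there are open neighbourhoods $U\ni\theta(x)$ and $V\ni\theta(y)$ in $\Lam$ with $U\times V\subseteq\Lambda^{(2)}_\Gamma$. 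Full support then gives $\mu^{\ii}_o(U)\mu_o(V)>0$.

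\emph{$\Gamma$-invariance.} This is the main computation. Fix $\gamma\in\Gamma$. I would show that for every nonnegative Borel $f$ on $\Lambda^{(2)}_\Gamma$,
\[
\int f(\gamma\xi,\gamma\eta)\,d\widetilde m=\int f\,d\widetilde m,
\]
by computing the Radon--Nikodym derivative of $(\gamma^{-1})_\ast\widetilde m$, written as $\gamma^\ast$, at the point $(\xi,\eta)$.

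Lemma~\ref{lem:density-basic}(3) applied to $\mu^{\ii}$, which carries character $-\chi$, together with $\mu$, gives
\[
\frac{d(\gamma^{-1})_\ast\mu^{\ii}_o}{d\mu^{\ii}_o}(\xi)=e^{-\chi(\gamma)}e^{-\delta\varphi^{\ii}(\beta_\xi(\gamma^{-1}o,o))},
\qquad
\frac{d(\gamma^{-1})_\ast\mu_o}{d\mu_o}(\eta)=e^{\chi(\gamma)}e^{-\delta\varphi(\beta_\eta(\gamma^{-1}o,o))}.
\]
The character factors $e^{\mp\chi(\gamma)}$ cancel. This cancellation is exactly why $\mu^{\ii}$ is taken with $-\chi$.

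For the Gromov product, Lemma~\ref{lem:gromov-equiv} gives $\cG(\gamma\xi,\gamma\eta)=\cG(\xi,\eta)+\sigma(\gamma,\xi)+\ii\sigma(\gamma,\eta)$. Applying $\varphi^{\ii}$ and using $\ii^2=\mathrm{id}$, the increment becomes $\varphi^{\ii}(\sigma(\gamma,\xi))+\varphi(\sigma(\gamma,\eta))$. By Definition~\ref{def:busemann},
\[
\beta_\xi(\gamma^{-1}o,o)=\sigma(\gamma,\xi)-\sigma(e,\xi)=\sigma(\gamma,\xi),
\]
using $\sigma(e,\cdot)=0$, and similarly for $\eta$.

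Substituting $(\xi,\eta)\mapsto(\gamma\xi,\gamma\eta)$, the density factor $e^{\delta\varphi^{\ii}(\cG(\gamma\xi,\gamma\eta))}$ equals $e^{\delta\varphi^{\ii}(\cG(\xi,\eta))}$ times $e^{\delta[\varphi^{\ii}(\sigma(\gamma,\xi))+\varphi(\sigma(\gamma,\eta))]}$. The Jacobians of the two base measures contribute exactly $e^{-\delta[\varphi^{\ii}(\sigma(\gamma,\xi))+\varphi(\sigma(\gamma,\eta))]}$, so everything cancels. Since $\Lambda^{(2)}_\Gamma$ is $\Gamma$-invariant, this gives $\gamma_\ast\widetilde m=\widetilde m$.

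The main obstacle is sign and direction bookkeeping: whether the pushforward uses $\gamma$ or $\gamma^{-1}$, and matching $\varphi^{\ii}\circ\ii=\varphi$ to the $\eta$-coordinate. I would check these carefully, testing the formula with $\gamma\in\sK$-trivial cases to confirm the exponents cancel rather than double.
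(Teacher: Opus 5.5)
Your proposal is correct and follows essentially the same route as the paper. Positivity and continuity of the weight $e^{\delta\varphi^{\ii}(\cG(\xi,\eta))}$ give Radon-ness and mutual absolute continuity; full support on an open rectangle inside $\Lambda^{(2)}_\Gamma$ gives $\widetilde m\neq0$; and invariance follows by cancelling the $e^{\mp\chi(\gamma)}$ factors and the Busemann Jacobians against the equivariance of $\cG$ from Lemma~\ref{lem:gromov-equiv}. The differences are only cosmetic: the paper pushes forward by $\gamma$ (with $\beta_\xi(\gamma o,o)$) rather than by $\gamma^{-1}$, and your nonvanishing step states explicitly the transversality of $\theta$ that the paper uses implicitly.
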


\begin{proof}
	Put
	\[
	\lambda:=(\mu^{\ii}_o\otimes\mu_o)|_{\Lambda^{(2)}_\Gamma},
	\qquad
	w(\xi,\eta):=e^{\delta\varphi^{\ii}(\cG(\xi,\eta))}.
	\]
	The measure $\lambda$ is finite, and $w$ is positive and continuous.
	Since $w$ is continuous and $K$ is compact, $\sup_K w<\infty$.
	Moreover, both $\mu^{\ii}_o$ and $\mu_o$ are finite measures, so
	\[
	\lambda(K)
	\leq(\mu^{\ii}_o\otimes\mu_o)(\Lam\times\Lam)
	=\lVert\mu^{\ii}_o\rVert\,\lVert\mu_o\rVert<\infty.
	\]
	Consequently,
	\[
	\widetilde m(K)=\int_K w\,d\lambda
	\leq\bigl(\sup_K w\bigr)\lambda(K)<\infty.
	\]
	Thus $\widetilde m$ is locally finite and hence Radon.
	Since $w>0$, we also have $\widetilde m\ll\lambda$ and
	$\widetilde m\gg\lambda$.
	By Lemma~\ref{lem:density-basic}(4), both boundary measures have
	full support on $\Lam$. Choose disjoint nonempty relatively open
	sets $U,V\subseteq\Lam$. Then
	$\mu^{\ii}_o(U)\mu_o(V)>0$ and $U\times V\subseteq\Lambda^{(2)}_\Gamma$,
	so $\widetilde m\neq0$.
	
	Fix $\gamma\in\Gamma$. By \eqref{eq:equiv} and \eqref{eq:conf},
	\[
	\begin{aligned}
		d(\gamma_\ast\mu^{\ii}_o)(\xi)
		&=e^{\chi(\gamma)-\delta\varphi^{\ii}(\beta_\xi(\gamma o,o))}
		\,d\mu^{\ii}_o(\xi),\\
		d(\gamma_\ast\mu_o)(\eta)
		&=e^{-\chi(\gamma)-\delta\varphi(\beta_\eta(\gamma o,o))}
		\,d\mu_o(\eta).
	\end{aligned}
	\]
By Lemma~\ref{lem:gromov-equiv}(2) and
Lemma 	\ref{lem:beta-props}, together with $\varphi^{\ii}\circ\ii=\varphi$,
	give
	\[
	w(\gamma^{-1}\xi,\gamma^{-1}\eta)
	=w(\xi,\eta)
	e^{\delta\varphi^{\ii}(\beta_\xi(\gamma o,o))
		+\delta\varphi(\beta_\eta(\gamma o,o))}.
	\]
	Consequently, on $\Lambda^{(2)}_\Gamma$, we obtain
	\[
	\begin{aligned}
		d(\gamma_\ast\widetilde m)(\xi,\eta)
		&=w(\gamma^{-1}\xi,\gamma^{-1}\eta)
		\,d(\gamma_\ast\mu^{\ii}_o)(\xi)\,d(\gamma_\ast\mu_o)(\eta)\\
		&=w(\xi,\eta)\,d\mu^{\ii}_o(\xi)\,d\mu_o(\eta)\\
		&=d\widetilde m(\xi,\eta).
	\end{aligned}
	\]
	Hence $\gamma_\ast\widetilde m=\widetilde m$, which proves $\Gamma$-invariance.
\end{proof}

	\subsection{The twisted Hopf--Tsuji--Sullivan theorem}
	
	
Let a countable group $\Gamma$ act measurably on a standard Borel
space $X$, and let $\mu$ be a $\sigma$-finite Borel measure on $X$.
The measure $\mu$ is \emph{quasi-invariant} if
$\gamma_\ast\mu\ll\mu$ and $\gamma_\ast\mu\gg\mu$ for every
$\gamma\in\Gamma$, and the action is then called
\emph{non-singular}. The action is \emph{free modulo $\mu$}, or
\emph{essentially free}, if
$$\mu(\{x\in X:\gamma x=x\})=0\qquad \text{for every } 
\gamma\in\Gamma\smallsetminus\{e\}.$$ A non-singular action is
\emph{ergodic} if every $\Gamma$-invariant Borel set
$A\subseteq X$ satisfies
$\mu(A)=0$ or $\mu(X\smallsetminus A)=0$. It is
\emph{conservative} if, for every Borel set $A\subseteq X$,
\[
\left|\{\gamma\in\Gamma:\gamma x\in A\}\right|=\infty
\qquad\text{for $\mu$-almost every }x\in A.
\]
Finally, $\mu$ is \emph{non-atomic} if
$\mu(\{x\})=0$ for every $x\in X$.

	\begin{thm}\label{thm:HTS}
		Let $\chi\in\Hom(\Gamma,\bR)$, $\delta=\delta_{\varphi,\chi}(\Gamma)$, and let
		$\mu\in\cM_{\varphi,\chi}(\delta,\Gamma)$,
		$\mu^\ii\in\cM_{\varphi^\ii,-\chi}(\delta,\Gamma)$. Then:
		\begin{enumerate}
			\item the $\Gamma$-action on $(\Lambda^{(2)}_\Gamma,\widetilde m)$ is ergodic;
			\item the $\Gamma$-action on $(\cF,\mu_o)$ is ergodic, and likewise for
			$\mu^\ii_o$;
			\item $\mu_o$ and $\mu^\ii_o$ are atomless;
			\item the $\Gamma$-action on $(\cF,\mu_o)$ is conservative;
			\item Every twisted $(\varphi,\chi)$-conformal density for
			$\Gamma$ has dimension $\delta$ and is a positive scalar multiple
			of $\mu$.
		\end{enumerate}
	\end{thm}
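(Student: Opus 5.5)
The plan is to derive everything from a Hopf--Tsuji--Sullivan-type dichotomy, driven by divergence and the differentiation Theorem~\ref{thm:differentiation}, and then to read off (2)--(5) from (1) and from Theorem~\ref{thm:fatou}.

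\emph{Step 1: conservativity (4).} Since every point of $\Lam$ is radial (Lemma~\ref{lem:shadow-basic}(2)), $\Lambda_\Gamma^{\mathrm{con}}=\Lam$. Let $A$ be Borel with $\mu_o(A)>0$. By Theorem~\ref{thm:differentiation}, $\mu_o$-a.e.\ $x\in A$ is a density point of $A$ along any radial sequence $\gamma_i=c(i)$ on a ray $[e,x)$. We pull back by $\gamma_i^{-1}$ and use Lemma~\ref{lem:translates} together with the shadow principle: the normalised measures $\mu^{[\gamma_i]}_o$ put mass at least $1-\varepsilon_i$ of the shadow $\cO_R(\gamma_i^{-1},e)$ on $\gamma_i^{-1}A$, with $\varepsilon_i\to0$. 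Since $\mu^{[\gamma_i]}_o\in\cD$, Lemma~\ref{lem:compact-class} lets us pass to limits. The complements of these shadows have diameter tending to $0$ only when $R$ is large (Lemma~\ref{lem:concentration}), so we first choose $R$ large and then conclude that $\gamma_i^{-1}x$ returns to a set of definite size infinitely often. The standard contrapositive is cleaner: if $W\subseteq A$ is wandering, so that the sets $\gamma W$ are disjoint, then $\sum_\gamma\mu_o(\gamma W)\le\lVert\mu_o\rVert$. Density points of $W$, combined with Theorem~\ref{thm:shadowprinciple}, give $\mu_o(\gamma_i^{-1}W)\gtrsim e^{\chi(\gamma_i)}\mu_o(\ldots)$-type lower bounds, which contradicts summability along the divergent series $Q_{\Gamma,\varphi,\chi}(\delta)=\infty$ (Proposition~\ref{prop:rigidity}). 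I will follow the shadow-lemma version of Sullivan's argument: divergence plus the quasi-independence of shadows (from Lemma~\ref{lem:multiplicity} and Corollary~\ref{cor:shadow-consequences}(2)) gives, through a Kochen--Stone/Borel--Cantelli argument, $\mu_o(\limsup\cO_R(e,\gamma))>0$. Ergodicity, established in Step 2, then upgrades this to full measure.

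\emph{Step 2: ergodicity (1), (2).} For (2) I will use Theorem~\ref{thm:fatou} instead of the geodesic flow. Let $A$ be $\Gamma$-invariant with $0<\mu_o(A)<\lVert\mu_o\rVert$. Then $\mu^A_x:=\mu_x|_A$ is again in $\cM_{\varphi,\chi}(\delta,\Gamma)$, since invariance of $A$ preserves twisted equivariance. By Lemma~\ref{lem:density-basic}(1),
\[
\lVert\mu^A_{\gamma o}\rVert=e^{\chi(\gamma)}\mu_o(A)=\frac{\mu_o(A)}{\lVert\mu_o\rVert}\,\lVert\mu_{\gamma o}\rVert .
\]
Theorem~\ref{thm:fatou}(2) then gives $\mu|_A=\frac{\mu_o(A)}{\lVert\mu_o\rVert}\mu$, which forces $\mu_o(\cF\smallsetminus A)=0$, a contradiction. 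The same argument applies to $\mu^\ii$. The same trick also proves (5). If $\nu\in\cM_{\varphi,\chi}(\sigma,\Gamma)$, then $\sigma=\delta$ by Proposition~\ref{prop:rigidity}. Normalising so that $\lVert\nu_o\rVert=\lVert\mu_o\rVert$, we get $\lVert\nu_{\gamma o}\rVert=\lVert\mu_{\gamma o}\rVert$ for all $\gamma$, so $\nu=\mu$ by Theorem~\ref{thm:fatou}(2). For (1), a $\Gamma$-invariant set $E\subseteq\Lambda^{(2)}_\Gamma$ gives invariant measurable functions. By Lemma~\ref{lem:bms-invariant} the measure $\widetilde m$ is equivalent to $\mu^\ii_o\otimes\mu_o$, so it suffices to show that $\mathbb 1_E$ is a.e.\ a function of one coordinate and then apply (2). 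This is the step I expect to be the main obstacle, since it needs a Hopf-type argument. I would try the following: for $\mu^\ii_o$-a.e.\ $\xi$, apply Theorem~\ref{thm:differentiation} in the $\eta$ variable along radial sequences $\gamma_i\to\eta$, and show that the pulled-back slices $\gamma_i^{-1}E$ become asymptotically independent of $\xi$, because $\gamma_i^{-1}\xi$ converges while the shadows shrink. If this fails, I would instead use ergodicity of the diagonal action via (2) applied to the product of two ergodic conservative quasi-invariant systems with Radon--Nikodym cocycles of the form $e^{-\delta\varphi(\beta)}$.

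\emph{Step 3: atoms (3).} Suppose $\mu_o(\{\xi\})>0$. By quasi-invariance (Lemma~\ref{lem:density-basic}(3)) the orbit $\Gamma\xi$ consists of atoms, and the sum over $\gamma$ of
\[
\mu_o(\{\gamma\xi\})=e^{\chi(\gamma)-\delta\varphi(\beta_\xi(\gamma^{-1}o,o))}\mu_o(\{\xi\})
\]
is finite. The stabiliser of $\xi$ is virtually cyclic. Since $\xi$ is radial, Proposition~\ref{Prop:busctrl} compares $\varphi(\beta_\xi)$ with $\dph$ along a ray $[e,\xi)$, so these weights are comparable to $e^{\chi(\gamma_i)-\delta\dph(e,\gamma_i)}$ for the elements $\gamma_i$ on the ray. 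These in turn are comparable to the shadow masses $\mu_o(\cO_R(e,\gamma_i))$, which tend to $\mu_o(\{\xi\})>0$. The sum over the infinitely many distinct $\gamma_i$ (modulo the finite multiplicity of Lemma~\ref{lem:multiplicity}) therefore diverges, a contradiction. Alternatively, by ergodicity (2) the measure would be carried by a single orbit, which is countable, and then conservativity (4) fails since an orbit of atoms is wandering.
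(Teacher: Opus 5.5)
Your treatment of (2) and (5) via Theorem~\ref{thm:fatou} is sound and does not use (1); it is the argument the paper itself uses in Proposition~\ref{prop:ascent}. The wandering-set version of your Step~1 can also be made to work. Every point is conical, and at a density point of a wandering set $W$ the shadow lemma gives $\mu_o(\gamma_i^{-1}W)\geq c>0$ for the distinct $\gamma_i$ on the ray. This contradicts disjointness directly, so neither divergence nor Kochen--Stone is needed.

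The genuine gap is (1), which you do not prove. Saying it suffices that $\mathbb{1}_E$ be a.e.\ a function of one coordinate only restates what must be shown. Your differentiation sketch leaves the key point open. After pulling back by $\gamma_i^{-1}$ along a radial sequence for $\eta$, all first coordinates $\gamma_i^{-1}\xi$ land in the shrinking complement of $\cO_R(\gamma_i^{-1},e)$. Theorem~\ref{thm:differentiation} is a one-variable statement and gives no joint control of $(\gamma_i^{-1})_\ast\mu^\ii_o$ there together with the second factor. Your fallback is false: ergodicity and conservativity of each factor do not imply ergodicity of the diagonal action. For instance, for an irrational rotation $x\mapsto x+\alpha$ of $\bR/\bZ$, the function $(x,y)\mapsto x-y$ is invariant on the product. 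Double ergodicity is a genuinely stronger, Hopf-type input. The paper imports it from \cite[Theorem~3.1]{CantrellTanaka}, applied to $\widetilde m$, which lies in the class of $\mu^\ii_o\otimes\mu_o$ by Lemma~\ref{lem:bms-invariant}.

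Your argument for (3) has a separate hole. It fails when the putative atom $\xi$ has infinite stabiliser, i.e.\ $\xi=g^+$ is the attracting fixed point of an infinite-order $g\in\Gamma$ (write $g^-$ for the repelling one). In that case $\sum_{\gamma\in\Gamma}\mu_o(\{\gamma\xi\})=\infty$ trivially; only the sum over $\Gamma/\operatorname{Stab}(\xi)$ is bounded by $\lVert\mu_o\rVert$.

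The points controlled by Proposition~\ref{Prop:busctrl} are $\gamma_i^{-1}\xi$, with weights $\asymp e^{-\chi(\gamma_i)+\delta\dph(e,\gamma_i)}$. This is the reciprocal of what you wrote, though still bounded below by Corollary~\ref{cor:shadow-consequences}(1). But a geodesic ray to $g^+$ fellow-travels $\langle g\rangle$. Hence $\gamma_i=g^{n_i}h_i$ with $h_i$ in a finite set, and $\{\gamma_i^{-1}\xi\}=\{h_i^{-1}g^+\}$ is finite, so no contradiction arises.

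Your alternative fails in the same case. Since $\{\gamma:\gamma\xi=\xi\}$ is infinite, an atomic measure on $\Gamma g^+$ is conservative in the paper's sense; the atom $\{\xi\}$ is recurrent, not wandering. Quasi-invariance at $g^+$ only forces $\chi(g)=\delta\varphi(\lambda(g))$, the equality case of Corollary~\ref{prop:jordan-constraint}, and nothing in your proposal excludes it.

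The paper closes this case with both densities. The shadow lemma for $\mu^\ii$ turns the atom of $\mu_o$ into an atom of $\mu^\ii_o$. Then (1) forces $\widetilde m$ onto a single countable orbit in $\Lambda^{(2)}_\Gamma$, which is impossible because $\widetilde m$ then has atoms in two distinct orbits. Alternatively, once $\mu^\ii_o$ has an atom, pick an atom $\zeta\notin\{g^{\pm}\}$ on its orbit. The $\widetilde m$-atoms $(g^{-n}\zeta,g^+)$, $n\geq0$, then have equal mass and accumulate at $(g^-,g^+)\in\Lambda^{(2)}_\Gamma$, contradicting local finiteness. Either way, your one-density argument does not reach this case.
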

	
	\begin{proof}
		(1) 
			By Lemma~\ref{lem:bms-invariant}, $\widetilde m$ is a nonzero
			$\Gamma$-invariant Radon measure on $\Lambda_\Gamma^{(2)}$ satisfying
			\[
			\widetilde m
			\ll (\mu_o^\ii\otimes\mu_o)|_{\Lambda_\Gamma^{(2)}}
			\ll \widetilde m.
			\]
				The claim follows from \cite[Theorem~3.1]{CantrellTanaka}
			via the $\Gamma$-equivariant homeomorphism $\theta\times\theta$.

	(2) Let $A\subseteq\Lam$ be $\Gamma$-invariant Borel with
	$\mu_o(A)>0$. Then
	$\widetilde A:=(\Lam\times A)\cap\Lambda_\Gamma^{(2)}$
	is $\Gamma$-invariant.
	By Lemma~\ref{lem:density-basic}(4), $\supp\mu_o^\ii=\Lam$.
	Since $\Gamma$ is non-elementary,
	$\Lam\smallsetminus\{\eta\}$ is a nonempty relatively open subset
	of $\Lam$ for every $\eta\in\Lam$, and hence
	$\mu_o^\ii(\Lam\smallsetminus\{\eta\})>0$.
	Thus, by the definition of $\widetilde m$ and positivity of
	the density,
	\[
	\widetilde m(\widetilde A)
	=
	\int_A
	\left(
	\int_{\Lam\smallsetminus\{\eta\}}
	e^{\delta\varphi^\ii(\cG(\xi,\eta))}
	\,d\mu_o^\ii(\xi)
	\right)d\mu_o(\eta)
	>0.
	\]
	By (1), we obtain 
	\[
	0
	=
	\widetilde m(\Lambda_\Gamma^{(2)}\smallsetminus\widetilde A)
	=
	\int_{\Lam\smallsetminus A}
	\left(
	\int_{\Lam\smallsetminus\{\eta\}}
	e^{\delta\varphi^\ii(\cG(\xi,\eta))}
	\,d\mu_o^\ii(\xi)
	\right)d\mu_o(\eta).
	\]
	Since the inner integral is strictly positive for every $\eta\in\Lam$,
	we obtain $\mu_o(\Lam\smallsetminus A)=0$.
	The same argument using the first factor and
	$\supp\mu_o=\Lam$ proves ergodicity with respect to $\mu_o^\ii$.
		
(3) Suppose that
$a:=\mu_o(\{\xi_0\})>0$, where $\xi_0=\theta(x_0)$.
Let $(\gamma_i)_{i\geq0}$ be the successive vertices of a
geodesic ray from $e$ to $x_0$, with $\gamma_0=e$.
Then $|\gamma_i|=i$.
Choose $R$ sufficiently large for the shadow estimates of both
densities, with a common constant $C\geq1$.
By Lemma~\ref{lem:shadow-basic}(2) and
\eqref{eq:shadow-character},
\[
a
\leq \mu_o\bigl(\cO_R(e,\gamma_i)\bigr)
\leq C e^{\chi(\gamma_i)-\delta\dph(e,\gamma_i)}
\lVert\mu_o\rVert.
\]
Moreover,  applying 
 \eqref{eq:shadow-character} to $\mu^\ii$ with $-\chi(\gamma_i^{-1})=\chi(\gamma_i)$ and  $d_{\varphi^\ii}(e,\gamma_i^{-1})=\dph(e,\gamma_i)$ gives
\[
\begin{aligned}
	\mu_o^\ii\bigl(\cO_R(e,\gamma_i^{-1})\bigr)
	&\geq C^{-1}
	e^{-\chi(\gamma_i^{-1})
		-\delta d_{\varphi^\ii}(e,\gamma_i^{-1})}
	\lVert\mu_o^\ii\rVert\\
	&=C^{-1}e^{\chi(\gamma_i)-\delta\dph(e,\gamma_i)}
	\lVert\mu_o^\ii\rVert\\
	&\geq C^{-2}
	\frac{\lVert\mu_o^\ii\rVert}{\lVert\mu_o\rVert}\,a
	=:b>0.
\end{aligned}
\]

Write $S_i:=\cO_R(e,\gamma_i^{-1})$, endowing   $\Lam$ with the visual metric $\rho$ via $\theta$
Since $|\gamma_i^{-1}|=i$, Lemma~\ref{lem:shrinking} gives
$\operatorname{diam}_\rho S_i\to0$.
Choose $\eta_i\in S_i$ and, by compactness, pass to a
subsequence such that $\eta_i\to\eta_0\in\Lam$.
Then
\[
\sup_{\eta\in S_i}\rho(\eta,\eta_0)
\leq \operatorname{diam}_\rho S_i+\rho(\eta_i,\eta_0)
\longrightarrow0.
\]
Consequently, for every $\varepsilon>0$ and all sufficiently
large $i$,
\[
b\leq\mu_o^\ii(S_i)
\leq\mu_o^\ii\bigl(\overline B_\rho(\eta_0,\varepsilon)\bigr).
\]
Since $\mu_o^\ii$ is finite, we obtain
\[
\mu_o^\ii(\{\eta_0\})
=\lim_{n\to\infty}
\mu_o^\ii\bigl(\overline B_\rho(\eta_0,1/n)\bigr)
\geq b>0.
\]

By Lemma~\ref{lem:density-basic}(3), for every
$\gamma\in\Gamma$ and $\xi\in\Lam$,
\[
\begin{aligned}
	\mu_o(\{\gamma\xi\})
	&=((\gamma^{-1})_\ast\mu_o)(\{\xi\})\\
	&=e^{\chi(\gamma)
		-\delta\varphi(\beta_\xi(\gamma^{-1}o,o))}
	\mu_o(\{\xi\}).
\end{aligned}
\]
Since $\Lam$ is infinite and every $\Gamma$-orbit is dense
\cite[Theorem~2.28 and Proposition~4.2(2)]{KB},
choose $\gamma_0\in\Gamma$ such that
$\xi^\ast:=\gamma_0\xi_0\neq\xi':=\eta_0$.
Thus
$\mu_o^\ii(\{\xi'\})\mu_o(\{\xi^\ast\})>0$.
By Definition~\ref{def:bms-measure},
\[
\widetilde m\bigl(\{(\xi',\xi^\ast)\}\bigr)
=
e^{\delta\varphi^\ii(\cG(\xi',\xi^\ast))}
\mu_o^\ii(\{\xi'\})\mu_o(\{\xi^\ast\})
>0.
\]
The set $O:=\Gamma\cdot(\xi',\xi^\ast)$ is countable,
Borel and $\Gamma$-invariant. Hence (1) gives $\widetilde m(\Lambda_\Gamma^{(2)}\smallsetminus O)=0.$
Set
\[
\Pi':=\{\gamma\in\Gamma:\gamma\xi'=\xi'\},
\qquad
E:=\overline{\Pi'\xi^\ast}\cup\{\xi'\}.
\]
The subgroup $\Pi'$ fixes a boundary point, so it is finite
or virtually cyclic virtually cyclic (see \cite[Thm.~12.2(1)]{KB} and \cite[Ex.~2.4.3]{Calegari}).  Its boundary orbits therefore have at
most two accumulation points see \cite[Thm.~4.3]{KB}).
Thus $E$ is countable and closed.
Since $\Lam$ is uncountable and
$\overline{\Gamma\xi^\ast}=\Lam$, there exists
$\gamma_1\in\Gamma$ such that
$\gamma_1\xi^\ast\notin E$.
In particular, $\gamma_1\xi^\ast\neq\xi',$ and  $\mu_o(\{\gamma_1\xi^\ast\})>0.$
But $O\cap(\{\xi'\}\times\Lam)
=\{\xi'\}\times(\Pi'\xi^\ast),$
so $(\xi',\gamma_1\xi^\ast)\notin O$.
Consequently,
\[
\begin{aligned}
	0
	&=\widetilde m(\Lambda_\Gamma^{(2)}\smallsetminus O)\\
	&\geq
	\widetilde m\bigl(\{(\xi',\gamma_1\xi^\ast)\}\bigr)\\
	&=
	e^{\delta\varphi^\ii(\cG(\xi',\gamma_1\xi^\ast))}
	\mu_o^\ii(\{\xi'\})\mu_o(\{\gamma_1\xi^\ast\})
	>0,
\end{aligned}
\]
a contradiction.
Therefore $\mu_o(\{\xi\})=0$ for every $\xi\in\Lam$.
The shadow argument with the two densities interchanged
then gives $\mu_o^\ii(\{\xi\})=0$ for every $\xi\in\Lam$.

	(4) By (2), (3), and Lemma~\ref{lem:density-basic}(3), the action
	$\Gamma\curvearrowright(\cF,\mu_o/\lVert\mu_o\rVert)$ is ergodic,
	non-singular, non-atomic, and free modulo $\mu_o$, since every nontrivial
	element fixes only two points of $\Lam$. Hence it is conservative by
	\cite[Proposition~1.6.6]{Aaronson}.
		
	\textup{(5)} Let $\delta_1,\delta_2>0$ and
	$\mu^j\in\cM_{\varphi,\chi}(\delta_j,\Gamma)$ for $j=1,2$.
	By Proposition~\ref{prop:rigidity},
	$\delta_1=\delta_2=\delta$.
	Set
	\[
	c:=\frac{\lVert\mu^1_o\rVert}{\lVert\mu^2_o\rVert}>0.
	\]
	Then $c\mu^2\in\cM_{\varphi,\chi}(\delta,\Gamma)$.
	By Lemma~\ref{lem:density-basic}\textup{(1)}, for every
	$\gamma\in\Gamma$,
	\[
	\begin{aligned}
		\lVert\mu^1_{\gamma o}\rVert
		&=e^{\chi(\gamma)}\lVert\mu^1_o\rVert\\
		&=c\,e^{\chi(\gamma)}\lVert\mu^2_o\rVert\\
		&=c\,\lVert\mu^2_{\gamma o}\rVert
		=\lVert(c\mu^2)_{\gamma o}\rVert.
	\end{aligned}
	\]
	Applying Theorem~\ref{thm:fatou}\textup{(2)} to
	$\mu^1$ and $c\mu^2$, with $\sH=\Gamma$, gives
	$\mu^1=c\mu^2$.
	\end{proof}

	\section{Proof of Theorem \ref{thm:A}}\label{sec:thmA}


	Set $\delta:=\delta_{\varphi,\chi}(\Gamma)$. By
	Proposition~\ref{prop:delta-props}\textup{(2)},
	$0<\delta<\infty$.
	
	For \textup{(1)}, Theorem~\ref{thm:existence} gives
	\[
	\mu^{\varphi,\chi}\in
	\cM_{\varphi,\chi}(\delta,\Gamma),
	\qquad
	\supp\mu_x^{\varphi,\chi}=\Lam
	\quad(x\in\mathbb X).
	\]
	Applying Proposition~\ref{prop:rigidity} to this density yields
	\[
	Q_{\Gamma,\varphi,\chi}(\delta)
	=\sum_{\gamma\in\Gamma}
	e^{\chi(\gamma)-\delta\dph(e,\gamma)}
	=\infty.
	\]
	Thus $(\Gamma,\varphi,\chi)$ is of divergence type.
	Moreover, Proposition~\ref{prop:delta-props}\textup{(1)} gives
	$\delta_{\varphi^\ii,-\chi}(\Gamma)=\delta$, so
	Theorem~\ref{thm:existence} also provides
	$\mu^\ii\in\cM_{\varphi^\ii,-\chi}(\delta,\Gamma)$.
	We may therefore apply Theorem~\ref{thm:HTS}.
	
	For \textup{(2)}, let
	$\nu\in\cM_{\varphi,\chi}(\delta',\Gamma)$ with $\delta'>0$.
	Proposition~\ref{prop:rigidity} gives $\delta'=\delta$, and
	Theorem~\ref{thm:HTS}\textup{(5)} implies
	\[
	\nu=c\,\mu^{\varphi,\chi},
	\qquad
	c=\frac{\lVert\nu_o\rVert}
	{\lVert\mu_o^{\varphi,\chi}\rVert}>0.
	\]
	
	For \textup{(3)}, Theorem~\ref{thm:HTS}\textup{(2)--(4)}
	shows that $\mu_o^{\varphi,\chi}$ is atomless and that the
	$\Gamma$-action on $(\cF,\mu_o^{\varphi,\chi})$ is ergodic
	and conservative. For every $x\in\mathbb X$ and every Borel
	set $A\subseteq\cF$, \eqref{eq:conf} gives
	\[
	\mu_x^{\varphi,\chi}(A)
	=\int_A
	e^{-\delta\varphi(\beta_\xi(x,o))}
	\,d\mu_o^{\varphi,\chi}(\xi).
	\]
	Since the density is strictly positive, $\mu_x^{\varphi,\chi}(A)=0$ if and only if 
	$\mu_o^{\varphi,\chi}(A)=0.$ Therefore, 
	atomlessness, ergodicity, and conservativity  hold
	for every $\mu_x^{\varphi,\chi}$.
	
	Finally, fix $R\geq\max\{R_1,R_2\}$ and let
	$\xi=\theta(x)\in\Lam$. Let $(\gamma_i)_{i\geq0}$ be the
	successive vertices of a geodesic ray from $e$ to $p$.
	By Lemma~\ref{lem:shadow-basic}\textup{(2)},
$(\gamma_i)$ converges radially to $x$. Hence $\Lambda_{\mathrm{con}}=\Lam$ in the word-shadow sense, and
	\[
	\mu_x^{\varphi,\chi}
	(\cF\smallsetminus\Lambda_{\mathrm{con}})
	=\mu_x^{\varphi,\chi}(\cF\smallsetminus\Lam)
	=0
	\qquad(x\in\mathbb X).
	\]

	\section{Nilpotent covers: proof of Theorem \ref{thm:B} and Corollary \ref{cor:D}}
	\label{sec:nilpotent}
	

	Throughout this section, let $\Gamma_0\lhd\Gamma$ be a
	nontrivial normal subgroup with nilpotent quotient
	$Q:=\Gamma/\Gamma_0$.
	Let $\Gamma^{(k)}$ be the preimage of $Q_k$, where
	$Q_0=Q$, $Q_{k+1}=[Q,Q_k]$, and $Q_n=\{e\}$.
	Each $\Gamma^{(k)}$ is an infinite non-elementary normal
	subgroup of $\Gamma$, and
\[
\begin{gathered}
	\Gamma=\Gamma^{(0)}\unrhd\cdots
	\unrhd\Gamma^{(n)}=\Gamma_0,\\
	[\Gamma^{(k-1)},\Gamma]\subseteq\Gamma^{(k)}
	\qquad(1\leq k\leq n).
\end{gathered}
\]

	For $\chi\in\Hom(\Gamma,\bR)$, the restrictions
	$\chi|_{\Gamma^{(k)}}$ are $\Gamma$-conjugation invariant,
	so Sections~\ref{sec:densities}--\ref{sec:differentiation}
	apply to $(\Gamma^{(k)},\chi|_{\Gamma^{(k)}})$.
	\subsection{Quasi-invariance along the tower}
	
	\begin{prop}\label{prop:quasiinv}
		Let $1\leq k\leq n$, $\sigma>0$, $\chi\in\Hom(\Gamma,\bR)$,
		and
		$\nu\in\cM_{\varphi,\chi|_{\Gamma^{(k)}}}
		(\sigma,\Gamma^{(k)})$.
		For every $\gamma\in\Gamma^{(k-1)}$, there is
		$C_\gamma\geq1$, depending only on
		$(\gamma,\sigma,\varphi,\chi,\Gamma^{(k)})$, such that
		\[
		C_\gamma^{-1}\nu_o
		\leq(\gamma^{-1})_\ast\nu_{\gamma o}
		\leq C_\gamma\nu_o.
		\]
		In particular, $\gamma_\ast\nu_o\ll\nu_o$ and
		$\gamma_\ast\nu_o\gg\nu_o$ for every
		$\gamma\in\Gamma^{(k-1)}$.
	\end{prop}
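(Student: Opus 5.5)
The plan is to compare $\nu$ with its $\gamma$-translate. Both are twisted densities for the same normal subgroup $\Gamma^{(k)}$, so Proposition~\ref{prop:masscomparison} will give the two-sided bound once their masses $\lVert\cdot_{go}\rVert$ are shown to be comparable uniformly in $g\in\Gamma$.

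Fix $\gamma\in\Gamma^{(k-1)}$ and define
\[
\mu_x:=(\gamma^{-1})_\ast\nu_{\gamma x}\qquad(x\in\mathbb X),
\]
which is $\lVert\nu_{\gamma o}\rVert\,\nu^{[\gamma]}_x$. By Lemma~\ref{lem:translates}, applied with $\sH=\Gamma^{(k)}$ and the $\Gamma$-conjugation invariant character $\chi|_{\Gamma^{(k)}}$, we get $\nu^{[\gamma]}\in\cM_{\varphi,\chi|_{\Gamma^{(k)}}}(\sigma,\Gamma^{(k)})$. Hence also $\mu\in\cM_{\varphi,\chi|_{\Gamma^{(k)}}}(\sigma,\Gamma^{(k)})$, since this class is closed under positive scalars.

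The key step is the mass comparison. For $g\in\Gamma$ we have $\lVert\mu_{go}\rVert=\lVert\nu_{\gamma go}\rVert$. Write $\gamma g=h\,g\gamma$ with
\[
h:=\gamma g\gamma^{-1}g^{-1}\in[\Gamma^{(k-1)},\Gamma]\subseteq\Gamma^{(k)}.
\]
Lemma~\ref{lem:density-basic}(1) gives $\lVert\nu_{\gamma go}\rVert=e^{\chi(h)}\lVert\nu_{g\gamma o}\rVert$. Since $\chi$ is a homomorphism to the abelian group $\bR$, it vanishes on commutators, so $\chi(h)=0$. Next, $\kappa(go,g\gamma o)=\kappa(\gamma)$. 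So Lemma~\ref{lem:density-basic}(5) gives
\[
\Theta^{-1}\le\frac{\lVert\nu_{g\gamma o}\rVert}{\lVert\nu_{go}\rVert}\le\Theta,
\qquad
\Theta:=e^{\sigma\lVert\varphi\rVert\lVert\kappa(\gamma)\rVert}.
\]
Thus $\Theta^{-1}\lVert\nu_{go}\rVert\le\lVert\mu_{go}\rVert\le\Theta\lVert\nu_{go}\rVert$ for every $g\in\Gamma$. The constant is uniform in $g$, which is exactly the hypothesis of Proposition~\ref{prop:masscomparison}. I expect the only delicate point to be this commutator rewriting: one must use $[\Gamma^{(k-1)},\Gamma]\subseteq\Gamma^{(k)}$ together with the fact that $\chi$ is defined on all of $\Gamma$, not merely on $\Gamma^{(k)}$, to kill the twisting factor.

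Fix $R\ge R_2$, where $R_2$ is taken for $(\varphi,\chi|_{\Gamma^{(k)}},\sigma,\Gamma^{(k)})$. Proposition~\ref{prop:masscomparison} then yields
\[
C_\gamma^{-1}\nu_o\le\mu_o=(\gamma^{-1})_\ast\nu_{\gamma o}\le C_\gamma\nu_o,
\qquad
C_\gamma:=\Theta\,C_RC_{6R}.
\]
This depends only on $(\gamma,\sigma,\varphi,\chi,\Gamma^{(k)})$.

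Finally, for quasi-invariance, conformality (Lemma~\ref{lem:density-basic}(2) with Lemma~\ref{lem:beta-props}(3)) shows that $\nu_{\gamma o}$ and $\nu_o$ have Radon--Nikodym derivative bounded between $\Theta^{-1}$ and $\Theta$. Hence $(\gamma^{-1})_\ast\nu_o$ and $\nu_o$ are mutually absolutely continuous. Since $\Gamma^{(k-1)}$ is a group, we may apply this with $\gamma^{-1}$ in place of $\gamma$, which gives $\gamma_\ast\nu_o\ll\nu_o\ll\gamma_\ast\nu_o$.
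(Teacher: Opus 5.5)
Your proposal is correct and follows the paper's proof essentially step for step. The paper also translates $\nu$ by $\gamma$ (Lemma~\ref{lem:translates}), uses the commutator $[\gamma,\gamma']\in\Gamma^{(k)}\cap\ker\chi$ to get $\lVert\nu^\gamma_{\gamma'o}\rVert=\lVert\nu_{\gamma'\gamma o}\rVert$, applies Lemma~\ref{lem:density-basic}(5) to obtain the uniform constant $\Theta_\gamma=e^{\sigma\lVert\varphi\rVert\lVert\kappa(\gamma)\rVert}$, and concludes with Proposition~\ref{prop:masscomparison}, giving $C_\gamma=\Theta_\gamma C_RC_{6R}$. The only cosmetic difference is in the final quasi-invariance step: the paper pushes the bound forward by $\gamma$ and compares $\gamma_\ast\nu_o$ with $\nu_o$ directly via conformality, whereas you pass through $(\gamma^{-1})_\ast\nu_o$ and then replace $\gamma$ by $\gamma^{-1}$, which is equally valid.
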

	
	\begin{proof}
		Fix $\gamma\in\Gamma^{(k-1)}$ and set
		$\nu^\gamma_x:=(\gamma^{-1})_\ast\nu_{\gamma x}$
		for $x\in\mathbb X$. By Lemma~\ref{lem:translates}
		and positive rescaling,
		\[
		\nu^\gamma\in
		\cM_{\varphi,\chi|_{\Gamma^{(k)}}}(\sigma,\Gamma^{(k)}).
		\]
		For $\gamma'\in\Gamma$, put $c:=[\gamma,\gamma']
		=\gamma\gamma'\gamma^{-1}(\gamma')^{-1}
		\in\Gamma^{(k)}\cap\ker\chi.$
		Since $\gamma\gamma'=c\gamma'\gamma$,
		Lemma~\ref{lem:density-basic}\textup{(1)} gives
		\[
		\begin{aligned}
			\lVert\nu^\gamma_{\gamma'o}\rVert
			=\lVert\nu_{\gamma\gamma'o}\rVert
			=e^{\chi(c)}\lVert\nu_{\gamma'\gamma o}\rVert
			=\lVert\nu_{\gamma'\gamma o}\rVert.
		\end{aligned}
		\]
		By Lemma~\ref{lem:density-basic}\textup{(5)},
		\[
		\left|
		\log\lVert\nu^\gamma_{\gamma'o}\rVert-\log \lVert\nu_{\gamma'o}\rVert
		\right|
		\leq
		\sigma\lVert\varphi\rVert
		\lVert\kappa(\gamma'o,\gamma'\gamma o)\rVert
		=
		\sigma\lVert\varphi\rVert\lVert\kappa(\gamma)\rVert.
		\]
		Thus, setting
		$\Theta_\gamma
		:=e^{\sigma\lVert\varphi\rVert\lVert\kappa(\gamma)\rVert}$,
		we have
		\[
		\Theta_\gamma^{-1}\lVert\nu_{\gamma'o}\rVert
		\leq\lVert\nu^\gamma_{\gamma'o}\rVert
		\leq\Theta_\gamma\lVert\nu_{\gamma'o}\rVert
		\qquad(\gamma'\in\Gamma).
		\]
		Fix $R\geq\max\{R_1,R_2\}$. Proposition~\ref{prop:masscomparison}
		therefore yields
		\[
		C_\gamma^{-1}\nu_o\leq\nu^\gamma_o\leq C_\gamma\nu_o,
		\qquad
		C_\gamma:=\Theta_\gamma C_RC_{6R}.
		\]
		Pushing forward by $\gamma$ and using \eqref{eq:conf}, we obtain
		\[
		C_\gamma^{-1}
		e^{-\sigma\varphi(\beta_\bullet(\gamma o,o))}\nu_o
		\leq\gamma_\ast\nu_o
		\leq
		C_\gamma
		e^{-\sigma\varphi(\beta_\bullet(\gamma o,o))}\nu_o.
		\]
		The weight is strictly positive, proving both
		$\gamma_\ast\nu_o\ll\nu_o$ and $\gamma_\ast\nu_o\gg\nu_o$.
	\end{proof}
	
	\subsection{The emerging character}
	
\begin{prop}\label{prop:emerging}
	Let $\Gamma'\lhd\Gamma$ be non-elementary,
	$\Gamma'\leq S\leq\Gamma$, $\sigma>0$, and
	$\mu\in\cM_\varphi(\sigma,\Gamma')$.
	Suppose that the $\Gamma'$-action on $(\cF,\mu_o)$ is ergodic
	and the measure class of $\mu_o$ is $S$-invariant.
	Then there is a character $\chi'\in\Hom(S,\bR)$, vanishing
	on $\Gamma'$, such that
	\[
	\gamma_\ast\mu_x=e^{-\chi'(\gamma)}\mu_{\gamma x}
	\qquad(\gamma\in S,\ x\in\mathbb X).
	\]
	Moreover,
	$\chi'(\gamma)=
	\log(\lVert\mu_{\gamma o}\rVert/\lVert\mu_o\rVert)$.
\end{prop}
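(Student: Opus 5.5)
For each $\gamma\in S$, consider the translated family $\mu^\gamma_x:=(\gamma^{-1})_\ast\mu_{\gamma x}$. Exactly as in Lemma~\ref{lem:translates}, normality of $\Gamma'$ in $\Gamma$ and the equivariance of $\beta$ show that $\mu^\gamma\in\cM_\varphi(\sigma,\Gamma')$. Here $\chi=0$, so conjugation invariance is automatic. Since the measure class of $\mu_o$ is $S$-invariant and conformality gives $\mu_{\gamma o}\sim\mu_o$, we get $\mu^\gamma_o\sim\mu_o$. Let $f_\gamma:=d\mu^\gamma_o/d\mu_o$, which is positive and finite $\mu_o$-a.e.

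The key step is to show that $f_\gamma$ is $\Gamma'$-invariant $\mu_o$-a.e. Both densities satisfy the same twisted equivariance with $\chi=0$. For $h\in\Gamma'$, Lemma~\ref{lem:density-basic}(3) gives
\[
\frac{d(h_\ast\mu_o)}{d\mu_o}(\xi)=e^{-\sigma\varphi(\beta_\xi(ho,o))},
\]
and the same cocycle for $\mu^\gamma_o$. Computing $d(h_\ast\mu^\gamma_o)/d(h_\ast\mu_o)$ in two ways then yields $f_\gamma(h^{-1}\xi)=f_\gamma(\xi)$ $\mu_o$-a.e. Ergodicity of $\Gamma'$ on $(\cF,\mu_o)$ forces $f_\gamma$ to be an a.e.\ constant $c_\gamma>0$. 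Hence $\mu^\gamma_o=c_\gamma\mu_o$, and by conformality $\mu^\gamma_x=c_\gamma\mu_x$ for all $x$. Comparing total masses at $x=o$ gives $c_\gamma=\lVert\mu_{\gamma o}\rVert/\lVert\mu_o\rVert$.

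Define $\chi'(\gamma):=\log c_\gamma$. Then $(\gamma^{-1})_\ast\mu_{\gamma x}=e^{\chi'(\gamma)}\mu_x$; pushing forward by $\gamma$ gives $\gamma_\ast\mu_x=e^{-\chi'(\gamma)}\mu_{\gamma x}$. Note that it is $\mu_{\gamma x}$, not $\mu_{\gamma^{-1}x}$, and this sign convention should be checked against the formula $\lVert\mu_{\gamma o}\rVert=e^{\chi'(\gamma)}\lVert\mu_o\rVert$.

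For the homomorphism property, apply the displayed identity twice:
\[
(\gamma_1\gamma_2)_\ast\mu_o=(\gamma_1)_\ast\bigl(e^{-\chi'(\gamma_2)}\mu_{\gamma_2 o}\bigr)=e^{-\chi'(\gamma_2)-\chi'(\gamma_1)}\mu_{\gamma_1\gamma_2 o}.
\]
Comparing with the defining identity for $\gamma_1\gamma_2$, and using that the measures are nonzero, gives additivity. Finally, for $h\in\Gamma'$ the equivariance with $\chi=0$ gives $c_h=1$, so $\chi'$ vanishes on $\Gamma'$.

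The main obstacle is the a.e.\ invariance of $f_\gamma$. This requires care with null sets and with the fact that the Radon--Nikodym cocycles of $\mu_o$ and $\mu^\gamma_o$ under $\Gamma'$ coincide exactly, rather than only up to constants. The ergodicity hypothesis is then applied to the countable family of level sets $\{f_\gamma>t\}$, $t\in\mathbb Q$, which are $\Gamma'$-invariant modulo null sets; one replaces each by a strictly invariant set by intersecting its countably many translates.
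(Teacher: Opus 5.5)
Your proposal is correct and follows the paper's proof essentially step for step. You translate to $\mu^\gamma_x=(\gamma^{-1})_\ast\mu_{\gamma x}$ and show that its Radon--Nikodym derivative against $\mu_o$ is $\Gamma'$-invariant because both densities have the same cocycle $e^{-\sigma\varphi(\beta_\bullet(ho,o))}$; ergodicity then makes this derivative a constant $c_\gamma$, conformality and a total-mass comparison identify $c_\gamma=\lVert\mu_{\gamma o}\rVert/\lVert\mu_o\rVert$, and additivity and vanishing on $\Gamma'$ follow exactly as in the paper. Your extra care in replacing the a.e.-invariant level sets by strictly invariant ones (intersecting the countably many translates) is a harmless refinement of the step the paper states in one line.
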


\begin{proof}
	For $\gamma\in S$, set
	$\nu^\gamma_x:=(\gamma^{-1})_\ast\mu_{\gamma x}$.
	By Lemma~\ref{lem:translates} and positive rescaling,
	$\nu^\gamma\in\cM_\varphi(\sigma,\Gamma')$.
	Conformality and $S$-quasi-invariance give
	$\nu^\gamma_o\ll\mu_o$ and $\nu^\gamma_o\gg\mu_o$.
	Thus
	\[
	D_\gamma:=\frac{d\nu^\gamma_o}{d\mu_o}
	\quad\text{satisfies}\quad
	0<D_\gamma<\infty
	\qquad\mu_o\text{-a.e.}
	\]
	For $h\in\Gamma'$, put
	$J_h:=e^{-\sigma\varphi(\beta_\bullet(ho,o))}$.
	By Lemma~\ref{lem:density-basic}\textup{(3)},
	\[
	\begin{aligned}
		(D_\gamma\circ h^{-1})J_h\mu_o
		=h_\ast(D_\gamma\mu_o)
		=h_\ast\nu^\gamma_o
		=J_h\nu^\gamma_o
		=J_hD_\gamma\mu_o.
	\end{aligned}
	\]
	Since $J_h>0$, we have
	$D_\gamma\circ h^{-1}=D_\gamma$ $\mu_o$-a.e.
	Ergodicity therefore gives $D_\gamma=c(\gamma)$
	$\mu_o$-a.e., for some $c(\gamma)>0$.
	By \eqref{eq:conf},
	\begin{equation}\label{eq:translate-const}
		(\gamma^{-1})_\ast\mu_{\gamma x}
		=\nu^\gamma_x=c(\gamma)\mu_x
		\qquad(x\in\mathbb X).
	\end{equation}
	For $\gamma,\gamma'\in S$, this yields
	\[
	\begin{aligned}
		c(\gamma\gamma')\mu_x
		&=((\gamma\gamma')^{-1})_\ast\mu_{\gamma\gamma'x}\\
		&=((\gamma')^{-1})_\ast
		\bigl((\gamma^{-1})_\ast\mu_{\gamma(\gamma'x)}\bigr)\\
		&=c(\gamma)c(\gamma')\mu_x.
	\end{aligned}
	\]
	Hence $c(\gamma\gamma')=c(\gamma)c(\gamma')$.
	Also, \eqref{eq:equiv} gives $c(h)=1$ for $h\in\Gamma'$.
	Thus $\chi':=\log c$ is a character vanishing on $\Gamma'$.
	Finally, \eqref{eq:translate-const} gives
	\[
	\gamma_\ast\mu_x=c(\gamma)^{-1}\mu_{\gamma x},
	\qquad
	c(\gamma)=
	\frac{\lVert\mu_{\gamma o}\rVert}{\lVert\mu_o\rVert},
	\]
	which proves both assertions.
\end{proof}

Fix a positive linear functional
$$\mathfrak m\colon\ell^\infty(\Gamma_0\backslash\Gamma)\longrightarrow\bR$$
satisfying
$\mathfrak m(1)=1,
\mathfrak m(f\circ R_\gamma)=\mathfrak m(f),$
where $[\gamma]:=\Gamma_0\gamma$ and
$R_\gamma[\gamma']=[\gamma'\gamma]$.

\begin{lemma}\label{lem:kill}
	Let $2\leq k\leq n$, $\sigma>0$, and
	$\mu\in\cM_\varphi(\sigma,\Gamma^{(k)})$.
	Suppose that a character
	$\chi_{k-1}\in\Hom(\Gamma^{(k-1)},\bR)$ satisfies
	\[
	\gamma_\ast\mu_x
	=e^{-\chi_{k-1}(\gamma)}\mu_{\gamma x}
	\qquad(\gamma\in\Gamma^{(k-1)},\ x\in\mathbb X).
	\]
	Then $\chi_{k-1}=0$, and hence
	$\mu\in\cM_\varphi(\sigma,\Gamma^{(k-1)})$.
\end{lemma}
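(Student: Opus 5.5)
The plan is to show that $\chi_{k-1}$ vanishes on a generating set of $\Gamma^{(k-1)}$ modulo $\Gamma^{(k)}$. By construction of the lower central series of $Q$, the group $\Gamma^{(k-1)}$ is generated by $\Gamma^{(k)}$ together with the commutators $[a,b]$ with $a\in\Gamma^{(k-2)}$ and $b\in\Gamma$; here $k\geq2$ is exactly what is needed for $\Gamma^{(k-2)}$ to make sense. Since $\mu\in\cM_\varphi(\sigma,\Gamma^{(k)})$ is untwisted, comparing the hypothesis with \eqref{eq:equiv} gives $\chi_{k-1}|_{\Gamma^{(k)}}=0$: both identities hold for $h\in\Gamma^{(k)}$ and the measures are nonzero. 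It therefore suffices to prove $\chi_{k-1}([a,b])=0$ for such $a,b$.

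\emph{Step 1 (linearity in $a$).} Because $[\Gamma^{(k-1)},\Gamma]\subseteq\Gamma^{(k)}\subseteq\ker\chi_{k-1}$, the character $\chi_{k-1}$ is $\Gamma$-conjugation invariant, and the commutator $[\cdot,b]$ is a homomorphism from $\Gamma^{(k-2)}$ to $\Gamma^{(k-1)}/\Gamma^{(k)}$. The latter holds because $\Gamma^{(k-1)}/\Gamma^{(k)}$ is central in $\Gamma/\Gamma^{(k)}$, so the identity $[aa',b]=a[a',b]a^{-1}[a,b]$ reduces to multiplication. Hence
\[
\chi_{k-1}([a^n,b])=n\,\chi_{k-1}([a,b])\qquad(n\geq1),
\]
and it is enough to show that $n\mapsto\chi_{k-1}([a^n,b])$ is bounded.

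\emph{Step 2 (mass formula).} Put $c=[a^n,b]\in\Gamma^{(k-1)}$ and $F(g):=\log\lVert\mu_{go}\rVert$. The hypothesis gives $\lVert\mu_{cx}\rVert=e^{\chi_{k-1}(c)}\lVert\mu_x\rVert$ for every $x\in\mathbb X$. Choosing $x=ba^no$, for which $cx=a^nbo$, we get
\[
\chi_{k-1}([a^n,b])=F(a^nb)-F(ba^n).
\]
By Lemma~\ref{lem:density-basic}(5), $|F(gb)-F(g)|\leq\sigma\lVert\varphi\rVert\lVert\kappa(b)\rVert$ uniformly in $g$. So it remains to bound $|F(bg)-F(g)|$ uniformly for $g=a^n$.

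\emph{Step 3 (main obstacle).} Here $F(bg)-F(g)=\log\bigl(\lVert\nu^b_{go}\rVert/\lVert\mu_{go}\rVert\bigr)$ with $\nu^b_x:=(b^{-1})_\ast\mu_{bx}$, which lies in $\cM_\varphi(\sigma,\Gamma^{(k)})$ by Lemma~\ref{lem:translates}. A uniform bound on this ratio is equivalent to a two-sided comparison $C^{-1}\mu_o\leq\nu^b_o\leq C\mu_o$; Proposition~\ref{prop:masscomparison} only gives this comparison in the reverse logical direction, from a mass bound to a measure bound. I would first try induction down the tower, using Proposition~\ref{prop:quasiinv} level by level to get quasi-invariance and bounded Radon--Nikodym derivatives for $b$ in successively larger subgroups. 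If that fails, the fallback is the fixed right-invariant mean $\mathfrak m$: average the function $[g]\mapsto F(bg)-F(g)$, or rather its exponential, over $\Gamma_0\backslash\Gamma$, and use amenability of the nilpotent quotient to produce an averaged density for which the comparison with $\mu$ is bounded, in the spirit of \cite{PPS}. This uniformity is the step I expect to be hardest. Once it is established, Steps 1--2 give $|n\chi_{k-1}([a,b])|\leq C_b$ for all $n\geq1$, hence $\chi_{k-1}([a,b])=0$. It follows that $\chi_{k-1}=0$, and the hypothesis then says exactly that $\mu\in\cM_\varphi(\sigma,\Gamma^{(k-1)})$.
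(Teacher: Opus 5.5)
Your Steps 1 and 2 are correct, but they only reformulate the problem. Step 3, which you leave open, is the whole content of the lemma.

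Indeed, for $c\in\Gamma^{(k-1)}$ the hypothesis together with conjugation invariance of $\chi_{k-1}$ gives $F(gc)=F(g)+\chi_{k-1}(c)$ for every $g\in\Gamma$; just write $gc=(gcg^{-1})g$. Now write $ba^n=(bab^{-1})^nb$ and $bab^{-1}=a\,[a^{-1},b]$, with $[a^{-1},b]\in\Gamma^{(k-1)}$ central modulo $\Gamma^{(k)}$. This yields $F(ba^n)-F(a^n)=-n\,\chi_{k-1}([a,b])+O_b(1)$. So the bound you want along $g=a^n$ is equivalent to $\chi_{k-1}([a,b])=0$.

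Neither of your suggested routes supplies it:
\begin{itemize}
\item Proposition~\ref{prop:quasiinv} applied to $\mu$ only controls $b\in\Gamma^{(k-1)}$, where $F(bg)-F(g)=\chi_{k-1}(b)$ is already constant. Applying it one level higher, to $\mu$ as a $\chi_{k-1}$-twisted density for $\Gamma^{(k-1)}$, is not covered by its statement, since $\chi_{k-1}$ is not known to be the restriction of a character of $\Gamma$. Its proof would also need $\chi_{k-1}([\gamma,\gamma'])=0$ for $\gamma\in\Gamma^{(k-2)}$, which is exactly the claim.
\item The mean, as you propose to use it, would be applied to $[g]\mapsto F(bg)-F(g)$ (or its exponential). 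That function is not known to be bounded, so it is not in the domain of $\mathfrak m$.
\end{itemize}

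The missing idea is to average \emph{right} differences, which are bounded for free. Put $f_\gamma([g]):=F(g\gamma)-F(g)$ on $\Gamma_0\backslash\Gamma$. It is well defined because $\mu$ is untwisted over $\Gamma_0$. It satisfies $|f_\gamma|\leq\sigma\lVert\varphi\rVert\lVert\kappa(\gamma)\rVert$ by Lemma~\ref{lem:density-basic}(5), and $f_{\gamma\gamma'}=f_\gamma+f_{\gamma'}\circ R_\gamma$. Right invariance therefore makes $\chi(\gamma):=\mathfrak m(f_\gamma)$ a character of $\Gamma$. By the identity $F(gc)=F(g)+\chi_{k-1}(c)$ above, $f_c\equiv\chi_{k-1}(c)$ for $c\in\Gamma^{(k-1)}$, so $\chi_{k-1}=\chi|_{\Gamma^{(k-1)}}$. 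In particular $\chi$ vanishes on $\Gamma_0\subseteq\Gamma^{(k)}$. Since $k\geq2$, we get $\Gamma^{(k-1)}\subseteq\Gamma^{(1)}=\Gamma_0[\Gamma,\Gamma]\subseteq\ker\chi$. This is the paper's argument, and it bypasses Steps 1--3 entirely.

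Alternatively, you can close your own argument without any mean by using bilinearity instead of linearity. The pairing $(a,b)\mapsto\chi_{k-1}([a,b])$ is additive in $b$ as well, so $\chi_{k-1}([a^n,b^n])=n^2\chi_{k-1}([a,b])$. On the other hand, Lemma~\ref{lem:density-basic}(5) applied to $\chi_{k-1}(c)=\log\lVert\mu_{co}\rVert-\log\lVert\mu_o\rVert$ gives $|\chi_{k-1}([a^n,b^n])|\leq\sigma\lVert\varphi\rVert\lVert\kappa([a^n,b^n])\rVert\leq2n\sigma\lVert\varphi\rVert(\lVert\kappa(a)\rVert+\lVert\kappa(b)\rVert)$. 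Quadratic growth against a linear bound forces $\chi_{k-1}([a,b])=0$.
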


\begin{proof}
	Since $\Gamma_0\subseteq\Gamma^{(k)}$, we have
	$\mu\in\cM_\varphi(\sigma,\Gamma_0)$.
	Choose a right-invariant mean $\mathfrak m$ on
	$\ell^\infty(\Gamma_0\backslash\Gamma)$ and set
	\[
	f_\gamma([\gamma'])
	:=\log\lVert\mu_{\gamma'\gamma o}\rVert
	-\log\lVert\mu_{\gamma'o}\rVert,
	\qquad
	\chi(\gamma):=\mathfrak m(f_\gamma)
	\qquad(\gamma,\gamma'\in\Gamma).
	\]
	
	For $\gamma_0\in\Gamma_0$ and $x\in\mathbb X$,
	equation~\eqref{eq:equiv} gives
	\[
	\lVert\mu^0_{\gamma_0x}\rVert
	=\lVert(\gamma_0)_\ast\mu^0_x\rVert
	=\lVert\mu^0_x\rVert.
	\]
	Hence $f_\gamma$ is well defined. By
	Lemma~\ref{lem:density-basic}\textup{(5)},
	\[
	|f_\gamma([\gamma'])|
	\leq\delta_0\lVert\varphi\rVert\,
	\lVert\kappa(\gamma'o,\gamma'\gamma o)\rVert
	=\delta_0\lVert\varphi\rVert\,
	\lVert\kappa(\gamma)\rVert.
	\]
	Since $f_{\gamma\gamma'}=f_\gamma+f_{\gamma'}\circ R_\gamma$,
	right-invariance of $\mathfrak m$ yields
	\[
	\begin{aligned}
		\chi_0(\gamma\gamma')
		=\mathfrak m(f_\gamma)
		+\mathfrak m(f_{\gamma'}\circ R_\gamma)
		=\chi_0(\gamma)+\chi_0(\gamma').
	\end{aligned}
	\]
	Finally, normality gives
	$\gamma\gamma_0\gamma^{-1}\in\Gamma_0$, so
	\[
	f_{\gamma_0}([\gamma])
	=\log\frac{
		\lVert\mu^0_{(\gamma\gamma_0\gamma^{-1})\gamma o}\rVert}
	{\lVert\mu^0_{\gamma o}\rVert}
	=0.
	\]
	Thus $\chi_0(\gamma_0)=\mathfrak m(f_{\gamma_0})=0$.
	This  shows that $f_\gamma$ is well
	defined and bounded, and that
	\[
	\chi\in\Hom(\Gamma,\bR),
	\qquad
	\chi|_{\Gamma_0}=0.
	\]
	
	Fix $\gamma\in\Gamma^{(k-1)}$ and $\gamma'\in\Gamma$.
	Since
	$\gamma\gamma'=[\gamma,\gamma']\gamma'\gamma$
	with $[\gamma,\gamma']\in\Gamma^{(k)}$,
	untwisted equivariance over $\Gamma^{(k)}$ gives
	$\lVert\mu_{\gamma\gamma'o}\rVert
	=\lVert\mu_{\gamma'\gamma o}\rVert$.
	Using the assumed twisted equivariance, we obtain
	\[
	\begin{aligned}
		f_\gamma([\gamma'])
		&=\log\frac{\lVert\mu_{\gamma\gamma'o}\rVert}
		{\lVert\mu_{\gamma'o}\rVert}\\
		&=\log\frac{
			e^{\chi_{k-1}(\gamma)}\lVert\mu_{\gamma'o}\rVert}
		{\lVert\mu_{\gamma'o}\rVert}\\
		&=\chi_{k-1}(\gamma).
	\end{aligned}
	\]
	Consequently, we have
	\[
	\chi(\gamma)=\mathfrak m(f_\gamma)
	=\chi_{k-1}(\gamma)
	\qquad(\gamma\in\Gamma^{(k-1)}).
	\]
	Since $k\geq2$ and every character vanishes on commutators,
	\[
	\Gamma^{(k-1)}
	\subseteq\Gamma^{(1)}
	=\Gamma_0[\Gamma,\Gamma]
	\subseteq\ker\chi.
	\]
	Thus $\chi_{k-1}=\chi|_{\Gamma^{(k-1)}}=0$.
\end{proof}
	
	\subsection{The two directions}

\begin{prop}\label{prop:descent}
	Let $\sigma>0$ and $\mu\in\cM_\varphi(\sigma,\Gamma_0)$.
	If the $\Gamma_0$-action on $(\cF,\mu_o)$ is ergodic, then
	there is $\chi\in\Hom(\Gamma,\bR)$ with $\chi|_{\Gamma_0}=0$
	such that
	\[
	\mu\in\cM_{\varphi,\chi}(\sigma,\Gamma),
	\qquad
	\sigma=\delta_{\varphi,\chi}(\Gamma).
	\]
\end{prop}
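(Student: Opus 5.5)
We argue by climbing the tower $\Gamma_0=\Gamma^{(n)}\lhd\Gamma^{(n-1)}\lhd\cdots\lhd\Gamma^{(0)}=\Gamma$ one step at a time. The inductive claim is that $\mu\in\cM_\varphi(\sigma,\Gamma^{(k)})$ for $k=n,n-1,\dots,1$: at every level above $\Gamma$ the density stays untwisted, and a character appears only at the last step $\Gamma^{(1)}\to\Gamma^{(0)}$. The base case $k=n$ is the hypothesis. Ergodicity under $\Gamma_0$ implies ergodicity under every $\Gamma^{(k)}\supseteq\Gamma_0$, since a $\Gamma^{(k)}$-invariant set is $\Gamma_0$-invariant. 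We use this throughout.

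For the inductive step, suppose $\mu\in\cM_\varphi(\sigma,\Gamma^{(k)})$ with $1\le k\le n$. By Proposition~\ref{prop:quasiinv}, applied with $\chi=0$, the measure class of $\mu_o$ is $\Gamma^{(k-1)}$-invariant. We then apply Proposition~\ref{prop:emerging} with $\Gamma'=\Gamma^{(k)}$ and $S=\Gamma^{(k-1)}$; this is legitimate because $\Gamma^{(k)}\lhd\Gamma$ is non-elementary and the $\Gamma^{(k)}$-action is ergodic. It yields $\chi_{k-1}\in\Hom(\Gamma^{(k-1)},\bR)$, vanishing on $\Gamma^{(k)}$, with $\gamma_\ast\mu_x=e^{-\chi_{k-1}(\gamma)}\mu_{\gamma x}$ for all $\gamma\in\Gamma^{(k-1)}$. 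If $k\ge2$, Lemma~\ref{lem:kill} forces $\chi_{k-1}=0$, so $\mu\in\cM_\varphi(\sigma,\Gamma^{(k-1)})$ and the induction continues. If $k=1$, we have $S=\Gamma$, and we set $\chi:=\chi_0\in\Hom(\Gamma,\bR)$. It vanishes on $\Gamma^{(1)}\supseteq\Gamma_0$, and it is automatically $\Gamma$-conjugation invariant. Hence $\mu$ satisfies \eqref{eq:conf} and \eqref{eq:equiv} for $\Gamma$ with character $\chi$. Since $\mu$ is also supported on $\Lam$, this gives $\mu\in\cM_{\varphi,\chi}(\sigma,\Gamma)$.

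The dimension statement follows directly. Because $\cM_{\varphi,\chi}(\sigma,\Gamma)\neq\emptyset$, Proposition~\ref{prop:rigidity} gives $\sigma=\delta_{\varphi,\chi}(\Gamma)$.

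The main obstacle is the step $k\ge2$, namely checking that Lemma~\ref{lem:kill} applies as stated. Its proof builds a character $\chi$ on all of $\Gamma$ from an invariant mean on $\Gamma_0\backslash\Gamma$, which exists because the quotient is nilpotent and hence amenable. The key point is that $\chi$ agrees with $\chi_{k-1}$ on $\Gamma^{(k-1)}$, using $[\Gamma^{(k-1)},\Gamma]\subseteq\Gamma^{(k)}$ and the fact that $\mu$ is untwisted on $\Gamma^{(k)}$. Since $\Gamma^{(k-1)}\subseteq\Gamma^{(1)}=\Gamma_0[\Gamma,\Gamma]$ lies in $\ker\chi$, this forces $\chi_{k-1}=0$. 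Beyond this, we only need to check the hypotheses of Proposition~\ref{prop:emerging} at each level: ergodicity, and non-elementarity of each $\Gamma^{(k)}$, the latter already recorded at the start of this section. If $n=0$, that is $\Gamma_0=\Gamma$, the statement is immediate with $\chi=0$.
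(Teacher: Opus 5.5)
Your proposal is correct and follows the paper's proof essentially step for step. The argument is a downward induction along the tower $\Gamma_0=\Gamma^{(n)}\lhd\cdots\lhd\Gamma^{(0)}=\Gamma$: Proposition~\ref{prop:quasiinv} with $\chi=0$ gives quasi-invariance, Proposition~\ref{prop:emerging} produces a character at each level, Lemma~\ref{lem:kill} forces that character to vanish for $k\geq2$, and Proposition~\ref{prop:rigidity} gives $\sigma=\delta_{\varphi,\chi}(\Gamma)$. The only differences are cosmetic: you fold the final step $\Gamma^{(1)}\to\Gamma$ into the inductive step, and you handle the trivial case $\Gamma_0=\Gamma$ separately.
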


\begin{proof}
	Take the tower above with $n\geq1$. We prove by downward
	induction that
	$\mu\in\cM_\varphi(\sigma,\Gamma^{(k)})$
	for $k=n,\ldots,1$.
	The case $k=n$ is the hypothesis.
	
	Suppose that $\mu\in\cM_\varphi(\sigma,\Gamma^{(k)})$
	with $k\geq2$. Since $\Gamma_0\subseteq\Gamma^{(k)}$,
	the $\Gamma^{(k)}$-action on $(\cF,\mu_o)$ is ergodic.
	Proposition~\ref{prop:quasiinv}, with $\chi=0$, makes the
	measure class of $\mu_o$ invariant under $\Gamma^{(k-1)}$.
	Applying Proposition~\ref{prop:emerging} to
	$(\Gamma^{(k)},\Gamma^{(k-1)})$ gives a character
	$\chi_{k-1}\in\Hom(\Gamma^{(k-1)},\bR)$, vanishing on
	$\Gamma^{(k)}$, such that
	\[
	\gamma_\ast\mu_x
	=e^{-\chi_{k-1}(\gamma)}\mu_{\gamma x}
	\qquad(\gamma\in\Gamma^{(k-1)},\ x\in\mathbb X).
	\]
	Lemma~\ref{lem:kill} gives $\chi_{k-1}=0$, hence
	$\mu\in\cM_\varphi(\sigma,\Gamma^{(k-1)})$.
	
	Thus $\mu\in\cM_\varphi(\sigma,\Gamma^{(1)})$.
	Proposition~\ref{prop:quasiinv} at $k=1$ makes the measure
	class of $\mu_o$ $\Gamma$-invariant. Since the
	$\Gamma^{(1)}$-action is ergodic,
	Proposition~\ref{prop:emerging}, applied to
	$(\Gamma^{(1)},\Gamma)$, gives
	\[
	\chi\in\Hom(\Gamma,\bR),
	\qquad
	\chi|_{\Gamma^{(1)}}=0,
	\qquad
	\mu\in\cM_{\varphi,\chi}(\sigma,\Gamma).
	\]
	Since $\Gamma_0\subseteq\Gamma^{(1)}$, we have
	$\chi|_{\Gamma_0}=0$.
	Finally, Proposition~\ref{prop:rigidity} gives
	$\sigma=\delta_{\varphi,\chi}(\Gamma)$.
\end{proof}

\begin{prop}\label{prop:ascent}
	Let $\sigma>0$, $\chi\in\Hom(\Gamma,\bR)$ with
	$\chi|_{\Gamma_0}=0$, and
	$\mu\in\cM_{\varphi,\chi}(\sigma,\Gamma)$.
	Then $\sigma=\delta_{\varphi,\chi}(\Gamma)$,
	$\mu\in\cM_\varphi(\sigma,\Gamma_0)$, and the
	$\Gamma_0$-action on $(\cF,\mu_o)$ is ergodic.
\end{prop}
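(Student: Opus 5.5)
The first two assertions are formal, and the real content is ergodicity of the $\Gamma_0$-action. The equality $\sigma=\delta_{\varphi,\chi}(\Gamma)$ is Proposition~\ref{prop:rigidity}. Since $\chi|_{\Gamma_0}=0$, the twisted equivariance \eqref{eq:equiv} restricted to $\Gamma_0$ becomes untwisted, and conformality is unchanged. Hence $\mu\in\cM_\varphi(\sigma,\Gamma_0)$; note that $\Gamma_0$ is non-elementary, being a nontrivial normal subgroup, as in Section~\ref{sec:nilpotent}.

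For ergodicity I argue by contradiction and decompose. Suppose $A\subseteq\cF$ is a $\Gamma_0$-invariant Borel set with $\mu_o(A)>0$ and $\mu_o(\cF\smallsetminus A)>0$. Set $\mu^A_x:=\mathbb 1_A\mu_x$. By $\Gamma_0$-invariance of $A$, this family lies in $\cM_\varphi(\sigma,\Gamma_0)$, and so does its complement. The aim is to show that $\mu^A$ is proportional to $\mu$, which is impossible.

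The natural route is to climb the tower $\Gamma_0=\Gamma^{(n)}\lhd\cdots\lhd\Gamma^{(0)}=\Gamma$ and reuse the machinery of Proposition~\ref{prop:descent}. First I would reduce to the case where $\mu^A$ is an extreme point: take an ergodic component, obtained from an ergodic decomposition of $\mu_o$ under $\Gamma_0$ with the conformal cocycle. This decomposition is legitimate because $\mu_o$ is quasi-invariant with continuous Radon--Nikodym cocycle (Lemma~\ref{lem:density-basic}(3)), and each component inherits conformality. Applying Proposition~\ref{prop:descent} to an ergodic component $\lambda\ll\mu$ gives a character $\chi'$ vanishing on $\Gamma_0$ with $\lambda\in\cM_{\varphi,\chi'}(\sigma,\Gamma)$.

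Absolute continuity $\lambda_o\ll\mu_o$ then yields a comparison of total masses. Theorem~\ref{thm:shadowprinciple} gives $\lambda_o(\cO_R(e,\gamma))\asymp e^{\chi'(\gamma)-\sigma\dph(e,\gamma)}$ and $\mu_o(\cO_R(e,\gamma))\asymp e^{\chi(\gamma)-\sigma\dph(e,\gamma)}$. Let $g:=d\lambda_o/d\mu_o$. Differentiating along shadows (Theorem~\ref{thm:differentiation}) at a $\mu_o$-density point where $g>0$, the ratio $e^{\chi'(\gamma_i)-\chi(\gamma_i)}$ stays bounded above and below along radial sequences. Twisted equivariance of both densities shows that $g$ satisfies $g(\gamma\xi)=e^{\chi(\gamma)-\chi'(\gamma)}g(\xi)$. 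The set $\{g>0\}$ is therefore $\Gamma$-invariant, and by ergodicity of $\Gamma$ on $\mu_o$ (Theorem~\ref{thm:HTS}(2)) it has full measure.

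The step I expect to be the main obstacle is concluding $\chi'=\chi$ from this. I would use $\Gamma$-conservativity (Theorem~\ref{thm:HTS}(4)): on a set $E$ of positive measure where $\log g$ lies in a bounded interval, conservativity provides infinitely many $\gamma$ with $\gamma\xi\in E$. This forces $|\chi(\gamma)-\chi'(\gamma)|$ to be bounded along these returns. To upgrade this to $\chi=\chi'$, I would run the argument with the function $\log g$ as a measurable cocycle-coboundary: it is a measurable solution of $\log g\circ\gamma-\log g=\chi-\chi'$, so $(\chi-\chi')$ is a measurable coboundary for an ergodic conservative action. A nonzero real character cannot be such a coboundary: the push-forward of $\mu_o$ by $\log g$ would be quasi-invariant under the nontrivial translation group $(\chi-\chi')(\Gamma)$ while being a finite measure with conservative dynamics, contradicting the recurrence just described. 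Hence $\chi'=\chi$ and $g$ is $\Gamma$-invariant, so $g$ is constant by ergodicity. Thus $\lambda=c\mu$, every ergodic component is all of $\mu$, and $\mu$ is $\Gamma_0$-ergodic.

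Alternatively, and probably more cleanly, the conclusion $\lambda=c\mu$ once $\chi'=\chi$ is known follows directly from uniqueness in Theorem~\ref{thm:HTS}(5).
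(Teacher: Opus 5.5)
You have two genuine gaps. The first is at the foundation, and the second is at the step you flagged.

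\textbf{The reduction to an ergodic component $\lambda$ with $\lambda\ll\mu$ is unjustified.} Take an ergodic decomposition $\mu_o=\int\lambda^{(t)}\,d\tau(t)$ under $\Gamma_0$, normalising $\lVert\mu_o\rVert=1$. The components are carried by pairwise disjoint $\Gamma_0$-invariant sets $E_t$ with $\mu_o(E_t)=\tau(\{t\})$. So every component with $\tau(\{t\})=0$ is singular to $\mu_o$.

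A component that is absolutely continuous with respect to $\mu_o$ is an atom of the decomposition. That is, it is a multiple of $\mathbb{1}_E\mu_o$ for a $\Gamma_0$-invariant $E$ of positive measure on which $\mu_o$ is ergodic. Nothing in your argument produces such an $E$. If $\tau$ is non-atomic, which is exactly the case you must exclude, the density $g=d\lambda_o/d\mu_o$ does not exist, and everything after it rests on $g$.

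\textbf{Even granting $\lambda\ll\mu$, your deduction of $\chi'=\chi$ fails.} A nonzero real character can be a measurable coboundary for an ergodic, conservative, non-singular action on a finite measure space. For example, let $\bZ^2$ act on $\bR$ by $(m,n)\cdot x=x+m+n\alpha$ with $\alpha\notin\mathbb{Q}$, and take the measure $e^{-x^2}\,dx$. This action is free, ergodic and conservative, yet $c(m,n)=m+n\alpha$ equals $f(\gamma x)-f(x)$ for $f(x)=x$.

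The recurrence you invoke only bounds $|(\chi-\chi')(\gamma)|$ along infinitely many returns. That is automatic, since $\ker(\chi-\chi')$ is infinite, so it gives no contradiction.

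This step can be repaired with the paper's own tools. The family $g^{1/2}\mu$ is a twisted $(\varphi,\tfrac12(\chi+\chi'))$-density of dimension $\sigma$. Normalise it and $\tfrac12(\mu+\lambda)$, and compare them on $\ker(\chi-\chi')$ using AM--GM and Theorem~\ref{thm:fatou}, exactly as in Proposition~\ref{prop:reversibility}. This forces $\chi=\chi'$. The first gap still remains.

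\textbf{How the paper avoids both issues.} It never decomposes. It keeps $\nu:=\mathbb{1}_A\mu$ for a $\Gamma_0$-invariant $A$ with $\mu_o(A)>0$. By downward induction along the tower, it shows that $A$ is $\mu_o$-a.e. invariant under each $\Gamma^{(k)}$.

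Once $A$ is a.e. invariant under $\Gamma^{(k)}$, one has $\nu\in\cM_{\varphi,\chi|_{\Gamma^{(k)}}}(\sigma,\Gamma^{(k)})$. Proposition~\ref{prop:quasiinv} then applies: the identity $\gamma\gamma'=[\gamma,\gamma']\gamma'\gamma$ gives uniformly bounded mass ratios, and Proposition~\ref{prop:masscomparison} turns these into a measure comparison. This yields $C_\gamma^{-1}\mathbb{1}_A\mu_o\leq e^{\chi(\gamma)}\mathbb{1}_{\gamma^{-1}A}\mu_o\leq C_\gamma\mathbb{1}_A\mu_o$ for $\gamma\in\Gamma^{(k-1)}$. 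Hence $\gamma^{-1}A=A$ modulo $\mu_o$.

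At the top of the tower, $\nu\in\cM_{\varphi,\chi}(\sigma,\Gamma)$. So $\nu=c\mu$ by Theorem~\ref{thm:HTS}(5), and $\mu_o(A^c)=0$. This route needs no ergodic components, no Radon--Nikodym derivatives between different densities, and no identification of characters.
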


\begin{proof}
	The dimension equality follows from
	Proposition~\ref{prop:rigidity}, and
	$\mu\in\cM_\varphi(\sigma,\Gamma_0)$ follows from
	\eqref{eq:equiv} and $\chi|_{\Gamma_0}=0$.
	Let $A\subseteq\cF$ be a $\Gamma_0$-invariant Borel set
	with $\mu_o(A)>0$, and set
	$\nu_x:=\mathbb1_A\mu_x$ for $x\in\mathbb X$.
	By \eqref{eq:conf},
	\[
	\nu_x=e^{-\sigma\varphi(\beta_\bullet(x,y))}\nu_y,
	\qquad
	0<\lVert\nu_x\rVert
	=\int_A e^{-\sigma\varphi(\beta_\xi(x,o))}\,d\mu_o(\xi)
	\leq\lVert\mu_x\rVert<\infty.
	\]
	Also, $\supp\nu_x\subseteq\Lam$.
	We prove by downward induction on $k=n,\ldots,0$ that
	\[
	\mu_o(\gamma A\triangle A)=0
	\qquad(\gamma\in\Gamma^{(k)}).
	\]
	The case $k=n$ holds by hypothesis.
	Suppose it holds for some $k\geq1$.
	For $\gamma\in\Gamma^{(k)}$, \eqref{eq:conf} gives
	$\mu_{\gamma x}(\gamma A\triangle A)=0$.
	Hence, by \eqref{eq:equiv},
	\[
	\begin{aligned}
		\gamma_\ast\nu_x
		=\mathbb1_{\gamma A}\,\gamma_\ast\mu_x
		=e^{-\chi(\gamma)}\mathbb1_{\gamma A}\mu_{\gamma x}
		=e^{-\chi(\gamma)}\mathbb1_A\mu_{\gamma x}
		=e^{-\chi(\gamma)}\nu_{\gamma x}.
	\end{aligned}
	\]
	Thus
	$\nu\in\cM_{\varphi,\chi|_{\Gamma^{(k)}}}
	(\sigma,\Gamma^{(k)})$.
	For $\gamma\in\Gamma^{(k-1)}$, equation~\eqref{eq:equiv}
	also gives
	\[
	\begin{aligned}
		(\gamma^{-1})_\ast\nu_{\gamma o}
		=(\gamma^{-1})_\ast(\mathbb1_A\mu_{\gamma o})
		=\mathbb1_{\gamma^{-1}A}
		(\gamma^{-1})_\ast\mu_{\gamma o}
		=e^{\chi(\gamma)}\mathbb1_{\gamma^{-1}A}\mu_o.
	\end{aligned}
	\]
	Proposition~\ref{prop:quasiinv} therefore yields
	\[
	C_\gamma^{-1}\mathbb1_A\mu_o
	\leq e^{\chi(\gamma)}\mathbb1_{\gamma^{-1}A}\mu_o
	\leq C_\gamma\mathbb1_A\mu_o
	\]
	for some $C_\gamma\geq1$.
	Evaluating on $A^c$ and $(\gamma^{-1}A)^c$, respectively,
	gives
	\[
	\mu_o(\gamma^{-1}A\setminus A)
	=\mu_o(A\setminus\gamma^{-1}A)=0.
	\]
	Since $\gamma$ ranges over $\Gamma^{(k-1)}$, this proves
	the induction step.
	At $k=0$, the same equivariance calculation gives
	$\nu\in\cM_{\varphi,\chi}(\sigma,\Gamma)$.
	By Theorem~\ref{thm:HTS}\textup{(5)},
	\[
	\nu=c\mu,
	\qquad
	c=\frac{\mu_o(A)}{\lVert\mu_o\rVert}>0.
	\]
	Consequently,
	\[
	0=\nu_o(A^c)=c\,\mu_o(A^c),
	\]
	so $\mu_o(A^c)=0$, proving ergodicity.
\end{proof}
	
	\subsection{Proof of Theorem \ref{thm:B}}
	
	For every $\sigma>0$, Propositions~\ref{prop:descent}
	and~\ref{prop:ascent} give
	\[
	\begin{aligned}
		\{\mu\in\cM_\varphi(\sigma,\Gamma_0):
		\mu_o\text{ is }\Gamma_0\text{-ergodic}\}
		\qquad=
		\bigcup_{\substack{\chi\in\Hom(\Gamma,\bR),
				\chi|_{\Gamma_0}=0}}
		\cM_{\varphi,\chi}(\sigma,\Gamma).
	\end{aligned}
	\]
	By Proposition~\ref{prop:rigidity}, every nonempty class
	on the right satisfies
	$\sigma=\delta_{\varphi,\chi}(\Gamma)$.
	Now consider ergodic $\Gamma_0$-densities of all positive
	dimensions. For such a density $\mu$, let $\chi_\mu$ be
	the character supplied by Proposition~\ref{prop:descent}.
	Lemma~\ref{lem:density-basic}\textup{(1)} gives
	\[
	\chi_\mu(\gamma)
	=\log\frac{\lVert\mu_{\gamma o}\rVert}
	{\lVert\mu_o\rVert},
	\qquad
	\chi_{t\mu}(\gamma)
	=\log\frac{t\lVert\mu_{\gamma o}\rVert}
	{t\lVert\mu_o\rVert}
	=\chi_\mu(\gamma)
	\quad(t>0).
	\]
	Thus $[\mu]\mapsto\chi_\mu$ is well defined modulo
	positive scaling.
	
	If $\chi_\mu=\chi_\nu=\chi$, then
	Proposition~\ref{prop:descent} gives $\mu,\nu\in
	\cM_{\varphi,\chi}(\delta_{\varphi,\chi}(\Gamma),\Gamma).$
	By Theorem~\ref{thm:HTS}\textup{(5)},
	\[
	\mu=\frac{\lVert\mu_o\rVert}{\lVert\nu_o\rVert}\,\nu,
	\]
	proving injectivity.
	Conversely, let $\chi\in\Hom(\Gamma,\bR)$ with
	$\chi|_{\Gamma_0}=0$. By
	Proposition~\ref{prop:delta-props}\textup{(2)} and
	Theorem~\ref{thm:existence}, there exists $	\mu^{\varphi,\chi}\in
	\cM_{\varphi,\chi}
	\bigl(\delta_{\varphi,\chi}(\Gamma),\Gamma\bigr).$
	By Proposition~\ref{prop:ascent} and
	Lemma~\ref{lem:density-basic}\textup{(1)},
	$\mu^{\varphi,\chi}$ is an ergodic $\Gamma_0$-density
	with associated character 
	\[
	\chi_{\mu^{\varphi,\chi}}(\gamma)
	=\log\frac{
		e^{\chi(\gamma)}\lVert\mu_o^{\varphi,\chi}\rVert}
	{\lVert\mu_o^{\varphi,\chi}\rVert}
	=\chi(\gamma),
	\]
	again by Lemma~\ref{lem:density-basic}\textup{(1)}.
	This proves surjectivity.
	Finally, every ergodic density $\mu$ has dimension
	$\delta_{\varphi,\chi_\mu}(\Gamma)$, and every character
	vanishing on $\Gamma_0$ occurs. Hence the set of
	possible dimensions is
	\[
	\bigl\{\delta_{\varphi,\chi}(\Gamma):
	\chi\in\Hom(\Gamma,\bR),\ \chi|_{\Gamma_0}=0\bigr\}.
	\]
	
	\subsection{The growth form and the norm counting}\label{ss:growthform}
	
	Recall Quint's growth indicator \cite[\S3.1.2]{Quint02a} (see also \cite[p.~1758]{Sam}).
	For $v\in\cL_\Gamma\smallsetminus\{0\}$, 
	\[
	\psi_\Gamma(v)
	=
	\lVert v\rVert
	\inf_{\text{open cones }\cC\ni v}
	\limsup_{T\to\infty}
	\frac{1}{T}
	\log\left|\left\{
	\gamma\in\Gamma:
	\kappa(\gamma)\in\cC,\
	\lVert\kappa(\gamma)\rVert\leq T
	\right\}\right|,
	\]
	where the cones are open in $\fk a$. We set
	$\psi_\Gamma(0):=0$ and $\psi_\Gamma(v):=-\infty$ for
	$v\notin\cL_\Gamma$. By \cite[Thm.~4.2.2]{Quint02a},
	$\psi_\Gamma$ is $1$-homogeneous, concave and upper
	semicontinuous, finite and non-negative on $\cL_\Gamma$,
	and strictly positive on its interior. In particular,
	it is bounded above on the unit sphere.
	By \cite[Lem.~3.1.3]{Quint02a} and
	Lemma~\ref{lem:abscissa}, for every
	$\varphi\in\cL_\Gamma^{\ast+}$,
	\begin{equation}\label{eq:quint-sup}
		\delta_\varphi(\Gamma)
		=
		\sup_{v\in\cL_\Gamma\smallsetminus\{0\}}
		\frac{\psi_\Gamma(v)}{\varphi(v)}.
	\end{equation}
	Moreover, $\psi_\Gamma\circ\ii=\psi_\Gamma$, since inversion
	sends the elements counted in $\cC$ bijectively to those
	counted in $\ii\cC$, preserving $\lVert\kappa\rVert$ by
	\eqref{eq:kappa-inverse}.
	
	Equip $\fk a^\ast$ with the dual Euclidean norm and set
	$D_\Gamma:=\{\ell\in\fk a^\ast:\ell\geq\psi_\Gamma\}$.
	By \cite[Cor.~3.3.5]{Quint02a}, this non-empty closed convex
	set has a unique element of minimal norm. Following
	\cite[p.~1782]{Sam}, we denote it by $\Theta_\Gamma$
	and call it the \emph{growth form} of $\Gamma$.
	
	\begin{prop}\label{prop:growthform}
		$\Theta_\Gamma\in\cL^{>0}_\Gamma$, $\Theta_\Gamma\circ\ii=\Theta_\Gamma$, and
		$\delta_{\Theta_\Gamma}(\Gamma)=1$.
	\end{prop}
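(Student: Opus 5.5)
The plan is to read off all three assertions from the characterisation of $\Theta_\Gamma$ as the metric projection of $0$ onto the closed convex set $D_\Gamma=\{\ell\in\fk a^\ast:\ell\geq\psi_\Gamma\}$. We use the properties of $\psi_\Gamma$ recalled above: $1$-homogeneity, concavity, upper semicontinuity, non-negativity on $\cL_\Gamma$, strict positivity on its interior, and $\psi_\Gamma\circ\ii=\psi_\Gamma$.

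\emph{Symmetry.} The involution $\ii$ is a linear isometry of $\fk a$, so $\ell\mapsto\ell\circ\ii$ is a linear isometry of $\fk a^\ast$ for the dual norm. Since $\psi_\Gamma\circ\ii=\psi_\Gamma$, this map preserves $D_\Gamma$. It therefore sends the unique minimal-norm element of $D_\Gamma$ to itself, which gives $\Theta_\Gamma\circ\ii=\Theta_\Gamma$.

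\emph{Touching directions and the shape of $\Theta_\Gamma$.} First, $\Theta_\Gamma\neq0$, because $\psi_\Gamma>0$ on the interior of $\cL_\Gamma$, which is non-empty by Zariski density (Benoist). Let $\mathbb S:=\cL_\Gamma\cap\{\lVert v\rVert=1\}$, a compact set. Consider the contact set
\[
T:=\{u\in\mathbb S:\Theta_\Gamma(u)=\psi_\Gamma(u)\}.
\]
We claim $T\neq\emptyset$. The function $\Theta_\Gamma-\psi_\Gamma$ is lower semicontinuous on $\mathbb S$, so if it were strictly positive there it would have a positive minimum $\eta$. Then $(1-\varepsilon)\Theta_\Gamma\geq\psi_\Gamma$ on $\mathbb S$ for small $\varepsilon>0$, and hence everywhere, since $\psi_\Gamma=-\infty$ off $\cL_\Gamma$ and by homogeneity. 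This contradicts minimality of the norm.

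Next we aim for a Karush--Kuhn--Tucker description: $\Theta_\Gamma$ lies in the closed convex cone generated by the functionals $\langle u,\cdot\rangle$ with $u\in T$. The projection characterisation gives $\langle\Theta_\Gamma,\ell-\Theta_\Gamma\rangle\geq0$ for every $\ell\in D_\Gamma$. The set $D_\Gamma$ is the intersection of the half-spaces $\{\ell:\ell(v)\geq\psi_\Gamma(v)\}$ for $v\in\mathbb S$. Suppose $\Theta_\Gamma$ were not in that cone. A separating direction $\zeta$ would satisfy $\zeta(u)>0$ for all $u\in T$ and $\langle\Theta_\Gamma,\zeta\rangle<0$. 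By compactness of $T$ and lower semicontinuity of $\Theta_\Gamma-\psi_\Gamma$, we would then have $\Theta_\Gamma+t\zeta\in D_\Gamma$ for small $t>0$, with strictly smaller norm. This is a contradiction. (The cone is closed because $T$ is compact and avoids $0$.) I expect this step to be the main obstacle: it needs care to make the semicontinuity and compactness argument uniform near $T$.

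\emph{Positivity.} Write $\Theta_\Gamma=\sum_j\lambda_j\langle u_j,\cdot\rangle$ with $\lambda_j\geq0$, $u_j\in T\subset\fk a^+$, not all $\lambda_j$ zero. If needed, pass to a limit of such finite combinations, keeping a uniform bound. For $A_{d-1}$ the Weyl chamber is acute: $\fk a^+$ is generated by the fundamental coweights, which have pairwise positive inner products. Hence $\langle u,v\rangle>0$ for all nonzero $u,v\in\fk a^+$. It follows that $\Theta_\Gamma(v)>0$ for every $v\in\cL_\Gamma\smallsetminus\{0\}$, i.e.\ $\Theta_\Gamma\in\cL^{>0}_\Gamma$.

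\emph{Critical exponent.} Since $\Theta_\Gamma\in\cL^{>0}_\Gamma$, formula \eqref{eq:quint-sup} applies and gives $\delta_{\Theta_\Gamma}(\Gamma)=\sup\psi_\Gamma/\Theta_\Gamma$. This supremum is at most $1$ because $\Theta_\Gamma\geq\psi_\Gamma$. It equals $1$ at any $u\in T$, since there $\psi_\Gamma(u)=\Theta_\Gamma(u)>0$ by the positivity just proved. Hence $\delta_{\Theta_\Gamma}(\Gamma)=1$.
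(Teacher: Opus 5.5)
Your proof is correct, but it reaches the key structural fact by a different route from the paper.

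The paper does not analyse a contact set. It quotes \cite[Cor.~3.3.5]{Quint02a} for a single unit vector $u\in\cL_\Gamma$ with $\psi_\Gamma(u)>0$ and $\Theta_\Gamma=\psi_\Gamma(u)\langle u,\cdot\rangle$. It then gets positivity from the identity $d\langle u,v\rangle=\sum_{i<j}(u_i-u_j)(v_i-v_j)\geq(u_1-u_d)(v_1-v_d)>0$ for non-zero $u,v\in\fk a^+$. This is the same acuteness of the Weyl chamber that you obtain from the fundamental coweights. The symmetry argument and the evaluation $\delta_{\Theta_\Gamma}(\Gamma)=1$ at a touching direction are the same as yours.

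You instead prove from scratch that the contact set $T$ is non-empty and that $\Theta_\Gamma$ lies in the cone spanned by the functionals $\langle u,\cdot\rangle$, $u\in T$. You use only upper semicontinuity and homogeneity of $\psi_\Gamma$ and compactness of $\mathbb S$. This is self-contained and does not even need concavity of $\psi_\Gamma$. The price is a weaker conclusion than the cited one, though it is all you need. With concavity one actually gets a single direction: $\Theta_\Gamma=\langle v_0,\cdot\rangle/\lVert v_0\rVert^2$, where $v_0$ is the point of the closed convex set $\{\psi_\Gamma\geq1\}$ nearest to $0$.

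The step you flagged as the main obstacle does go through. Choose an open neighbourhood $U$ of $T$ in $\mathbb S$ on which $\zeta>0$; then $\Theta_\Gamma+t\zeta\geq\Theta_\Gamma\geq\psi_\Gamma$ on $U$ for all $t\geq0$. On the compact set $\mathbb S\smallsetminus U$, the lower semicontinuous function $\Theta_\Gamma-\psi_\Gamma$ has a positive minimum, which absorbs $t\lVert\zeta\rVert$ for small $t$.

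Two small points deserve a line each. First, separation from the closed cone a priori only gives $\zeta\geq0$ on $T$. You get strict positivity by adding $\varepsilon w$ with $w$ positive on $\fk a^+\smallsetminus\{0\}$. Second, closedness of the cone requires $0\notin\operatorname{conv}T$, not merely $0\notin T$. This again holds because $T\subset\fk a^+\smallsetminus\{0\}$.
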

	
\begin{proof}
	Since $\psi_\Gamma\circ\ii=\psi_\Gamma$ and $\ii$ is an isometry,
	$\Theta_\Gamma\circ\ii\in D_\Gamma$ and
	$\lVert\Theta_\Gamma\circ\ii\rVert=\lVert\Theta_\Gamma\rVert$.
	Uniqueness of the minimum-norm element therefore gives
	$\Theta_\Gamma\circ\ii=\Theta_\Gamma$.
	
	By \cite[Cor.~3.3.5]{Quint02a}, there exists
	$u\in\cL_\Gamma$ with $\lVert u\rVert=1$ and
	$\psi_\Gamma(u)>0$ such that
	$\Theta_\Gamma(v)=\psi_\Gamma(u)\langle u,v\rangle$
	for every $v\in\fk a$.
	For $v\in\fk a^+\smallsetminus\{0\}$, the coordinates of
	$u$ and $v$ are decreasing and sum to zero. Hence
	\[
	\begin{aligned}
		\Theta_\Gamma(v)
		&=\frac{\psi_\Gamma(u)}{d}
		\sum_{1\leq i<j\leq d}(u_i-u_j)(v_i-v_j)\\
		&\geq\frac{\psi_\Gamma(u)}{d}
		(u_1-u_d)(v_1-v_d)
		>0.
	\end{aligned}
	\]
	Thus $\Theta_\Gamma\in\cL^{>0}_\Gamma$.
	
	Finally, $\Theta_\Gamma\geq\psi_\Gamma$ and
	$\Theta_\Gamma(u)=\psi_\Gamma(u)>0$, so
	\eqref{eq:quint-sup} gives
	\[
	\delta_{\Theta_\Gamma}(\Gamma)
	=
	\sup_{v\in\cL_\Gamma\smallsetminus\{0\}}
	\frac{\psi_\Gamma(v)}{\Theta_\Gamma(v)}
	=1.
	\]
\end{proof}

	\subsection*{Proof of Corollary \textup{\ref{cor:D}}}
	
			By Proposition~\ref{prop:growthform}, we have 
			$\Theta_\Gamma\in\cL_\Gamma^{>0},	\Theta_\Gamma\circ\ii=\Theta_\Gamma$, and
			$\delta_{\Theta_\Gamma}(\Gamma)=1.$
			
			\textup{(1)}
			Theorem~\ref{thm:A}, applied with
			$\varphi = \Theta_\Gamma, \chi =0$, gives
			$\mu^\Theta\in\cM_{\Theta_\Gamma}(1,\Gamma)$,
			unique up to a positive scalar, with the stated
			measure-theoretic properties.
			
			\textup{(2)}
			By Theorem~\ref{thm:B} and
			Proposition~\ref{prop:rigidity}, the ergodic
			$\Theta_\Gamma$-conformal densities of $\Gamma_0$
			of dimension $1$ are exactly the elements of
			\[
			\bigcup_{\substack{\chi\in\Hom(\Gamma,\bR),\
					\chi|_{\Gamma_0}=0\\
					\delta_{\Theta_\Gamma,\chi}(\Gamma)=1}}
			\cM_{\Theta_\Gamma,\chi}(1,\Gamma).
			\]
			The character $\chi=0$ occurs since
			$\delta_{\Theta_\Gamma,0}(\Gamma)=1$.
			For every occurring character,
			Corollary~\ref{prop:jordan-constraint} and
			$\Theta_\Gamma\circ\ii=\Theta_\Gamma$ give
			\[
			-\Theta_\Gamma(\lambda(\gamma))
			=-\Theta_\Gamma(\ii\lambda(\gamma))
			\leq\chi(\gamma)
			\leq\Theta_\Gamma(\lambda(\gamma))
			\qquad(\gamma\in\Gamma).
			\]
			
			\textup{(3)}
			For every character occurring in \textup{(2)}, we have 
			$\delta_{\Theta_\Gamma,\chi}(\Gamma)
			=1=\delta_{\Theta_\Gamma}(\Gamma).$
			Proposition~\ref{prop:reversibility} therefore gives
			$\chi=0$. By Theorem~\ref{thm:A}\textup{(2)}, every
			ergodic $\Theta_\Gamma$-conformal density of $\Gamma_0$
			of dimension $1$ is consequently a positive scalar
			multiple of $\mu^\Theta$.
			Finally, Proposition~\ref{prop:ascent}, applied with
			$\chi=0$, shows that $\mu^\Theta$ is itself
			$\Gamma_0$-ergodic.

\end{document}